\setlist[enumerate]{
    nosep,
    label=\textit{\roman*}),
}
\theoremstyle{plain}
\newtheorem{theorem}{Theorem}
\newtheorem{lemma}[theorem]{Lemma}
\newtheorem{proposition}[theorem]{Proposition}
\newtheorem{corollaryenv}{Corollary}[theorem]
\newenvironment{corollary}[1][]{%
    \if\relax\detokenize{#1}\relax
    \else
        \ifcsname #1-used\endcsname
            \expandafter\xdef\csname #1-used\endcsname{\the\numexpr\csname #1-used\endcsname+1}%
        \else
            \expandafter\gdef\csname #1-used\endcsname{1}%
        \fi
        \renewcommand{\thecorollaryenv}{\ref{#1}.\csname #1-used\endcsname}%
    \fi%
    \begin{corollaryenv}%
}{%
    \end{corollaryenv}%
}
\theoremstyle{definition}
\newtheorem{definition}{Definition}
\newtheorem{hypothesis}{Hypothesis}
\theoremstyle{remark}
\newtheorem*{remark}{Remark}
\newcounter{step}
\newcommand*{\step}[1]{%
    \stepcounter{step}%
    \textbf{Step \arabic{step}: #1.\enspace}%
    \@ifnextchar\par\@gobble\relax%
}
\newcommand*{\beq}[1]{\begin{equation} \label{#1}}
\newcommand*{\eeq}{\end{equation}}
\newcommand*{\quadtext}[1]{\quad \text{#1} \quad}
\newcommand*{\qquadtext}[1]{\qquad \text{#1} \qquad}
\newcommand*{\defeq}{\coloneqq}
\newcommand*{\eqdef}{\eqqcolon}
\let\Re\relax
\let\Im\relax
\DeclareMathOperator{\Re}{Re}
\DeclareMathOperator{\Im}{Im}
\DeclareMathOperator{\supp}{supp}
\DeclareMathOperator{\diam}{diam}
\DeclarePairedDelimiter{\floor}{\lfloor}{\rfloor}
\DeclarePairedDelimiter{\ceil}{\lceil}{\rceil}
\DeclarePairedDelimiter{\abs}{\lvert}{\rvert}
\DeclarePairedDelimiter{\norm}{\lVert}{\rVert}
\DeclarePairedDelimiterX{\inprod}[2]{\langle}{\rangle}{#1, #2}
\DeclareMathOperator{\AC}{AC}
\newcommand*{\loc}{_{\mathrm{loc}}}
\newcommand*{\quasi}[1]{#1^{[1]}}
\NewDocumentCommand{\diff}{s o m}{%
    \mathop{}\!\mathrm{d}%
    \IfNoValueTF{#2}{}{^{#2}}%
    \IfBooleanTF{#1}{\mkern -1mu #3}{#3}%
}
\DeclareMathOperator{\tr}{tr}
\newcommand*{\transpose}{\mathsf{T}}
\newenvironment{smallpmatrix}{%
    \bigl( \begin{smallmatrix}
}{\end{smallmatrix} \bigr)}
\newcommand*{\charf}[1]{\mathbb{1}_{#1}}
\newcommand*{\eqlaw}{\overset{\mathrm{law}}{=}}
\newcommand*{\probto}[1][]{\xrightarrow[#1]{\mathbb{P}}}
\newcommand*{\lawto}[1][]{\xrightarrow[#1]{\mathrm{law}}}
\DeclarePairedDelimiterXPP{\bprob}[1]{\mathbb{P}}{[}{]}{}{#1}
\DeclarePairedDelimiterXPP{\pprob}[1]{\mathbb{P}}{(}{)}{}{#1}
\newcommand*{\expect}{\mathop{\mbox{}\mathbb{E}}}
\DeclarePairedDelimiterXPP{\bexpect}[1]{\mathbb{E}}{[}{]}{}{#1}
\newcommand*{\given}[1][]{\nonscript\>#1\vert\nonscript\>\mathopen{}}
\DeclarePairedDelimiterX{\quadvar}[1]{\langle}{\rangle}{#1}
\DeclarePairedDelimiterX{\crossvar}[2]{\langle}{\rangle}{#1, #2}
\newsavebox{\@brx}
\newcommand*{\llangle}[1][]{\savebox{\@brx}{\(\m@th{#1\langle}\)}%
    \mathopen{\copy\@brx\mkern2mu\kern-0.9\wd\@brx\usebox{\@brx}}}
\newcommand*{\rrangle}[1][]{\savebox{\@brx}{\(\m@th{#1\rangle}\)}%
    \mathclose{\copy\@brx\mkern2mu\kern-0.9\wd\@brx\usebox{\@brx}}}
\newcommand*{\subGbracket}[2][]{%
    \llangle[#1] #2\rrangle[#1]%
}
\DeclareMathOperator{\erf}{erf}
\DeclareMathOperator{\Ai}{Ai}
\DeclareMathOperator{\Bi}{Bi}
\DeclareMathOperator{\SAi}{SAi}
\newcommand*{\intsol}[1]{\Phi_{#1}}
\newcommand*{\normal}[2]{\mathcal{N}(#1, #2)}
\DeclareMathOperator{\uniform}{Unif}
\newcommand*{\HBM}[1][\beta]{\mathcal{B}_{#1}}
\newcommand*{\ABM}{\@ifstar\@ABM\@@ABM}
\newcommand*{\@ABM}{\tilde{X}_{\beta,\shift}}
\newcommand*{\@@ABM}{X_\beta}
\newcommand*{\UHP}{\mathbb{H}}
\newcommand*{\clUHP}{\overline{\mathbb{H}}_\infty}
\newcommand*{\RiemannSphere}{\mathbb{C}_\infty}
\newcommand*{\projection}{\mathscr{P}}
\DeclareMathOperator{\CS}{CS}
\DeclareMathOperator{\TM}{TM}
\DeclareMathOperator{\TMLP}{TM_{LP}}
\DeclareMathOperator{\TMLC}{TM_{LC}}
\DeclareMathOperator{\Hol}{Hol}
\newcommand*{\testfunc}{\varphi}
\DeclarePairedDelimiterXPP{\wronskian}[2]{\mathcal{W}}{(}{)}{}{#1, #2}
\newcommand*{\shift}{E}
\newcommand*{\setshift}[1]{%
    \renewcommand*{\shift}{#1}%
}
\newcommand*{\timechange}{\eta_{\shift}}
\newcommand*{\sine}{\mathrm{sine}_\beta}
\newcommand*{\sinemat}{\@ifstar\@sinemat\@@sinemat}
\newcommand*{\@sinemat}{\tilde{R}_\beta}
\newcommand*{\@@sinemat}[1][\beta]{R_{#1}}
\newcommand*{\sineTM}{\@ifstar\@sineTM\@@sineTM}
\newcommand*{\@sineTM}{T_{\sinemat*}}
\newcommand*{\@@sineTM}{T_{\sinemat}}
\newcommand*{\sineWT}{m_{\sinemat}}
\newcommand*{\logtime}{\upsilon}
\newcommand*{\slogtime}{\upsilon_{\shift}}
\newcommand*{\taumat}{R_{\tau_\beta}}
\newcommand*{\Airy}{\mathrm{Airy}_\beta}
\newcommand*{\Airyop}{\mathcal{H}_\beta}
\newcommand*{\sAiryop}{\mathcal{H}_{\beta,\shift}}
\newcommand*{\sAirymat}{\@ifstar\@sAirymat\@@sAirymat}
\newcommand*{\@sAirymat}{\tilde{H}_{\beta,\shift}}
\newcommand*{\@@sAirymat}[1][\beta]{H_{#1,\shift}}
\newcommand*{\sAiryTM}{\@ifstar\@sAiryTM\@@sAiryTM}
\newcommand*{\@sAiryTM}{T_{\sAirymat*}}
\newcommand*{\@@sAiryTM}{T_{\sAirymat}}
\newcommand*{\sAiryWT}{m_{\sAirymat}}
\newcommand*{\sAirySLtoCS}[1][\beta]{A_{#1, \shift}}
\newcommand*{\sAiryDirichlet}[1][\beta]{\mathrm{f}_{#1, \shift}}
\newcommand*{\sAiryNeumann}[1][\beta]{\mathrm{g}_{#1, \shift}}
\newcommand*{\unsAiryNeumann}{\tilde{\mathrm{g}}_{\beta, \shift}}
\newcommand*{\trigarg}{\theta_{\shift}}
\newcommand*{\diln}{c_{\shift}}
\newcommand*{\lasttime}{\tau_{\shift}}
\newcommand*{\secondtolasttime}{\tau_{\shift,\alpha}}
\newcommand*{\timedom}{\mathcal{I}_{\shift, \alpha}}
\newcommand*{\sBM}{B_{\shift}}
\newcommand*{\dirichlet}{\mathrm{f}}
\newcommand*{\neumann}{\mathrm{g}}
\newcommand*{\newdirichletneumanncommands}{\@ifstar\@@newdirichletneumanncommands\@newdirichletneumanncommands}
\newcommand*{\@newdirichletneumanncommands}[2]{%
    \expandafter\newcommand\expandafter*\csname #1\endcsname{#2_{\beta, \shift}}
    \expandafter\newcommand\expandafter*\csname #1D\endcsname{#2_{\beta, \shift}^{\dirichlet}}
    \expandafter\newcommand\expandafter*\csname #1N\endcsname{#2_{\beta, \shift}^{\neumann}}
}
\newcommand*{\@@newdirichletneumanncommands}[3]{%
    \expandafter\newcommand\expandafter*\csname #1\endcsname{#2_{\beta, \shift}^{#3}}
    \expandafter\newcommand\expandafter*\csname #1D\endcsname{#2_{\beta, \shift}^{#3, \dirichlet}}
    \expandafter\newcommand\expandafter*\csname #1N\endcsname{#2_{\beta, \shift}^{#3, \neumann}}
}
\newcommand*{\diffamps}{\Delta^\rho_{\beta, \shift}}
\newcommand*{\diffphases}{\Delta^\xi_{\beta, \shift}}
\newcommand*{\sumamps}{\Sigma^\rho_{\beta, \shift}}
\newcommand*{\sumphases}{\Sigma^\xi_{\beta, \shift}}
\newcommand*{\sumdrift}{R_{\beta, \shift}^{\Sigma}}
\newcommand*{\sumdiff}{S_{\beta, \shift}^{\Sigma}}
\newcommand*{\GBM}{Z_{\beta, \shift}}
\newcommand*{\AiryABM}{Y_{\beta, \shift}}
\newcommand*{\rBM}{\reflectbox{\(\vec{\reflectbox{\(B\)}}\)}}
\newcommand*{\ramp}{r_{\beta}}
\newcommand*{\rphase}{\xi_{\beta}}
\title{Operator level soft edge to bulk transition in \(\beta\)-ensembles via canonical systems}
\author[1]{Vincent Painchaud\thanks{vincent.painchaud@mail.mcgill.ca}}
\author[1]{Elliot Paquette\thanks{elliot.paquette@mcgill.ca}}
\affil[1]{Department of Mathematics and Statistics, McGill University}
\date{\today}
\begin{document}

\maketitle

\begin{abstract}
The stochastic Airy and sine operators, which are respectively a random Sturm--Liouville operator and a random Dirac operator, characterize the soft edge and bulk scaling limits of \(\beta\)-ensembles.  Dirac and Sturm--Liouville operators are distinct operator classes which can both be represented as canonical systems, which gives a unified framework for defining important properties, such as their spectral data. Seeing both as canonical systems, we prove that in a suitable high-energy scaling limit, the Airy operator converges to the sine operator.  We prove this convergence in the vague topology of canonical systems' coefficient matrices, and deduce the convergence of the associated Weyl--Titchmarsh functions and spectral measures. Our proof relies on a coupling between the Brownian paths that drive the two operators, under which the convergence holds in probability. This extends the corresponding result at the level of the eigenvalue point processes, proven in \cite{valko_continuum_2009} by comparison to the Gaussian $\beta$-ensemble.
\end{abstract}

\tableofcontents

\section{Introduction}

One of the central projects of random matrix theory, from its inception, is the description of the local statistics of eigenvalues in the large matrix size limit.  In the case of $\beta$-ensembles, which are point processes admitting the joint density 
\begin{equation}\label{eq:betaensemble}
    (\lambda_1, \lambda_2, \dots, \lambda_N) \mapsto \frac{1}{Z_{N,V,\beta}}\exp\Bigl(-\sum_{i=1}^N \beta N V(\lambda_i)\Bigr) \prod_{i > j} \abs{\lambda_i-\lambda_j}^{\beta} 
\end{equation}
for constraining potential $V \colon \mathbb{R} \to \mathbb{R}$ and inverse temperature $\beta > 0$ (see \cite{anderson_introduction_2009} for background), the important cases of $\beta \in \{1,2,4\}$ famously admit additional structure.  These allow for explicit computations of the correlation functions, albeit with slightly different formalisms in the cases $\beta=2$ versus the cases $\beta \in \{1,4\},$ and they make $N\to\infty$ limits relatively straightforward.  Moreover, when specializing to the classical $\beta\in\{1,2,4\}$ determinantal or Pfaffian cases, \emph{all} the local scaling limits retain this determinantal or Pfaffian structure; it is possible to work entirely within the category of determinantal (respectively, Pfaffian) point processes to study the various local scaling limits.

For general $\beta > 0,$ a powerful overarching idea was introduced in \cite{edelman_random_2007}: to define random operators whose spectra would give the local point process of $\beta$-ensembles. These ideas were developed and implemented by \cite{ramirez_beta_2011} and \cite{ramirez_diffusion_2009} for the soft edge (Airy) and hard edge (Bessel) operators; the corresponding bulk (sine) operator \cite{valko_sine_beta_2017} followed two independent constructions of the \(\sine\) point process \cite{valko_continuum_2009} and \cite{killip_eigenvalue_2009}.  We also note that these point processes have been shown to be universal across a wide range of potentials \cite{bekerman_transport_2015, bourgade_edge_2014, bourgade_bulk_2012, bourgade_universality_2014, krishnapur_universality_2016, rider_universality_2019}.

There is a fundamental difference between the operator types that arise in the bulk versus the edge.  The sine operator is a \emph{random Dirac operator}, while the Airy and Bessel operators are \emph{random Schrödinger operators}.  This at first glance suggests that fundamentally different techniques are required to work with these operators, and this also impedes comparisons between them---how should one show the convergence (if possible) from the edge operators to the bulk operator?  Is there a single framework that can simultaneously describe all the general-$\beta$ point processes?

As noted by Valkó and Virág in~\cite{valko_sine_beta_2017}, there is actually a common representation that can be used for both the stochastic Airy and sine operators: the \emph{canonical system}. We will show in this paper that in this embedding, we are able to obtain the stochastic sine operator as a scaling limit of the stochastic Airy operator, thus giving an \emph{operator-level} convergence.  We further show that the \emph{spectral measures} of these canonical systems converge; this measure is a weighted form of the empirical measure of eigenvalues of the associated operator.  Moreover, we note that all known limit operators of $\beta$-ensembles (as well as the tridiagonal matrix models and the CMV models) also embed as canonical systems, which could therefore serve as a common mathematical framework for \emph{all} point process limits of $\beta$-ensembles.

\paragraph{Airy and sine operators.}

We now formally introduce the \emph{stochastic sine operator} of~\cite{valko_sine_beta_2017}, a random two-dimensional first-order differential operator defined as follows. Given \(\beta > 0\), let \(\HBM\) be a hyperbolic Brownian motion with variance \(\nicefrac{4}{\beta}\) started at \(i\) in the upper half-plane, meaning that \(\HBM\) solves the It\^o stochastic differential equation
\beq{eq.HBMSDE}
\diff{\HBM}(t) = \frac{2}{\sqrt{\beta}} \Im\HBM(t) \diff{W}(t)
\qquadtext{with}
\HBM(0) = i
\eeq
where \(W\) is a standard complex Brownian motion. An important property of a hyperbolic Brownian motion is that it always converges to a boundary point of the hyperbolic plane as \(t\to\infty\)~\cite[Proposition~7.6.7.3]{franchi_hyperbolic_2012}, which here (as we use the upper half-plane model of the hyperbolic plane) means that \(\HBM(\infty)\) is a well-defined real random variable. Now, setting
\beq{eq.sinemat}
\sinemat \defeq \frac{1}{2\Im\HBM} \ABM^\transpose \ABM
\qquadtext{with}
\ABM \defeq \begin{pmatrix} 1 & -\Re\HBM \\ 0 & \Im\HBM \end{pmatrix},
\eeq
the stochastic sine operator is the random differential operator sending \(u\colon (0,1) \to \mathbb{C}^2\) to
\beq{eq.sineop}
(\sinemat^{-1}\circ\logtime) Ju'
\quadtext{where}
J \defeq \begin{pmatrix} 0 & -1 \\ 1 & 0 \end{pmatrix}
\quadtext{and with boundary data}
\begin{cases}
    u(0) \parallel (1,0), \\ 
    u(1) \parallel (\Re\HBM(\infty), 1) &
    \text{if } \beta > 2.
\end{cases}
\eeq
Here \(\logtime(t) \defeq -\log(1-t)\) and $\parallel$ denotes parallel. Under these boundary conditions, the stochastic sine operator is self-adjoint on a suitable domain, and has discrete eigenvalues which are the points of the \(\sine\) point process~\cite{valko_sine_beta_2017}.

We also introduce the \emph{stochastic Airy operator}, which appears at the soft edge.  It is a random Schrödinger operator densely defined on \(L^2(0,\infty)\) as mapping \(f\) to
\beq{eq.AiryopWN}
\Airyop f(t) = - f''(t) + \Bigl( t + \frac{2}{\sqrt{\beta}} B'(t) \Bigr) f(t)
\qquadtext{with boundary data}
f(0) = 0
\eeq
where \(B\) is a standard Brownian motion. Due to the presence of the white noise in the potential, some care is required to make this definition rigorous, but it can be done using the theory of distributions~\cite{ramirez_beta_2011}, or through the theory of generalized Sturm--Liouville operators~\cite{minami_definition_2015} as we will see in Section~\ref{sec.AirysineCS.defAiry}. In both cases, the resulting operator is well-defined and self-adjoint on the appropriate domain, and it has a discrete spectrum which by definition is \(-\Airy\).


\paragraph{Canonical system embedding.}

Our goal is to prove the convergence of the (scaled) Airy operator to the sine operator, and therefore our first task is to find a suitable framework that can encompass both of them.  This brings us to canonical systems.

\begin{definition}
\label{def.CS}
A \emph{canonical system} on an interval \((a,b) \subset \mathbb{R}\) is a differential equation of the form
\[
Ju' = -zHu \qquadtext{with} J \defeq \begin{pmatrix} 0 & -1 \\ 1 & 0 \end{pmatrix},
\]
where \(H\colon (a,b) \to \mathbb{C}^{2\times 2}\) is called the \emph{coefficient matrix}, \(z \in \mathbb{C}\) and \(u\colon (a,b) \to \mathbb{C}^2\).
\end{definition}
\noindent It may be necessary to impose boundary conditions at $a$ and at $b$, which will play an important role going forward.

The eigenvalue equation for the stochastic sine operator can immediately be seen to be equivalent to a canonical system on \((0,1)\) with coefficient matrix \(\sinemat* \defeq \sinemat\circ\logtime\) and the same boundary data given in \eqref{eq.sineop}. The stochastic Airy operator cannot directly be represented as a canonical system, but we will see in Section~\ref{sec.CS.SL} that there is a general recipe to turn the eigenvalue equation of a (generalized) Sturm--Liouville operator into a canonical system. Applying it to the eigenvalue equation of the shifted and scaled stochastic Airy operator \(\sAiryop \defeq 2\sqrt{\shift} (\Airyop - \shift)\) yields the canonical system on \((0,\infty)\) with coefficient matrix
\beq{eq.sAirymat}
    \sAirymat \defeq \frac{1}{2\sqrt{\shift}} \begin{pmatrix} \sqrt{\shift} \sAiryNeumann^2 & \sAiryDirichlet\sAiryNeumann \\ \sAiryDirichlet\sAiryNeumann & \frac{1}{\sqrt{\shift}} \sAiryDirichlet^2 \end{pmatrix}
\eeq
where \(\sAiryDirichlet\) and \(\sAiryNeumann\) are fundamental solutions to \(\sAiryop f = 0\) which satisfy \(\sAiryDirichlet(0) = \sAiryNeumann'(0) = 1\) and \(\sAiryDirichlet'(0) = \sAiryNeumann(0) = 0\). 

So while this construction allows us to use a common first-order differential operator framework, there is an important distinction between the embedded sine and Airy systems.  The Airy canonical system is \enquote{degenerate}, in the sense that its coefficient matrix \(\sAirymat\) is not invertible, and therefore it has no associated operator of the same form as the stochastic sine operator. Following \cite{valko_sine_beta_2017}, we call canonical systems with invertible coefficient matrices \emph{Dirac} operators (note that this name is also used for other families of operators in canonical systems theory, c.f.~\cite[Section~6.4]{remling_spectral_2018}).

Nevertheless, the theory of canonical systems provides a way to think of such systems as operators in a generalized sense (as we will see Section \ref{sec.CS}), and therefore it provides a way to unify the analysis of the \(\Airy\) and \(\sine\) point processes under the same mathematical framework. In particular, the convergence of the shifted \(\Airy\) point process \(2\sqrt{\shift} (\Airy + \shift)\) to \(\sine\) can be upgraded to an operator-level convergence in the sense of the associated canonical systems.

Our first result in that sense is at the level of the coefficient matrices. We will see in more detail in Section~\ref{sec.CS.topologies} that spaces of coefficient matrices can be topologized by thinking of coefficient matrices as matrix-valued measures and testing them against compactly supported continuous functions, which results in what we call the \emph{vague topology} of coefficient matrices. What we prove is the following.

\begin{theorem}
\label{thm.CSconvinlaw}
\setshift{E_n}
Let \(\mathcal{I} \defeq [0,1)\) if \(\beta \leq 2\) and \(\mathcal{I} \defeq [0,1]\) if \(\beta > 2\). For any diverging sequence \(\{\shift\}_{n\in\mathbb{N}} \subset (0,\infty)\), there are \(\mathscr{C}^1\) bijections \(\timechange\colon [0,1+\varepsilon_{\shift}) \to [0,\infty)\) where \(\varepsilon_{\shift} = 0\) when \(\beta \leq 2\) but \(\varepsilon_{\shift} \downarrow 0\) when \(\beta > 2\) such that
\[
\sAirymat* \defeq \timechange'(\sAirymat\circ\timechange) \lawto[n\to\infty] \sinemat* = \sinemat\circ\logtime
\]
in the vague topology of coefficient matrices on \(\mathcal{I}\).
\end{theorem}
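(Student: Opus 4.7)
The plan is to establish Theorem~\ref{thm.CSconvinlaw} through a Prüfer amplitude--phase decomposition of the Airy fundamental solutions, coupled with a specific driving Brownian motion identifying the high-energy Airy dynamics with those of the hyperbolic Brownian motion, and then an averaging of fast phase oscillations at the level of the coefficient matrix.

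First, I would write $\sAiryDirichlet$ and $\sAiryNeumann$ in Prüfer amplitude--phase coordinates adapted to the shifted--scaled equation at energy $E_n \to \infty$, schematically $\sAiryDirichlet = \rho_f \sin\xi_f$ (with an analogous formula for $\sAiryDirichlet'$) and similarly for $\sAiryNeumann$. Under this parametrization the entries of $\sAirymat$ become products of the slowly varying amplitudes $\rho_f^2, \rho_f\rho_g, \rho_g^2$ with trigonometric functions of the fast phases $\xi_f, \xi_g$. I would then derive the It\^o SDEs governing $(\rho_f, \rho_g, \xi_f, \xi_g)$ driven by the Airy white noise $B'$, observing that the individual phases rotate at rate of order $\sqrt{E_n + t}$ while the amplitudes and the phase difference $\xi_g - \xi_f$ evolve on a slower diffusive scale.

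Next, I would apply the time change $\timechange$ so that the slow variables are reparametrized onto $[0, 1 + \varepsilon_{\shift})$, and couple the Airy Brownian motion $B$ with the complex Brownian motion $W$ appearing in \eqref{eq.HBMSDE}. The aim is to identify explicit slow combinations of $(\rho_f, \rho_g, \xi_g - \xi_f)$, invariant under the fast oscillation, whose time-changed SDEs converge in probability to those of $\HBM$. The choice of coupling is dictated by matching diffusion coefficients; this promotes the eigenvalue-level coupling of \cite{valko_continuum_2009} to the full coefficient matrix. The vague convergence then follows by averaging out the fast phases: when $\sAirymat*$ is tested against a continuous compactly supported $\testfunc$, the fast trigonometric factors $\cos^2\xi_f, \sin\xi_f\cos\xi_f, \sin^2\xi_f$ average to $\tfrac12, 0, \tfrac12$ by a Riemann--Lebesgue-type argument (justified by integration by parts in $t$, using the rapid growth of $\xi_f'$ and the controlled variation of $\testfunc$ and of the slow variables), and the residual integrand against the slow amplitudes reproduces exactly the entries of $\sinemat* = \frac{1}{2\Im\HBM}(\ABM^{\transpose}\ABM) \circ \logtime$.

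The main obstacle I would expect is the boundary analysis when $\beta > 2$: there $\HBM$ reaches $\partial\UHP$ in finite hyperbolic time and the Airy amplitudes blow up on the corresponding window $[1, 1 + \varepsilon_{\shift})$. The time change must be tuned carefully so that $\varepsilon_{\shift} \downarrow 0$ absorbs this singularity while preserving vague convergence, which likely requires martingale estimates matching the asymptotic behavior of $\HBM$ near $\HBM(\infty) \in \mathbb{R}$, more delicate than the interior convergence sketched above.
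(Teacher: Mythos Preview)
Your proposal is essentially the paper's approach: Prüfer/polar coordinates for the fundamental solutions (the paper's Proposition~\ref{prop.polarcoords}), an explicit time change, a coupling of the Airy driving Brownian motion with the complex Brownian motion $W$ of the sine system (Lemma~\ref{lem.probspace}), identification of slow combinations with $\HBM$ (Propositions~\ref{prop.GBM} and~\ref{prop.ReHBM}), and averaging of the fast-oscillating remainder via integration by parts (Lemma~\ref{lem.averaging} and Step~4 of Theorem~\ref{thm.Airytosine.vaguely}).

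One small correction to your picture of the $\beta>2$ case: $\HBM$ does not reach $\partial\UHP$ in finite time; rather $\Im\HBM(t)\to 0$ as $t\to\infty$ but $\tr(\sinemat\circ\logtime)$ is integrable on $[0,1]$, making the sine system limit circle at $1$. The role of $\varepsilon_{\shift}>0$ is simply to place the time-changed Airy system on $[0,1+\varepsilon_{\shift})$ so that its restriction to $[0,1]$ is limit circle and can be tested against functions supported at $1$; the actual technical work is a uniform trace bound (the paper's Proposition~\ref{prop.tracebound}) showing $e^{2\ampN}, e^{2\ampD} \lesssim (1-t)^{-2/\beta-\gamma}$ with high probability, which makes the contribution near $t=1$ negligible and the vague convergence go through on $[0,1]$.
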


\begin{remark}
The number \(\varepsilon_{\shift}\) appears for technical reasons that will be explained in Sections~\ref{sec.CS} and~\ref{sec.AirysineCS}. At this point, the mapping \(\timechange\) should essentially be understood as a time change from \((0,1)\) to \((0,\infty)\).   See \eqref{eq.deftimechange} for a precise definition.
\end{remark}

As a corollary of this result, we can show the convergence of the systems' \emph{transfer matrices} (see Definition \ref{def.T}), which are their (matrix) solutions for a given value \(z\in\mathbb{C}\) when started at the identity matrix \(I_2\).

\begin{corollary}
\label{cor.TMconvinlaw}
\setshift{E_n}
Let \(\{\shift\}_{n\in\mathbb{N}} \subset (0,\infty)\) be a diverging sequence, and let \(\sAiryTM*, \sineTM* \colon \mathcal{I} \times \mathbb{C} \to \mathbb{C}^{2\times 2}\) be the transfer matrices of the shifted and scaled Airy system and of the sine system respectively. Then
\[
\sAiryTM* \lawto[n\to\infty] \sineTM*
\]
compactly on \(\mathcal{I} \times \mathbb{C}\).
\end{corollary}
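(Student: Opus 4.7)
The plan is to deduce this corollary from Theorem~\ref{thm.CSconvinlaw} via the continuous mapping theorem, once we establish that the assignment $H \mapsto T_H$, sending a canonical system's coefficient matrix to its transfer matrix, is continuous from the vague topology on coefficient matrices to the compact-open topology on functions $\mathcal{I} \times \mathbb{C} \to \mathbb{C}^{2\times 2}$. Given this continuity, the theorem's convergence of $\sAirymat*$ to $\sinemat*$ immediately lifts to the claimed convergence $\sAiryTM* \lawto \sineTM*$.

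To prove this continuity, I would fix a deterministic sequence $H_n \to H$ vaguely and show $T_{H_n} \to T_H$ uniformly on compact subsets of $\mathcal{I} \times \mathbb{C}$. The transfer matrices satisfy the Volterra integral equation
\[
    T_H(t, z) = I_2 + z \int_0^t J H(s) T_H(s, z) \, ds,
\]
which can be read as a Riemann--Stieltjes equation against the continuous antiderivative $M_H(t) \defeq \int_0^t H(s) \, ds$. Vague convergence $H_n \to H$ forces $M_{H_n} \to M_H$ uniformly on compact subsets of $\mathcal{I}$: the limit $M_H$ has no atoms, so approximating indicator functions of intervals by continuous test functions yields pointwise convergence, which an equicontinuity argument (the $M_{H_n}$ are increasing and bounded on compacts) promotes to uniform convergence on compacts. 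A Grönwall-type estimate then propagates this to uniform convergence $T_{H_n}(\cdot, z) \to T_H(\cdot, z)$ on compact subsets of $\mathcal{I}$, for each fixed $z \in \mathbb{C}$, and uniformity in $z$ on compact subsets of $\mathbb{C}$ follows from the entirety of $z \mapsto T_H(t, z)$ together with the local bounds the Grönwall estimate supplies, via a Vitali/Montel argument.

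The main obstacle will be the right endpoint $t = 1$ in the $\beta > 2$ regime, where $\mathcal{I} = [0,1]$ is closed. Here $\sinemat*$ remains integrable up to $t = 1$ thanks to the almost-sure convergence of $\HBM(t)$ as $t \to \infty$, and by the construction of the time change $\timechange$ in Theorem~\ref{thm.CSconvinlaw} the analogous integrability must hold uniformly for the approximations $\sAirymat*$. Controlling the tail integrals $\int_{1 - \delta}^1$ uniformly in $n$ is what allows the Grönwall estimate to close on the full closed interval $[0, 1]$; away from the endpoint, the argument reduces to the standard continuity statement for Volterra equations found in the canonical systems literature. Once the deterministic continuity is in hand, the continuous mapping theorem (or equivalently, Skorokhod representation followed by the deterministic statement) upgrades the convergence in law of $\sAirymat*$ to that of $\sAiryTM*$, completing the proof.
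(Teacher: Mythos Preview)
Your approach is correct and takes a genuinely different route from the paper. You deduce the corollary directly from Theorem~\ref{thm.CSconvinlaw} via the continuous mapping theorem, after establishing that $H \mapsto T_H$ is continuous on all of $\CS\mathcal{I}$; this is a mild but genuine strengthening of the paper's Theorem~\ref{thm.CStoTM}, which is stated only on the trace-dominated subsets $\CS_f\mathcal{I}$. Your argument for the strengthening is sound: vague convergence $H_n\to H$ with $H\in L^1_{\mathrm{loc}}$ automatically bounds $\sup_n\int_K \tr H_n$ on each compact $K\subset\mathcal{I}$ (test against a $\varphi\in\mathscr{C}_c(\mathcal{I})$ dominating $\charf{K}$), whence $M_{H_n}\to M_H$ uniformly on compacts by a P\'olya-type argument (monotone functions converging pointwise to a continuous limit; this is what does the work, not equicontinuity), and then Gr\"onwall closes with constants depending only on $|z|$ and $\sup_n\int_K\tr H_n$. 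The paper instead works on the coupled probability space of Lemma~\ref{lem.probspace}, proves the stronger convergence \emph{in probability} (Theorem~\ref{thm.Airytosine.TMconvergence}) via Proposition~\ref{prop.CS.random.TMconvergence}, and for that route must separately establish high-probability trace bounds (Proposition~\ref{prop.tracebound}). Your route is cleaner for the bare convergence-in-law statement; the paper's buys the convergence in probability needed downstream for the Weyl--Titchmarsh convergence. One small correction: the uniform tail control near $t=1$ in the $\beta>2$ case that you attribute to the construction of $\timechange$ actually falls out of the deterministic vague convergence on the closed interval $[0,1]$ itself (test against $\varphi\equiv 1$), so no appeal to the specifics of $\timechange$ is needed there.
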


Unfortunately, the vague convergence of a sequence of coefficient matrices is not strong enough to imply the convergence of the spectra of the canonical systems, in general. However, the Weyl theory of second-order differential operators generalizes to canonical systems: given a canonical system, one can always define a Weyl--Titchmarsh function (see Section \ref{sec.Weyl}), which is always a generalized Herglotz function (i.e., a holomorphic function from the upper half-plane \(\UHP\) into its closure \(\clUHP\) in the Riemann sphere) and essentially the Stieltjes transform of the system's spectral measure. By proving the convergence of the boundary conditions of the canonical systems, we can upgrade the convergence of the coefficient matrices to the convergence of the Weyl--Titchmarsh functions.

\begin{theorem}
\label{thm.WTconvinlaw}
\setshift{E_n}
Let \(\{\shift\}_{n\in\mathbb{N}} \subset (0,\infty)\) be a diverging sequence, and let \(\sAiryWT, \sineWT \colon \UHP \to \clUHP\) be the Weyl--Titchmarsh functions of the shifted and scaled Airy system and of the sine system respectively. Then
\[
    \sAiryWT \lawto[n\to\infty] \sineWT
\]
compactly on \(\UHP\). This holds jointly with the convergence of transfer matrices in Corollary \ref{cor.TMconvinlaw}, and furthermore the Weyl--Tichmarsh function is invariant under the time change, so 
\(m_{\sAirymat*}
=
\sAiryWT\).  In particular, the spectral measures of the corresponding systems converge vaguely in distribution.
\end{theorem}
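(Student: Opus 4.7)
The plan hinges on Corollary \ref{cor.TMconvinlaw}: the Weyl--Titchmarsh function of a canonical system is determined by the transfer matrix together with appropriate data at the endpoints, so the compact joint convergence of the transfer matrices should translate, once the boundary behavior is handled, into the desired convergence $\sAiryWT \lawto \sineWT$.

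First, I would establish the asserted identity $m_{\sAirymat*} = \sAiryWT$, reflecting the general principle that the Weyl--Titchmarsh function is invariant under orientation-preserving $\mathscr{C}^1$ reparametrizations. Under $t \mapsto \timechange(t)$ the coefficient matrix transforms as $\sAirymat*(t) = \timechange'(t)\, \sAirymat(\timechange(t))$, and a direct differentiation shows that the transfer matrices satisfy $T_{\sAirymat*}(t,z) = T_{\sAirymat}(\timechange(t),z)$. The Weyl disks at corresponding times thus coincide, and so do their limiting intersections, from which $m$ is read off (as their unique intersection in the limit-point case or as a specific point selected by the boundary condition in the limit-circle case).

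The main step is the convergence of the Weyl--Titchmarsh functions themselves. By Skorokhod representation, I would pass to a coupling under which $\sAiryTM* \to \sineTM*$ compactly on $\mathcal{I} \times \mathbb{C}$ almost surely, and then split according to the right-endpoint behavior. In the limit-circle case $\beta > 2$, both $\sineWT(z)$ and $\sAiryWT(z)$ admit explicit closed-form expressions as Möbius images under $\sineTM*(1,z)$, respectively $\sAiryTM*(1+\varepsilon_{\shift},z)$, of the projective points encoding the right boundary conditions---namely $(\Re\HBM(\infty) : 1)$ on the sine side and its Airy analogue on the Airy side. Choosing the Brownian coupling so that both the transfer matrices \emph{and} the boundary data converge (the latter as $\varepsilon_{\shift} \downarrow 0$), continuity of the Möbius action on $\RiemannSphere$ closes the argument. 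In the limit-point case $\beta \leq 2$, tightness of the generalized Herglotz family $\{\sAiryWT\}$ on $\UHP$ lets me extract a subsequential limit $m_*$; for each fixed $t < 1$, $\sAiryWT(z)$ lies in the Weyl disk cut out by $\sAiryTM*(t,z)$, and passing to the limit $n \to \infty$ places $m_*(z)$ in the corresponding sine Weyl disk; letting $t \uparrow 1$ then pins $m_*(z) = \sineWT(z)$ because these disks collapse to a single point.

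The convergence of spectral measures is then automatic: compact convergence of Nevanlinna representatives on $\UHP$ is equivalent to vague convergence of the associated representing measures (modulo tight linear and atomic contributions at $\infty$), a standard fact about generalized Herglotz functions. The principal technical obstacle I anticipate is in the limit-point case, where interchanging $t \uparrow 1$ and $n \to \infty$ requires a quantitative, uniform-in-$n$ control on the shrinkage of the Airy Weyl disks as $t \uparrow 1$; this should follow from an asymptotic analysis of the phase functions $\sAiryDirichlet, \sAiryNeumann$ on the time-changed interval, in parallel with the role played by $\HBM$ near $t = 1$ on the sine side.
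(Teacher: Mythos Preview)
Your case split has the difficulty exactly backwards, and this reflects a genuine gap. The shifted Airy system is \emph{always} limit point at its right endpoint (which after the time change is $1+\varepsilon_{\shift}$), regardless of $\beta$. So for $\beta > 2$ you are not comparing two limit-circle systems: you have a sequence of limit-point systems converging to a limit-circle one, which is precisely the delicate situation flagged in the paper after Theorem~\ref{thm.TMtoWT.LP}. In particular, $\sAiryTM*(1+\varepsilon_{\shift},z)$ does not exist, there is no ``Airy analogue'' boundary condition at the right endpoint, and your proposed Möbius argument does not apply as stated. What plays the role of a right boundary condition for the Airy system is the value at $t=1$ of an $L^2$ solution $u_z$, which depends on $z$; the paper then invokes Theorem~\ref{thm.TMtoWT.LC} with this $z$-dependent data. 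Proving that $u_z\circ\timechange(1)$ converges to $(\Re\HBM(\infty),1)$ is the entire content of Section~\ref{sec.WTconv}: it requires decomposing $u_z\circ\timechange(1)$ via the Wronskian identity \eqref{eq.integrablesolutiondecomposition}, relating the pieces to the stochastic Airy function $\SAi$, and crucially using the $t\to-\infty$ asymptotics of solutions to $\Airyop f=0$ (Theorem~\ref{thm.asymptoticsSAi} and Propositions~\ref{prop.ampboundSAi}--\ref{prop.uniformphase}). None of this is visible in your sketch.

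Conversely, for $\beta\leq 2$ the anticipated obstacle is illusory. Because the \emph{limit} (sine) system is limit point, the sine Weyl disks shrink to a point as $t\uparrow 1$; one fixes $t$, passes to the limit in $n$ using the transfer-matrix convergence, and only then sends $t\uparrow 1$. No uniform-in-$n$ shrinkage of the Airy disks is required. This is exactly Theorem~\ref{thm.TMtoWT.LP}, and the paper disposes of this case in a single corollary. Your remark about the time-change invariance of $m$ and the spectral-measure consequence via the Herglotz representation are both fine.
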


\begin{remark}
The spectral measures associated to \(\sAiryWT, \sineWT\) are pure point and have positive masses precisely at the (simple) eigenvalues of the shifted Airy and sine operators.  The masses are independent of the eigenvalues and are iid Gamma random variables with shape and rate parameters \(\nicefrac{\beta}{2}\) and \(\nicefrac{4}{\beta}\). This was proven in \cite[Proposition~3]{valko_palm_2023} in the case of the sine operator, and we prove it here in Corollary~\ref{cor.spectralweights} in the case of the shifted Airy operator. Hence, it follows that the associated empirical spectral measures of $\sAiryop$ converge vaguely to the empirical spectral measure of the sine operator, which gives a new proof of the convergence of the point process $2\smash{\sqrt{\shift}}(\Airy + \shift)$ to the $\sine$ point process.
\end{remark}

As part of the proof of this result, we also obtain the following asymptotics of solutions to \(\Airyop f = 0\) towards \(-\infty\), which complement the $t \to \infty$ asymptotics for these solutions derived in \cite{lambert_strong_2021}.

\begin{theorem}
\label{thm.asymptoticsSAi}
Let \(\Airyop\) be defined on the full real line from a two-sided standard Brownian motion \(B\). If \(f\) solves \(\Airyop f = 0\) and if \(f(-1)\) and \(f'(-1)\) are independent of the restriction of \(B\) to \((-\infty, -1)\), then for \(t \geq 1\),
\[
f(-t) = C_{f0}\, t^{\nicefrac{-1}{4} + \nicefrac{1}{2\beta}} e^{X(t)} \cos\rphase(t)
\qquadtext{and}
f'(-t) = C_{f0}\, t^{\nicefrac{1}{4} + \nicefrac{1}{2\beta}} e^{X(t)} \sin\rphase(t)
\]
where \(C_{f0}^2 \defeq f^2(-1) + {f'}^2(-1)\), \(\rphase\) is a process that satisfies \(\rphase(t) - 2\pi\floor[\big]{\frac{\rphase(t)}{2\pi}} \to U\) in law as \(t \to \infty\) for \(U \sim \uniform[0,2\pi)\), and \(X\) is a process such that for any \(\varepsilon, \delta > 0\), there are \(C, C' > 0\) for which
\[
\bprob[\Big]{\forall t \geq 1, \abs{X(t)} \leq C + C'(\log t)^{\nicefrac{1}{2} + \delta}} \geq 1 - \varepsilon.
\]
\end{theorem}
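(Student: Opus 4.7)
\emph{Strategy.} The equation $\Airyop f = 0$ reads $f''(s) = \bigl(s + \tfrac{2}{\sqrt{\beta}} B'(s)\bigr) f(s)$, which is oscillatory as $s \to -\infty$ with local frequency $\sqrt{\abs{s}}$. Setting $u(t) \defeq f(-t)$ and $v(t) \defeq f'(-t)$, the pair $(u, v)$ evolves for $t \geq 1$ as the stochastic linear system $du = -v\,dt$ and $dv = t u\,dt - \tfrac{2}{\sqrt{\beta}} u\,d\tilde{B}(t)$, where $\tilde{B}(t) \defeq -B(-t)$ is a standard Brownian motion. My plan is to introduce a Prüfer-type polar decomposition adapted to this system and matching the paper's conventions (writing $u$ and $v$ as products of an amplitude $\ramp$ and sinusoids of a phase $\rphase$, up to a $t$-dependent rescaling), and to analyze the resulting amplitude and phase SDEs, which take the form
\[
    d\log\ramp = \alpha_\ramp(\rphase, t)\,dt + \sigma_\ramp(\rphase, t)\,d\tilde{B}, \qquad d\rphase = \alpha_\rphase(\rphase, t)\,dt + \sigma_\rphase(\rphase, t)\,d\tilde{B},
\]
with explicit trigonometric coefficients obtained via Itô's formula.

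The amplitude analysis is the heart of the argument. The drift $\alpha_\ramp$ splits into a $\rphase$-independent piece of size $\tfrac{1}{2\beta t}$ plus an oscillatory remainder: integrating the constant piece produces the polynomial factor $t^{\nicefrac{1}{2\beta}}$, while the oscillatory remainder integrates to an $O(1)$ bounded contribution after integration by parts against the rapidly rotating phase $\rphase \sim \tfrac{2}{3}t^{\nicefrac{3}{2}}$. The amplitude martingale $M(t) \defeq \int_1^t \sigma_\ramp\,d\tilde{B}$ has quadratic variation $\langle M\rangle(t) \sim c \log t$ for some $c > 0$ depending on $\beta$. By Dambis--Dubins--Schwarz, $M = W \circ \langle M\rangle$ for some standard Brownian motion $W$; a Doob maximal inequality on geometric blocks $t \in [e^k, e^{k+1}]$ combined with Borel--Cantelli then yields the uniform-in-$t$ bound $\abs{M(t)} \leq C + C'(\log t)^{\nicefrac{1}{2} + \delta}$ with probability at least $1 - \varepsilon$, for any $\delta > 0$. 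Combining with the bounded oscillatory contributions and the initial data $C_{f0}^2 = f(-1)^2 + f'(-1)^2$ identifies $X(t)$ and yields its stated tail bound.

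For the equidistribution claim, decompose $\rphase(t) = \tfrac{2}{3} t^{\nicefrac{3}{2}} + D(t) + N(t)$, where $D$ is a uniformly bounded correction and $N$ is a continuous martingale with $\langle N\rangle(t) \to \infty$ at rate $\log t$. By Dambis--Dubins--Schwarz, $N = W' \circ \langle N\rangle$ for a standard Brownian motion $W'$, and since Brownian motion evaluated at a time tending to infinity is asymptotically uniform modulo $2\pi$, so is $N(t)$. Translating by the deterministic shift $\tfrac{2}{3}t^{\nicefrac{3}{2}} + D(t)$ preserves uniformity modulo $2\pi$ and delivers the convergence of $\rphase(t) - 2\pi \floor{\tfrac{\rphase(t)}{2\pi}}$ to $U \sim \uniform[0, 2\pi)$ in law.

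The main obstacle is the precise amplitude analysis: pinning down the constant-in-$\rphase$ drift of $\alpha_\ramp$ as exactly $\tfrac{1}{2\beta t}$ requires careful tracking of the Itô cross-variations between the amplitude and phase equations, since any miscount would shift the polynomial exponent. A secondary technical issue is securing the uniform-in-$t$ martingale bound at the prescribed failure probability $\varepsilon$: a sharp pointwise law of the iterated logarithm would yield the tighter $\sqrt{\log t \cdot \log \log t}$, but the small inflation to $(\log t)^{\nicefrac{1}{2} + \delta}$ in the statement is what makes the Borel--Cantelli sum over geometric blocks converge for arbitrary $\varepsilon > 0$.
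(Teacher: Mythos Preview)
Your amplitude analysis is essentially correct and matches the paper (Propositions~\ref{prop.polarcoordsSAi} and~\ref{prop.ampboundSAi}): the Pr\"ufer decomposition, the identification of the $\tfrac{1}{2\beta t}$ constant-in-phase drift, the integration-by-parts control of the oscillatory remainder (the paper's Lemma~\ref{lem.averagingSAi}), and the DDS-based bound on the martingale part (Proposition~\ref{prop.martingalebound}) are all as in the paper.

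The equidistribution argument, however, has a genuine gap. You write $N = W' \circ \langle N\rangle$ by Dambis--Dubins--Schwarz and invoke ``Brownian motion evaluated at a time tending to infinity is asymptotically uniform modulo $2\pi$''. That statement holds for \emph{deterministic} times, but $\langle N\rangle(t) = \int_1^t \tfrac{4}{\beta s}\cos^4\rphase(s)\,\diff s$ is random and adapted to the filtration of $W'$, and uniformity fails for such times in general (e.g.\ if $\tau_x$ is the first time $W'$ hits a multiple of $2\pi$ beyond $x$, then $W'(\tau_x) \equiv 0 \pmod{2\pi}$ although $\tau_x \to \infty$). You then call $\tfrac{2}{3}t^{3/2} + D(t)$ a ``deterministic shift'', but $D(t)$ is the integral of the oscillatory drift and is itself random and correlated with $N$, so even granting uniformity of $N(t) \bmod 2\pi$ the translation step would not go through. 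The paper's Proposition~\ref{prop.uniformphase} avoids this by a Fourier-moment computation: setting $\varphi_m(t) = \mathbb{E}\exp\bigl(mi(\rphase(t)+\tfrac{2}{3}t^{3/2})\bigr)$, It\^o's formula yields $\varphi_m'(t) = -\tfrac{3m^2}{4\beta t}\varphi_m(t) + (\text{oscillatory terms})$; multiplying by the integrating factor $t^{3m^2/4\beta}$ and integrating by parts once more against the fast phase shows the oscillatory contribution is $O(t^{-3/2})$, forcing $\varphi_m(t)\to 0$ for every $m\neq 0$.
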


\subsection*{Discussion and related work}

\paragraph{Related work on operator convergence.}

Operator level convergence is an important topic in the limit theory of \(\beta\)-ensembles, and there are many significant related works.  \cite{dumaz_operator_2021} shows convergence of the Bessel operator in the limit of large $a$ (the charge at $0$) to the Airy operator.  The Bessel operator is again a random Schrödinger operator, and the convergence is in the \emph{norm-resolvent} sense (see \cite[Theorem 1.VIII.20]{reed_methods_1980} for this and other notions of resolvent convergence).  We note that it is possible to define the resolvent $(\mathcal{S}_H - z)^{-1}$ of a canonical system $H$ with appropriate boundary conditions (see Theorem \ref{thm.CS.resolvent} in appendix~\ref{apx.resolvents}).  Moreover, as a corollary of Theorems~\ref{thm.CSconvinlaw} and \ref{thm.WTconvinlaw}, for $z \in \mathbb{C} \setminus \mathbb{R}$ and any compactly supported continuous function $\varphi : \mathcal{I} \to \mathbb{C}^2$,
\[\setshift{E_n}
    (\mathcal{S}_{\sAirymat*} - z)^{-1}\varphi  
    \lawto[n\to\infty]
    (\mathcal{S}_{\sinemat*} - z)^{-1}\varphi
\]
compactly in $z$ (this essentially follows from the integral representation for a canonical system's resolvent---see Proposition~\ref{prop.CS.resolventconvergence}).  For a Sturm--Liouville operator, the canonical system resolvent $(\mathcal{S}_{\sAirymat*} - z)^{-1}$ is conjugate to the more usual Sturm--Liouville resolvent (see \eqref{eq.resolventconjugation}).  Hence, this convergence is more closely related to the \emph{strong-resolvent} convergence, albeit the operators are not defined on the same Hilbert space.  We note that in related problems in random Schrödinger theory, norm-resolvent convergence can be too strong (see \cite[Theorem 1.6]{dumaz_delocalized_2023}). Although we do not study resolvent convergence in more detail in this paper, for comparison with other works we give in Appendix~\ref{apx.resolvents} some basic results about resolvents of canonical systems and their convergence.

The original work of \cite{ramirez_beta_2011} also constitutes a type of operator convergence, in that it establishes convergence of the Dumitriu--Edelman tridiagonal matrix \cite{dumitriu_matrix_2002}, embedded as a discrete difference operator, to the Airy operator in a sufficiently strong sense to get convergence of eigenvalues and eigenvectors.  In contrast, convergence in the bulk, for the Gaussian $\beta$-ensemble, requires making a Schrödinger to Dirac type convergence.  For the circular $\beta$-ensembles, this instead can be done using Dirac to Dirac type convergence \cite{valko_sine_beta_2017}.

\paragraph{Related work on \(\Airy\).}

In \cite{lambert_strong_2021}, the authors give an analytic construction of the \emph{stochastic Airy function} \(\SAi_\lambda(t)\), which is a random entire function of $\lambda \in \mathbb{C}$ satisfying \(\Airyop \SAi_\lambda(t) = - \lambda \SAi_\lambda(t)\) in $L^2(0,\infty)$ as a function of $t$ with explicit almost sure $t \to \infty$ asymptotics. The Weyl--Titchmarsh function of Theorem \ref{thm.WTconvinlaw} admits a representation in terms of this function, namely 
\beq{eq.WTSAi}
    \sAiryWT(z) = \frac{\SAi'_{\lambda_{\shift}(z)}(0)}{\sqrt{\shift} \SAi_{\lambda_{\shift}(z)}(0)}
    \qquadtext{with}
    \lambda_{\shift}(z) = - E - \frac{z}{2\sqrt{\shift}},
\eeq
where the prime denotes a derivative with respect to time. Similarly, the Weyl--Titchmarsh function of the stochastic sine system can be written in terms of a random entire function: the stochastic zeta function introduced in \cite{valko_many_2022}. Hence, Theorem~\ref{thm.WTconvinlaw} gives the following.

\begin{corollary}[thm.WTconvinlaw]
\label{cor.SAizeta}
\setshift{E_n}
Let \(\zeta_\beta\) denote the stochastic zeta function of~\cite{valko_many_2022}. There is another random analytic function \(\xi\colon \UHP \to \mathbb{C}\) such that for any diverging sequence \(\{\shift\}_{n\in\mathbb{N}} \subset (0,\infty)\) and any \(z \in \UHP\),
\[
\frac{\SAi'_{\lambda_{\shift}(z)}(0)}{\sqrt{\shift} \SAi_{\lambda_{\shift}(z)}(0)}
\lawto[n\to\infty]
\sqrt{q^2 + 1} \frac{\xi(z)}{\zeta_\beta(-z)}
\]
where \(q\) is a standard Cauchy random variable independent of \(\xi\) and \(\zeta_\beta\). Moreover, \(\xi\) can be written in terms of \(q\) and of an independent solution to the eigenvalue equation for a differential operator \(\tau_\beta\) that is orthogonally equivalent to the stochastic sine operator.
\end{corollary}

\noindent
From Theorem~\ref{thm.WTconvinlaw}, the proof of this result only amounts to unraveling definitions, so we skip it in the main text. For the sake of completeness, we still provide it in Appendix~\ref{apx.proofSAizeta}.

In \cite[Theorem 11]{ashbury-bridgwood_random_2022}, an independent construction of the stochastic Airy function is given via canonical systems. They define the regularized Fredholm determinant for $z\in \UHP$,
\[
\mathfrak{p}(z) \defeq {\det}_2\bigl(\operatorname{Id} - z\Airyop^{-1} \bigr) 
= \lim_{t\to\infty} e^{z \mathcal{T}(t)} \bigl(T_{H_\beta}(t,z)\bigr)_{11}
\]
where $T_{H_\beta}$ is the transfer matrix and $\mathcal{T}(t)$ is the trace of $\Airyop^{-1}$ restricted to $[0,t]$ (c.f.\ \eqref{eq.SLresolvent}, which gives an integral representation for $\Airyop^{-1}$).  The function $\mathfrak{p}(z)$ can also be related explicitly to $\SAi_z(0)$.

Finally, we comment that there is an independent approach of \cite{gorin_stochastic_2018} and \cite{gaudreau_lamarre_edge_2019} which constructs the semigroup of the stochastic Airy operator $\exp(T \Airyop)$.  We do not know if there is an explicit relationship between this semigroup and the canonical systems theory.

\paragraph{Remarks on the point process convergence.}

Theorem~\ref{thm.WTconvinlaw} proves that the spectral measure of the shifted and scaled Airy system converges to that of the sine system in law, with respect to the vague topology of measures. These spectral measures are pure point with positive masses precisely at the points of \(-2\sqrt{\shift}(\Airy + \shift)\) and of \(-\sine\). Their vague convergence is not strong enough to imply the vague convergence of the point processes, as a priori some spectral masses could vanish and masses could merge in the limit of \(\shift \to \infty\). 

Nevertheless, as pointed out in the remark following Theorem~\ref{thm.WTconvinlaw}, in this case the convergence of spectral measures does imply the point process convergence, because the spectral weights have a very specific structure: they are independent of the eigenvalues and are iid Gamma random variables with shape parameter \(\nicefrac{\beta}{2}\) and mean \(2\), as proven in \cite[Proposition~3]{valko_palm_2023} for the sine system, and here in Corollary~\ref{cor.spectralweights} for the Airy system. However, in both cases, the distribution of the spectral weights is deduced by comparison with random matrix models whose spectra converge to those of the operators, so our proof of the point process convergence is almost intrinsic, but not quite. It would be valuable to have a direct proof that the spectral weights are independent of the eigenvalues from the definitions of the canonical systems.

We also note that the point process convergence of \(-2\sqrt{\shift}(\Airy + \shift)\) to \(-\sine\) is already known: it was proven by Valkó and Virág in \cite[Corollary~3]{valko_continuum_2009} from a comparison with the Gaussian \(\beta\)-ensemble. We expect that it could also be proven without appealing to a random matrix model, but still without really using the operator-theoretic framework, by relating the counting functions of the point processes to the operators' phase functions, in a similar way as what was done in ~\cite[Theorem~1.1]{holcomb_random_2018} to describe the transition from the hard edge to the bulk.

\subsection*{Outline of proof and organization of the paper}

\paragraph{Proof strategy.}

We now give a brief overview of the method that we use to prove our main results.

The canonical system for the shifted and scaled stochastic Airy operator \(\sAiryop = 2\sqrt{\shift} (\Airyop - \shift)\) is naturally defined on \((0,\infty)\), whereas the one for the stochastic sine operator is defined on \((0,1)\). Hence, the first step is to introduce a time change to have the Airy system acting on \((0,1)\) as well; this will have the form of a \(\mathscr{C}^1\) bijection \(\timechange\colon [0,1+\varepsilon_{\shift}) \to [0,\infty)\) where \(\varepsilon_{\shift} \downarrow 0\). The role of this bijection is actually twofold: in addition to adapting the time domain of the Airy system, it modifies the coefficient matrix~\eqref{eq.sAirymat} of the Airy system in an essential way to set it up for convergence to the sine system. Intuitively, it counteracts the decay of the solutions that appear in the coefficient matrix to convert their \enquote{Airy-like} oscillations into \enquote{sine-like} oscillations. 

To prove the vague convergence in Theorem~\ref{thm.CSconvinlaw}, we must then analyse the behavior of the solutions that appear in the coefficient matrix~\eqref{eq.sAirymat} of the Airy system. To do so, in Proposition~\ref{prop.polarcoords} we change variables and write the (time-changed) solutions using pairs of polar coordinates. This allows us to write the coefficient matrix in two terms---which is done in~\eqref{eq.sAirymatpolar}---and to effectively decouple a highly oscillatory part that will vanish in the vague limit from an \enquote{average} part, which is the one that ends up converging to the sine system's coefficient matrix. This also allows us to identify the process (written in terms of our polar coordinates) that converges to the hyperbolic Brownian motion from the sine system's coefficient matrix, as well as the process that converges to its driving complex Brownian motion.

Now, to prove the vague convergence of the canonical systems on \([0,1)\), it would suffice to analyse the solutions on compact time intervals in \([0,1)\). Although this approach could be simple and efficient, in order to prove Theorem~\ref{thm.CSconvinlaw} and then extend the vague convergence to a spectral convergence, we actually need a stronger result: the convergence must hold on the full interval \([0,1]\) when \(\beta > 2\). For this reason, we rather analyse the solutions on \([0, 1 - \shift^{\nicefrac{-1}{2}+\alpha}]\) for an \(\alpha \in (0, \nicefrac{1}{2})\), which corresponds when reversing the logarithmic time scaling of the sine system to \([0, (\frac{1}{2}-\alpha) \log\shift]\). Because these intervals grow unboundedly with \(\shift\), our strategy is to build (in Lemma~\ref{lem.probspace}) a coupling between the Brownian motions that drive the Airy and sine systems. This allows us to compare, pathwise, the hyperbolic Brownian motion of the sine system with the process that converges to it. This convergence is proven in Propositions~\ref{prop.GBM} and~\ref{prop.ReHBM}, with explicit rates that hold uniformly on intervals \([0, 1 - \shift^{\nicefrac{-1}{2}+\alpha}]\). From this, we deduce the desired vague convergence in Theorem~\ref{thm.Airytosine.vaguely}, which implies Theorem~\ref{thm.CSconvinlaw}.
The convergence of the canonical systems' transfer matrices is then essentially a consequence of results from general canonical systems theory. We prove this convergence in probability, under the coupling mentioned above, in Theorem~\ref{thm.Airytosine.TMconvergence}, which implies Corollary~\ref{cor.TMconvinlaw}. 

The last part is to prove the convergence of the Weyl--Titchmarsh functions announced in Theorem~\ref{thm.WTconvinlaw}. The proof is split in two cases: \(\beta \leq 2\) and \(\beta > 2\). When \(\beta \leq 2\), the sine system is limit point at \(1\), and for that reason the convergence of the Weyl--Titchmarsh functions is basically a consequence of the convergence of the transfer matrices by general theory of canonical systems (we will see this in Theorem~\ref{thm.TMtoWT.LP}). There is more work left to do when \(\beta > 2\), as the sine system is limit circle at \(1\). In that case, what is missing is essentially the convergence of the boundary conditions at the right endpoint, with the integrability condition playing the role of the boundary condition for the Airy system. As we will see in detail in Section~\ref{sec.WTconv} (which begins with a detailed sketch), the proof in that case mostly boils down to understanding the asymptotic behavior of solutions to \(\Airyop f = 0\) towards \(-\infty\). Hence, we first derive these asymptotics and prove Theorem~\ref{thm.asymptoticsSAi}, and from this we prove the convergence in probability (still under our coupling) of the Weyl--Titchmarsh functions in Theorem~\ref{thm.Airytosine.WTconvergence}, which implies Theorem~\ref{thm.WTconvinlaw}.

\paragraph{Organization of the paper.}

The rest of the paper is organized as follows. In Section~\ref{sec.CS}, we provide a short introduction to the classical theory of canonical systems, and we give more details on topologies on canonical systems and associated objects. In Section~\ref{sec.AirysineCS}, we review some properties of the stochastic Airy operator, and we build its canonical system version. We also construct the time change \(\timechange\) and derive the polar coordinates mentioned above. At the end of the section, having these tools in hand, we also describe more precisely the idea behind the vague convergence of the canonical systems. We start the proof in Section~\ref{sec.coupling}, in which we build the announced coupling between the Airy and sine systems. We continue in Section~\ref{sec.asymptotics}, where we derive the asymptotic behavior of processes that appear in the entries of the shifted and scaled Airy system's coefficient matrix. From these results, we conclude the proof of Theorem~\ref{thm.CSconvinlaw} in Section~\ref{sec.CSconv}. Finally, we extend this result to the convergence of Weyl--Titchmarsh functions in Section~\ref{sec.WTconv}, in which we prove Theorems~\ref{thm.WTconvinlaw} and~\ref{thm.asymptoticsSAi}. 

Some of our proofs use martingale concentration inequalities that we could not find in the literature, and their statements and proofs are included in Appendix~\ref{apx.concentration}. There are three more appendices: Appendix~\ref{apx.resolvents}, which contains some basic theory about canonical system's resolvents, Appendix~\ref{apx.proofSAizeta}, which contains the proof of Corollary~\ref{cor.SAizeta}, and Appendix~\ref{apx.spectralweights}, in which we study the distribution of the spectral weights of the shifted Airy operator's spectral measure.

\paragraph{Notation and conventions.}

We denote by \(\UHP \defeq \{z \in \mathbb{C} : \Im z > 0\}\) the upper half-plane, and by \(\clUHP\) its closure in the Riemann sphere, which we denote by \(\RiemannSphere\). We write \(\Hol(\UHP, \clUHP)\) for the set of generalized Herglotz functions, that is, holomorphic functions \(\UHP \to \clUHP\) (which are just holomorphic functions \(\UHP \to \UHP\) along with extended real constants). We also denote by \(\projection\colon \mathbb{C}^2 \setminus \{0\} \to \RiemannSphere\) the map that sends \(\begin{smallpmatrix} z \\ w \end{smallpmatrix}\) to \(\nicefrac{z}{w}\). 

We usually write points of the unit circle \(\mathbb{S}^1\) as \(e_\theta \defeq (\cos\theta, \sin\theta)\) for an angle \(\theta \in [0,2\pi)\). In particular, \(e_0 \defeq (1,0)\).

If \(f, g \colon \mathbb{R} \to \mathbb{R}\), we use the notation \(f \lesssim g\) to mean that \(f(t) = O\bigl( g(t) \bigr)\) as \(t\to\infty\). 

Given metric spaces \(X\) and \(Y\), we denote by \(\mathscr{C}(X, Y)\) the space of continuous functions \(X \to Y\), and by \(\mathscr{C}_c(X, Y)\) the space of those functions that are also compactly supported.

We denote by \(\AC\loc(a,b)\) the set of locally absolutely continuous functions on the interval \((a,b)\), that is, functions \(f\colon (a,b) \to \mathbb{C}\) such that for some \(t_0 \in (a,b)\), \(f(t) = f(t_0) + \int_{t_0}^t g(t) \diff{t}\) for a \(g \in L^1\loc(a,b)\). If we write \(u \in \AC\loc(a,b)\) and \(u\) is a vector function, we mean that \(u \in \AC\loc(a,b)\) entrywise.

Finally, we point out that we use the convention that a standard complex Brownian motion \(W\) has independent standard real Brownian motions as its real and imaginary parts, so that in particular \(\expect\abs{W(t)}^2 = 2t\) for all \(t > 0\).

\paragraph{Acknowledgements.}

VP is supported by an NSERC CGS-D scholarship as well as an FRQ doctoral scholarship (doi:~\href{https://doi.org/10.69777/319962}{\texttt{10.69777/319962}}).  EP is supported by an NSERC Discovery grant.  Part of this work was conducted while the authors were in residence at Institut Mittag-Leffler in Djursholm, Sweden during Fall 2024, and so we acknowledge support from the Swedish Research Council under grant no.\ 2021-06594. The authors would also like to thank Laure Dumaz, Cyril Labbé, and Gaultier Lambert for helpful conversations, as well as two anonymous referees for helpful comments on an earlier version of this paper.

\section{Survey of canonical systems theory}
\label{sec.CS}

In this section, we introduce the tools from canonical systems theory that we will need. We start in subsection~\ref{sec.CS.classical} with the basic definitions, and we continue in subsection~\ref{sec.CS.topologies} with the description of the convergence of sequences of canonical systems. We briefly describe how these results can be applied to random canonical systems in subsection~\ref{sec.CS.random}. Finally, we show in subsection~\ref{sec.CS.SL} how to build a canonical system that is equivalent to a (generalized) Sturm--Liouville operator. 

Most of the classical theory of canonical systems is due to de Branges~\cite{de_branges_hilbert_1968, de_branges_hilbert_1960, de_branges_hilbert_1961, de_branges_hilbert_1961-1, de_branges_hilbert_1962}. The presentation we give here is essentially based on Remling's book~\cite{remling_spectral_2018}, except for the material of subsection~\ref{sec.CS.random}, which we could not find in the literature.

\subsection{Canonical systems and their spectral theory}
\label{sec.CS.classical}

\subsubsection{Canonical systems}

Recall from the introduction the definition of a canonical system. 

\setcounter{definition}{0}
\begin{definition}
A \emph{canonical system} on an interval \((a,b) \subset \mathbb{R}\) is a differential equation of the form
\[
Ju' = -zHu \qquadtext{with} J \defeq \begin{pmatrix} 0 & -1 \\ 1 & 0 \end{pmatrix},
\]
where \(H\colon (a,b) \to \mathbb{C}^{2\times 2}\) is called the \emph{coefficient matrix}, \(z \in \mathbb{C}\) and \(u\colon (a,b) \to \mathbb{C}^2\).
\end{definition}

Throughout, we always suppose that coefficient matrices satisfy the following.

\begin{hypothesis}
\label{hyp.CS}
The coefficient matrix \(H\) of a canonical system has the following properties.
\begin{enumerate}
    \item \(H \colon (a,b) \to \mathbb{R}^{2\times 2}\),
    \item \(H \in L^1\loc(a,b)\) entrywise,
    \item \(H(t) \succeq 0\) for a.e.\@ \(t \in (a,b)\),
    \item \(H(t) \neq 0\) for a.e.\@ \(t \in (a,b)\).
\end{enumerate}
\end{hypothesis}

As coefficient matrices do not have to be very regular, solutions to canonical systems are understood in the following weak sense: \(u\colon (a,b) \to \mathbb{C}^2\) is said to solve the system on \((a,b)\) provided \(u \in \AC\loc(a,b)\) and the equality \(Ju' = -zHu\) holds a.e.\@ on \((a,b)\). Moreover, by the theory of ordinary differential equations, the initial value problem for a canonical system, that is, the problem
\[
Ju' = -zHu
\qquadtext{with}
u(t_0) = v
\]
for some \(t_0 \in (a,b)\) and \(v \in \mathbb{C}^2\), always has a unique solution (see e.g.~\cite[Theorem~1.1]{remling_spectral_2018}). 

An important property of canonical systems is that they can easily be modified to change their domain. Indeed, the following result can be verified by a direct calculation (see also~\cite[Theorem~1.5]{remling_spectral_2018}).

\begin{proposition}
\label{prop.timechange}
Let \((a,b)\) and \((A,B)\) be two real intervals, and let \(\eta\colon (A,B) \to (a,b)\) be an absolutely continuous bijection. Then a canonical system \(Ju' = -zHu\) on \((a,b)\) is equivalent to the canonical system \(Jv' = -z\tilde{H}v\) on \((A,B)\) for \(\tilde{H} \defeq \eta' (H\circ\eta)\), in the sense that \(u\colon (a,b) \to \mathbb{C}^2\) solves \(Ju' = -zHu\) if and only if \(v\defeq u\circ\eta\) solves \(Jv' = -z\tilde{H}v\).
\end{proposition}

\subsubsection{Self-adjoint realizations of the maximal relation}

We now turn to the spectral theory of canonical systems. In a similar fashion as for a differential operator, the analysis of a canonical system takes place on an approriate Hilbert space. Given a measurable \(u\colon (a,b) \to \mathbb{C}^2\), define
\[
\norm{u}_H^2 \defeq \int_a^b u^*(t) H(t) u(t) \diff{t}.
\]
This seminorm induces the separable infinite-dimensional Hilbert space
\[
L_H^2(a,b) \defeq \mathcal{L} \bigm/ \{u\in\mathcal{L} : \norm{u}_H = 0\}
\qquadtext{where}
\mathcal{L} \defeq \bigl\{ u\colon (a,b) \to \mathbb{C}^2 : u \text{ is measurable}, \norm{u}_H < \infty \bigr\}.
\]
This is entirely analogous as the usual construction of an \(L^2\) space, except for the fact that when \(H\) is not invertible, it can be possible for an element of \(L_H^2(a,b)\) to have representatives which are distinct continuous functions. 

Now, a canonical system should be understood as the eigenvalue equation of an operator in a generalized sense. If the coefficient matrix \(H\) is invertible everywhere on \((a,b)\), then the canonical system is exactly the eigenvalue equation of \(u \mapsto -H^{-1}Ju'\), which is a differential operator on \(L_H^2(a,b)\). When \(H\) is not invertible, there is no operator, but there is still a \emph{maximal relation}
\[
\mathcal{T}_H \defeq \bigl\{ (u,v) \in L_H^2(a,b) \oplus L_H^2(a,b) : u \text{ has a representative } u_0 \in \AC\loc(a,b) \text{ with } Ju_0' = -Hv \text{ a.e.} \bigr\},
\]
where by a \emph{relation} on a Hilbert space \(\mathscr{H}\) we simply mean a linear subspace of \(\mathscr{H} \oplus \mathscr{H}\). When \(H\) is invertible, this maximal relation is simply the graph of the corresponding operator. Otherwise, \(\mathcal{T}_H\) can be thought of as the graph of a multi-valued operator. In particular, as we will see in Section~\ref{sec.CS.SL}, this is the case for the canonical system associated with a Sturm--Liouville operator (for example the stochastic Airy operator), as its coefficient matrix has the form~\eqref{eq.CS.SL.coefficientmatrix} and is never invertible.

It is straightforward to extend the definition of the adjoint of an operator to relations: if \(\mathcal{T}\) is a relation on \(\mathscr{H}\), then its adjoint is
\[
\mathcal{T}^* \defeq \bigl\{ (u,v) \in \mathscr{H}\oplus\mathscr{H} : \forall (w,x) \in \mathcal{T}, \inprod{u}{x} = \inprod{v}{w} \bigr\}.
\]
This is defined so that if \(\mathcal{T}\) is the graph of an operator \(T\) with adjoint \(T^*\), then \(\mathcal{T}^*\) is the graph of \(T^*\). A relation \(\mathcal{T}\) is then said to be \emph{self-adjoint} if \(\mathcal{T} = \mathcal{T}^*\). It turns out that self-adjoint relations are very similar to self-adjoint operators in the following sense.

\begin{theorem}[Theorem~2.11 of~\cite{remling_spectral_2018}]
Let \(\mathcal{T}\) be a self-adjoint relation on a Hilbert space \(\mathscr{H}\). Let
\[
\mathscr{H}_1 \defeq \overline{D(\mathcal{T})}
\quadtext{where}
D(\mathcal{T}) \defeq \bigl\{ u \in \mathscr{H} : \exists v \in \mathscr{H}, (u,v) \in \mathcal{T} \bigr\}
\qquadtext{and}
\mathscr{H}_2 \defeq \bigl\{ v \in \mathscr{H} : (0, v) \in \mathcal{T} \bigr\}.
\]
Then \(\mathscr{H} = \mathscr{H}_1 \oplus \mathscr{H}_2\) and \(\mathcal{T} = \mathcal{T}_1 \oplus \mathcal{T}_2\) where \(\mathcal{T}_j \defeq \mathcal{T} \cap (\mathscr{H}_j \oplus \mathscr{H}_j)\). Moreover, \(\mathcal{T}_1\) is exactly the graph of a self-adjoint operator on \(\mathscr{H}_1\), and \(\mathcal{T}_2 = \{(0,v) \in \mathscr{H} \oplus \mathscr{H} : v \in \mathscr{H}_2\}\).
\end{theorem}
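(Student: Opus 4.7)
The plan is to exploit the adjoint property $\mathcal{T} = \mathcal{T}^*$ to establish the orthogonal decomposition of $\mathscr{H}$, then decompose $\mathcal{T}$ by projection, and finally check the single-valuedness and self-adjointness of the operator part. Throughout, the key symmetry to use is that for $(u,w), (u',w') \in \mathcal{T}$, the self-adjointness says $\inprod{u}{w'} = \inprod{w}{u'}$.

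First I would prove $\mathscr{H}_1 \perp \mathscr{H}_2$. If $v \in \mathscr{H}_2$, then $(0,v) \in \mathcal{T}$; and for any $u \in D(\mathcal{T})$ there exists $w$ with $(u,w) \in \mathcal{T}$. Applying $\mathcal{T} \subseteq \mathcal{T}^*$ to the pairs $(0,v)$ and $(u,w)$ gives $\inprod{0}{w} = \inprod{v}{u}$, i.e.\ $v \perp u$. By continuity of the inner product this extends to all of $\overline{D(\mathcal{T})} = \mathscr{H}_1$. Next I would show $\mathscr{H}_1^\perp \subseteq \mathscr{H}_2$: if $v \perp D(\mathcal{T})$, then for every $(u,w) \in \mathcal{T}$ we have $\inprod{v}{u} = 0 = \inprod{0}{w}$, so $(0,v) \in \mathcal{T}^* = \mathcal{T}$ and hence $v \in \mathscr{H}_2$. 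Combined with $\mathscr{H}_1 \perp \mathscr{H}_2$ this yields $\mathscr{H} = \mathscr{H}_1 \oplus \mathscr{H}_2$.

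Next I would split any $(u,v) \in \mathcal{T}$: since $D(\mathcal{T}) \subseteq \mathscr{H}_1$, necessarily $u \in \mathscr{H}_1$. Writing $v = v_1 + v_2$ with $v_i \in \mathscr{H}_i$, the fact that $v_2 \in \mathscr{H}_2$ means $(0, v_2) \in \mathcal{T}$, so linearity of the relation gives $(u, v_1) = (u,v) - (0, v_2) \in \mathcal{T}$, and this pair lives in $\mathscr{H}_1 \oplus \mathscr{H}_1$. This proves $\mathcal{T} = \mathcal{T}_1 \oplus \mathcal{T}_2$. For the explicit form of $\mathcal{T}_2$, any $(u,v) \in \mathcal{T}_2$ has $u \in \mathscr{H}_2 \cap D(\mathcal{T}) \subseteq \mathscr{H}_2 \cap \mathscr{H}_1 = \{0\}$, so $u = 0$; conversely, every $(0,v)$ with $v \in \mathscr{H}_2$ lies in $\mathcal{T}$ by definition of $\mathscr{H}_2$.

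Finally I would show $\mathcal{T}_1$ is the graph of a self-adjoint operator on $\mathscr{H}_1$. It is a graph: if $(u, v), (u, v') \in \mathcal{T}_1$, then $(0, v - v') \in \mathcal{T}$ forces $v - v' \in \mathscr{H}_2$, while $v, v' \in \mathscr{H}_1$, so $v = v'$. Call this operator $T_1$; its domain $D(T_1) = D(\mathcal{T})$ is dense in $\mathscr{H}_1$ by definition. Symmetry is immediate from $\mathcal{T} \subseteq \mathcal{T}^*$. For self-adjointness, suppose $(u,v) \in \mathscr{H}_1 \oplus \mathscr{H}_1$ satisfies $\inprod{u}{T_1 u'} = \inprod{v}{u'}$ for every $u' \in D(T_1)$. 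For an arbitrary $(u', w') \in \mathcal{T}$, decompose $w' = w_1' + w_2'$ as above so that $(u', w_1') \in \mathcal{T}_1$ and $w_2' \in \mathscr{H}_2$; then $\inprod{u}{w'} = \inprod{u}{w_1'} + \inprod{u}{w_2'} = \inprod{v}{u'} + 0$ using $u \in \mathscr{H}_1 \perp \mathscr{H}_2$. Hence $(u,v) \in \mathcal{T}^* = \mathcal{T}$, and since both coordinates lie in $\mathscr{H}_1$, $(u,v) \in \mathcal{T}_1$, proving $T_1 = T_1^*$.

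The only step requiring real care is the self-adjointness of $T_1$: one has to verify that testing against elements of $\mathcal{T}_1$ alone is enough to conclude membership in $\mathcal{T}^*$, and this uses the orthogonality $\mathscr{H}_1 \perp \mathscr{H}_2$ precisely to kill the $\mathscr{H}_2$ component of the second coordinate. Everything else is a clean projection argument driven by $\mathcal{T} = \mathcal{T}^*$.
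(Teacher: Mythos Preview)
Your proof is correct and follows the standard route for this result. Note, however, that the paper does not actually prove this theorem: it is quoted verbatim as Theorem~2.11 of Remling's book~\cite{remling_spectral_2018} and used as background, so there is no in-paper argument to compare against.
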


Thus, a self-adjoint relation is essentially a self-adjoint operator (but on a possibly smaller space), plus a multi-valued part. Given this result, we can define the spectrum of a self-adjoint relation as the spectrum of its operator part. 

Much like what can be done with second-order differential operators, it is possible to restrict a maximal relation to make it self-adjoint through boundary conditions, depending on the behavior of the system near the endpoints. The endpoints \(a\) and \(b\) of a canonical system's domain can be of either of two types (the terminology here stems from Weyl theory).

\begin{definition}
An endpoint is said to be \emph{limit circle} if \(H\) is integrable near that endpoint. Otherwise, it is said to be \emph{limit point}.
\end{definition}

The self-adjoint restrictions of the maximal relation can then be described as follows.

\begin{theorem}[Theorem~2.25 of~\cite{remling_spectral_2018}]
\label{thm.CS.selfadjointrealizations}
Let \(\mathcal{T}_H\) be the maximal relation of a canonical system on \((a,b)\).
\begin{enumerate}
    \item If both endpoints are limit point, then \(\mathcal{T}_H\) is self-adjoint.
    \item If \(a\) is limit circle and \(b\) is limit point, then for each \(\theta \in [0,\pi)\), with \(e_\theta \defeq (\cos\theta, \sin\theta)\), the restriction of \(\mathcal{T}_H\) by the boundary condition \(e_\theta^* Ju(a) = 0\) is self-adjoint. 
    \item If \(a\) and \(b\) are both limit circle, then for \(\theta, \phi \in [0,\pi)\), the restriction of \(\mathcal{T}_H\) by the boundary conditions \(e_\theta^* Ju(a) = e_\phi^*Ju(b) = 0\) is self-adjoint.
\end{enumerate}
\end{theorem}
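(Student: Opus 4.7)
The plan is to follow the classical von Neumann scheme for self-adjoint extensions of symmetric relations, adapted to canonical systems. The argument rests on two pillars: a Lagrange/Green identity that converts the self-adjointness question into one about boundary contributions, and a Weyl-type analysis at each endpoint that classifies those contributions as limit point or limit circle.

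\textbf{Step 1 (Lagrange identity).} For $(u,v), (w,x) \in \mathcal{T}_H$ with representatives $u_0, w_0 \in \AC\loc(a,b)$, a direct computation using $Ju_0' = -Hv$, $Jw_0' = -Hx$, and $J^*J = I$ yields
\[
\frac{d}{dt}\bigl(u_0^*(t) J w_0(t)\bigr) = v^*(t) H(t) w_0(t) - u_0^*(t) H(t) x(t).
\]
Integrating gives
\[
\inprod{v}{w}_H - \inprod{u}{x}_H = \lim_{t \uparrow b} u_0^*(t) J w_0(t) - \lim_{t \downarrow a} u_0^*(t) J w_0(t)
\]
whenever the boundary limits exist. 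Thus $(u,v) \in \mathcal{T}_H^*$ precisely when these boundary contributions vanish against every $(w,x) \in \mathcal{T}_H$. \textbf{Step 2 (Boundary behavior).} At a limit circle endpoint, the integrability of $H$ and absolute continuity of representatives force one-sided limits $u_0(a)$ or $u_0(b)$ to exist, and the boundary form becomes the nondegenerate symplectic pairing $u_0^* J w_0$ at that endpoint. At a limit point endpoint, one invokes the Weyl alternative: for fixed $z \in \mathbb{C} \setminus \mathbb{R}$, the solutions to $Ju' = -zHu$ that lie in $L_H^2$ near that endpoint span at most a one-dimensional subspace. A variation-of-parameters decomposition of an arbitrary maximal-domain element into such a solution plus a remainder with vanishing $J$-pairing then shows that $\lim u_0^* J w_0$ is identically zero at the limit point endpoint for all pairs in $\mathcal{T}_H$.

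\textbf{Step 3 (Minimal relation).} Define $\mathcal{T}_{\min}$ as the subset of $\mathcal{T}_H$ whose elements admit compactly supported representatives, equivalently the pairs for which all boundary contributions vanish against every element of $\mathcal{T}_H$. Steps 1 and 2 give $\mathcal{T}_{\min} \subset \mathcal{T}_H^*$; the reverse inclusion follows from surjectivity of the boundary-value map at limit circle endpoints, obtained again by variation of parameters. Thus $\mathcal{T}_H = \mathcal{T}_{\min}^*$, so $\mathcal{T}_{\min}$ is symmetric with adjoint $\mathcal{T}_H$. \textbf{Step 4 (Classification).} By von Neumann's theorem for symmetric relations, self-adjoint extensions of $\mathcal{T}_{\min}$ correspond bijectively to maximal isotropic subspaces of the quotient $\mathcal{T}_H / \mathcal{T}_{\min}$ under the symplectic form of Step 1. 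By Step 2, this quotient is $0$-, $2$-, or $4$-dimensional in cases (i), (ii), (iii) respectively. In case (i) nothing is imposed and $\mathcal{T}_H$ is itself self-adjoint. In case (ii), the form is the two-dimensional symplectic pairing $u^*(a) J w(a)$ on boundary values at $a$, whose maximal isotropic subspaces are the lines $\mathbb{C} e_\theta$ for $\theta \in [0,\pi)$; specifying one such line is equivalent to imposing $e_\theta^* J u(a) = 0$. Case (iii) is the same analysis performed independently at each endpoint.

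The main obstacle is Step 2 at the limit point endpoint. The one-dimensionality of $L_H^2$-solutions there does not immediately yield vanishing of the boundary form on the entire maximal domain, because maximal-domain elements are not themselves homogeneous solutions; one must control their endpoint asymptotics via a careful Weyl disc / limit construction together with a variation-of-parameters decomposition into a $z_0$-solution part plus an $L_H^2$ remainder whose $J$-product with any other maximal-domain element tends to zero. Everything else in the argument is either a direct calculation (Step 1), a standard extension-by-cutoff (Step 3), or a finite-dimensional symplectic-algebra exercise (Step 4).
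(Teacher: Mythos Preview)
The paper does not give its own proof of this statement: it is quoted verbatim as Theorem~2.25 of Remling's book \cite{remling_spectral_2018} and used as a black box in the survey section. So there is no in-paper proof to compare against.

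That said, your sketch is essentially the standard route taken in Remling's treatment (Lagrange identity, minimal relation, symplectic boundary form, von Neumann classification), and the overall architecture is sound. Two points are worth tightening. First, in Step~3 your description of $\mathcal{T}_{\min}$ as ``pairs admitting compactly supported representatives'' is not quite right: one takes the \emph{closure} of that set, and the identity $\mathcal{T}_{\min}^* = \mathcal{T}_H$ requires this closure. Second, your dimension count in Step~4 (that $\mathcal{T}_H/\mathcal{T}_{\min}$ has dimension $0$, $2$, or $4$) is really the statement that the deficiency indices of $\mathcal{T}_{\min}$ are $(0,0)$, $(1,1)$, or $(2,2)$, which itself is a consequence of the Weyl alternative you invoke in Step~2---so make sure you are not assuming the conclusion there. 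Your own diagnosis that the limit-point vanishing in Step~2 is the delicate part is correct; Remling handles it via the Weyl disc construction rather than a bare variation-of-parameters argument, and that is where the real work lies.
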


\subsubsection{Transfer matrices and Weyl--Titchmarsh functions}
\label{sec.Weyl}

Transfer matrices and Weyl--Titchmarsh functions are useful tools to describe solutions of canonical systems and their spectra. We will only define these objects under an additional hypothesis, as their definitions become simpler in that case and this will suffice for our purposes.

\begin{hypothesis}
\label{hyp.LCata}
The system is limit circle at \(a\), and subject to the boundary condition \(e_0^*Ju(a) = 0\).
\end{hypothesis}

One advantage of this is that the solutions of a system that is limit circle at an endpoint can be continuously extended to that endpoint (see e.g.~\cite[Lemma~2.6]{remling_spectral_2018}). This allows the following definition.

\begin{definition}\label{def.T}
The \emph{transfer matrix} of a canonical system satisfying Hypothesis~\ref{hyp.LCata} is the function \(T_H \colon (a,b) \times \mathbb{C} \to \mathbb{C}^{2\times 2}\) such that for each \(z \in \mathbb{C}\), the function \(T = T_H(\cdot, z)\) is the (matrix) solution to \(JT' = -zHT\) with \(T(a) = I_2\), the identity matrix.
\end{definition}

The Weyl--Titchmarsh function is defined in two different ways, depending on the behavior of the system at the right endpoint. 

\begin{definition}
\label{def.WT}
Suppose that Hypothesis~\ref{hyp.LCata} holds and that \(b\) is limit circle. Fix \(\theta \in [0,\pi)\), and for each \(z \in \UHP\), let \(u_z \colon [a,b] \to \mathbb{C}^2\) denote a non-trivial solution of the canonical system satisfying the boundary conditions \(e_0^*Ju_z(a) = e_\theta^*Ju_z(b) = 0\). The \emph{Weyl--Titchmarsh function} for this problem is the map \(m^\theta\colon \UHP \to \clUHP\) defined by \(m^\theta(z) \defeq \projection u_z(a)\) where \(\projection \begin{smallpmatrix} z \\ w \end{smallpmatrix} \defeq \nicefrac{z}{w}\). 
\end{definition}

\begin{remark}
By definition of the transfer matrix, \(m^\theta(z) = \projection T_H(b,z)^{-1}e_\theta\).
\end{remark}

\begin{definition}
Suppose that Hypothesis~\ref{hyp.LCata} holds and that \(b\) is limit point. For each \(z \in \UHP\), let \(u_z\colon [a,b) \to \mathbb{C}^2\) be a non-trivial \(L_H^2(a,b)\) solution of the canonical system. The \emph{Weyl--Titchmarsh function} for this problem is the map \(m\colon \UHP \to \clUHP\) defined by \(m(z) \defeq \projection u_z(a)\).
\end{definition}

\begin{remark}
For each \(z\), the solution \(u_z\) is only defined up to a multiplicative constant, but the Weyl--Titchmarsh function is still well defined since this constant cancels out when taking the ratio of the coordinates.
\end{remark}

In both cases, it can be shown that the Weyl--Titchmarsh function is a \emph{generalized Herglotz function}, that is, a holomorphic function \(\UHP \to \clUHP\) (see~\cite[Theorems~3.10 and 3.15]{remling_spectral_2018}). Note that by the open mapping theorem, a generalized Herglotz function is either a holomorphic function \(\UHP \to \UHP\) (which makes it a genuine Herglotz function), or an extended real constant. Moreover, by definition, a Weyl--Titchmarsh function blows up when the solution it is defined from satisfies the boundary condition at the left endpoint, and therefore it has poles exactly at the system's eigenvalues. This link between the Weyl--Titchmarsh function and the spectrum is completed by the following classical theorem, which allows to extract a spectral measure from the Weyl--Titchmarsh function.

\begin{theorem}[Herglotz representation theorem]
\label{thm.Herglotzrepresentation}
Let \(m\) be a Herglotz function. Then there are \(a \in \mathbb{R}\), \(b \geq 0\) and a positive regular Borel measure \(\mu\) on \(\mathbb{R}\) with \(\int_\mathbb{R} \frac{1}{1+t^2} \diff{\mu}(t) < \infty\) such that
\[
m(z) = a + bz + \int_\mathbb{R} \Bigl( \frac{1}{t-z} - \frac{t}{1+t^2} \Bigr) \diff{\mu}(t),
\]
and \(a\), \(b\) and \(\mu\) are uniquely determined by \(m\). Conversely, any \(m\) defined as such is a Herglotz function.
\end{theorem}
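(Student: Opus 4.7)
The plan is to reduce the theorem to the classical Poisson representation of positive harmonic functions on \(\UHP\), applied to \(u(z) \defeq \Im m(z)\). If \(m\) is an extended real constant the statement is trivial with \(b = 0\) and \(\mu = 0\), so I may assume \(m\colon \UHP \to \UHP\) is a genuine Herglotz function, so that \(u\) is strictly positive and harmonic on \(\UHP\).

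The main step is to produce a unique \(b \geq 0\) and a unique positive regular Borel measure \(\mu\) on \(\mathbb{R}\) with \(\int_\mathbb{R} (1+t^2)^{-1} \diff{\mu}(t) < \infty\) satisfying
\[
u(x+iy) = by + \frac{1}{\pi} \int_\mathbb{R} \frac{y}{(x-t)^2 + y^2} \diff{\mu}(t).
\]
I would obtain this by transporting \(u\) to the unit disk via the Cayley transform \(C(z) \defeq (z-i)/(z+i)\): the composition \(u \circ C^{-1}\) is a positive harmonic function on \(\mathbb{D}\), so the classical Herglotz--Riesz theorem on the disk (proved by noting that \((u \circ C^{-1})(r\,\cdot)\) is the Poisson integral of its boundary trace on \(\partial \mathbb{D}\) and extracting a weak-\(*\) subsequential limit of these finite positive measures as \(r \uparrow 1\)) yields a unique finite positive regular Borel measure \(\tilde\mu\) on \(\partial \mathbb{D}\) representing \(u \circ C^{-1}\). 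Pulling back through \(C\), the possible atom of \(\tilde\mu\) at \(1 = C(\infty)\) contributes the linear term \(by\), and the restriction of \(\tilde\mu\) to \(\partial \mathbb{D} \setminus \{1\}\) pushes forward through \(t = -\cot(\theta/2)\) to a measure \(\mu\) on \(\mathbb{R}\); the Jacobian \(\diff{\theta} = 2\diff{t}/(1+t^2)\) of this change of variables is exactly what converts the Poisson kernel on \(\mathbb{D}\) into the one on \(\UHP\) and, combined with the finiteness of \(\tilde\mu\), yields the integrability condition on \(\mu\).

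With this representation in hand, the rest is bookkeeping. Using \(\Im(1/(t-z)) = y/((x-t)^2 + y^2)\) and the fact that \(t/(1+t^2)\) is real, the formula
\[
\tilde m(z) \defeq bz + \int_\mathbb{R} \Bigl( \frac{1}{t-z} - \frac{t}{1+t^2} \Bigr) \diff{\mu}(t)
\]
defines a holomorphic function on \(\UHP\)---the integrand is \(O((1+t^2)^{-1})\) uniformly on compact subsets, so dominated convergence justifies differentiation under the integral---with \(\Im \tilde m = u\), so \(m - \tilde m\) is a holomorphic function of vanishing imaginary part on a connected open set, hence a real constant which I name \(a\). Uniqueness follows by inspection: \(a = \Re m(i)\) (the integrand has vanishing real part at \(z = i\)), \(b = \lim_{y \to \infty} y^{-1} \Im m(iy)\) by dominated convergence, and \(\mu\) is recovered from \(u\) via the Stieltjes inversion formula \(\mu((s,t)) + \tfrac{1}{2}(\mu(\{s\}) + \mu(\{t\})) = \lim_{\varepsilon \downarrow 0} \pi^{-1} \int_s^t u(x+i\varepsilon) \diff{x}\), which follows from the Poisson kernel being an approximate identity as \(\varepsilon \downarrow 0\).

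The converse direction is a direct verification that the stated formula defines a holomorphic function whose imaginary part is \(by + \pi^{-1}\int y/((x-t)^2+y^2) \diff{\mu}(t) \geq 0\). The main conceptual obstacle is the Poisson representation on \(\UHP\) itself: the Herglotz--Riesz theorem on \(\mathbb{D}\) is classical but nontrivial, and the transfer to \(\UHP\) requires separating the atom of \(\tilde\mu\) at the Cayley image of \(\infty\) (to produce the linear \(by\) term) and carefully tracking the Jacobian in order to obtain the correct integrability condition \(\int (1+t^2)^{-1} \diff{\mu} < \infty\) on the boundary measure.
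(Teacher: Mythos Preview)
The paper does not actually prove this theorem: it states the Herglotz representation as a classical fact and refers the reader to \cite[Appendix~F]{schmudgen_unbounded_2012} for details. So there is no paper proof to compare against.

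Your argument is the standard one and is correct. Reducing to the Poisson representation of the positive harmonic function $\Im m$, transporting to the disk via the Cayley transform, invoking the Herglotz--Riesz theorem there, and then pulling back---with the atom at $1 \in \partial\mathbb{D}$ giving the linear term $bz$ and the Jacobian $\diff\theta = 2\diff t/(1+t^2)$ producing both the half-plane Poisson kernel and the integrability condition on $\mu$---is exactly how this is typically done. The reconstruction of $m$ from $\Im m$ up to a real additive constant, and the identification of $a$, $b$, $\mu$ via $\Re m(i)$, the limit $\lim_{y\to\infty} y^{-1}\Im m(iy)$, and Stieltjes inversion, are all correct. There is nothing missing.
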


In particular, the spectral measure \(\mu\) can be recovered from the Weyl--Titchmarsh function through Stieltjes inversion: its absolutely continuous part \(\mu_{\mathrm{ac}}\) satisfies \(\diff{\mu_{\mathrm{ac}}}(t) = \frac{1}{\pi} \Im m(t) \diff{t}\), and its singular part can be recovered from the fact that \(\mu\bigl( \{t\} \bigr) = -i \lim_{\varepsilon\downarrow 0} \varepsilon m(t + i\varepsilon)\) for all \(t\in\mathbb{R}\). We refer to~\cite[Appendix~F]{schmudgen_unbounded_2012} for more details on Herglotz functions.

\subsection{Convergence of sequences of canonical systems}
\label{sec.CS.topologies}

We call the \emph{vague topology} on the set of locally finite signed measures on an interval \(\mathcal{I}\) the topology generated by all maps \(\mu \mapsto \int_\mathcal{I} \varphi\diff{\mu}\) for \(\varphi \in \mathscr{C}_c(\mathcal{I},\mathbb{C})\). From this, we can define the following.

\begin{definition}
Let \(\CS\mathcal{I}\) denote the set of coefficient matrices of canonical systems defined on \(\mathcal{I}\), and for which the endpoints included in \(\mathcal{I}\) (if any) are limit circle. We call the \emph{vague topology} on \(\CS\mathcal{I}\) the topology obtained by identifying the entries of coefficient matrices with signed measures on \(\mathcal{I}\), and using the vague topology.
\end{definition}

This vague convergence on \(\CS\mathcal{I}\) can be described using the pseudometrics
\[
d_\varphi(H, H') \defeq \abs[\Big]{\int_\mathcal{I} \varphi^*(t) \bigl( H(t) - H'(t) \bigr) \varphi(t) \diff{t}}
\]
for \(\varphi \in \mathscr{C}_c(\mathcal{I}, \mathbb{C}^2)\). Indeed, since coefficient matrices are symmetric, it is easy to check that \(H_n \to H\) vaguely on \(\mathcal{I}\) if and only if \(d_\varphi(H_n,H) \to 0\) for all \(\varphi \in \mathscr{C}_c(\mathcal{I}, \mathbb{C}^2)\). Just like in more familiar cases of weak and vague topologies, the above pseudometrics can be combined to build a metric on \(\CS\mathcal{I}\), and it can be proven from the Stone--Weierstrass theorem that the resulting space is separable. 

\begin{theorem}
\label{thm.CSmetric}
Let \(\{\varphi_k\}_{k\in\mathbb{N}} \subset \mathscr{C}_c(\mathcal{I},\mathbb{C}^2)\) be a countable collection of functions that is dense with respect to the topology of compact convergence. Let \(d\colon \CS\mathcal{I}\times\CS\mathcal{I} \to [0,\infty)\) be defined by
\[
d(H,H') \defeq \sum_{k=1}^\infty \frac{1}{2^k} \frac{d_{\varphi_k}(H,H')}{1 + d_{\varphi_k}(H,H')}.
\]
Then \(d\) is a metric on \(\CS\mathcal{I}\) that induces the vague topology, and \((\CS\mathcal{I}, d)\) is separable.
\end{theorem}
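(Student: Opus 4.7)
The plan is to verify the three assertions in turn: that $d$ is a metric, that it induces the vague topology, and that $(\CS\mathcal{I}, d)$ is separable. For the metric axioms, non-negativity, symmetry and the bound $d \leq 1$ are immediate from the defining series; the triangle inequality follows from the elementary fact that $x \mapsto x/(1+x)$ is non-decreasing and subadditive on $[0, \infty)$, applied term by term. For separation, I would argue that $d(H, H') = 0$ forces $d_{\varphi_k}(H, H') = 0$ for every $k$, hence $\int \varphi_k^*(H-H')\varphi_k\diff{t} = 0$. Density of $\{\varphi_k\}$ in $\mathscr{C}_c$ (combined with a polarization identity using that $H-H'$ takes real symmetric values) promotes this to $\int \varphi^*(H-H')\psi\diff{t} = 0$ for all $\varphi, \psi \in \mathscr{C}_c$. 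Specializing to $\varphi = fe_i$ and $\psi = ge_j$ for scalar $f, g \in \mathscr{C}_c(\mathcal{I}, \mathbb{C})$ and standard basis vectors $e_i, e_j \in \mathbb{C}^2$ then forces each entry of $H - H'$ to vanish almost everywhere.

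For the identification of topologies, the direction vague $\Rightarrow$ $d$ is straightforward: if $H_n \to H$ vaguely, each $d_{\varphi_k}(H_n, H) \to 0$ (as it is a linear combination of integrations of entries of $H_n - H$ against scalar $\mathscr{C}_c$ functions), and dominated convergence with summable dominator $2^{-k}$ gives $d(H_n, H) \to 0$. The converse is the main obstacle. Approximating an arbitrary $\varphi \in \mathscr{C}_c$ by some $\varphi_k$ from the family produces remainders $\int(\varphi-\varphi_k)^*H_n(\varphi-\varphi_k)\diff{t}$ which must be uniformly bounded in $n$. The key intermediate fact is a local mass bound: $\sup_n \int_K \tr H_n \diff{t} < \infty$ for every compact $K \subset \mathcal{I}$. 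I would derive this by selecting elements $\varphi_k$ of the family close (in compact convergence) to $(\chi, 0)$ and $(0, \chi)$ for a bump $\chi \in \mathscr{C}_c$ equal to $1$ on $K$: the boundedness of $\int \varphi_k^*H_n\varphi_k\diff{t}$ in $n$ (a consequence of $d$-convergence), combined with positive semi-definiteness of $H_n$ (via $|(H_n)_{12}| \leq \nicefrac{1}{2}((H_n)_{11} + (H_n)_{22})$), yields a lower bound of the form $c\int_K \tr H_n\diff{t}$ up to absorbable error terms. With local mass controlled, for any $\varphi \in \mathscr{C}_c$ one chooses $\varphi_k$ uniformly close to $\varphi$ on a compact neighborhood of $\supp\varphi$, and Cauchy--Schwarz in the seminorms induced by $H_n$ and $H$ bounds $|d_\varphi(H_n, H) - d_{\varphi_k}(H_n, H)|$ by the approximation error times uniformly bounded quantities.

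Separability is then almost automatic. The map $\Phi \colon \CS\mathcal{I} \to \mathbb{R}^\mathbb{N}$ defined by $\Phi(H) \defeq \bigl(\int \varphi_k^* H \varphi_k\diff{t}\bigr)_{k \in \mathbb{N}}$ is injective by the separation property proven above, and it realizes $d$ exactly as the pullback of the standard product metric $\tilde d((x_k),(y_k)) \defeq \sum_k 2^{-k}|x_k-y_k|/(1+|x_k-y_k|)$ on $\mathbb{R}^\mathbb{N}$. Since $\mathbb{R}^\mathbb{N}$ with this metric is separable and every subspace of a separable metric space is separable, $\Phi(\CS\mathcal{I})$ is separable, and hence so is $(\CS\mathcal{I}, d)$ via the isometry $\Phi$. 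Alternatively, an explicit countable dense subset can be constructed using the Stone--Weierstrass theorem: exhaust $\mathcal{I}$ with nested compacts of rational endpoints, and on each approximate (vaguely) the restriction of any $H$ by step functions on rational-endpoint sub-intervals taking values in the countable dense subset of the cone of $2\times 2$ positive semi-definite real symmetric matrices with rational entries.
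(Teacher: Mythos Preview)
The paper does not supply a proof of this theorem: it is stated as a standard fact, with only a remark that separability ``can be proven from the Stone--Weierstrass theorem.'' Your outline is exactly the expected argument, and the local mass bound and the separability-via-embedding into $\mathbb{R}^{\mathbb{N}}$ are both clean and correct.

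There is, however, one genuine technical gap you should patch. In both the separation step and the implication $d$-convergence $\Rightarrow$ vague convergence, you approximate an arbitrary $\varphi\in\mathscr{C}_c$ by some $\varphi_k$ and control the remainder by Cauchy--Schwarz and the mass bound. The difficulty is that density in the \emph{compact convergence} topology gives no control on $\supp\varphi_k$: even if $\varphi_k\to\varphi$ uniformly on a compact neighbourhood $K'$ of $\supp\varphi$, the function $\varphi_k$ may carry substantial mass outside $K'$, and the cross term $\int_{\supp\varphi_k\setminus K'}\varphi_k^{*}(H_n-H)\varphi_k$ is then not small (your mass bound is only over $K'$, and $d_{\varphi_k}(H_n,H)\to 0$ concerns the full integral, not its restriction). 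Your local-mass-bound argument itself is immune to this issue, since it only uses positivity to drop the integral outside $K$; but the subsequent approximation is not. The fix is to use a dense family with support control: for each compact $K\subset\mathcal{I}$, the subfamily $\{\varphi_k:\supp\varphi_k\subset K\}$ should be uniformly dense in $\{\psi\in\mathscr{C}_c:\supp\psi\subset K\}$. This is precisely what any Stone--Weierstrass construction on a compact exhaustion yields (and is presumably what the paper intends), and with it your Cauchy--Schwarz estimate goes through verbatim.
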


The vague topology is strong enough to keep track of the solutions of canonical systems provided mild conditions on their coefficient matrices are met. In fact, it can be shown that the map from coefficient to transfer matrices is continuous on suitable domains.

To simplify the notation in what follows, we denote by \(\TM\mathcal{I}\) the space of transfer matrices of canonical systems that satisfy Hypothesis~\ref{hyp.LCata}. As a space of continuous functions between hemicompact, locally compact metric spaces, \(\TM\mathcal{I}\) is a separable metric spaces (see e.g.~\cite[Example~2.2]{conway_course_2007} and~\cite[Theorem~XII.5.2]{dugundji_topology_1966}). 

\begin{theorem}
\label{thm.CStoTM}
Let \(\mathcal{I} = [a,b)\) or \(\mathcal{I} = [a,b]\). Given \(f \in L^1\loc(\mathcal{I})\), let
\[
\CS_f\mathcal{I} \defeq \bigl\{ H \in \CS\mathcal{I} : \tr H < f \text{ on } (a,b) \bigr\}.
\]
Then the map \(\CS_f\mathcal{I} \to \TM\mathcal{I}\) sending a coefficient matrix to the corresponding transfer matrix is continuous.
\end{theorem}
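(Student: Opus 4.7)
My plan is to cast the transfer-matrix ODE as an integral equation and close it with Gronwall's inequality. Since \(J^{-1} = -J\), the identity \(JT_H' = -zHT_H\) with \(T_H(a) = I_2\) rewrites as
\[
T_H(t,z) = I_2 + z\int_a^t JH(s) T_H(s,z) \diff{s}.
\]
Positive semi-definiteness of \(H \in \CS_f\mathcal{I}\) forces \(\norm{H(s)}_{\mathrm{op}} \leq \tr H(s) < f(s)\), so Gronwall immediately yields the locally uniform bound \(\norm{T_H(t,z)} \leq \exp\bigl(\abs{z}\int_a^t f(s) \diff{s}\bigr)\), which depends only on \(f\) and is bounded on any compact subset of \(\mathcal{I}\times\mathbb{C}\).

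Given a convergent sequence \(H_n \to H\) in \(\CS_f\mathcal{I}\), set \(D_n \defeq T_{H_n} - T_H\) and
\[
R_n(t,z) \defeq z\int_a^t J[H_n(s) - H(s)] T_H(s,z) \diff{s}.
\]
Subtracting integral equations gives \(D_n(t,z) = z\int_a^t JH_n(s) D_n(s,z) \diff{s} + R_n(t,z)\), and a second application of Gronwall (using \(\norm{H_n} \leq f\)) produces
\[
\sup_{(t,z) \in K} \norm{D_n(t,z)} \leq C(f,K) \sup_{(t,z) \in K} \norm{R_n(t,z)}
\]
for any compact \(K \subset \mathcal{I}\times\mathbb{C}\). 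The problem therefore reduces to proving that \(R_n \to 0\) uniformly on such \(K\).

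For each fixed \((t,z)\), the entries of \(R_n(t,z)\) are bilinear expressions of the form \(\int_\mathcal{I} \varphi^*(s) [H_n(s) - H(s)] \psi(s) \diff{s}\), where \(\varphi\) and \(\psi\) are compactly supported continuous vector-valued functions obtained by smoothly truncating the columns of \(T_H(\cdot,z)\) to a slightly larger subinterval \([a, t+\varepsilon]\). Since \(H_n - H\) is real symmetric, such bilinear expressions vanish in the limit by polarizing the pseudometrics \(d_{\varphi_k}\) that generate the vague topology in Theorem~\ref{thm.CSmetric}, which gives pointwise convergence \(R_n(t,z) \to 0\).

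Finally, I would upgrade pointwise to uniform convergence on compacts via an Arzelà--Ascoli argument using equicontinuity of \(\{R_n\}\). Equicontinuity in \(t\) is immediate from \(\norm{H_n - H} \leq 2f\) together with the uniform local bound on \(T_H\); equicontinuity in \(z\) follows by differentiating the integral equation for \(T_H\) with respect to \(z\) and applying Gronwall once more to obtain a locally uniform bound on \(\partial_z T_H\), yielding Lipschitz-in-\(z\) estimates for \(R_n\) that are uniform in \(n\). The main technical point is extracting uniformity in both variables simultaneously, especially when \(b = \infty\); however, any compact \(K \subset \mathcal{I}\times\mathbb{C}\) satisfies \(K \subset [a,t_0] \times \{\abs{z} \leq M\}\) for some \(t_0 < b\) and \(M < \infty\), and \(f\in L^1\loc\) is integrable on \([a,t_0]\), so all bounds close up as required.
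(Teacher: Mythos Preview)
Your argument is correct. The paper does not supply its own proof of this statement; it is recorded as ``a straightforward extension of \cite[Theorem~5.7(a)]{remling_spectral_2018}'' and left at that. Your route---recasting the transfer-matrix ODE as a Volterra integral equation, using Gronwall with the uniform trace bound $\tr H < f$ to control both $T_H$ and the difference $D_n$, and then upgrading pointwise convergence of the remainder $R_n$ to uniform convergence on compacts via Arzel\`a--Ascoli---is precisely the standard mechanism behind such continuity results and is essentially how Remling's proof proceeds as well.

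One small expository slip: in your description of the bilinear form $\int \varphi^*(H_n-H)\psi$, only $\psi$ is a (truncated) column of $T_H(\cdot,z)$; the companion test function $\varphi$ is a truncated constant vector $\pm J e_i$, not another column of $T_H$. This does not affect the argument, since vague convergence of the entries of $H_n$ as measures already delivers convergence of each scalar integral $\int_a^t (H_n-H)_{k\ell}\, g$ for continuous compactly supported $g$, after the cutoff-at-$t$ approximation you describe (whose error is controlled uniformly in $n$ by $\int_t^{t+\varepsilon} 2f$).
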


\noindent
This result can be proven by adapting the proof of~\cite[Theorem~5.7(a)]{remling_spectral_2018}, which shows that the same map is continuous on the domain \(\{H \in \CS[0,b] : \tr H \equiv 1\}\) for any \(b < \infty\).

Finally, the convergence of transfer matrices can be related to the convergence of Weyl--Titchmarsh functions. These belong to the space \(\Hol(\UHP,\clUHP)\) of generalized Herglotz functions, which is also a separable metric space under the topology of compact convergence. Moreover, as \(\clUHP\) is compact, the space \(\Hol(\UHP,\clUHP)\) is itself compact~\cite[Theorem~5.6(b)]{remling_spectral_2018}. Note that the Herglotz representation is continuous (see e.g.~\cite[Theorem~7.3(a)]{remling_spectral_2018}), so the convergence of Weyl--Titchmarsh functions always implies the vague convergence of the underlying spectral measures.

As when we introduced Weyl--Titchmarsh functions, we only consider systems which satisfy Hypothesis~\ref{hyp.LCata}. Therefore, the convergence of Weyl--Titchmarsh functions can essentially be split into two cases, depending on the behavior of the limit system at the right endpoint, which leads to fundamentally different behavior. We thus partition \(\TM\mathcal{I}\) into its subsets \(\TMLP\mathcal{I}\) and \(\TMLC\mathcal{I}\) of transfer matrices of systems which are respectively limit point and limit circle at \(b\). 

The simplest case is the limit point case. The following result can be proven using the same strategy as in~\cite[Theorem~5.7(b)]{remling_spectral_2018}, which restricts to systems on \([0,\infty)\) whose coefficient matrices have trace \(1\).

\begin{theorem}
\label{thm.TMtoWT.LP}
Let \(\mathcal{I} = [a,b)\) or \([a,b]\). Let \(T_n \in \TM\mathcal{I}\) and \(T \in \TMLP\mathcal{I}\) be transfer matrices with corresponding Weyl--Titchmarsh functions \(m_n,m \in \Hol(\UHP,\clUHP)\). If \(T_n \to T\) compactly, then \(m_n \to m\) compactly. In particular, the map \(\TMLP[a,b) \to \Hol(\UHP,\clUHP)\) sending a transfer matrix to the corresponding Weyl--Titchmarsh function is continuous.
\end{theorem}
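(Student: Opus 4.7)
The plan is to prove this via the classical Weyl disk machinery, combined with a Dini-type argument to uniformize in \(z\).

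For each \(t \in (a, b)\) and \(z \in \UHP\), the transfer matrix determines a closed Weyl disk \(D_t(z) \subset \clUHP\), namely the image of \(\clUHP\) under the Möbius transformation \(w \mapsto \projection T(t, z)^{-1} w\), and similarly \(D_t^n(z)\) is defined from \(T_n\). The starting point is three classical facts from canonical systems theory: \textit{i)}~as \(t\) increases, the disks are nested, \(D_{t'}(z) \subseteq D_t(z)\) whenever \(t \leq t'\); \textit{ii)}~the Weyl--Titchmarsh function \(m(z)\) lies in \(D_t(z)\) for every \(t\), and likewise \(m_n(z) \in D_t^n(z)\) regardless of whether \(T_n\) is limit point or limit circle at \(b\); \textit{iii)}~since \(T \in \TMLP\mathcal{I}\), the disks contract to a point: \(\bigcap_{t \in (a, b)} D_t(z) = \{m(z)\}\). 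All three follow from the Lagrange identity \((u^* J u)' = -2i (\Im z)\, u^* H u\) applied to solutions.

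With these in hand, I would first uniformize the shrinkage of \(D_t(z)\) in \(z\). Fix a compact \(K \subset \UHP\). The spherical diameter \(z \mapsto \diam D_t(z)\) is continuous on \(K\) (since \(T(t, z)\) is analytic in \(z\) and the disk depends continuously on \(T(t, z)^{-1}\)) and is monotone decreasing in \(t\), with pointwise limit zero on \(K\) by \textit{iii)}. Dini's theorem then forces \(\sup_{z \in K} \diam D_t(z) \downarrow 0\) as \(t \to b\). Given \(\varepsilon > 0\), I fix \(t_\varepsilon \in (a, b)\) so that this supremum is below \(\varepsilon\).

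Next I would use the hypothesis \(T_n \to T\) compactly: on the compact set \(\{t_\varepsilon\} \times K\), \(T_n(t_\varepsilon, z) \to T(t_\varepsilon, z)\) uniformly in \(z\). Since the disk \(D_t^n(z)\) depends continuously on \(T_n(t, z)\) (as a Möbius image of \(\clUHP\)), the family \(D_{t_\varepsilon}^n(z)\) converges to \(D_{t_\varepsilon}(z)\) in spherical Hausdorff distance, uniformly in \(z \in K\). Combining with \(m_n(z) \in D_{t_\varepsilon}^n(z)\) and \(m(z) \in D_{t_\varepsilon}(z)\), the triangle inequality in the spherical metric yields \(\sup_{z \in K} \rho\bigl(m_n(z), m(z)\bigr) = O(\varepsilon)\) for \(n\) large, which is the desired compact convergence. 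Continuity of the map \(T \mapsto m\) on \(\TMLP\mathcal{I}\) then follows from sequential continuity, since both spaces are metrizable.

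The main obstacle is the Dini step: one has to verify that the spherical diameter \(\diam D_t(z)\) is genuinely continuous in \(z\) and monotone in \(t\). Both points are standard in Remling's treatment, but need to be transcribed carefully so that Dini's theorem applies on a compact set in \(\UHP\). The extension from \(\mathcal{I} = [a,b)\) to \(\mathcal{I} = [a,b]\) is routine, since the Weyl disk at an interior point \(t_\varepsilon\) is insensitive to what happens beyond \(t_\varepsilon\). Once the uniform shrinkage of \(D_t(z)\) is established, the remainder is a straightforward bookkeeping argument.
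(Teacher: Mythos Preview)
The paper does not supply its own proof of this theorem; it simply states that the result is a straightforward extension of \cite[Theorem~5.7(b)]{remling_spectral_2018}. Your Weyl-disk argument is correct and is one standard way to establish such a result.

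That said, there is a slightly cleaner route which avoids the Dini step, and which is closer in spirit to how Remling handles this. Since \(\Hol(\UHP,\clUHP)\) is sequentially compact, pass to a subsequence along which \(m_n \to \tilde m\) compactly for some generalized Herglotz \(\tilde m\). For fixed \(t \in (a,b)\) and \(z \in \UHP\), the inclusion \(m_n(z) \in D_t^n(z)\) and the convergence \(T_n(t,z) \to T(t,z)\) give \(\tilde m(z) \in D_t(z)\); since \(T\) is limit point, \(\bigcap_t D_t(z) = \{m(z)\}\), so \(\tilde m = m\). As every subsequential limit equals \(m\), the whole sequence converges. The gain is that one only needs \emph{pointwise} containment \(\tilde m(z) \in D_t(z)\), so neither the uniform shrinkage of disks nor the Hausdorff-distance argument is needed; compactness of the target space does the uniformization for free. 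Your approach, by contrast, is more quantitative and would give an explicit rate if one wanted one, at the cost of verifying the continuity and monotonicity hypotheses for Dini.
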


In the limit circle case, however, the convergence of the transfer matrices is not enough to imply the convergence of the Weyl--Titchmarsh functions, since they not only depend on the transfer matrices, but also on the boundary conditions. Therefore, one could imagine that the analogous result to Theorem~\ref{thm.TMtoWT.LP} in the limit circle case is the convergence of the obvious map \(\TMLC[a,b] \times \mathbb{S}^1 \to \Hol(\UHP, \clUHP)\), where the unit circle \(\mathbb{S}^1\) is taken as the set of boundary conditions. This map does turn out to be continuous, but in fact more is true and we now motivate the extension we will consider.

So far, in our considerations to pass from the convergence of transfer matrices to that of Weyl--Titchmarsh functions, we have left out an important case. Theorem~\ref{thm.TMtoWT.LP} shows how the convergence goes through in any case in which the limit system is limit point at \(b\), while the continuity of the map \(\TMLC[a,b]\times \mathbb{S}^1 \to \Hol(\UHP, \clUHP)\) handles systems that are all limit circle at \(b\). Yet, it is possible for a sequence of systems that are limit point at \(b\) to converge to one that is limit circle at \(b\). In our sense, it seems more appropriate to consider these along with convergent sequences of systems that are limit circle at \(b\) for two reasons. The first one is that the type of result we are looking for is the continuity of a map, as our ultimate goal is to exploit this continuity to pass some probabilistic type of convergence of random transfer matrices through, and thus obtain the convergence of random Weyl--Titchmarsh functions. But it is not obvious (at least to us) to find an appropriate formulation which would include at the same time both types of systems. 

The other reason has to do with the way in which systems that are limit point at \(b\) can converge to one that is limit circle at \(b\). For this to happen, the idea is that the integrability condition enforced by the limit point systems should converge to the boundary condition of the limit circle system. To make things clearer, consider \(T_n \in \TMLP[a,b)\) with \(u_n\colon [a,b) \times \mathbb{C} \to \mathbb{C}^2\) such that \(u_n(\cdot,z)\) is an integrable solution to the corresponding canonical system, and take \(T \in \TMLC[a,b]\) with boundary condition \(e_\theta^*Ju(b) = 0\). Then, given times \(t_n \in [a,b)\), the Weyl--Titchmarsh functions of the systems in the sequence can be written as \(m_n(z) = \projection T_n(t_n, z)^{-1} u_n(t_n,z)\), and that of the limit system is \(m(z) = \projection T(b,z)^{-1} e_\theta\). Hence, to deduce that \(m_n(z) \to m(z)\), it suffices to find \(t_n\)'s such that \(T_n(t_n,z)^{-1} \projection u_n(t_n, z) \to T(b,z)^{-1} \cot\phi\) (of course, it is not obvious that such \(t_n\)'s even exist). If additionally \(t_n \uparrow b\), we can then think of this convergence as that of the systems \emph{restricted} to \([a, t_n]\) (which effectively become limit circle at \(t_n\)) to the limit system, along with the convergence of the \enquote{boundary conditions} at \(t_n\) (this being \(\projection u_n(t_n,z)\), which are not proper boundary conditions since they depend on \(z\)) to \(\cot\theta\). If this works, it is straightforward to stretch the time domains \([a,t_n]\) to \([a,b]\) without changing properties of the restricted systems, and then the problem is reduced to the convergence of systems which are all limit circle at \(b\), only with \enquote{boundary conditions} which depend on \(z\). 

This motivates the following result, which gives the continuity of the map \(\TMLC[a,b] \times \mathbb{S}^1 \to \Hol(\UHP, \clUHP)\) mentioned earlier, but allows for such \(z\)-dependent \enquote{boundary conditions} as well.

\begin{theorem}
\label{thm.TMtoWT.LC}
Define \(M\colon \TMLC[a,b] \times \mathscr{C}(\UHP, \mathbb{C}^2) \to \RiemannSphere^{\UHP}\) by setting the function \(M(T,w) \colon \UHP \to \RiemannSphere\) as \(M(T,w)(z) \defeq \projection T(b,z)^{-1}w(z)\). Then \(M\) is continuous on \(M^{-1}\bigl( \Hol(\UHP,\clUHP) \bigr)\) under the topology of compact convergence.
\end{theorem}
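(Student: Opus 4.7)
My plan is to combine the compactness of $\Hol(\UHP, \clUHP)$ with pointwise convergence of the projected vector $\projection \circ v$ on the open set where $v$ is non-zero. Given a sequence $(T_n, w_n) \to (T, w)$ in $M^{-1}\bigl(\Hol(\UHP, \clUHP)\bigr)$, I first pass to the $\mathbb{C}^2$-valued auxiliaries $v_n \defeq T_n(b, \cdot)^{-1} w_n$ and $v \defeq T(b, \cdot)^{-1} w$. Continuity of evaluation at $b$ on $\TMLC[a,b]$, combined with smoothness of matrix inversion on $GL_2(\mathbb{C})$ (noting that $\det T(b,z) = 1$ by Liouville's formula applied to $JT' = -zHT$) and compact convergence $w_n \to w$, yields compact convergence $v_n \to v$ on $\UHP$.

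Next, I restrict attention to the open set $\Omega \defeq \{z \in \UHP : v(z) \neq 0\}$, which coincides with $\{z : w(z) \neq 0\}$ because $T(b, z)$ is invertible. On $\Omega$, the map $\projection \colon \mathbb{C}^2 \setminus \{0\} \to \RiemannSphere$ is continuous, so $m_n(z) \defeq M(T_n, w_n)(z) \to M(T, w)(z) \eqdef m(z)$ pointwise for $z \in \Omega$. The set $\Omega$ is non-empty because the defining formula for $m$ is only meaningful where $v \neq 0$, and $m \in \Hol(\UHP, \clUHP)$ requires at least one such point; since $v$ is continuous, $\Omega$ is then automatically an open subset of $\UHP$ with accumulation points.

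To conclude, I invoke the compactness of $\Hol(\UHP, \clUHP)$ under the topology of compact convergence, which follows from compactness of $\clUHP$ as noted in the survey. Any subsequence of $(m_n)$ admits a further subsequence converging compactly to some $\tilde m \in \Hol(\UHP, \clUHP)$. The pointwise convergence on $\Omega$ identifies $\tilde m = m$ there; since both $\tilde m$ and $m$ are holomorphic maps $\UHP \to \clUHP$ (so either non-constant with isolated poles or constant in $\mathbb{R} \cup \{\infty\}$), the identity theorem extends this to $\tilde m = m$ on all of $\UHP$. Every subsequential limit thus equals $m$, and compactness gives $m_n \to m$ compactly.

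The principal difficulty is that $w$ is only assumed continuous, so where $w$ vanishes there is no direct local expansion of $v_n$ to control $\projection v_n$, and the value of $m$ at such points is accessible only through the holomorphic extension guaranteed by membership in $\Hol(\UHP, \clUHP)$. The compactness-plus-identity-theorem approach circumvents this entirely by asking only for pointwise convergence on the good set $\Omega$ and relying on the rigidity of holomorphic functions into $\clUHP$ to propagate the identification to the full upper half-plane; this avoids any need to extract subtle cancellations in the ratio $\projection v_n$ near the zeros of $w$.
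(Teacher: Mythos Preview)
Your proof is correct and follows the same skeleton as the paper's: both exploit sequential compactness of $\Hol(\UHP,\clUHP)$ to reduce compact convergence to identifying subsequential limits via pointwise convergence. The only difference is in how pointwise convergence is established: the paper does a direct case analysis on the M\"obius formula $\projection T(b,z)^{-1}w(z)$ (treating $\projection w(z)$ as always defined, i.e.\ implicitly taking $w(z)\neq 0$), whereas you restrict to the open set $\Omega=\{w\neq 0\}$, use continuity of $\projection$ there, and invoke the identity theorem to propagate the equality $\tilde m=m$ to all of $\UHP$. Your route is marginally more robust in that it explicitly accommodates zeros of $w$, at the cost of one extra ingredient; the paper's is a few lines shorter under its tacit assumption.
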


\begin{proof}
To prove the continuity of \(M\) on \(M^{-1}\bigl( \Hol(\UHP, \clUHP) \bigr)\), it suffices to prove that \(M(T_n,w_n) \to M(T,w)\) compactly whenever \((T_n,w_n) \to (T,w)\) compactly in \(M^{-1}\bigl( \Hol(\UHP, \clUHP) \bigr)\), and to do so, it suffices to prove that \(M(T_n,w_n)(z) \to M(T,w)(z)\) for every \(z \in \UHP\), as the space of generalized Herglotz functions is sequentially compact. 

Fix \(z \in \UHP\). As \(T_n \to T\) compactly, \(\begin{smallpmatrix} A_n & B_n \\ C_n & D_n \end{smallpmatrix} \defeq T_n(b,z)^{-1} \to T(b,z)^{-1} \eqdef \begin{smallpmatrix} A & B \\ C & D \end{smallpmatrix}\). Now, if both \(\abs{M(T,w)(z)} < \infty\) and \(\abs{\projection w(z)} < \infty\), then \(C\projection w(z) + D\) is bounded away from zero, so \(C_n \projection w_n(z) + D_n\) is bounded away from zero for \(n\) large enough, and
\[
M(T_n,w_n)(z)
    = \frac{A_n\projection w_n(z) + B_n}{C_n \projection w_n(z) + D_n}
    \to \frac{A\projection w(z) + B}{C\projection w(z) + D}
    = M(T,w)(z).
\]
If \(\abs{M(T,w)(z)} < \infty\) but \(\projection w(z) = \infty\), then \(M(T_n,w_n)(z) \to \nicefrac{A}{C} = M(T,w)(z)\). Finally, if \(M(T,w)(z) = \infty\), then \(C\projection w(z) + D = 0\), and it must be that \(\abs{\projection w(z)} < \infty\) and that \(A\projection w(z) + B \neq 0\) because \(\det T(b,z)^{-1} = 1\), so \(M(T_n,w_n)(z) \to \infty\).
\end{proof}

\subsection{Convergence of random canonical systems}
\label{sec.CS.random}

In this section, we use the results from the last section to find useful criteria to describe the convergence in probability of random canonical systems. 

We first point out the following straightforward result.

\begin{proposition}
\label{prop.CS.random.convergence}
Let \(\mathcal{I}\) be a real interval, and let \(H_n, H\) be random coefficient matrices with values in \(\CS\mathcal{I}\). If
\[
\int_\mathcal{I} \varphi^*(t) \bigl( H_n(t) - H(t) \bigr) \varphi(t) \diff{t} \probto[n\to\infty] 0
\]
for all \(\varphi \in \mathscr{C}_c(\mathcal{I},\mathbb{C}^2)\), then \(H_n \to H\) vaguely on \(\mathcal{I}\) in probability.
\end{proposition}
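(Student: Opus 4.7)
The plan is to reduce vague convergence in probability to convergence in probability of countably many real-valued functionals, using the explicit metric for the vague topology given in Theorem~\ref{thm.CSmetric}. Recall that this metric has the form
\[
d(H,H') = \sum_{k=1}^\infty 2^{-k} \frac{d_{\varphi_k}(H,H')}{1 + d_{\varphi_k}(H,H')}
\]
for some countable family \(\{\varphi_k\}_{k\in\mathbb{N}} \subset \mathscr{C}_c(\mathcal{I},\mathbb{C}^2)\) dense under compact convergence, and convergence in probability for the vague topology on \(\CS\mathcal{I}\) is equivalent to \(d(H_n,H) \probto[n\to\infty] 0\). Since the hypothesis of the proposition applies in particular to each \(\varphi_k\), it immediately gives \(d_{\varphi_k}(H_n,H) \probto[n\to\infty] 0\) for every \(k\).

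\textbf{Tail truncation.} Given \(\varepsilon > 0\), I would first exploit that each summand is bounded by \(2^{-k}\) (since \(x/(1+x) \leq 1\) for \(x \geq 0\)) to pick \(K = K(\varepsilon)\) with \(\sum_{k>K} 2^{-k} < \varepsilon/2\). This yields the deterministic pathwise bound
\[
d(H_n,H) \leq \sum_{k=1}^K 2^{-k} \frac{d_{\varphi_k}(H_n,H)}{1+d_{\varphi_k}(H_n,H)} + \frac{\varepsilon}{2},
\]
so it suffices to show that the head converges to \(0\) in probability.

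\textbf{Head convergence.} For each of the finitely many \(k \in \{1,\ldots,K\}\), the continuous mapping theorem applied to \(x \mapsto x/(1+x)\), combined with the hypothesis, yields \(\frac{d_{\varphi_k}(H_n,H)}{1+d_{\varphi_k}(H_n,H)} \probto[n\to\infty] 0\). A finite sum of sequences converging to \(0\) in probability also converges to \(0\) in probability, so the head is smaller than \(\varepsilon/2\) with probability tending to \(1\), and hence \(\pprob{d(H_n,H) > \varepsilon} \to 0\) as \(n \to \infty\). As \(\varepsilon\) was arbitrary, this is the claimed vague convergence in probability.

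\textbf{Obstacle.} The argument is a routine tail-truncation and continuous-mapping reduction, and I don't expect any substantive obstacle. The one conceptual ingredient is the explicit metric construction of Theorem~\ref{thm.CSmetric}, which packages the whole uncountable family of pseudometrics \(\{d_\varphi\}_{\varphi \in \mathscr{C}_c(\mathcal{I},\mathbb{C}^2)}\) into a countable one while inducing the same (vague) topology; without such a countable-base characterization, one would worry that verifying the hypothesis only over a countable dense subfamily is insufficient, but Theorem~\ref{thm.CSmetric} precisely rules this out.
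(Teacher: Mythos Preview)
Your proposal is correct and takes essentially the same approach as the paper: both use the metric $d$ from Theorem~\ref{thm.CSmetric}, truncate the tail at some $K$ so that $\sum_{k>K}2^{-k}<\varepsilon/2$, and then control the finitely many head terms using the hypothesis applied to the $\varphi_k$. The paper's write-up is marginally more direct (it observes that $d(H_n,H)>\varepsilon$ forces $d_{\varphi_k}(H_n,H)>\varepsilon/2$ for some $k\leq K$ and applies a union bound), but this is just a cosmetic variation of your continuous-mapping-plus-finite-sum argument.
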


\begin{proof}
Let \(\varepsilon > 0\), and let \(d\) and \(\{\varphi_k\}_{k\in\mathbb{N}}\) be the metric and functions from Theorem~\ref{thm.CSmetric}. If \(d(H_n,H) > \varepsilon\) and \(K\) is large enough so that \(\sum_{k=K+1}^\infty \nicefrac{1}{2^k} < \nicefrac{\varepsilon}{2}\), then there must be a \(k \leq K\) with \(d_{\varphi_k}(H_n,H) > \nicefrac{\varepsilon}{2}\). Therefore, \(\bprob{d(H_n,H) > \varepsilon} \leq \sum_{k=1}^K \bprob{d_{\varphi_k}(H_n,H) > \nicefrac{\varepsilon}{2}}\), and the result follows.
\end{proof}

Now, we have seen in Theorem~\ref{thm.CStoTM} that the map from coefficient to transfer matrices is continuous on domains with dominated trace. When dealing with \emph{random} canonical systems, it can be useful to relax the domination condition to a high probability event. Thus, we extend Theorem~\ref{thm.CStoTM} to the following.

\begin{proposition}
\label{prop.CS.random.TMconvergence}
Let \(\mathcal{I} = [a,b)\) or \([a,b]\). Let \(H_n,H\) be random coefficient matrices with values in \(\CS\mathcal{I}\), and let \(T_{H_n},T_H\) be their transfer matrices. Suppose that for any \(\varepsilon > 0\), there are \(f_\varepsilon, g_\varepsilon \in L^1\loc(\mathcal{I})\) such that \(\bprob{\tr H \leq f_\varepsilon} \geq 1 - \varepsilon\) and \(\bprob{\tr H_n \leq g_\varepsilon} \geq 1 - \varepsilon\) for any \(n\) large enough. If \(H_n \to H\) vaguely on \(\mathcal{I}\) in probability, then \(T_{H_n} \to T_H\) compactly on \(\mathcal{I} \times \mathbb{C}\) in probability.
\end{proposition}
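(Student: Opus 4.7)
The plan is to reduce Proposition~\ref{prop.CS.random.TMconvergence} to the deterministic continuity of Theorem~\ref{thm.CStoTM} by truncating \(H_n\) and \(H\) so that they lie almost surely in some \(\CS_h\mathcal{I}\) on which that map is continuous. Fix \(\varepsilon > 0\) and let \(f_\varepsilon, g_\varepsilon \in L^1\loc(\mathcal{I})\) be the dominators from the hypothesis; choose \(h_\varepsilon \in L^1\loc(\mathcal{I})\) strictly dominating \(f_\varepsilon \vee g_\varepsilon\), and fix a reference coefficient matrix \(H_0 \in \CS_{h_\varepsilon}\mathcal{I}\). Define the truncated variables
\[
\hat H_n \defeq H_n \charf{\{\tr H_n \leq g_\varepsilon\}} + H_0 \charf{\{\tr H_n > g_\varepsilon\}}, \qquad
\hat H \defeq H \charf{\{\tr H \leq f_\varepsilon\}} + H_0 \charf{\{\tr H > f_\varepsilon\}},
\]
so that \(\hat H_n, \hat H \in \CS_{h_\varepsilon}\mathcal{I}\) almost surely, and \(\hat H_n = H_n\), \(\hat H = H\) hold on events of probability at least \(1 - \varepsilon\) each (for \(n\) large).

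On the intersection \(\{\tr H_n \leq g_\varepsilon\} \cap \{\tr H \leq f_\varepsilon\}\), of probability at least \(1 - 2\varepsilon\), one has \(\hat H_n - \hat H = H_n - H\). Combined with the vague convergence \(H_n \to H\) in probability and Proposition~\ref{prop.CS.random.convergence}, this yields \(\limsup_{n\to\infty} \bprob{d(\hat H_n, \hat H) > \tau} \leq 2\varepsilon\) for every \(\tau > 0\), where \(d\) is the metric on \(\CS\mathcal{I}\) from Theorem~\ref{thm.CSmetric}.

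The bridge to transfer matrices is a soft continuity-in-probability lemma: if \(\phi \colon (S, d_S) \to (S', d_{S'})\) is continuous between separable metric spaces and \(Y_n, Y\) are random elements of \(S\) with \(\limsup_n \bprob{d_S(Y_n, Y) > \tau} \leq \alpha\) for every \(\tau > 0\), then \(\limsup_n \bprob{d_{S'}(\phi(Y_n), \phi(Y)) > \delta} \leq \alpha\) for every \(\delta > 0\). This follows from the inclusion \(\{d_{S'}(\phi(Y_n), \phi(Y)) > \delta\} \subset \{d_S(Y_n, Y) > \tau_0\} \cup \{\tau_\phi(\delta, Y) \leq \tau_0\}\), where \(\tau_\phi(\delta, y) > 0\) is the pointwise modulus of continuity of \(\phi\) at \(y\); taking \(\limsup_n\) and then \(\tau_0 \downarrow 0\) gives the conclusion because \(\tau_\phi(\delta, Y) > 0\) almost surely.

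Applying this lemma with \(\phi\) equal to the continuous map \(\CS_{h_\varepsilon}\mathcal{I} \to \TM\mathcal{I}\) from Theorem~\ref{thm.CStoTM} gives \(\limsup_n \bprob{d'(T_{\hat H_n}, T_{\hat H}) > \delta} \leq 2\varepsilon\) for every \(\delta > 0\), where \(d'\) is a metric inducing compact convergence on \(\TM\mathcal{I}\). Since \(T_{H_n} = T_{\hat H_n}\) and \(T_H = T_{\hat H}\) each hold with probability at least \(1 - \varepsilon\), a union bound yields \(\limsup_n \bprob{d'(T_{H_n}, T_H) > \delta} \leq 4\varepsilon\). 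As \(\varepsilon > 0\) was arbitrary, this limsup is \(0\), which is exactly the asserted compact convergence in probability. The main obstacle is verifying the soft lemma; everything else is bookkeeping around the truncation.
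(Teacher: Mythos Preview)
Your proof is correct and follows essentially the same route as the paper: truncate \(H_n\) and \(H\) onto a trace-dominated class \(\CS_{h_\varepsilon}\mathcal{I}\) where Theorem~\ref{thm.CStoTM} applies, transport the (approximate) vague convergence through that continuous map, and then undo the truncation at the cost of \(O(\varepsilon)\) probability. The only real difference is packaging: the paper replaces on the bad events by deterministic sample values \(H(\omega),H_n(\omega_n)\) and invokes continuity directly, while you replace by a fixed \(H_0\) and isolate the passage ``continuous map preserves approximate convergence in probability'' as a standalone lemma via the random modulus \(\tau_\phi(\delta,Y)\); this makes the pointwise (non-uniform) nature of the continuity explicit and is arguably cleaner.
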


\begin{proof}
Let \(d_{\CS\mathcal{I}}\) and \(d_{\TM\mathcal{I}}\) denote the metrics on \(\CS\mathcal{I}\) and \(\TM\mathcal{I}\). If \(\varepsilon, \zeta > 0\), by hypothesis there is an \(N\in\mathbb{N}\) such that for \(n \geq N\),
\[
\bprob[\big]{d_{\TM\mathcal{I}}(T_{H_n}, T_H) > \zeta}
    \leq \bprob[\big]{d_{\TM\mathcal{I}}(T_{H_n},T_H) > \zeta, \tr H \leq f_{\varepsilon}, \tr H_n \leq g_{\varepsilon}} + 2\varepsilon.
\]
Now, pick \(\omega, \omega_n\) in the sample space such that \(\tr H(\omega) \leq f_{\varepsilon}\) and \(\tr H_n(\omega_n) \leq g_{\varepsilon}\), and define
\[
\tilde{H} \defeq H \charf{\{\tr H \leq f_{\varepsilon}\}} + H(\omega) \charf{\{\tr H > f_{\varepsilon}\}}
\qquadtext{and}
\tilde{H}_n \defeq H_n \charf{\{\tr H_n \leq g_{\varepsilon}\}} + H_n(\omega_n) \charf{\{\tr H_n > g_{\varepsilon}\}}.
\]
Then \(\tilde{H}_n, \tilde{H} \in \CS_{f_{\varepsilon}+g_{\varepsilon}}\mathcal{I}\), so by Theorem~\ref{thm.CStoTM}, there is a \(\delta > 0\) such that \(d_{\CS\mathcal{I}}(\tilde{H}_n, \tilde{H}) > \delta\) whenever \(d_{\TM\mathcal{I}}(T_{\tilde{H}_n}, T_{\tilde{H}}) > \zeta\). But by definition of \(\tilde{H}_n\) and \(\tilde{H}\), this implies that
\[
\bprob[\big]{d_{\TM\mathcal{I}}(T_{H_n},T_H) > \zeta, \tr H \leq f_{\varepsilon}, \tr H_n \leq g_{\varepsilon}}
    \leq \bprob[\big]{d_{\CS\mathcal{I}}(H_n, H) > \delta}.
\]
Therefore, if \(\bprob{d_{\CS\mathcal{I}}(H_n, H) > \delta} \to 0\), then \(\bprob{d_{\TM\mathcal{I}}(T_{H_n}, T_H) > \zeta} < 3\varepsilon\) for any \(n\) large enough, and \(T_{H_n} \to T_H\) compactly in probability.  
\end{proof}

\subsection{Generalized Sturm--Liouville operators as canonical systems}
\label{sec.CS.SL}

By a generalized Sturm--Liouville operator on some interval \((a,b) \subseteq \mathbb{R}\), we mean a second-order differential operator
\beq{eq.genSL}
L \colon f \mapsto \frac{1}{w} \Bigl( -\bigl( \quasi{f} \bigr)' + s\quasi{f} + qf \Bigr)
\qquadtext{with}
\quasi{f} \defeq p(f' + sf),
\eeq
where \(p,q,s,w \colon (a,b) \to \mathbb{R}\) satisfy \(p\neq 0\) and \(w > 0\) a.e.\@ on \((a,b)\) and \(p^{-1},q,s,w \in L^1\loc(a,b)\), and \(\quasi{f}\) is called the \emph{(first) quasi-derivative} of \(f\). Here, \(L\) is acting on functions from
\[
\mathfrak{D}_L \defeq \bigl\{ f \in \AC\loc(a,b) : \quasi{f} \in \AC\loc(a,b) \bigr\}.
\]
This class of generalized Sturm--Liouville operators is studied in detail by Eckhardt et al.\@ in~\cite{eckhardt_weyl-titchmarsh_2013}. The point of extending classical Sturm--Liouville operators to ones of this form is that here \(L\) is allowed to have a distributional potential, as \(s\) does not have to be differentiable. Nevertheless, from the classical theory of ordinary differential equations, it can be shown that for any \(g \in L^1\loc\bigl( (a,b), w(t) \diff{t} \bigr)\), any \(x, y, z \in \mathbb{C}\) and any \(t_0 \in (a,b)\), the initial value problem
\[
Lf = zf + g,
\qquad
f(t_0) = x, \quad
\quasi{f}(t_0) = y
\]
has a unique solution \(f \in \mathfrak{D}_L\) (see~\cite[Theorem~2.2]{eckhardt_weyl-titchmarsh_2013}).

It is possible to construct a canonical system that is equivalent to the eigenvalue equation of such a generalized Sturm--Liouville operator basically by choosing the right change of variables. This is done through a matrix \(A\colon (a,b) \to \mathbb{R}^{2\times 2}\) that solves, for some \(t_0 \in (a,b)\) and an \(A_0 \in \mathbb{R}^{2\times 2}\) with \(\det A_0 = 1\),
\[
A' = \begin{pmatrix} s & q \\ p^{-1} & -s \end{pmatrix} A
\quadtext{with}
A(t_0) = A_0,
\qquadtext{so that}
A = \begin{pmatrix} \quasi{g} & \quasi{h} \\ g & h \end{pmatrix}
\]
for two linearly independent solutions \(g\) and \(h\) to \(Lf = 0\) which are determined by \(A_0\). Here, \(t_0\) could also be chosen to be \(a\) if it is limit circle, and likewise for \(b\). Now, suppose that \(f\) solves \(Lf = zf\) for some \(z \in \mathbb{C}\), and set \(F \defeq (\quasi{f}, f)\). Then
\[
F' = \begin{pmatrix} s & q-zw \\ p^{-1} & -s \end{pmatrix} F
    = A'A^{-1}F - \begin{pmatrix} 0 & zw \\ 0 & 0 \end{pmatrix} F.
\]
Because \((A^{-1})' = - A^{-1}A'A^{-1}\), it follows that \(u \defeq A^{-1}F\) solves the canonical system with coefficient matrix
\beq{eq.CS.SL.coefficientmatrix}
H \defeq wJA^{-1}\begin{pmatrix} 0 & 1 \\ 0 & 0 \end{pmatrix} A
    = w \begin{pmatrix} g^2 & gh \\ gh & h^2 \end{pmatrix},
\eeq
where in order to get the second expression, we used the fact that the Wronskian \(h\quasi{g} - \quasi{h}g\) is constant, meaning that the determinant of \(A\) is equal to \(1\) at all times. The constancy of this Wronskian is straightforward to verify by computing its derivative; it is a simple consequence of the fact that \(g\) and \(h\) solve the eigenvalue equation \(Lf = zf\) for the same \(z\) (here, \(z = 0\)).

The equivalence between solutions of the eigenvalue equation \(Lf = zf\) and solutions \(u\) of the canonical system with coefficient matrix~\eqref{eq.CS.SL.coefficientmatrix} goes one step further. Indeed, since \(J\) has the property that \(M^\transpose JM = J \det M\) for any invertible \(M\), it is straightforward to see that with \(u = A^{-1}F\) as above,
\[
\norm{u}_H^2 = \int_a^b u^*(t) H(t) u(t) \diff{t}
    = \int_a^b w(t) F^*(t) J \begin{pmatrix} 0 & 1 \\ 0 & 0 \end{pmatrix} F(t) \diff{t}
    = \int_a^b \abs{f(t)}^2 w(t) \diff{t}.
\]
Hence, a solution \(u\) of the canonical system has the same norm in \(L_H^2(a,b)\) as the associated solution \(f\) to \(Lf = zf\) has in \(L^2\bigl( (a,b), w(t)\diff{t} \bigr)\). In particular, this shows that the correspondence \(u = A^{-1}F\) sets up a bijection between the eigenfunctions of the generalized Sturm--Liouville operator and the solutions of the canonical system that satisfy the corresponding boundary conditions at limit circle endpoints, since the integrability conditions of the two systems are met exactly at the same time.

\section{The Airy and sine canonical systems and the setup for the convergence}
\label{sec.AirysineCS}

In this section, we first provide a rigorous definition of the stochastic Airy operator \(\Airyop\), and then we build a canonical system that is equivalent to the shifted and scaled operator \(\sAiryop \defeq 2\sqrt{\shift} (\Airyop - \shift)\). We then introduce the time change \(\timechange\) used in Theorem~\ref{thm.CSconvinlaw}, which allows to define the canonical system on \((0,1)\), like the sine system is. Finally, we derive a change of variables into polar coordinates for solutions of \(\sAiryop f = 0\), and by switching the canonical system's coefficient matrix into these polar coordinates, we obtain a heuristic argument for the vague convergence of Theorem~\ref{thm.CSconvinlaw}, which paves the way for the full proof that is carried out in the subsequent sections.

\subsection{A precise definition of the stochastic Airy operator}
\label{sec.AirysineCS.defAiry}

There are several ways to rigorously define the stochastic Airy operator in order to make precise the heuristic definition~\eqref{eq.AiryopWN}. Originally, Edelman and Sutton avoided the white noise by defining an equivalent operator on a weighted \(L^2\) space, and then defining the Airy operator by conjugating it with an isometry between the weighted and unweighted \(L^2\) spaces~\cite{edelman_random_2007}. Ramírez, Rider and Virág rather used the theory of Schwartz distributions to make sense of the white noise~\cite{ramirez_beta_2011}.

Here, we use the approach used by Minami~\cite{minami_definition_2015}: we understand the stochastic Airy operator as the random generalized Sturm--Liouville operator defined pathwise by the expression
\beq{eq.Airyop}
\Airyop f(t) = - \Bigl( f' - \frac{2}{\sqrt{\beta}} Bf \Bigr)'(t) - \frac{2}{\sqrt{\beta}} B(t) \Bigl( f' - \frac{2}{\sqrt{\beta}} Bf \Bigr)(t) + \Bigl( t - \frac{4}{\beta} B^2(t) \Bigr) f(t),
\eeq
acting on functions \(f \in \mathfrak{D}_{\Airyop} \defeq \bigl\{ f \in \AC\loc(0,\infty) : f' - \frac{2}{\sqrt{\beta}} Bf \in \AC\loc(0,\infty) \bigr\}\), where \(B\) is a standard Brownian motion. This operator fits the framework studied in~\cite{eckhardt_weyl-titchmarsh_2013}, which we related to canonical systems in Section~\ref{sec.CS.SL}, with 
\[
    p\equiv 1,
    \quad 
    q(t) = t - \frac{4}{\beta}B^2(t),
    \quad 
    s(t) = -\frac{2}{\sqrt{\beta}}B(t),
    \quadtext{and} 
    w\equiv 1,
    \quadtext{for all $t \in (0,\infty).$}
\]
Differentiating formally the above expression, one retrieves the heuristic expression~\eqref{eq.AiryopWN} of the operator that was given in the introduction.

In the sequel, we will also need a characterisation of the solutions to \(\Airyop f = zf\) in the form of a stochastic differential equation. To obtain it, note that if a stochastic process \(f\) does solve \(\Airyop f = zf\) pathwise, then by the theory of generalized Sturm--Liouville operators, its sample paths must be absolutely continuous with a derivative \(f'\) a.e.\@ such that \(f' - \frac{2}{\sqrt{\beta}} Bf\) is absolutely continuous. As long as \(f(0)\) and \(f'(0)\) are independent of the Brownian motion, it follows that \(f\) is a semimartingale with finite variation. Two expressions for the Itô differential of \(f' - \frac{2}{\sqrt{\beta}} Bf\) can then be computed: Itô's formula shows that
\[
\diff{\Bigl( f' - \frac{2}{\sqrt{\beta}} Bf \Bigr)}(t)
    = \diff{f'}(t) - \frac{2}{\sqrt{\beta}} f(t) \diff{B}(t) - \frac{2}{\sqrt{\beta}} B(t) f'(t) \diff{t},
\]
but as this process is absolutely continuous, we can also get from the eigenvalue equation that
\[
\diff{\Bigl( f' - \frac{2}{\sqrt{\beta}} Bf \Bigr)}(t)
    = \Bigl( f' - \frac{2}{\sqrt{\beta}} Bf \Bigr)'(t) \diff{t}
    = - \frac{2}{\sqrt{\beta}} B(t) \Bigl( f' - \frac{2}{\sqrt{\beta}} B f \Bigr)(t) \diff{t} + \Bigl( t - \frac{4}{\beta} B^2(t) - z \Bigr) f(t) \diff{t}.
\]
Comparing these expressions shows that \(f\) solves
\beq{eq.SAE}
\begin{aligned}
\diff{f}(t) & = f'(t) \diff{t}, \\
\diff{f'}(t) & = (t - z) f(t) \diff{t} + \frac{2}{\sqrt{\beta}} f(t) \diff{B}(t).
\end{aligned}
\eeq
This stochastic differential equation, called the \emph{stochastic Airy equation}, was studied in detail by Lambert and Paquette in~\cite{lambert_strong_2021}. In particular, they showed that this equation has a unique (up to a multiplicative constant) integrable solution, called the \emph{stochastic Airy function}.

\subsection{A canonical system equivalent to the shifted stochastic Airy operator}
\label{sec.AirysineCS.sAiry}

We now focus on the shifted and scaled stochastic Airy operator \(\sAiryop \defeq 2\sqrt{\shift} (\Airyop - \shift)\) for \(\shift > 0\). Following the ideas from Section~\ref{sec.CS.SL}, we know that as a generalized Sturm--Liouville operator, \(\sAiryop\) is equivalent to a canonical system on \((0,\infty)\). To set up the equivalence, let \(\sAirySLtoCS\colon [0,\infty) \to \mathbb{R}^{2\times 2}\) solve
\[
\sAirySLtoCS'(t) = \begin{pmatrix}
    - \frac{2}{\sqrt{\beta}} B(t) & t - \shift - \frac{4}{\beta} B^2(t) \\ 1 & \frac{2}{\sqrt{\beta}} B(t)
\end{pmatrix} \sAirySLtoCS(t)
\qquadtext{with}
\sAirySLtoCS(0) = \begin{pmatrix}
    \shift^{\nicefrac{1}{4}} & 0 \\ 0 & \shift^{\nicefrac{-1}{4}}
\end{pmatrix}.
\]
Then \(\sAirySLtoCS\) can be written as
\beq{eq.sAirySLtoCS}
\sAirySLtoCS = \begin{pmatrix}
    \shift^{\nicefrac{1}{4}} \sAiryNeumann' - \frac{2}{\sqrt{\beta}} \shift^{\nicefrac{1}{4}} B \sAiryNeumann & \shift^{\nicefrac{-1}{4}} \sAiryDirichlet' - \frac{2}{\sqrt{\beta}} \shift^{\nicefrac{-1}{4}} B \sAiryDirichlet \\ \shift^{\nicefrac{1}{4}} \sAiryNeumann & \shift^{\nicefrac{-1}{4}} \sAiryDirichlet
\end{pmatrix}
\eeq
where \(\sAiryNeumann\) and \(\sAiryDirichlet\) solve \(\sAiryop f = 0\) with initial conditions \(\sAiryDirichlet(0) = \sAiryNeumann'(0) = 1\) and \(\sAiryDirichlet'(0) = \sAiryNeumann(0) = 0\). Then the procedure detailed in Section~\ref{sec.CS.SL} shows that \(f\) solves the eigenvalue equation \(\sAiryop f = zf\) if and only if \(u \defeq \sAirySLtoCS^{-1} \begin{smallpmatrix} \quasi{f} \\ f \end{smallpmatrix}\) solves the canonical system
\beq{eq.sAiryCS}
Ju' = - z\sAirymat u
\quadtext{on}
(0, \infty)
\qquadtext{with}
\sAirymat \defeq \frac{1}{2\sqrt{\shift}} \begin{pmatrix} \sqrt{\shift} \sAiryNeumann^2 & \sAiryDirichlet\sAiryNeumann \\ \sAiryDirichlet\sAiryNeumann & \frac{1}{\sqrt{\shift}} \sAiryDirichlet^2 \end{pmatrix}.
\eeq
Note that the boundary condition \(f(0) = 0\) of the (shifted) stochastic Airy operator corresponds to the boundary condition \(e_0^* Ju(0) = 0\) in the above canonical system. 

In order to work out the vague convergence of the canonical system~\eqref{eq.sAiryCS} to the sine canonical system, we must first modify it so that its time domain is \((0,1)\), like the sine system. Following the time-change property from Proposition~\ref{prop.timechange}, we are looking for an absolutely continuous bijection \(\timechange\colon (0, 1+\varepsilon_{\shift}) \to (0,\infty)\) for some \(\varepsilon_{\shift} > 0\) with \(\varepsilon_{\shift} \downarrow 0\). 

The point of adding this parameter \(\varepsilon_{\shift}\) is related to the change of behavior of the systems at the right endpoint and to the discussion preceeding Theorem~\ref{thm.TMtoWT.LC}. Recall that both the Airy and the sine systems are limit circle at their left endpoint \(0\), but the Airy system is always limit point at its right endpoint \(\infty\) while the sine system is either limit point (when \(\beta \leq 2\)) or limit circle (when \(\beta > 2\)) at its right endpoint \(1\). When \(\beta \leq 2\), the right endpoints of all systems are limit point and we take \(\varepsilon_{\shift} = 0\). However, when \(\beta > 2\), the behavior at the right endpoint changes from limit point to limit circle, so we are in the situation discussed just before Theorem~\ref{thm.TMtoWT.LC}. Thus, we rather want to take \(\varepsilon_{\shift} \downarrow 0\) in order to work out the vague convergence of the canonical systems on \([0,1]\).

A good candidate for \(\timechange\) stems from the zero-temperature limit case \(\beta = \infty\), in which the problem becomes deterministic. Indeed, when \(\beta = \infty\) the Brownian motion disappears from \(\sAiryop\), so the functions \(\sAiryDirichlet[\infty]\) and \(\sAiryNeumann[\infty]\) are solutions of the equation \(2\sqrt{\shift} \bigl( - f''(t) + (t - \shift) f(t) \bigr) = 0\). These are simply solutions of the Airy differential equation but shifted by \(\shift\), so \(\sAiryDirichlet[\infty]\) and \(\sAiryNeumann[\infty]\) can be written as linear combinations of \(\Ai(\cdot - \shift)\) and \(\Bi(\cdot - \shift)\), where \(\Ai\) and \(\Bi\) are the usual Airy functions. The Wronskian property \(\Ai\Bi' - \Ai'\Bi \equiv \nicefrac{1}{\pi}\) then directly leads to
\begin{subequations}
\label{eq.sAiryDirichletNeumanninfty}
\begin{align}
\sAiryDirichlet[\infty](t) & = \pi\bigl( \Bi'(-\shift)\Ai(t-\shift) - \Ai'(-\shift)\Bi(t-\shift) \bigr), \\
\sAiryNeumann[\infty](t) & = \pi\bigl( \Ai(-\shift)\Bi(t-\shift) - \Bi(-\shift)\Ai(t-\shift) \bigr).
\end{align}
\end{subequations}
The Airy functions have well-known asymptotic expansions (see e.g.~\cite{NIST:DLMF, olver_asymptotics_1997}). For \(t < \shift\), these lead to
\begin{subequations}
\label{eq.sAiryDirichletNeumannasymptotics}
\begin{align}
\label{eq.sAiryDirichletasymptotics}
\sAiryDirichlet[\infty](t) = \frac{\shift^{\nicefrac{1}{4}}}{(\shift - t)^{\nicefrac{1}{4}}} \biggl( \cos\Bigl( \frac{2}{3} \shift^{\nicefrac{3}{2}} - \frac{2}{3} (\shift - t)^{\nicefrac{3}{2}} \Bigr) + O\bigl( (\shift - t)^{\nicefrac{-3}{2}} \bigr) \biggr), \\
\label{eq.sAiryNeumannasymptotics}
\sAiryNeumann[\infty](t) = \frac{1}{\shift^{\nicefrac{1}{4}} (\shift - t)^{\nicefrac{1}{4}}} \biggl( \sin\Bigl( \frac{2}{3} \shift^{\nicefrac{3}{2}} - \frac{2}{3} (\shift - t)^{\nicefrac{3}{2}} \Bigr) + O\bigl( (\shift - t)^{\nicefrac{-3}{2}} \bigr) \biggr).
\end{align}
\end{subequations}
Dropping the errors, it follows that if \(\timechange < \shift\), then
\beq{eq.sAirymatinfty}
\timechange' (\sAirymat[\infty]\circ\timechange)
    \approx \frac{\timechange'}{2\sqrt{\shift}} \frac{1}{\sqrt{\shift - \timechange}} \begin{pmatrix}
        \sin^2 (\trigarg\circ\timechange) & \sin(\trigarg\circ\timechange) \cos(\trigarg\circ\timechange) \\ \sin(\trigarg\circ\timechange)\cos(\trigarg\circ\timechange) & \cos^2(\trigarg\circ\timechange)
    \end{pmatrix}
\eeq
where \(\trigarg(t) \defeq \frac{2}{3} \shift^{\nicefrac{3}{2}} - \frac{2}{3} \bigl( \shift - t \bigr)^{\nicefrac{3}{2}}\). Now, \(\timechange\) should be chosen so that this coefficient matrix converges to \(\sinemat[\infty] \circ \logtime\), which is identically equal to \(\frac{1}{2} I_2\) since when \(\beta = \infty\), the \enquote{hyperbolic Brownian motion} \(\HBM[\infty]\) has zero variance, so it is stuck at \(i\) at all times. If the function \(\trigarg\circ\timechange\) grows increasingly fast with \(\shift\), we can expect the oscillations of the trigonometric functions given here to make them converge weakly to their average values, making the above matrix, without the prefactor, converge vaguely to \(\frac{1}{2} I_2\) as \(\shift\to\infty\). This leads us to choose \(\timechange\) as a function that makes the prefactor go to 1, that is, \(\timechange\) should solve the equation
\beq{eq.timechangeODE}
\timechange' = 2\diln \sqrt{\shift} \sqrt{\shift - \timechange}
\qquadtext{with}
\timechange(0) = 0,
\eeq
where \(\diln\) is something that converges to 1 in an appropriate sense. The simplest choice is, of course, to force the \(\diln\)'s to be constants. In that case, the differential equation is separable and the initial value problem is solved by the function
\beq{eq.timechangesol}
\timechange(t) \defeq \shift - \shift (1 - \diln t)^2.
\eeq
However, this is only a bijection \([0, \nicefrac{1}{\diln}] \to [0,\shift]\). When \(\beta \leq 2\), we are looking for a bijection \([0,1) \to [0,\infty)\), so we can take \(\diln = 1\), use \eqref{eq.timechangesol} as a bijection \([0, 1-\nicefrac{1}{\sqrt{\shift}}] \to [0, \shift-1]\) and complete it with a bijection \([1-\nicefrac{1}{\sqrt{\shift}}, 1) \to [\shift-1, \infty)\). When \(\beta > 2\), we are looking for a bijection \([0,1+\varepsilon_{\shift}) \to [0,\infty)\), so if \(\diln < 1\) we can use \eqref{eq.timechangesol} as a bijection \([0,1] \to [0, \shift - \shift(1 - \diln)^2]\). In this case, we take \(\diln = 1 - \nicefrac{1}{\sqrt{\shift}}\) so that \(\timechange(1) = \shift - 1\). 

In both regimes of \(\beta\), the above gives a bijection \([0, \lasttime] \to [0, \shift-1]\) for \(\lasttime \defeq 1 - \nicefrac{1}{\sqrt{\shift}}\) when \(\beta \leq 2\) and \(\lasttime \defeq 1\) when \(\beta > 2\). What we use to complete the bijection will turn out not to make a difference, so we leave it unspecified, assuming only that \(\timechange\colon [\lasttime, \lasttime + \nicefrac{1}{\sqrt{\shift}}) \to [\shift-1, \infty)\) is a bijection so that \(\timechange\) is \(\mathscr{C}^1\) on the full interval \([0, \lasttime+\nicefrac{1}{\sqrt{\shift}})\). 

To sum up, our time change is a \(\mathscr{C}^1\) bijection \(\timechange\colon [0, \lasttime+\nicefrac{1}{\sqrt{\shift}}) \to [0,\infty)\) such that
\beq{eq.deftimechange}
\timechange(t) = \shift - \shift (1 - \diln t)^2 \quad\text{if } t \leq \lasttime
\qquadtext{with}
\Biggl\{
\begin{aligned}
    \diln & = 1, & 
    \lasttime & = 1 - \nicefrac{1}{\sqrt{\shift}} &
    \text{when } & \beta \leq 2, \\[1pt]
    \diln & = 1 - \nicefrac{1}{\sqrt{\shift}}, &
    \lasttime & = 1 &
    \text{when } & \beta > 2.
\end{aligned}
\eeq
It always satisfies \(\timechange(\lasttime) = \shift - 1\) and \(\timechange'(\lasttime) = 2\diln\sqrt{\shift}\). Let us emphasize that the reason to take this specific time change is to counteract the natural decay of the \enquote{Airy-like} oscillations of the solutions that appear in the system's coefficient matrix as in~\eqref{eq.sAirymatinfty}, in order to convert them to \enquote{sine-like} oscillations, with no decay in amplitude. Moreover, the point of taking \(\diln < 1\) when \(\beta > 2\) (i.e., when the sine system is limit circle at \(1\)) is to allow us to analyse the vague convergence of the canonical systems on the closed interval \([0, 1]\), which is more natural in the canonical system setting than working on restricted intervals that would grow to \([0,1]\). Our specific choice of \(\diln = 1 - \nicefrac{1}{\sqrt{\shift}}\) for that case also implies that \(\timechange(1) - \shift = 1\) for all \(\shift\), which will play an important role in our proof of the convergence of the Weyl--Titchmarsh functions in Section~\ref{sec.WTconv}.

\subsection{Polar coordinates and idea of the convergence}
\label{sec.polarcoords}

To analyse the behavior of the system and its solutions, we now introduce a change of variables with the goal of obtaining approximations of solutions analogous to those obtained from asymptotics of Airy functions in the deterministic case.

\begin{proposition}
\label{prop.polarcoords}
Let \(\sAiryop\) be defined from a standard Brownian motion \(B\) on a filtered probability space \((\Omega, \mathscr{F}, \{\mathscr{F}_t\}_{t\geq 0}, \mathbb{P})\), and let \(\timechange\), \(\diln\) and \(\lasttime\) be as in \eqref{eq.deftimechange}. If \(f\) solves \(\sAiryop f = 0\) and if \(f(0)\) and \(f'(0)\) are independent of \(B\), then for \(t \in [0,\lasttime]\),
\[
f \circ \timechange(t)
    = \frac{C_{f0}}{\shift^{\nicefrac{1}{4}} \sqrt{1 - \diln t}} \, e^{\amp(t)} \cos\phase(t)
\qquadtext{and}
f' \circ \timechange(t)
    = C_{f0} \shift^{\nicefrac{1}{4}} \sqrt{1 - \diln t} \, e^{\amp(t)} \sin\phase(t)
\]
where \(C_{f0}^2 \defeq \sqrt{\shift} f^2(0) + \frac{1}{\sqrt{\shift}} {f'}^2(0)\) and \(\amp\) and \(\phase\) solve the coupled stochastic differential equations
\begin{align*}
\diff{\amp}(t)
    & = \biggl( \frac{1}{\beta} + \Bigl( \frac{2}{\beta} - \frac{1}{2} \Bigr) \cos 2\phase(t) + \frac{1}{\beta} \cos 4\phase(t) \biggr) \frac{\diln}{1 - \diln t} \diff{t} + \sqrt{\frac{2}{\beta}} \sin 2\phase(t) \sqrt{\frac{\diln}{1 - \diln t}} \diff{\sBM}(t), \\
\begin{split}
\diff{\phase}(t)
    & = -2\diln \shift^{\nicefrac{3}{2}} (1 - \diln t)^2 \diff{t} - \biggl( \Bigl( \frac{2}{\beta} - \frac{1}{2} \Bigr) \sin 2\phase(t) + \frac{1}{\beta} \sin 4\phase(t) \biggr) \frac{\diln}{1 - \diln t} \diff{t} \\
    & \hspace*{77mm} + \frac{2\sqrt{2}}{\sqrt{\beta}} \cos^2\phase(t) \sqrt{\frac{\diln}{1 - \diln t}} \diff{\sBM}(t)
\end{split}
\end{align*}
with \(\amp(0) = 0\) and \(\phase(0) = \arctan\bigl( \nicefrac{f'(0)}{\sqrt{\shift} f(0)} \bigr)\), and where \(\sBM(t) \defeq \frac{1}{2\diln\sqrt{\shift}} \int_0^{\timechange(t)} \frac{1}{\sqrt{\shift - s}} \diff{B}(s)\) is a Brownian motion and a martingale with respect to the filtration \(\{\mathscr{F}_{\timechange(t)}\}_{t\in [0,\lasttime)}\).
\end{proposition}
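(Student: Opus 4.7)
The plan is to derive the SDEs for $\amp$ and $\phase$ by a two-stage stochastic change of variables, starting from the stochastic Airy equation~\eqref{eq.SAE} with $z=\shift$ that characterises any solution of $\sAiryop f = 0$ (since $\sAiryop f = 0$ means $\Airyop f = \shift f$).  The argument is a routine but lengthy application of It\^o's formula; the only real subtlety is the trigonometric bookkeeping at the end.

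First, I would time-change the pair $(f,f')$ to $F \defeq f\circ\timechange$, $F' \defeq f'\circ\timechange$.  Differentiating~\eqref{eq.deftimechange} gives $\timechange'(t) = 2\diln\shift(1-\diln t)$ and $\timechange(t)-\shift = -\shift(1-\diln t)^2$ on $[0,\lasttime]$.  Since $f$ is of finite variation while $f'$ carries the It\^o term $\frac{2}{\sqrt\beta}f\,dB$, the time-changed pair satisfies
\[
dF = 2\diln\shift(1-\diln t)\,F'\,dt,\qquad dF' = -2\diln\shift^2(1-\diln t)^3\,F\,dt + \frac{2}{\sqrt\beta}F\,dB(\timechange(t)),
\]
where $B\circ\timechange$ is a continuous martingale of quadratic variation $\timechange$.

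Next, introduce the normalised coordinates $X(t) \defeq \shift^{1/4}\sqrt{1-\diln t}\,F(t)$ and $Y(t) \defeq F'(t)/(\shift^{1/4}\sqrt{1-\diln t})$, chosen so that the target representation reads $X = C_{f0}e^\amp\cos\phase$, $Y = C_{f0}e^\amp\sin\phase$; the stated initial conditions then follow immediately from $X(0)^2+Y(0)^2 = \sqrt\shift f^2(0) + f'^2(0)/\sqrt\shift = C_{f0}^2$ and $Y(0)/X(0) = f'(0)/(\sqrt\shift f(0))$.  A direct It\^o computation, using that $X$ is of finite variation and hence contributes no quadratic variation, produces clean SDEs for $X$ and $Y$ whose drifts involve $(1-\diln t)^2$ and $1/(1-\diln t)$ and whose only noise term is proportional to $X\,dB(\timechange(t))/((1-\diln t)\sqrt\shift)$.

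Finally, I apply It\^o once more to $\amp = \frac{1}{2}\log((X^2+Y^2)/C_{f0}^2)$ and to $\phase = \arctan(Y/X)$, carefully tracking the It\^o corrections coming from the $\log$ and the $\arctan$.  Rewriting everything in $(\amp,\phase)$-variables via $X = C_{f0}e^\amp\cos\phase$, $Y = C_{f0}e^\amp\sin\phase$ and using the double-angle identities
\[
2\sin\phase\cos\phase = \sin 2\phase,\quad 2\cos^2\phase = 1+\cos 2\phase,\quad 2\sin^2 2\phase = 1-\cos 4\phase,
\]
together with $8\sin\phase\cos^3\phase = 2\sin 2\phase + \sin 4\phase$ and $8\cos^4\phase = 3 + 4\cos 2\phase + \cos 4\phase$, converts the drifts into the exact combinations of $\sin 2\phase$, $\cos 2\phase$, $\sin 4\phase$, $\cos 4\phase$ stated in the proposition.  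The common noise term becomes a scalar multiple of $\sqrt{\diln/(1-\diln t)}\,d\sBM$ once $\sBM$ is defined as the stated normalised stochastic integral of $dB$; that $\sBM$ is a standard Brownian motion adapted to $\{\mathscr F_{\timechange(t)}\}$ follows from a direct computation of its quadratic variation and L\'evy's characterisation.  The main obstacle is the length of this bookkeeping---ensuring that the It\^o corrections from the two successive changes of variables combine into precisely the announced drift coefficients---but once the trigonometric identities above are in hand, the verification is mechanical.
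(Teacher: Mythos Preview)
Your proposal is correct and follows essentially the same route as the paper. The paper parametrizes via the complex exponential $e^{r+i\xi} = S\,(f\circ\timechange) + i(f\circ\timechange)'/S$ with an initially undetermined factor $S$, then chooses $S = \timechange'/(\sqrt{2\diln}\,\shift^{1/4})$ to simplify the resulting SDEs and finally subtracts a deterministic piece of $r$ to obtain $\amp$; your $(X,Y)$ amounts to fixing the equivalent normalization from the outset (so your $\phase$ is the paper's $\xi$ and your $\amp$ is the paper's $r-\tilde r$), after which the It\^o and trigonometric bookkeeping is identical.
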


\begin{proof}
Let \(y \defeq f \circ \timechange\) and define two real-valued processes \(r\) and \(\xi\) from
\[
e^{r + i\xi} = Sy + \frac{iy'}{S}
\]
where \(S \colon [0,\lasttime] \to (0,\infty)\) will be specified later. Note that since \(\sAiryop f = 0\), \(f\) and \(f'\) satisfy the coupled SDEs given in~\eqref{eq.SAE} with \(z = \shift\). It then follows from the Dambis--Dubins--Schwarz theorem that
\[
\diff{(f'\circ\timechange)}(t)
    = \bigl( \timechange(t) - \shift \bigr) f\circ\timechange(t) \timechange'(t) \diff{t} + \frac{2}{\sqrt{\beta}} f\circ\timechange(t) \sqrt{\timechange'(t)} \diff{\sBM}(t)
\]
where \(\sBM\) is the Brownian motion defined in the Proposition statement. Itô's formula then shows that
\[
\diff{y'}(t)
    = \timechange''(t) f' \circ \timechange(t) \diff{t} + \timechange'(t) \biggl( \bigl( \timechange(t) - \shift \bigr) f \circ\timechange(t) \timechange'(t) \diff{t} + \frac{2}{\sqrt{\beta}} f\circ\timechange(t) \sqrt{\timechange'(t)} \diff{\sBM}(t) \biggr).
\]
Omitting the explicit time dependence to simplify notation, this simplifies to
\[
\diff{y'}
    = \frac{\timechange''}{\timechange'} y' \diff{t} + (\timechange')^2 (\timechange - \shift) y \diff{t} + \frac{2}{\sqrt{\beta}} (\timechange')^{\nicefrac{3}{2}} y \diff{\sBM}.
\]

We now use the above expression to find the Itô differentials of \(r\) and \(\xi\). Applying Itô's formula,
\begin{align*}
\diff{r} + i\diff{\xi}
    & = \diff{\Bigl( \log\bigl( Sy + \nicefrac{iy'}{S} \bigr) \Bigr)} \\
    & = \frac{1}{Sy + \nicefrac{iy'}{S}} \biggl( \Bigl( S'y + Sy' \Bigr) \diff{t} + i \Bigl( \frac{\timechange''}{\timechange'} \frac{y'}{S} + (\timechange')^2 (\timechange - \shift) \frac{y}{S} - \frac{S'y'}{S^2} \Bigr) \diff{t} + \frac{2i}{\sqrt{\beta}} (\timechange')^{\nicefrac{3}{2}} \frac{y}{S} \diff{\sBM} \biggr) \\
    &\hspace*{88mm} + \frac{2}{\beta} \frac{1}{(Sy + \nicefrac{iy'}{S})^2} (\timechange')^3 \frac{y^2}{S^2} \diff{t}
    \\
    & = e^{-2r} \biggl( \Bigl( SS'y^2 + S^2yy' + \frac{\timechange''}{\timechange'} \frac{(y')^2}{S^2} + (\timechange')^2 (\timechange - \shift) \frac{yy'}{S^2} - \frac{S'(y')^2}{S^3} \Bigr) \diff{t} + \frac{2}{\sqrt{\beta}} (\timechange')^{\nicefrac{3}{2}} \frac{yy'}{S^2} \diff{\sBM} \\
    &\hspace*{22mm} + i\Bigl( \frac{\timechange''}{\timechange'} yy' + (\timechange')^2(\timechange-\shift) y^2 - \frac{2S'yy'}{S} - (y')^2 \Bigr) \diff{t} + \frac{2i}{\sqrt{\beta}} (\timechange')^{\nicefrac{3}{2}} y^2 \diff{\sBM} \biggr) \\
    &\hspace*{22mm} + \frac{2}{\beta} e^{-4r} (\timechange')^3 \Bigl( y^4 - \frac{y^2(y')^2}{S^4} - 2i \frac{y^3y'}{S^2} \Bigr) \diff{t}.
\end{align*}
We thus get the Itô differentials of \(r\) and \(\xi\) by reading off the real and imaginary parts of the equation. Using the identities \(y = \frac{1}{S} e^r \cos\xi\) and \(y' = Se^r \sin\xi\), these differentials can be written only in terms of \(r\) and \(\xi\) as
\begin{subequations}
\label{eq.polarcoords.S}
\begin{align}
\label{eq.polarcoords.S.dr}
\begin{split}
\diff{r}
    & = \frac{2}{\sqrt{\beta}} \frac{(\timechange')^{\nicefrac{3}{2}}}{S^2} \cos\xi\sin\xi \diff{\sBM} + \biggl( \frac{S'}{S} \cos^2\xi + \Bigl( S^2 + \frac{(\timechange')^2 (\timechange-\shift)}{S^2} \Bigr) \cos\xi\sin\xi \\
    &\hspace*{44mm} + \Bigl( \frac{\timechange''}{\timechange'} - \frac{S'}{S} \Bigr) \sin^2\xi + \frac{2}{\beta} \frac{(\timechange')^3}{S^4} \cos^2\xi \bigl( \cos^2\xi - \sin^2\xi \bigr) \biggr) \diff{t}
\end{split}
\shortintertext{and}
\label{eq.polarcoords.S.dxi}
\begin{split}
\diff{\xi}
    & = \frac{2}{\sqrt{\beta}} \frac{(\timechange')^{\nicefrac{3}{2}}}{S^2} \cos^2\xi \diff{\sBM} + \biggl( \frac{(\timechange')^2 (\timechange-\shift)}{S^2} \cos^2\xi + \Bigl( \frac{\timechange''}{\timechange'} - \frac{2S'}{S} \Bigr) \cos\xi\sin\xi \\
    &\hspace*{72mm} - S^2 \sin^2\xi - \frac{4}{\beta} \frac{(\timechange')^3}{S^4} \cos^3\xi\sin\xi \biggr) \diff{t}
\end{split}
\end{align}
\end{subequations}

We now choose \(S\) so as to simplify the equations \eqref{eq.polarcoords.S} on \([0, \lasttime]\). To do this, note that by definition of \(\timechange\) on that interval, \(\timechange(t) - \shift = -\shift(1 - \diln t)^2 = - \frac{1}{4\diln^2\shift} \bigl( \timechange'(t) \bigr)^2\). Hence, if
\[
S \defeq \frac{\timechange'}{\sqrt{2\diln}\shift^{\nicefrac{1}{4}}},
\qquadtext{then}
(\timechange')^2 (\timechange - \shift)
    = - \frac{(\timechange')^4}{4\diln^2\shift}
    = - S^4,
\]
which means that the coefficients of the \(\cos\xi\sin\xi \diff{t}\) and \(\sin^2\xi \diff{t}\) terms in \(\diff{r}\) cancel out, and that the coefficients of the \(\cos^2\xi \diff{t}\) and \(\sin^2\xi \diff{t}\) terms in \(\diff{\xi}\) are equal. Moreover, with that choice of \(S\),
\[
\frac{S'(t)}{S(t)} = \frac{\timechange''(t)}{\timechange'(t)} = - \frac{\diln}{1 - \diln t}
\qquadtext{and}
\frac{\bigl( \timechange'(t) \bigr)^3}{S^4(t)} = \frac{4\diln^2\shift}{\timechange'(t)} = \frac{2\diln}{1 - \diln t},
\]
so the stochastic differential equations for \(r\) and \(\xi\) become
\begin{subequations}
\begin{align}
\label{eq.polarcoords.dr}
\diff{r}(t)
    & = \Bigl( - \cos^2\xi(t) + \frac{4}{\beta} \cos^2\xi(t) \bigl( \cos^2\xi(t) - \sin^2\xi(t) \bigr) \Bigr) \frac{\diln}{1 - \diln t} \diff{t} + \frac{2\sqrt{2}}{\sqrt{\beta}} \cos\xi(t) \sin\xi(t) \sqrt{\frac{\diln}{1 - \diln t}} \diff{\sBM}(t)
\shortintertext{and}
\begin{split}
\label{eq.polarcoords.dxi}
\diff{\xi}(t)
    & = - \Bigl( 2\diln \shift^{\nicefrac{3}{2}} (1 - \diln t)^2 - \frac{\diln}{1 - \diln t} \cos\xi(t) \sin\xi(t) + \frac{8}{\beta} \frac{\diln}{1 - \diln t} \cos^3\xi(t) \sin\xi(t) \Bigr) \diff{t} \\
    & \hspace*{88mm} + \frac{2\sqrt{2}}{\sqrt{\beta}} \cos^2\xi(t) \sqrt{\frac{\diln}{1 - \diln t}} \diff{\sBM}(t).
\end{split}
\end{align}
\end{subequations}
Simplifying with trigonometric identities, we see that \(\phase \defeq \xi\) satisfies the desired stochastic differential equation. For the radial part, we first rewrite \eqref{eq.polarcoords.dr} as 
\[
\diff{r}(t) = \biggl( \frac{1}{\beta} - \frac{1}{2} + \Bigl( \frac{2}{\beta} - \frac{1}{2} \Bigr) \cos 2\xi(t) + \frac{1}{\beta} \cos 4\xi(t) \biggr) \frac{\diln}{1 - \diln t} \diff{t} + \sqrt{\frac{2}{\beta}} \sin 2\xi(t) \sqrt{\frac{\diln}{1 - \diln t}} \diff{\sBM}(t).
\]
Then we can define
\[
\tilde{r}(t)
    \defeq r(0) - \frac{1}{2} \int_0^t \frac{\diln}{1 - \diln s} \diff{s}
    = r(0) + \frac{1}{2} \log(1 - \diln t)
\]
in order to integrate out explicitly the term \(-\frac{1}{2} \frac{\diln}{1-\diln t} \diff{t}\) from the equation for \(r\), and \(\amp \defeq r - \tilde{r}\) satisfies the desired stochastic differential equation. Moreover, \(\amp(0) = 0\) by definition, and 
\[
e^{r(0) + i\xi(0)} = \sqrt{2\diln} \shift^{\nicefrac{3}{4}} f(0) + i\sqrt{2\diln} \shift^{\nicefrac{1}{4}} f'(0)
\]
so
\[
\xi(0) = \arctan\Bigl( \frac{f'(0)}{\sqrt{\shift} f(0)} \Bigr)
\qquadtext{and}
e^{r(0)} = \sqrt{2\diln\shift \Bigl( \sqrt{\shift} f^2(0) + \frac{1}{\sqrt{\shift}} {f'}^2(0) \Bigr)} \eqdef \sqrt{2\diln\shift} C_{f0}.
\]
This finally yields the announced representations
\begin{align*}
f \circ \timechange(t)
    & = \frac{1}{S(t)} e^{r(t)} \cos\xi(t)
    = \frac{\sqrt{2\diln} \shift^{\nicefrac{1}{4}}}{\timechange'(t)} e^{\tilde{r}(t) + \amp(t)} \cos\phase(t)
    = \frac{C_{f0}}{\shift^{\nicefrac{1}{4}} \sqrt{1 - \diln t}} \, e^{\amp(t)} \cos\phase(t)
\shortintertext{and}
f' \circ \timechange(t)
    & = \frac{S(t)}{\timechange'(t)} e^{r(t)} \sin\xi(t)
    = \frac{1}{\sqrt{2\diln} \shift^{\nicefrac{1}{4}}} e^{\tilde{r}(t) + \amp(t)} \sin\phase(t)
    = C_{f0} \shift^{\nicefrac{1}{4}} \sqrt{1 - \diln t}\, e^{\amp(t)} \sin\phase(t).
\qedhere
\end{align*}
\end{proof}

Using the polar coordinates introduced in Proposition~\ref{prop.polarcoords}, we can write the time-changed fundamental solutions \(\sAiryDirichlet\circ\timechange\) and \(\sAiryNeumann\circ\timechange\) from pairs \((\ampD, \phaseD)\) and \((\ampN, \phaseN)\) as 
\begin{subequations}
\label{eq.polarcoordsDN}
\begin{align}
\label{eq.polarcoordsD}
\shift^{\nicefrac{-1}{4}} \sAiryDirichlet\circ\timechange(t) 
    & = \frac{1}{\shift^{\nicefrac{1}{4}} \sqrt{1 - \diln t}}\, e^{\ampD(t)} \cos\phaseD(t), \\
\label{eq.polarcoordsN}
\shift^{\nicefrac{1}{4}} \sAiryNeumann\circ\timechange(t)
    & = \frac{1}{\shift^{\nicefrac{1}{4}} \sqrt{1 - \diln t}}\, e^{\ampN(t)} \cos\phaseN(t),
\intertext{with \(\phaseD(0) = 0\) and \(\phaseN(0) = \nicefrac{\pi}{2}\), and we can write their derivatives as}
\label{eq.polarcoordsD'}
\shift^{\nicefrac{-1}{4}} \sAiryDirichlet'\circ\timechange(t) 
    & = \shift^{\nicefrac{1}{4}} \sqrt{1 - \diln t}\, e^{\ampD(t)} \sin\phaseD(t), \\
\label{eq.polarcoordsN'}
\shift^{\nicefrac{1}{4}} \sAiryNeumann'\circ\timechange(t)
    & = \shift^{\nicefrac{1}{4}} \sqrt{1 - \diln t}\, e^{\ampN(t)} \sin\phaseN(t).
\end{align}
\end{subequations}
These four polar coordinates are not completely independent from one another. Indeed, as \(\sAiryDirichlet\) and \(\sAiryNeumann\) both solve \(\sAiryop f = 0\), their Wronskian \(\sAiryDirichlet\quasi{\sAiryNeumann} - \quasi{\sAiryDirichlet}\sAiryNeumann\) is constant. It is thus equal to its value at \(0\), which is \(1\), at all times. Hence,
\[
1 \equiv (\sAiryDirichlet\sAiryNeumann' - \sAiryDirichlet'\sAiryNeumann)\circ\timechange
    = e^{\ampD+\ampN} \cos\phaseD \sin\phaseN - e^{\ampD+\ampN} \cos\phaseN \sin\phaseD,
\]
and it follows that
\beq{eq.ampsphasesWronskianidentity}
1 \equiv e^{\ampD+\ampN} \sin\bigl( \phaseN - \phaseD \bigr).
\eeq

Now, recall that the coefficient matrix of the time-changed Airy system is
\beq{eq.sAirymattimechanged}
\timechange'(t) \sAirymat \circ \timechange(t)
    = \frac{\timechange'(t)}{2\sqrt{\shift}} \begin{pmatrix} \sqrt{\shift} \sAiryNeumann^2 & \sAiryDirichlet\sAiryNeumann \\ \sAiryDirichlet\sAiryNeumann & \frac{1}{\sqrt{\shift}} \sAiryDirichlet^2 \end{pmatrix} \bigl(\timechange(t)\bigr).
\eeq
Changing variables to polar coordinates, this matrix becomes, for \(t \in [0,\lasttime]\),
\begin{align*}
\timechange' (\sAirymat\circ\timechange)
    & = \diln \begin{pmatrix} e^{2\ampN} \cos^2\phaseN & e^{\ampD + \ampN} \cos\phaseD\cos\phaseN \\[2mm] e^{\ampD + \ampN} \cos\phaseD\cos\phaseN & e^{2\ampD} \cos^2\phaseD \end{pmatrix} \\
    & = \frac{\diln e^{2\ampN}}{2} \begin{pmatrix} 1 & e^{-\diffamps}\cos\diffphases \\ e^{-\diffamps}\cos\diffphases & e^{-2\diffamps} \end{pmatrix} + \frac{\diln}{2} \begin{pmatrix} e^{2\ampN} \cos 2\phaseN & e^{\sumamps} \cos\sumphases \\ e^{\sumamps} \cos\sumphases & e^{2\ampD} \cos 2\phaseD \end{pmatrix}
\end{align*}
where we introduced the notations \(\diffamps \defeq \ampN - \ampD\) and \(\sumamps \defeq \ampN + \ampD\), and likewise for phases. Note that \eqref{eq.ampsphasesWronskianidentity} implies that \(e^{-\diffamps} \sin\diffphases = e^{-2\ampN}\). Using this to replace the prefactor, we get
\beq{eq.sAirymatpolar}
\timechange' (\sAirymat\circ\timechange) = \frac{\diln}{2e^{-\diffamps}\sin\diffphases} \begin{pmatrix} 1 & e^{-\diffamps}\cos\diffphases \\ e^{-\diffamps}\cos\diffphases & e^{-2\diffamps} \end{pmatrix} + \frac{\diln}{2} \begin{pmatrix} e^{2\ampN} \cos 2\phaseN & e^{\sumamps} \cos\sumphases \\ e^{\sumamps} \cos\sumphases & e^{2\ampD} \cos 2\phaseD \end{pmatrix}. 
\eeq
Recall that the coefficient matrix of the stochastic sine canonical system is given by
\beq{eq.sinematHBM}
\sinemat\circ\logtime = \frac{1}{2\Im\HBM\circ\logtime} \begin{pmatrix} 1 & -\Re\HBM\circ\logtime \\ -\Re\HBM\circ\logtime & \abs{\HBM\circ\logtime}^2 \end{pmatrix}
\eeq
where \(\HBM\) is a hyperbolic Brownian motion with variance \(\nicefrac{4}{\beta}\) started at \(i\) in the upper half-plane and \(\logtime\) is a logarithmic time-change. Comparing the two coefficient matrices, we see that \eqref{eq.sAirymatpolar} will converge to \eqref{eq.sinematHBM} if the second term of \eqref{eq.sAirymatpolar} vanishes in the vague limit (which is what we expect because of the increasingly fast oscillations of the phases \(\phaseD\) and \(\phaseN\)) and if \(-\exp\bigl(-\diffamps-i\diffphases\bigr)\) converges to a hyperbolic Brownian motion with variance \(\nicefrac{4}{\beta}\) started at \(i\) in the upper half-plane, run in logarithmic time. 

To see how this can be true, we can compute the Itô differential of the process \(-\exp\bigl( -\diffamps - i\diffphases \bigr)\). First, taking the differences of the SDEs from Proposition~\ref{prop.polarcoords}, we get
\begin{align}
\label{eq.ddiffamps}
\begin{split}
\diff{\diffamps}(t)
    & = \frac{2\sqrt{2}}{\sqrt{\beta}} \sin\diffphases(t) \cos\sumphases(t) \sqrt{\frac{\diln}{1 - \diln t}} \diff{\sBM}(t) \\
    & \hspace*{18mm} - \biggl( \Bigl( \frac{4}{\beta} - 1 \Bigr) \sin\diffphases(t) \sin\sumphases(t) + \frac{2}{\beta} \sin 2\diffphases(t) \sin 2\sumphases(t) \biggr) \frac{\diln}{1-\diln t} \diff{t}
\end{split} \\
\shortintertext{and}
\label{eq.ddiffphases}
\begin{split}
\diff{\diffphases}(t)
    & = - \frac{2\sqrt{2}}{\sqrt{\beta}} \sin\diffphases(t) \sin\sumphases(t) \sqrt{\frac{\diln}{1 - \diln t}} \diff{\sBM}(t) \\
    & \hspace*{18mm} - \biggl( \Bigl( \frac{4}{\beta} - 1 \Bigr) \sin\diffphases(t) \cos\sumphases(t) + \frac{2}{\beta} \sin 2\diffphases(t) \cos 2\sumphases(t) \biggr) \frac{\diln}{1-\diln t} \diff{t}.
\end{split}
\end{align}
Then, applying Itô's formula to \(-\exp\bigl( -\diffamps - i\diffphases \bigr)\) and using the expressions \eqref{eq.ddiffamps} and \eqref{eq.ddiffphases}, we get
\beq{eq.dalmostHBM}
\begin{aligned}
\diff{\bigl( -e^{-\diffamps-i\diffphases} \bigr)}(t)
    & = \frac{2}{\sqrt{\beta}} e^{-\diffamps(t)} \sin\diffphases(t) \, e^{-2i\phaseN(t)} \sqrt{\frac{2\diln}{1-\diln t}} \diff{\sBM}(t) \\
    & \hspace*{22mm} - ie^{-\diffamps(t)} \sin\diffphases(t) \biggl( \Bigl( \frac{4}{\beta} - 1 \Bigr) e^{-2i\phaseN(t)} + \frac{4}{\beta} e^{-4i\phaseN(t)} \biggr) \frac{\diln}{1 - \diln t} \diff{t}.
\end{aligned}
\eeq
Due to the strong oscillations of \(e^{-2i\phaseN}\) and \(e^{-4i\phaseN}\), it is reasonable to expect the second line here to vanish as \(\shift\to\infty\). Neglecting the second line, this SDE has the same form as the SDE for a hyperbolic Brownian motion in the upper half-plane, but where the driving complex Brownian motion is replaced with the process
\beq{eq.CBM}
\int_0^t e^{-2i\phaseN(s)} \sqrt{\frac{2\diln}{1-\diln s}} \diff{\sBM}(s).
\eeq
We will see that indeed, this process converges in distribution to a complex Brownian motion run in logarithmic time.

\section{A coupling between the Airy and sine systems}
\label{sec.coupling}

In this section, we start the proof of Theorem~\ref{thm.CSconvinlaw}, that is, the proof of the vague convergence of the canonical system for \(\sAiryop\) to the sine canonical system as \(\shift\to\infty\). Following the ideas developed at the end of the last section, the first step of the proof is to build a coupling between real Brownian motions \(\sBM\) and a single complex Brownian motion \(W\) such that the process~\eqref{eq.CBM} becomes pathwise close to \(W\) as \(\shift\to\infty\). This first step is the purpose of this section and the content of the following lemma.

\begin{lemma}
\label{lem.probspace}
\setshift{E_n}
Let \(\{\shift\}_{n\in\mathbb{N}} \subset (0,\infty)\) satisfy \(\shift \to \infty\). There exists a probability space on which are defined a collection \(\{\sBM\}_{n\in\mathbb{N}}\) of standard real Brownian motions and a standard complex Brownian motion \(W\) such that if \(\phaseN\) is the solution with \(\phaseN(0) = \nicefrac{\pi}{2}\) to the SDE from Proposition~\ref{prop.polarcoords} driven by \(\sBM\), then for any \(\alpha \in (0,\nicefrac{1}{2})\) and \(\delta \in (0,\alpha)\), for all \setshift{E}\(\shift \in \{\shift_n\}_{n\in\mathbb{N}}\) large enough,
\beq{eq.probspace}
\bprob[\bigg]{\sup_{t\in\timedom} \abs[\bigg]{\int_0^t e^{-2i\phaseN(s)} \sqrt{\frac{2\diln}{1 - \diln s}} \diff{\sBM}(s) - W\circ\slogtime(t)} \geq \shift^{-\alpha+\delta}} 
    \leq 3\shift^{4\alpha} \log^2\shift \exp\bigl( -C\shift^{\nicefrac{2\delta}{3}} \bigr)
\eeq
where \(\timedom \defeq [0, (1 - \shift^{\nicefrac{-1}{2}+\alpha}) / \diln]\), \(\slogtime(t) \defeq -\log(1 - \diln t)\), and where \(C > 0\) depends only on \(\beta\), \(\alpha\) and \(\delta\).
\end{lemma}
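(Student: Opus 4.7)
The plan is to build the coupling by defining, on an enlarged probability space carrying an independent standard real Brownian motion $\eta_n$ for each $n$, the process $\sBM$ as the pathwise solution to a coupled SDE in which the driving noise of $M_n(t) \defeq \int_0^t e^{-2i\phaseN(s)}\sqrt{\frac{2\diln}{1-\diln s}}\,\diff{\sBM}(s)$ aligns, at each instant, with the projection of $\diff{W}(\slogtime(s))$ onto the rotating direction $e^{-2i\phaseN(s)}$, with $\eta_n$ supplying the complementary noise needed to keep $\sBM$ a standard real Brownian motion. Under this coupling, the trigonometric identities $e^{-2i\phaseN}\cos 2\phaseN = (1+e^{-4i\phaseN})/2$ and $-e^{-2i\phaseN}\sin 2\phaseN = i(1-e^{-4i\phaseN})/2$ express the error $M_n(t) - W(\slogtime(t))$ as a finite sum of stochastic integrals whose integrands carry the second-harmonic oscillating phase $e^{-4i\phaseN(s)}$.

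The next step is to exploit the rapid oscillation of $\phaseN$. From Proposition~\ref{prop.polarcoords} its dominant drift is $-2\diln\shift^{\nicefrac{3}{2}}(1-\diln s)^2$, which in log-time $\tau = \slogtime(s)$ reads $-2\shift^{\nicefrac{3}{2}}e^{-3\tau}$ and is bounded below in absolute value by $2\shift^{3\alpha}$ on the relevant range $\tau \in [0,(\tfrac{1}{2}-\alpha)\log\shift]$. Each oscillating integral $\int_0^t e^{-4i\phaseN(s)}\,g(s)\,\diff{\mu}(s)$ arising in the error decomposition --- with $\mu$ either Lebesgue measure or a Brownian motion and $g$ slowly varying --- is transformed by an It\^o integration by parts based on $\diff{(e^{-4i\phaseN})} = -4i\, e^{-4i\phaseN}\,\diff{\phaseN} - 8\, e^{-4i\phaseN}\,\diff{\langle\phaseN\rangle}$ into a boundary term of order $\shift^{-3\alpha}$ plus a residual martingale whose quadratic variation is reduced by the factor $1/|\diff{\phaseN}/\diff{s}|$.

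The Bernstein-type martingale concentration inequality from Appendix~\ref{apx.concentration} then bounds the supremum of each residual martingale at level $\shift^{-\alpha+\delta}$ with exceptional probability at most $\exp(-C\shift^{\nicefrac{2\delta}{3}})$, the exponent $\nicefrac{2\delta}{3}$ arising from optimising the Bernstein tradeoff between deviation level and quadratic variation. To upgrade this pointwise estimate to the supremum over $\diln t \in [0, 1-\shift^{-\nicefrac{1}{2}+\alpha}]$, I would discretize the interval into $\asymp \shift^{4\alpha}\log^2\shift$ sub-intervals on which $|M_n(t) - W(\slogtime(t))|$ varies by at most $\shift^{-\alpha+\delta}/2$ --- controlled by Brownian continuity of $W$ together with the quadratic-variation bounds on the residual martingales --- and take a union bound.

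The hardest step I anticipate is the bookkeeping in the It\^o integration by parts: each application produces It\^o corrections from the diffusion coefficient of $\phaseN$ which do not directly gain from rapid oscillation and must either be iterated or absorbed into the Bernstein bound. A secondary subtlety is the degradation of the oscillation rate as $\diln s \to 1$: the lower bound $|\diff{\phaseN}/\diff{s}| \gtrsim \shift^{3\alpha}$ fails past $\diln s = 1 - \shift^{-\nicefrac{1}{2}+\alpha}$, which is precisely what forces the lemma's restriction on the supremum range.
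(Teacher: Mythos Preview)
Your approach has a genuine gap at the very first step: no instantaneous SDE coupling of the one-dimensional real Brownian motion $\sBM$ to the two-dimensional complex Brownian motion $W$ can reduce $M_n - W\circ\slogtime$ to stochastic integrals carrying \emph{only} an oscillating phase. For the natural choice $d\sBM = \cos(2\phaseN)\,d\tilde W_1 - \sin(2\phaseN)\,d\tilde W_2$ (writing $d(W\circ\slogtime) = \sqrt{\slogtime'}\,d\tilde W$), your own identities give
\[
dM_n \;=\; \tfrac{1}{\sqrt{2}}\,\sqrt{\slogtime'}\,d\tilde W \;+\; \tfrac{1}{\sqrt{2}}\,\sqrt{\slogtime'}\,e^{-4i\phaseN}\,d\overline{\tilde W},
\]
so $d(M_n - W\circ\slogtime)$ retains the non-oscillating piece $\bigl(\tfrac{1}{\sqrt{2}}-1\bigr)\sqrt{\slogtime'}\,d\tilde W$, whose integral has size of order $\sqrt{\log\shift}$. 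This is not an artefact of the particular coupling: $M_n$ is driven by a single real Brownian motion, so $d\langle M_n, M_n\rangle = 2e^{-4i\phaseN}\slogtime'\,dt$ is nonzero while $d\langle W, W\rangle = 0$, and for \emph{any} adapted coefficients $a^2+b^2+c^2 = 1$ in $d\sBM = a\,d\tilde W_1 + b\,d\tilde W_2 + c\,d\eta_n$ the quadratic variation of $\Re(M_n - W\circ\slogtime)$ grows at a rate bounded below by a positive multiple of $\slogtime'$ after phase-averaging. A second problem is that even the genuinely oscillating piece $\int e^{-4i\phaseN}\sqrt{\slogtime'}\,d\overline{\tilde W}$ is not small: its quadratic variation is $\int\slogtime'\,ds$ and does not see the phase, so the It\^o integration by parts you invoke --- which does work for the $dt$-integrals of Lemma~\ref{lem.averaging} --- gains nothing for a stochastic integral against a Brownian motion.

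The paper avoids this obstruction by running the construction in the opposite direction and in discrete time. Starting from $\sBM$, it replaces $\phaseN$ on short intervals by its deterministic leading part $\theta$, so that the resulting increments $\mathscr{B}^\theta_j$ are genuinely Gaussian; the oscillation of $\theta$ forces their covariance $\Sigma_j$ close to $\sigma^2 I_2$, and the small linear correction $W_j = e^{-2i(\phaseN(t_{j-1})-\theta(t_{j-1}))}\sigma\Sigma_j^{-1/2}\mathscr{B}^\theta_j$ produces exact i.i.d.\ complex normals, which are then interpolated with independent Brownian bridges to build $W$. The two errors --- replacing $\phaseN$ by $\theta$, and the covariance correction --- are controlled by the subgaussian Freedman and Azuma inequalities of Appendix~\ref{apx.concentration}, and balancing the discretization scale $p = 2\alpha$ against the deviation level $\shift^{-\alpha+\delta}$ is what produces the exponent $\shift^{2\delta/3}$.
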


\begin{remark}
This probability space therefore supports a sequence of shifted and scaled stochastic Airy systems. Indeed, each Brownian motion \(\sBM\) can be taken to define two pairs of polar coordinates \((\ampD, \phaseD)\) and \((\ampN, \phaseN)\) as solutions to the SDEs from Proposition~\ref{prop.polarcoords} with \(\ampD(0) = \ampN(0) = \phaseD(0) = 0\) and \(\phaseN(0) = \nicefrac{\pi}{2}\), and these can be used to define fundamental solutions \(\sAiryDirichlet\) and \(\sAiryNeumann\) as in \eqref{eq.polarcoordsDN}, which then allow to define a coefficient matrix \eqref{eq.sAirymattimechanged} for the canonical system equivalent to \(\sAiryop\). In the same way, the probability space also supports a stochastic sine canonical system, since its coefficient matrix can be defined from a hyperbolic Brownian motion driven by \(W\). 
\end{remark}

\begin{proof}
To build an appropriate probability space, we start by fixing an \(\shift > 0\) and building a coupling between a real and a complex Brownian motions in such a way that the above property is satisfied. To do this, we first couple a discretization of the integral in \eqref{eq.probspace} with a discrete random walk, which we then extend to a complex Brownian motion. Then, we combine all spaces together.

\setcounter{step}{-1}
\step{Discrete time setup}

Fix \(\shift > 0\). Let \(\sBM\) be a standard real Brownian motion, and let \(\phaseN\) be the solution with \(\phaseN(0) = 0\) to the SDE from Proposition~\ref{prop.polarcoords} driven by \(\sBM\). We discretize the time interval \([0, \lasttime]\) with a partition \(\{t_j\}_{j=0}^N\) with the bounds \(t_0 \defeq 0\) and \(\diln t_N \defeq 1 - \nicefrac{1}{\sqrt{\shift}}\), and with
\[
\diln t_j \defeq 1 - \frac{1}{(1 + \shift^{-p})^j}
\quadtext{for} 1 \leq j \leq N-1
\]
and a parameter \(p > 0\) that will be specified later. This means that we take
\beq{eq.probspace.Ndefandbound}
N \defeq \ceil[\Big]{\frac{\log\shift}{2\log(1 + \shift^{-p})}}
    \leq 1 + \frac{\shift^p \log\shift}{2 - \shift^{-p}}
\eeq
where the inequality follows from \(\log(1 + x) \geq x(1 - \nicefrac{x}{2})\), which holds for any \(x > 0\). 

We begin by considering the first integral in \eqref{eq.probspace} at discrete times \(\{t_j\}_{j=1}^N\), that is, we focus on the discrete martingale
\beq{eq.probspace.defBxi}
\sum_{j=1}^n \mathscr{B}^\xi_j
\qquadtext{where}
\mathscr{B}^\xi_j \defeq \int_{t_{j-1}}^{t_j} e^{-2i\phaseN(s)} \sqrt{\frac{2\diln}{1 - \diln s}} \diff{\sBM}(s)
\eeq
for \(n \in \{1, \hdots, N\}\). To compare this with a discretization of a complex Brownian motion, we will first compare it with the discrete martingale
\beq{eq.probspace.defBtheta}
\sum_{j=1}^n e^{-2i{(\phaseN(t_{j-1}) - \theta(t_{j-1}))}} \mathscr{B}^\theta_j
\qquadtext{where}
\mathscr{B}^\theta_j \defeq \int_{t_{j-1}}^{t_j} e^{-2i\theta(s)} \sqrt{\frac{2\diln}{1 - \diln s}} \diff{\sBM}(s)
\eeq
and where \(\theta\) is the deterministic part of \(\phaseN\), that is, \(\theta\) solves
\beq{eq.probspace.deftheta}
\theta'(t) = -2\diln \shift^{\nicefrac{3}{2}} (1 - \diln t)^2
\quadtext{with}
\theta(0) = \nicefrac{\pi}{2},
\qquadtext{so}
\theta(t) = \frac{\pi}{2} - \frac{2}{3} \shift^{\nicefrac{3}{2}} \bigl( 1 - (1 - \diln t)^3 \bigr).
\eeq
The \(\mathscr{B}^\theta_j\)'s are Gaussian, and due to the fast oscillations of \(\theta\), we expect their real and imaginary parts to be almost independent and to have variances close to
\beq{eq.probspace.defsigma}
\sigma^2
    \defeq \int_{t_{j-1}}^{t_j} \frac{\diln}{1 - \diln s} \diff{s}
    = \log\Bigl( \frac{1 - \diln t_{j-1}}{1 - \diln t_j} \Bigr)
    = \log(1 + \shift^{-p})
\eeq
for \(j < N\). The actual correlation structure of \(\mathscr{B}^\theta_j\) is given by the correlation matrix
\[
\Sigma_j \defeq \begin{pmatrix}
    \expect(\Re\mathscr{B}^\theta_j)^2 & \expect\Re\mathscr{B}^\theta_j\Im\mathscr{B}^\theta_j \\
    \expect\Re\mathscr{B}^\theta_j\Im\mathscr{B}^\theta_j & \expect(\Im\mathscr{B}^\theta_j)^2
\end{pmatrix},
\]
so if we define
\beq{eq.probspace.defWj}
W_j \defeq e^{-2i(\phaseN(t_{j-1}) - \theta(t_{j-1}))} \sigma \Sigma_j^{\nicefrac{-1}{2}} \mathscr{B}^\theta_j
\eeq
where we identify \(x + iy \in \mathbb{C}\) with \((x,y) \in \mathbb{R}^2\), then \(W_j \sim \mathbb{C}\normal{0}{\sigma^2}\) in the sense that its real and imaginary parts are independent and distributed as \(\normal{0}{\sigma^2}\). Indeed, multiplying \(\mathscr{B}^\theta_j\) by \(\Sigma_j^{\nicefrac{-1}{2}}\) removes its correlation structure so \(\sigma \Sigma_j^{\nicefrac{-1}{2}} \mathscr{B}^\theta_j \sim \mathbb{C}\normal{0}{\sigma^2}\), and since the multivariate normal distribution is invariant under multiplication by orthogonal martices, the multiplication by \(e^{-2i(\phaseN(t_{j-1}) - \theta(t_{j-1}))}\) has no effect on the distribution. This also shows that the \(W_j\)'s are independent.

Our first goal is thus to study, for \(n \in \{1, \hdots, N\}\),
\[
\sum_{j=1}^n (\mathscr{B}^\xi_j - W_j)
    = \Delta^{\xi\theta}_n + \Delta^{\theta W}_n
\]
where
\[
\Delta^{\xi\theta}_n
    \defeq \sum_{j=1}^n \bigl( \mathscr{B}^\xi_j - e^{-2i(\phaseN(t_{j-1}) - \theta(t_{j-1}))} \mathscr{B}^\theta_j \bigr)
\qquadtext{and}
\Delta^{\theta W}_n
    \defeq \sum_{j=1}^n \bigl( e^{-2i(\phaseN(t_{j-1}) - \theta(t_{j-1}))} \mathscr{B}^\theta_j - W_j \bigr).
\]
Setting \(\Delta^{\xi\theta}_0 \defeq 0 \eqdef \Delta^{\theta W}_0\), both \(\{\Delta^{\xi\theta}_n\}_{n=0}^N\) and \(\{\Delta^{\theta W}_n\}_{n=0}^N\) are martingales with respect to the filtration \(\{\mathscr{F}_j\}_{j=0}^N\) generated by \(\{\sBM(t_j)\}_{j=0}^N\). 

\step{Replacing the phase by its deterministic approximation}

We first control the martingale \(\{\Delta^{\xi\theta}_n\}_{n=0}^N\). Fixing \(j \in \{1, \hdots, N-1\}\) and using the identity \(e^{-2ix} - e^{-2iy} = 2ie^{-i(x+y)}\sin(y-x)\), we can write the martingale's \(j\)th increment as
\begin{multline*}
\mathscr{B}^\xi_j - e^{-2i(\phaseN(t_{j-1}) - \theta(t_{j-1}))} \mathscr{B}^\theta_j \\
    = 2i \int_{t_{j-1}}^{t_j} e^{-i(\phaseN(s) + \theta(s) - \theta(t_{j-1}) + \phaseN(t_{j-1}))} \sin\Bigl( \theta(s) - \theta(t_{j-1}) - \phaseN(s) + \phaseN(t_{j-1}) \Bigr) \sqrt{\frac{2\diln}{1 - \diln s}} \diff{\sBM}(s).
\end{multline*}
This expression already makes it obvious that the quadratic variation of the stochastic integral is bounded by \(8\sigma^2\), and therefore that the increment \(\mathscr{B}^\xi_j - e^{-2i(\phaseN(t_{j-1}) - \theta(t_{j-1}))} \mathscr{B}^\theta_j\) is \(8\sigma^2\)-subgaussian, both conditionally on \(\mathscr{F}_{j-1}\) and unconditionally. 

This control on the subgaussian constant of the increments is not enough to obtain a good control on the whole martingale, but we can improve it by getting a better control on the conditional variances of the increments. To do so, we apply Itô's isometry to the above expression for the \(j\)th increment, and as sine is \(1\)-Lipschitz, we get
\[
\bexpect[\Big]{\abs[\big]{\mathscr{B}^\xi_j - e^{-2i(\phaseN(t_{j-1}) - \theta(t_{j-1}))} \mathscr{B}^\theta_j}^2 \given[\Big] \mathscr{F}_{j-1}}
    \leq 8 \bexpect[\bigg]{\int_{t_{j-1}}^{t_j} \abs[\big]{\theta(s) - \theta(t_{j-1}) - \phaseN(s) + \phaseN(t_{j-1})}^2 \frac{\diln}{1 - \diln s} \diff{s} \given[\Big] \mathscr{F}_{j-1}}.
\]
Now, by definition of \(\theta\) and \(\phaseN\), for any \(s \in [t_{j-1}, t_j]\),
\begin{multline*}
\theta(s) - \theta(t_{j-1}) - \phaseN(s) + \phaseN(t_{j-1})
    = \int_{t_{j-1}}^s \biggl( \Bigl( \frac{2}{\beta} - \frac{1}{2} \Bigr) \sin 2\phaseN(u) + \frac{1}{\beta} \sin 4\phaseN(u) \biggr) \frac{\diln}{1 - \diln u} \diff{u} \\
    - \frac{2\sqrt{2}}{\sqrt{\beta}} \int_{t_{j-1}}^s \cos^2 \phaseN(u) \sqrt{\frac{\diln}{1 - \diln u}} \diff{\sBM}(u).
\end{multline*}
The first term is immediately bounded by \(\bigl( \frac{3}{\beta} + \frac{1}{2} \bigr) \sigma^2\), while the quadratic variation of the second one is bounded by \(\frac{8}{\beta} \sigma^2\) so Bernstein's inequality for martingales (see e.g.\@ \cite[Exercise~IV.3.16]{revuz_continuous_1999} for a general statement of that inequality) shows that for any \(x > 0\),
\[
\bprob[\bigg]{\sup_{s\in [t_{j-1}, t_j]} \abs[\Big]{\frac{2\sqrt{2}}{\sqrt{\beta}} \int_{t_{j-1}}^s \cos^2 \phaseN(u) \sqrt{\frac{\diln}{1 - \diln u}} \diff{\sBM}(u)} \geq x \given[\Big] \mathscr{F}_{j-1}} \leq 2 \exp\Bigl( - \frac{\beta x^2}{16 \sigma^2} \Bigr).
\]
On the complementary event,
\beq{eq.probspace.boundforVarBxi-Btheta}
\int_{t_{j-1}}^{t_j} \abs[\big]{\theta(s) - \theta(t_{j-1}) - \phaseN(s) + \phaseN(t_{j-1})}^2 \frac{\diln}{1 - \diln s} \diff{s}
    < \biggl( \Bigl( \frac{3}{\beta} + \frac{1}{2} \Bigr) \sigma^2 + x \biggr)^2 \sigma^2.
\eeq
Now, using a layer cake representation, we can write
\begin{multline*}
\bexpect[\Big]{\abs[\big]{\mathscr{B}^\xi_j - e^{-2i(\phaseN(t_{j-1}) - \theta(t_{j-1}))} \mathscr{B}^\theta_j}^2 \given[\Big] \mathscr{F}_{j-1}}
    \leq 8 \Bigl( \frac{3}{\beta} + \frac{1}{2} \Bigr)^2 \sigma^6 \\
    + 8 \int_{(\frac{3}{\beta} + \frac{1}{2})^2\sigma^6}^\infty \bprob[\bigg]{\int_{t_{j-1}}^{t_j} \abs[\big]{\theta(s) - \theta(t_{j-1}) - \phaseN(s) + \phaseN(t_{j-1})}^2 \frac{\diln}{1 - \diln s} \diff{s} \geq y \given[\Big] \mathscr{F}_{j-1}} \diff{y}.
\end{multline*}
Changing variables by setting \(y = \bigl( \bigl( \frac{3}{\beta} + \frac{1}{2} \bigr) \sigma^2 + x \bigr)^2 \sigma^2\), we can use the estimate~\eqref{eq.probspace.boundforVarBxi-Btheta} and integrate explicitly. Since \(\sigma^2 = \log(1 + \shift^{-p}) \leq \shift^{-p} \leq 1\) for any \(\shift \geq 1\), we find in that case
\beq{eq.probspace.VarBxi-Btheta}
\bexpect[\Big]{\abs[\big]{\mathscr{B}^\xi_j - e^{-2i(\phaseN(t_{j-1}) - \theta(t_{j-1}))} \mathscr{B}^\theta_j}^2 \given[\Big] \mathscr{F}_{j-1}} \leq C_\beta \sigma^4
\quadtext{with}
C_\beta \defeq 8 \Bigl( \frac{3}{\beta} + \frac{1}{2} \Bigr)^2 + 64 \sqrt{\frac{\pi}{\beta}} \Bigl( \frac{3}{\beta} + \frac{1}{2} \Bigr) + \frac{256}{\beta}.
\eeq

Finally, from the estimate \eqref{eq.probspace.VarBxi-Btheta} on the conditional variances of increments and the fact that the increments are \(8\sigma^2\)-subgaussian, both conditionally on their past and unconditionally, we can apply Corollary~\ref{cor.FreedmanSubG} of Freedman's inequality (see Theorem~\ref{thm.Freedman} in the appendix) to both the real and imaginary parts of the martingale. Because \(\sigma^2 \leq \shift^{-p}\) and \(N \leq \shift^p\log\shift\) for all \(\shift\) large enough by \eqref{eq.probspace.Ndefandbound}, this shows that
\beq{eq.probspace.Deltaxitheta}
\begin{aligned}
\bprob[\bigg]{\sup_{1\leq n\leq N-1} \abs{\Delta^{\xi\theta}_n} > x}
    \leq 2\shift^{2p} \log^2\shift \exp\biggl( - \frac{1}{2} \min\Bigl\{ & \Bigl( \frac{\shift^{\nicefrac{p}{2}} x\log 2}{12\sqrt{2}} \Bigr)^{\nicefrac{2}{3}}, \frac{\shift^p x^2 \log 2}{9C_\beta \log\shift} \Bigr\} \biggr) \\
    & + 4\exp\biggl( - \frac{x\log 2}{3\sqrt{2C_\beta} \log\shift} \exp\biggl( \frac{1}{4} \Bigl( \frac{\shift^{\nicefrac{p}{2}} x\log 2}{12\sqrt{2}} \Bigr)^{\nicefrac{2}{3}} \biggr) \biggr)
\end{aligned}
\eeq
for any \(\shift\) large enough.

\step{Comparing with a Gaussian random walk}

We now estimate the supremum of the martingale \(\{\Delta^{\theta W}_n\}_{n=1}^N\). To do so, our plan is to estimate the conditional variances of the increments, and then apply Azuma's inequality for subgaussian martingales. 

Recall that \(e^{-2i(\phaseN(t_{j-1}) - \theta(t_{j-1}))} \mathscr{B}^\theta_j - W_j = e^{-2i(\phaseN(t_{j-1}) - \theta(t_{j-1}))} (I_2 - \sigma\Sigma_j^{\nicefrac{-1}{2}}) \mathscr{B}^\theta_j\) by definition of \(W_j\), where again we identify complex numbers with vectors in \(\mathbb{R}^2\) for matrix products. This means that
\beq{eq.probspace.VarBtheta-W.1}
\bexpect[\Big]{\abs[\big]{e^{-2i(\phaseN(t_{j-1}) - \theta(t_{j-1}))} \mathscr{B}^\theta_j - W_j}^2 \given[\Big] \mathscr{F}_{j-1}} \leq \norm{I_2 - \sigma\Sigma_j^{\nicefrac{-1}{2}}}_2^2 \bexpect[\big]{\abs{\mathscr{B}^\theta_j}^2 \given[\big] \mathscr{F}_{j-1}}.
\eeq
The definition of \(\mathscr{B}^\theta_j\) directly yields
\beq{eq.probspace.VarBtheta}
\bexpect[\big]{\abs{\mathscr{B}^\theta_j}^2 \given[\big] \mathscr{F}_{j-1}}
    = \int_{t_{j-1}}^{t_j} \frac{2\diln}{1 - \diln s} \diff{s}
    = 2\sigma^2,
\eeq
so we only have to estimate the norm of the matrix \(I_2 - \sigma\Sigma_j^{\nicefrac{-1}{2}}\), which is its largest eigenvalue, in magnitude. To do so, note that by Itô isometry,
\begin{align*}
\expect\Re\mathscr{B}^\theta_j \Im\mathscr{B}^\theta_j
    & = - \int_{t_{j-1}}^{t_j} \cos 2\theta(s) \sin 2\theta(s) \frac{2\diln}{1 - \diln s} \diff{s}
    = - \int_{t_{j-1}}^{t_j} \sin 4\theta(s) \frac{\diln}{1 - \diln s} \diff{s}, \\
\expect(\Re\mathscr{B}^\theta_j)^2 - \sigma^2
    & = \int_{t_{j-1}}^{t_j} \bigl( 2 \cos^2 2\theta(s) - 1 \bigr) \frac{\diln}{1 - \diln s} \diff{s}
    = \int_{t_{j-1}}^{t_j} \cos 4\theta(s) \frac{\diln}{1 - \diln s} \diff{s}, \\
\expect(\Im\mathscr{B}^\theta_j)^2 - \sigma^2
    & = \int_{t_{j-1}}^{t_j} \bigl( 2 \sin^2 2\theta(s) - 1 \bigr) \frac{\diln}{1 - \diln s} \diff{s}
    = - \int_{t_{j-1}}^{t_j} \cos 4\theta(s) \frac{\diln}{1 - \diln s} \diff{s},
\end{align*}
so that
\[
\sigma^{-2} \Sigma_j - I_2 = \sigma^{-2} \begin{pmatrix} x & -y \\ -y & -x \end{pmatrix}
\qquadtext{for}
z = x + iy \defeq \int_{t_{j-1}}^{t_j} e^{4i\theta(s)} \frac{\diln}{1 - \diln s} \diff{s}.
\]
Hence, \(\sigma^{-2} \Sigma_j - I_2\) has eigenvalues \(\pm \sigma^{-2} \abs{z}\). This also means that \(\sigma^{-2} \Sigma_j\) has eigenvalues \(1 \pm \sigma^{-2} \abs{z}\), so that \(\sigma \Sigma_j^{\nicefrac{-1}{2}}\) has eigenvalues \((1 \pm \sigma^{-2} \abs{z})^{\nicefrac{-1}{2}}\). Writing \(\sigma^{-2} \Sigma_j - I_2 = (\sigma^{-1}\Sigma_j^{\nicefrac{1}{2}} - I_2)(\sigma^{-1}\Sigma_j^{\nicefrac{1}{2}} + I_2)\), we can then decompose \(I_2 - \sigma \Sigma_j^{\nicefrac{-1}{2}} = \sigma \Sigma_j^{\nicefrac{-1}{2}} (\sigma^{-2} \Sigma_j - I_2)(\sigma^{-1} \Sigma_j^{\nicefrac{1}{2}} + I_2)^{-1}\). Because \(\Sigma_j \succeq 0\), the eigenvalues of \(\sigma^{-1} \Sigma_j^{\nicefrac{1}{2}} + I_2\) must be at least \(1\), so combining our eigenvalue computations yields
\beq{eq.probspace.normsigmaSigma-I}
\norm{I_2 - \sigma\Sigma_j^{\nicefrac{-1}{2}}}_2 \leq \frac{\abs{z}}{\sigma^2 \sqrt{1 - \sigma^{-2} \abs{z}}}.
\eeq
To estimate \(\abs{z}\), we then use the fact that \(\theta'(s) = -2\diln \shift^{\nicefrac{3}{2}} (1 - \diln s)^2\) to integrate by parts in
\[
z = \frac{i}{8\shift^{\nicefrac{3}{2}}} \int_{t_{j-1}}^{t_j} e^{4i\theta(s)} \frac{4i\theta'(s)}{(1 - \diln s)^3} \diff{s}
    = \frac{i}{8\shift^{\nicefrac{3}{2}}} \biggl( \frac{e^{4i\theta(s)}}{(1 - \diln s)^3} \biggr\rvert_{t_{j-1}}^{t_j} - 3 \int_{t_{j-1}}^{t_j} e^{4i\theta(s)} \frac{\diln}{(1 - \diln s)^4} \diff{s} \biggr).
\]
This can be bounded as
\[
\abs{z}
    \leq \frac{1}{8\shift^{\nicefrac{3}{2}}} \biggl( \frac{1}{(1 - \diln t_j)^3} + \frac{1}{(1 - \diln t_{j-1})^3} + 3 \int_{t_{j-1}}^{t_j} \frac{\diln}{(1 - \diln s)^4} \diff{s} \biggr)
    = \frac{1}{4 \shift^{\nicefrac{3}{2}} (1 - \diln t_j)^3}
    \eqdef \frac{\delta_j}{4}.
\]
This can be used to control the norm in~\eqref{eq.probspace.normsigmaSigma-I}, but only if \(\delta_j\) is small compared to \(\sigma^2\). To check this, note that if \(\diln t_j \in [0, 1 - \shift^{\nicefrac{-1}{2} + \alpha}]\) for some \(\alpha \in (0, \nicefrac{1}{2})\), then \((1 - \diln t_j)^{-3} \leq \shift^{\nicefrac{3}{2}-3\alpha}\) so \(\delta_j \leq \shift^{-3\alpha}\). Combining this with the bound \(\log(1 + x) \geq x(1 - \nicefrac{x}{2})\) for \(x > 0\) by which \(2\sigma^2 \geq \shift^{-p} (2 - \shift^{-p})\),
\[
\frac{\delta_j}{2\sigma^2}
    \leq \frac{\shift^{-(3\alpha - p)}}{2 - \shift^{-p}}.
\]
If we take \(p < 3\alpha\), then the right-hand side vanishes when \(\shift\to\infty\), so \(\delta_j \leq 2\sigma^2\) for any \(\shift\) large enough. Then \(\sigma^{-2}\abs{z} \leq \nicefrac{1}{2}\), and the estimate~\eqref{eq.probspace.normsigmaSigma-I} becomes \(\norm{I_2 - \sigma\Sigma_j^{\nicefrac{-1}{2}}}_2 \leq \nicefrac{\delta_j}{2\sqrt{2}\sigma^2}\). Using this with \eqref{eq.probspace.VarBtheta} in \eqref{eq.probspace.VarBtheta-W.1}, we finally conclude that
\beq{eq.probspace.VarBtheta-W}
\bexpect[\Big]{\abs[\big]{e^{-2i(\phaseN(t_{j-1}) - \theta(t_{j-1}))} \mathscr{B}^\theta_j - W_j}^2 \given[\Big] \mathscr{F}_{j-1}}
    \leq \frac{\delta_j^2}{4\sigma^2}
    = \frac{1}{4\sigma^2 \shift^3 (1 - \diln t_j)^6}
\eeq
for any \(\shift\) large enough.

\newpage
Now, by definition, the increments \(e^{-2i(\phaseN(t_{j-1}) - \theta(t_{j-1}))} \mathscr{B}^\theta_j - W_j\) are centered Gaussian conditionally on their past. Therefore, the estimate~\eqref{eq.probspace.VarBtheta-W} on the conditional variances is equivalent to an estimate on the conditional subgaussian constants, and we can use this to estimate the subgaussian bracket \(\subGbracket{\Delta^{\theta W}}\) of the full martingale (which is essentially the sum of the conditional subgaussian constants---see Appendix~\ref{apx.concentration} for the full definition). Summing up~\eqref{eq.probspace.VarBtheta-W} while assuming that \(\diln t_n \in [0, 1-\shift^{\nicefrac{-1}{2}+\alpha}]\) so that the estimate applies to all increments, we get
\begin{align*}
\subGbracket{\Delta^{\theta W}}_n
    & = \sum_{j=1}^n \bexpect[\Big]{\abs[\big]{e^{-2i(\phaseN(t_{j-1}) - \theta(t_{j-1}))} \mathscr{B}^\theta_j - W_j}^2 \given[\Big] \mathscr{F}_{j-1}} \\
    & \leq \frac{1}{4\sigma^2\shift^3} \sum_{j=1}^n (1 + \shift^{-p})^{6j}
    \leq \frac{(1 + \shift^{-p})^6}{4\sigma^2\shift^3} \frac{(1 + \shift^{-p})^{6n} - 1}{\shift^{-p}}.
\end{align*}
Because \((1 + \shift^{-p})^n = (1 - \diln t_n)^{-1} \leq \shift^{\nicefrac{1}{2}-\alpha}\), this yields
\[
\subGbracket{\Delta^{\theta W}}_n
    \leq \frac{(1 + \shift^{-p})^6}{2(2 - \shift^{-p})} \shift^{2p - 6\alpha}.
\]
The prefactor converges to \(\nicefrac{1}{4}\) as \(\shift\to\infty\), so this shows that \(\subGbracket{\Delta^{\theta W}}_n \leq \shift^{-2(3\alpha-p)}\) for any \(\shift\) large enough. Applying Azuma's inequality (Theorem~\ref{thm.Azuma}) to both the real and imaginary parts of the martingale, it follows that for any \(\shift\) large enough and any \(x > 0\),
\beq{eq.probspace.DeltathetaW}
\bprob[\bigg]{\sup_n \abs{\Delta^{\theta W}_n} \geq x} \leq 4 \exp\Bigl( - \frac{\shift^{2(3\alpha-p)} x^2}{2} \Bigr)
\eeq
where the supremum is taken over all \(n\) such that \(\diln t_n \in [0, 1-\shift^{\nicefrac{-1}{2}+\alpha}]\).

\step{Extending to the continuum}

Now, we extend the random walk \(\{W_j\}_{j=1}^{N-1}\) to a complex Brownian motion. To do this, extending the probability space, we define a collection \(\{X_j\}_{j=1}^{N-1}\) of independent complex Brownian bridges with \(X_j(0) = X_j(1) = 0\), taken to be independent of \(\sBM\). From these, we start by defining Brownian motions on intervals \(\bigl[ \slogtime(t_{j-1}), \slogtime(t_j) \bigr]\) where \(\slogtime(t) \defeq -\log(1-\diln t)\). Since \(W_j \sim \mathbb{C}\normal{0}{\sigma^2}\), both real and imaginary parts of \(\sigma^{-1} W_j\) are real standard normals, so given an independent random variable \(Z\), by properties of Brownian bridges, each process
\[
s \mapsto Z + \sigma X_j\Bigl( \frac{s - \slogtime(t_{j-1})}{\sigma^2} \Bigr) + \frac{s - \slogtime(t_{j-1})}{\sigma^2} W_j
\]
is a complex Brownian motion on \(\bigl[ \slogtime(t_{j-1}), \slogtime(t_j) \bigr]\) with the law of \(Z\) as its distribution at time \(\slogtime(t_{j-1})\). To build our complex Brownian motion \(W\), we can stitch the above Brownian motions together as follows. We set \(W(0) \defeq (0)\), and then for \(0 < s \leq \slogtime(t_{N-1})\) we set \(k(s) \defeq \min\{j\in\mathbb{N} : s < \slogtime(t_{j+1})\}\) to be the index \(j\) such that \(\slogtime(t_j) \leq s < \slogtime(t_{j+1})\), and
\[
W(s) \defeq \sum_{j=1}^{k(s)} W_j + \sigma X_{k(s)+1}\Bigl( \frac{s - \slogtime(t_{k(s)})}{\sigma^2} \Bigr) + \frac{s - \slogtime(t_{k(s)})}{\sigma^2} W_{k(s)+1}.
\]
Finally, for \(s > \slogtime(t_{N-1})\), we simply extend again the probability space to add an independent standard complex Brownian motion \(\tilde{W}\), and we define \(W(s) \defeq W\bigl( \slogtime(t_{N-1}) \bigr) + \tilde{W}\bigl( s - \slogtime(t_{N-1}) \bigr)\). 

Since \(W\) is a complex Brownian motion on each interval \(\bigl[ \slogtime(t_{n-1}), \slogtime(t_n) \bigr]\) for \(n < N\) as well as on \([\slogtime(t_{N-1}), \infty)\), to check that it is a complex Brownian motion on \([0, \infty)\) we only have to check that it is a.s.\@ continuous at the endpoints \(\slogtime(t_n)\) of these intervals. To do this, fix \(n < N\), and note that \(k\bigl( \slogtime(t_n) \bigr) = n\) by definition, so that \(W\bigl( \slogtime(t_n) \bigr) = \sum_{j=1}^n W_j\) for each \(n < N\). It is clear that this is a.s.\@ the value approached from the right, since \(k\) does not change from that direction (or just because \(\tilde{W}\) is a.s.\@ continuous if \(n = N-1\)). With \(s \uparrow \slogtime(t_n)\), \(k(s) = k\bigl( \slogtime(t_n) \bigr) - 1 = n - 1\), and as \(\slogtime(t_n) - \slogtime(t_{n-1}) = \sigma^2\), we get that
\[
W(s) \to \sum_{j-1}^{n-1} W_j + \sigma X_n(1) + W_n = \sum_{j=1}^n W_j.
\]
Hence, \(W\) has a.s.\@ continuous paths on all of \([0, \infty)\), and we conclude that it is a standard complex Brownian motion.

\newpage
\step{Full comparison of the processes}

We now have all of the necessary ingredients to obtain the estimate~\eqref{eq.probspace} on
\[
\Delta \defeq \sup_{t\in\timedom} \abs[\bigg]{\int_0^t e^{-2i\phaseN(s)} \sqrt{\frac{2\diln}{1 - \diln s}} \diff{\sBM}(s) - W\circ\slogtime(t)}.
\]
With \(S_{\shift,\alpha} \defeq \{n\in\mathbb{N} : 0\leq \diln t_n \leq 1-\shift^{\nicefrac{-1}{2}+\alpha}\}\), we can extend a little bit the interval over which the supremum is taken to write
\begin{align*}
\Delta
    & \leq \sup_{n\in S_{\shift,\alpha}} \sup_{t \in [t_{n-1}, t_n]} \abs[\bigg]{\int_0^t e^{-2i\phaseN(s)} \sqrt{\frac{2\diln}{1-\diln s}} \diff{\sBM}(s) - W\circ\slogtime(t)} \\
    & \leq \sup_{n\in S_{\shift,\alpha}} \abs[\bigg]{\int_0^{t_{n-1}} e^{-2i\phaseN(s)} \sqrt{\frac{2\diln}{1-\diln s}} \diff{\sBM}(s) - W\circ\slogtime(t_{n-1})} \\
    &\hspace*{11mm} + \sup_{n\in S_{\shift,\alpha}} \sup_{t \in [t_{n-1}, t_n]} \biggl( \abs[\Big]{\int_{t_{n-1}}^t e^{-2i\phaseN(s)} \sqrt{\frac{2\diln}{1-\diln s}} \diff{\sBM}(s)} + \abs[\big]{W\circ\slogtime(t) - W\circ\slogtime(t_{n-1})} \biggr).
\end{align*}
The integral that appears in the first line is exactly \(\sum_{j=1}^{n-1} \mathscr{B}^\xi_j\), and \(W\circ\slogtime(t_{n-1}) = \sum_{j=1}^{n-1} W_j\) by construction of \(W\). In other words, in the notation of Steps 1 and 2,
\begin{subequations}
\label{eq.probspace.supbound}
\begin{align}
\Delta
    & \leq \sup_{n\in S_{\shift,\alpha}} \abs[\big]{\Delta^{\xi\theta}_n} + \sup_{n\in S_{\shift,\alpha}} \abs[\big]{\Delta^{\theta W}_n}
    \label{eq.probspace.supbound.step2} \\
    &\hspace*{11mm} + \sup_{n\in S_{\shift,\alpha}} \sup_{t \in [t_{n-1}, t_n]} \abs[\bigg]{\int_{t_{n-1}}^t e^{-2i\phaseN(s)} \sqrt{\frac{2\diln}{1-\diln s}} \diff{\sBM}(s)} 
    \label{eq.probspace.supbound.exp} \\
    &\hspace*{11mm} + \sup_{n\in S_{\shift,\alpha}} \sup_{t \in [t_{n-1}, t_n]} \abs[\big]{W\circ\slogtime(t) - W\circ\slogtime(t_{n-1})}.
    \label{eq.probspace.supbound.CBM}
\end{align}
\end{subequations}
The first line is controlled by the results \eqref{eq.probspace.Deltaxitheta} and \eqref{eq.probspace.DeltathetaW} from Steps 1 and 2. In the other two lines, for fixed \(n\) the supremum is that of a complex martingale on \([t_{n-1}, t_n]\), which we can control by bounding the brackets of their real and imaginary parts and using Bernstein's inequality on both of them. The brackets of the real and imaginary parts of the martingale on line \eqref{eq.probspace.supbound.exp} are both bounded by
\[
\int_{t_{n-1}}^{t_n} \frac{2\diln}{1 - \diln s} \diff{s}
    = 2\log\frac{1 - \diln t_{n-1}}{1 - \diln t_n}
    = 2\log(1 + \shift^{-p})
    \leq 2\shift^{-p},
\]
so by Bernstein's inequality, for \(x > 0\), \eqref{eq.probspace.supbound.exp} is bounded by \(x\) with probability at least \(1 - 4N\exp(\nicefrac{-\shift^{p}x^2}{4})\). Likewise, the brackets of the real and imaginary parts of the martingale on line~\eqref{eq.probspace.supbound.CBM} are bounded by \(\slogtime(t_n) - \slogtime(t_{n-1}) \leq \shift^{-p}\), so Bernstein's inequality yields a similar bound. Combining this with the results~\eqref{eq.probspace.Deltaxitheta} and~\eqref{eq.probspace.DeltathetaW} from Steps 1 and 2, we get from \eqref{eq.probspace.supbound} that for \(\shift\) large enough and \(x > 0\),
\begin{multline*}
\bprob{\Delta \geq x}
    \leq 2\shift^{2p} \log^2\shift \exp\biggl( - \frac{1}{2} \min\Bigl\{ \Bigl( \frac{\shift^{\nicefrac{p}{2}} x\log 2}{48\sqrt{2}} \Bigr)^{\nicefrac{2}{3}}, \frac{\shift^p x^2 \log 2}{144 C_\beta \log\shift} \Bigr\} \biggr) \\
    + 4\exp\biggl( - \frac{x\log 2}{12\sqrt{2C_\beta} \log\shift} \exp\biggl( \frac{1}{4} \Bigl( \frac{\shift^{\nicefrac{p}{2}} x\log 2}{48\sqrt{2}} \Bigr)^{\nicefrac{2}{3}} \biggr) \biggr)
    + 4\exp\Bigl( - \frac{\shift^{2(3\alpha-p)} x^2}{32} \Bigr)
    + 8N\exp\Bigl( - \frac{\shift^p x^2}{64} \Bigr).
\end{multline*}
Notice that the powers of \(\shift\) and \(x\) are balanced if we set \(p \defeq 2\alpha\). With this value for \(p\), if \(x = \shift^{-\alpha+\delta}\), then all exponents are at least of order \(\shift^{\nicefrac{2\delta}{3}}\) for \(\shift\) large enough, and we conclude that
\[
\bprob{\Delta \geq \shift^{-\alpha+\delta}} \leq 3\shift^{4\alpha} \log^2\shift \exp\bigl( - C\shift^{\nicefrac{2\delta}{3}} \bigr),
\]
where \(C > 0\) depends only on \(\beta\), \(\alpha\) and \(\delta\).

\step{Combining all spaces together}

The above construction yields, for a given \(\shift > 0\), a probability space on which are defined a standard real Brownian motion \(\sBM\) and a standard complex Brownian motion \(W\) such that \eqref{eq.probspace} holds if \(\shift\) is large enough. Thus, given a sequence \(\{\shift_n\}_{n\in\mathbb{N}} \subset (0,\infty)\) such that \(\shift_n\to\infty\), it remains to combine all the spaces together in such a way that the couplings between the real and complex Brownian motions are preserved. For the sake of completeness, we build the full probability space explicitly, but this construction is standard.

Let \((\Omega_n, \mathscr{F}_n, \mathbb{P}_n)\) and \(W_n\) denote the probability space and the complex Brownian motion built above for \(\shift = \shift_n\)
\setshift{E_n}
and a standard real Brownian motion \(\sBM\). As the initial Brownian motion \(\sBM\) is arbitrary, we can choose it to be defined as a random variable \(\sBM \colon \Omega_n \to \mathscr{C}\bigl( [0,\infty), \mathbb{R} \bigr)\), using the construction of Brownian motion on a Wiener space. In the same way, taking the Brownian bridges used to build \(W_n\) with continuous paths, \(W_n\) has surely continuous paths, and \(W_n \colon \Omega_n \to \mathscr{C}\bigl( [0,\infty), \mathbb{C} \bigr)\) where the space \(\mathscr{C}\bigl( [0,\infty), \mathbb{C} \bigr)\) is endowed with a product of the Wiener measure with itself. Using these constructions, on each space \((\Omega_n, \mathscr{F}_n, \mathbb{P}_n)\) we have two Brownian motions \(\sBM\) and \(W_n\) with joint law \(\mathbb{P}_{W_n,\sBM}\) on \(\mathscr{C}\bigl( [0,\infty), \mathbb{C} \bigr) \times \mathscr{C}\bigl( [0,\infty), \mathbb{R} \bigr)\) with the completion of its Borel \(\upsigma\)-algebra under an appropriate metric that makes it complete and separable (and such a metric does exist, see e.g.\@ \cite[\S 2.4]{karatzas_brownian_1991}). This measurable space is thus Borel, and there exists a regular condition distribution \(\rho_{\sBM\given W_n}\colon \mathscr{C}\bigl( [0,\infty), \mathbb{C} \bigr) \times \mathscr{B}\bigl( \mathscr{C}([0,\infty), \mathbb{R}) \bigr) \to [0,1]\) such that \(\mathbb{P}_{W_n,\sBM} = \mathbb{P}_{W_n} \otimes \rho_{\sBM\given W_n}\). 

Now, set \(\Omega \defeq \mathscr{C}\bigl( [0,\infty), \mathbb{C} \bigr) \times \mathscr{C}\bigl( [0,\infty), \mathbb{R} \bigr)^{\mathbb{N}}\) and let \(\mathscr{F} \defeq \mathscr{B}(\Omega)\) be its Borel \(\upsigma\)-algebra. Then, define kernels \(\mu_n\) on the partial products \(\mathscr{C}\bigl( [0,\infty), \mathbb{C} \bigr) \times \prod_{j=1}^n \mathscr{C}\bigl( [0,\infty), \mathbb{R} \bigr)\) by setting \(\mu_n(w, b_1, \hdots, b_{n-1}, A) \defeq \rho_{\sBM\given W_n}(w,A)\). By the Ionescu--Tulcea theorem, there exists a measure \(\mathbb{P}\) on \((\Omega, \mathscr{F})\) such that for \(A \in \mathscr{B}\bigl( \mathscr{C}([0,\infty), \mathbb{C}) \times \mathscr{C}([0,\infty), \mathbb{R})^n \bigr)\),
\[
\pprob[\Big]{A \times \prod_{j=n+1}^\infty \Omega_j}
    = \int_A \Bigl( \prod_{j=1}^n \rho_{\sBM\given W_n}(w, \diff{b}_n) \Bigr) \mathbb{P}_{W_n}(\diff{w}),
\]
and in particular if \(A \in \mathscr{B}\bigl( \mathscr{C}([0,\infty), \mathbb{C}) \bigr)\) and \(B \in \mathscr{B}\bigl( \mathscr{C}([0,\infty), \mathbb{R}) \bigr)\), then
\[
\pprob[\big]{A \times \Omega_1 \times \cdots \times \Omega_{n-1} \times B \times \Omega_{n+1} \times \cdots}
    = \int_A \int_B \rho_{\sBM\given W_n}(w, \diff{b}) \mathbb{P}_{W_n}(\diff{w})
    = \mathbb{P}_{W_n, \sBM}(A \times B).
\]
Therefore, for any \(n\in\mathbb{N}\) the projection onto coordinates \(1\) and \(n+1\) has the desired distribution for the pair \((W, \sBM)\), and all of the Brownian motions we need are well defined on the space \((\Omega, \mathscr{F}, \mathbb{P})\).
\end{proof}

\section{Asymptotic behavior of some processes}
\label{sec.asymptotics}

This section is devoted to the next step of the proof of Theorem~\ref{thm.CSconvinlaw}. Now that we have an appropriate probability space on which to work with specific real and complex Brownian motions \(\sBM\) and \(W\), we wish to compare the process \(-\exp(-\diffamps - i\diffphases)\) that appears in the first term of the time-changed shifted Airy system's coefficient matrix as written in~\eqref{eq.sAirymatpolar} with a hyperbolic Brownian motion driven by \(W\). We do this separately for the real and imaginary parts of the processes. We start with the imaginary parts, as these are simpler for two reasons. First, because the imaginary part of a hyperbolic Brownian motion driven by \(W\) is a geometric Brownian motion driven by \(\Im W\), and second because by the Wronskian identity~\eqref{eq.ampsphasesWronskianidentity}, the imaginary part of \(-\exp(-\diffamps - i\diffphases)\) can be written in terms of \(\ampN\) only.

For most of this section, we will work on the probability space \((\Omega, \mathscr{F}, \mathbb{P})\) from Lemma~\ref{lem.probspace}, and therefore we restrict the shift \(\shift\) to be part of some diverging sequence \(\{\shift_n\}_{n\in\mathbb{N}}\). To simplify notation, we will drop the \(n\) subscript and simply talk about limits as \(\shift\to\infty\), although in this context these should be understood as limits taken along a discrete sequence. 

When working on that space, we will still use \(\ampN\), \(\ampD\), \(\phaseN\) and \(\phaseD\) to denote the solutions to the SDEs from Proposition~\ref{prop.polarcoords} with \(\ampN(0) = \ampD(0) = 0\), \(\phaseD(0) = 0\) and \(\phaseN(0) = \nicefrac{\pi}{2}\), but now these solutions will always be taken to be driven by the corresponding Brownian motion \(\sBM\) from Lemma~\ref{lem.probspace}.

\subsection{Averaging of integrals with oscillatory integrands}

The first step is to control certain integrals whose integrands oscillate quickly, with phases proportional to the phase coordinates of solutions to \(\sAiryop f = 0\), which causes them to average out when \(\shift\) is large.

\begin{lemma}
\label{lem.averaging}
Let \(\phase\) be a solution to the SDE from Proposition~\ref{prop.polarcoords}, driven by a standard Brownian motion \(B\). Let \(X_{\shift}\) solve
\[
\diff{X_{\shift}}(t) = a_{\shift}(t) X_{\shift}(t) \frac{\diln}{1-\diln t} \diff{t} + b_{\shift}(t) X_{\shift}(t) \sqrt{\frac{\diln}{1 - \diln t}} \diff{B}(t)
\]
on \([0, \lasttime]\), where \(a_{\shift}\) and \(b_{\shift}\) are complex stochastic processes on \([0,\lasttime]\). Fix \(T \in (0, \lasttime]\) and a nonzero \(k \in \mathbb{R}\), and suppose that \(a_{\shift}\) and \(b_{\shift}\) are bounded on \([0,T]\) by constants \(m_a, m_b > 0\) independent of \(\shift\). Then, there are constants \(C, C' > 0\) depending only on \(\beta\), \(m_a\), \(m_b\) and \(k\) such that for any \(M, x > 0\),
\begin{multline*}
\pprob[\bigg]{\biggl\{ \sup_{t\in[0,T]} \abs{X_{\shift}} \leq M \biggr\} \cap \biggl\{ \sup_{t\in[0,T]} \abs[\Big]{\int_0^t X_{\shift}(s) e^{ki\phase(s)} \frac{\diln}{1-\diln s} \diff{s}} \geq x + \frac{CM}{\shift^{\nicefrac{3}{2}}(1-\diln T)^3} \biggr\}} \\
    \leq 4\exp\Bigl( - \frac{C' \shift^3(1-\diln T)^6 x^2}{M^2} \Bigr).
\end{multline*}
\end{lemma}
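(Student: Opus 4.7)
My plan is a stochastic integration by parts that exploits the fact that the drift of $\phase$ is dominated by the deterministic term $\mu_0(s) \defeq -2\diln\shift^{\nicefrac{3}{2}}(1-\diln s)^2$, which makes $e^{ki\phase(s)}$ oscillate at frequency of order $\shift^{\nicefrac{3}{2}}$ and thereby provides an effective gain of $\shift^{\nicefrac{-3}{2}}$ under integration. Applying It\^o's formula to $e^{ki\phase}$ and solving, I will write $e^{ki\phase}\diff{s}$ as the exact differential $\diff\bigl(e^{ki\phase}/(ki\mu_0)\bigr)$ plus remainders involving the non-dominant part of $\diff\phase$ and the quadratic variation $\diff\quadvar{\phase}$. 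Multiplying through by $X_{\shift}(s)\cdot\diln/(1-\diln s)$ and using the key identity $\diln/[(1-\diln s)\mu_0(s)] = -1/[2\shift^{\nicefrac{3}{2}}(1-\diln s)^3]$, I will integrate by parts on the exact-differential piece, expanding $\diff{X_{\shift}}$ via its own SDE. The result is a boundary term, several Lebesgue-type drift remainders (from $\diff X_{\shift}$, the derivative of $1/[\mu_0(1-\diln s)]$, the bounded-variation part of $\diff\phase$, and $\diff\quadvar\phase$), and two Brownian integrals against $\diff{B}$ (one from the noise in $\diff\phase$, one from the noise in $\diff X_{\shift}$).

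Using the elementary bounds $\int_0^t \diln\,\diff{s}/(1-\diln s)^k \lesssim 1/(1-\diln t)^{k-1}$ for $k \geq 2$, together with the hypotheses bounding $a_{\shift},b_{\shift}$ and the fact that all drift and diffusion coefficients in the SDE for $\phase$ are bounded as functions of $\phase$ independently of $\shift$, I will show each non-martingale contribution is controlled uniformly in $t \in [0,T]$ by a constant times $M/[\shift^{\nicefrac{3}{2}}(1-\diln T)^3]$; summing yields exactly the deterministic shift $CM/[\shift^{\nicefrac{3}{2}}(1-\diln T)^3]$ in the statement. The two Brownian integrals have integrands of magnitude $\lesssim |X_{\shift}|/[\shift^{\nicefrac{3}{2}}(1-\diln s)^3]$ against $\sqrt{\diln/(1-\diln s)}\,\diff{B}$, so on the event $\{\sup_{s\leq T}|X_{\shift}|\leq M\}$ each has quadratic variation at most $C_0 M^2/[\shift^3(1-\diln T)^6]$, now invoking $\int_0^t \diln\,\diff{s}/(1-\diln s)^7 \lesssim 1/(1-\diln t)^6$.

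To turn this random bound into a deterministic one so that Bernstein's inequality applies cleanly, I localize by the stopping time $\tau \defeq \inf\{s\in[0,T] : |X_{\shift}(s)|>M\}\wedge T$, which equals $T$ on the event of interest but always affords the stopped martingales the above deterministic quadratic variation bound. Applying Bernstein's inequality for continuous martingales (e.g.\ \cite[Exercise~IV.3.16]{revuz_continuous_1999}) to the real and imaginary parts of each of the two stopped Brownian integrals and summing the four tail bounds yields the announced estimate $4\exp\bigl(-C'\shift^3(1-\diln T)^6 x^2/M^2\bigr)$.

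The hard part here is not conceptual---extracting oscillatory gain via It\^o integration by parts is a standard trick---but bookkeeping, since each exact differential produced by the It\^o identity must in turn be expanded via an SDE that spawns additional drift and martingale terms, and I have to verify that every resulting piece carries a negative power of $(1-\diln s)$ at most $-3$ (for drift and boundary terms) or at most $-3$ in its integrand (so that the associated quadratic variation integrand has power $-7$ and integrates to $-6$).
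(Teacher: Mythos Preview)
Your proposal is correct and follows essentially the same route as the paper's proof: apply It\^o's formula to $e^{ki\phase}$ to extract the factor $\shift^{-3/2}$, perform a stochastic integration by parts against $X_{\shift}(s)/(1-\diln s)^3$, bound the boundary and drift pieces deterministically on $\{\sup|X_{\shift}|\le M\}$, and finish with Bernstein's inequality on the martingale remainder. The only cosmetic difference is that the paper combines your two Brownian integrals (both driven by the same $\diff B$) into a single complex martingale before splitting into real and imaginary parts, which is what yields the precise prefactor $4$ rather than the $8$ your four separate applications would produce.
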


\begin{remark}
The exponential structure of the SDE is in no way essential to this result: it would be straightforward to extend the proof given below to processes satisfying more complicated SDEs with the factors \(a_{\shift} X_{\shift}\) and \(b_{\shift} X_{\shift}\) replaced by other functions that are bounded on compacts. However, the proof is a bit simpler with this exponential structure, and this is all we will need here.
\end{remark}

\begin{proof}
Recall that \(\phase\) satisfies
\[
\diff{\phase}(t) = -2\diln\shift^{\nicefrac{3}{2}} (1 - \diln t)^2 \diff{t} + \phasedrift(t) \frac{\diln}{1 - \diln t} \diff{t} + \phasediff(t) \sqrt{\frac{\diln}{1 - \diln t}} \diff{B}(t)
\]
where
\[
\phasedrift \defeq - \Bigl( \frac{2}{\beta} - \frac{1}{2} \Bigr) \sin 2\phase - \frac{1}{\beta} \sin 4\phase
\qquadtext{and}
\phasediff \defeq \frac{2\sqrt{2}}{\sqrt{\beta}} \cos^2 \phase.
\]
An application of Itô's formula then shows that
\begin{multline*}
\diff{\bigl( e^{ki\phase} \bigr)}(t)
    = e^{ki\phase(t)} \biggl( -2ki\diln \shift^{\nicefrac{3}{2}} (1 - \diln t)^2 \diff{t} + ki \phasediff(t) \sqrt{\frac{\diln}{1 - \diln t}} \diff{B}(t) \\
    + \Bigl( ki\phasedrift(t) - \frac{k^2}{2} \phasediff(t)^2 \Bigr) \frac{\diln}{1 - \diln t} \diff{t} \biggr),
\end{multline*}
Thus, for \(t \in [0,T]\),
\beq{eq.averaging.1}
\begin{aligned}
\int_0^t X_{\shift}(s) e^{ki\phase(s)} \frac{\diln}{1 - \diln s} \diff{s}
    & = - \frac{1}{2ik\shift^{\nicefrac{3}{2}}} \int_0^t \frac{X_{\shift}(s)}{(1 - \diln s)^3} \biggl( \diff{\bigl( e^{ki\phase} \bigr)}(s) \\
    &\hspace*{22mm} - ki \phasediff(s) e^{ki\phase(s)} \sqrt{\frac{\diln}{1 - \diln s}} \diff{B}(s) \\
    &\hspace*{22mm} - e^{ki\phase(s)} \Bigl( ki\phasedrift(s) - \frac{k^2}{2} \phasediff(s)^2 \Bigr) \frac{\diln}{1 - \diln s} \diff{s} \biggr).
\end{aligned}
\eeq
If \(Y_{\shift}(t) \defeq (1 - \diln t)^{-3} X_{\shift}(t)\), then
\[
\diff{Y_{\shift}}(t)
    = X_{\shift}(t) \bigl( 3 + a_{\shift}(t) \bigr) \frac{\diln}{(1 - \diln t)^4} \diff{t} + X_{\shift}(t) b_{\shift}(t) \frac{\sqrt{\diln}}{(1 - \diln t)^{\nicefrac{7}{2}}} \diff{B}(t),
\]
which also implies that
\[
\diff{\crossvar{Y_{\shift}}{e^{ki\phase}}}(t)
    = ki X_{\shift}(t) e^{ki\phase(t)} \phasediff(t) b_{\shift}(t) \frac{\diln}{(1 - \diln t)^4} \diff{t}.
\]
From this, the first term of~\eqref{eq.averaging.1} can be integrated by parts:
\begin{align*}
\int_0^t \frac{X_{\shift}(s)}{(1 - \diln s)^3} \diff{\bigl( e^{ki\phase} \bigr)}(s)
    & = \frac{X_{\shift}(s) e^{ki\phase(s)}}{(1 - \diln s)^3} \biggr\rvert_0^t - ki \int_0^t X_{\shift}(s) e^{ki\phase(s)} \phasediff(s) b_{\shift}(s) \frac{\diln}{(1 - \diln s)^4} \diff{s} \\
    &\hspace*{11mm} - \int_0^t X_{\shift}(s) e^{ki\phase(s)} \Bigl( \bigl( 3 + a_{\shift}(s) \bigr) \frac{\diln}{(1 - \diln s)^4} \diff{s} + b_{\shift}(s) \frac{\sqrt{\diln}}{(1 - \diln t)^{\nicefrac{7}{2}}} \diff{B}(s) \Bigr),
\end{align*}
and substituting this in \eqref{eq.averaging.1} yields
\beq{eq.averaging.2}
\begin{aligned}
\int_0^t & X_{\shift}(s) e^{ki\phase(s)} \frac{\diln}{1 - \diln s} \diff{s} \\
    & = - \frac{1}{2ik\shift^{\nicefrac{3}{2}}} \biggl( \frac{X_{\shift}(s) e^{ki\phase(s)}}{(1 - \diln s)^3} \biggr\vert_0^t 
    - \int_0^t X_{\shift}(s) e^{ki\phase(s)} \Bigl( ki\phasediff(s) + b_{\shift}(s) \Bigr) \frac{\sqrt{\diln}}{(1 - \diln s)^{\nicefrac{7}{2}}} \diff{B}(s) \\
    &\hspace*{11mm} - \int_0^t X_{\shift}(s) e^{ki\phase(s)} \Bigl( ki\phasedrift(s) - \frac{k^2}{2} \phasediff(s)^2 + 3 + a_{\shift}(s) + ki\phasediff(s) b_{\shift}(s) \Bigr) \frac{\diln}{(1 - \diln s)^4} \diff{s} \biggr).
\end{aligned}
\eeq
Given \(M > 0\), each of the three terms can easily be bounded on the event \(\mathscr{G} \defeq \bigl\{ \sup_{t\in[0,T]} \abs{X_{\shift}(t)} \leq M \bigr\}\). The first term is bounded on \(\mathscr{G}\) by
\[
\frac{M}{2\abs{k}\shift^{\nicefrac{3}{2}}} \Bigl( \frac{1}{(1 - \diln T)^3} + 1 \Bigr) \leq \frac{M}{\abs{k}\shift^{\nicefrac{3}{2}}(1 - \diln T)^3}.
\]
Then, since \(\abs{\phasedrift} \leq \nicefrac{3}{\beta} + \nicefrac{1}{2}\) and \(\abs{\phasediff} \leq \nicefrac{2\sqrt{2}}{\sqrt{\beta}}\), there is a constant \(\tilde{C} > 0\) depending only on \(\beta\), \(k\), \(m_a\) and \(m_b\) such that the third term of \eqref{eq.averaging.2} is bounded on \(\mathscr{G}\) by
\[
\frac{\tilde{C}M}{\shift^{\nicefrac{3}{2}}} \int_0^t \frac{\diln}{(1 - \diln s)^4} \diff{s}
    \leq \frac{\tilde{C}M}{3\shift^{\nicefrac{3}{2}}(1 - \diln T)^3}.
\]
Finally, there is another constant \(\tilde{C}' > 0\) depending only on \(\beta\), \(k\) and \(m_b\) such that, still on \(\mathscr{G}\), the brackets of the real and imaginary parts of the second term of \eqref{eq.averaging.2} are bounded by
\[
\frac{\tilde{C}'M^2}{\shift^3} \int_0^t \frac{\diln}{(1 - \diln s)^7} \diff{s}
    \leq \frac{\tilde{C}'M^2}{6\shift^3 (1 - \diln T)^6}.
\]
As this holds for any \(t \in [0,T]\), applying Bernstein's inequality for martingales to both the real and the imaginary parts of the stochastic integral shows that for any \(x > 0\),
\begin{multline*}
\pprob[\bigg]{\mathscr{G} \cap \biggl\{ \sup_{t\in [0,T]} \abs[\bigg]{\frac{1}{2ik\shift^{\nicefrac{3}{2}}} \int_0^t X_{\shift}(s) e^{ki\phase(s)} \bigl( ki\phasediff(s) + b_{\shift}(s) \bigr) \frac{\sqrt{\diln}}{(1 - \diln s)^{\nicefrac{7}{2}}} \diff{B}(s)} \geq x \biggr\}} \\
    \leq 4\exp\biggl( - \frac{3\shift^3 (1 - \diln T)^6 x^2}{\tilde{C}'M^2} \biggr).
\end{multline*}
Combining the bounds on the three terms of \eqref{eq.averaging.2} yields the announced result with \(C \defeq \nicefrac{\tilde{C}}{3} + \nicefrac{1}{\abs{k}}\) and \(C' \defeq \nicefrac{3}{\tilde{C}'}\).
\end{proof}

This result has two immediate corollaries.

\begin{corollary}
\label{cor.averaging.bounded}
If \(k \neq 0\) and \(\alpha \in (0, \nicefrac{1}{2})\), there are \(C, C' > 0\) depending only on \(\beta\) and \(k\) such that for any \(x > 0\),
\[
\bprob[\bigg]{\sup_{t\in\timedom} \abs[\Big]{\int_0^t e^{ki\phase(s)} \frac{\diln}{1 - \diln s} \diff{s}} \geq x + C\shift^{-3\alpha}}
    \leq 4e^{-C'\shift^{6\alpha} x^2}
\]
where \(\timedom \defeq [0, (1 - \shift^{\nicefrac{-1}{2}+\alpha}) / \diln]\).
\end{corollary}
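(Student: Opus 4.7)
The plan is to apply Lemma~\ref{lem.averaging} directly with the constant process $X_{\shift} \equiv 1$. This trivially satisfies the SDE in the hypothesis with $a_{\shift} \equiv 0$ and $b_{\shift} \equiv 0$, so we may take $m_a = m_b = 0$, and the bound $\sup_{t\in[0,T]}\abs{X_{\shift}(t)} \le M = 1$ holds surely, making the intersection with the event $\mathscr{G}$ in Lemma~\ref{lem.averaging} trivial. The only remaining choice is the time horizon $T$.

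Specifically, I would set $\diln T \defeq 1 - \shift^{\nicefrac{-1}{2}+\alpha}$, so that $1 - \diln T = \shift^{\nicefrac{-1}{2}+\alpha}$. With this choice, the deterministic correction term from Lemma~\ref{lem.averaging} becomes
\[
\frac{CM}{\shift^{\nicefrac{3}{2}}(1-\diln T)^3}
= \frac{C}{\shift^{\nicefrac{3}{2}} \cdot \shift^{\nicefrac{-3}{2}+3\alpha}}
= C\shift^{-3\alpha},
\]
matching exactly the additive term in the statement. Meanwhile, the exponent in the probability bound becomes
\[
\frac{C'\shift^3(1-\diln T)^6 x^2}{M^2}
= C' \shift^{3} \cdot \shift^{-3 + 6\alpha} x^2
= C' \shift^{6\alpha} x^2,
\]
which is the desired rate. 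Note that $T \in (0, \lasttime]$ in both regimes of $\beta$: when $\beta \le 2$ we have $\diln \lasttime = 1 - \shift^{\nicefrac{-1}{2}}$, which exceeds $\diln T$ for $\alpha < \nicefrac{1}{2}$, and when $\beta > 2$ we have $\diln \lasttime = 1 - \shift^{\nicefrac{-1}{2}}$ as well, so the choice is admissible for $\shift$ large.

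There is no real obstacle here; the corollary is essentially a specialization of Lemma~\ref{lem.averaging}. The only thing worth double-checking is that the constants $C, C'$ coming out of Lemma~\ref{lem.averaging} depend only on $\beta$, $m_a$, $m_b$ and $k$, and since $m_a = m_b = 0$ are fixed, they indeed depend only on $\beta$ and $k$ as claimed. Putting everything together yields the stated inequality.
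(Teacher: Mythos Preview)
Your proposal is correct and follows essentially the same approach as the paper: apply Lemma~\ref{lem.averaging} with \(X_{\shift}\equiv 1\), \(a_{\shift}=b_{\shift}=0\), \(M=1\), and \(\diln T = 1-\shift^{\nicefrac{-1}{2}+\alpha}\), then simplify the resulting correction term and exponent. The additional checks you include (admissibility of \(T\) and dependence of the constants) are fine and do not diverge from the paper's argument.
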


\begin{proof}
We apply the lemma with \(X_{\shift} \equiv 1\) so that \(a_{\shift} = b_{\shift} = 0\), and with \(\diln T \defeq 1 - \shift^{\nicefrac{-1}{2}+\alpha}\). As in that case \(\bprob[\big]{\sup_{t\in[0,T]} \abs{X_{\shift}(t)} > 1} = 0\), we can take \(M = 1\) to get that there are \(C, C' > 0\) such that for any \(x > 0\),
\[
\bprob[\bigg]{\sup_{t\in [0,T]} \abs[\Big]{\int_0^t e^{ki\phase(s)} \frac{\diln}{1 - \diln s} \diff{s}} \geq x + \frac{C}{\shift^{\nicefrac{3}{2}}(1 - \diln T)^3}}
    \leq 4e^{-C' \shift^3 (1 - \diln T)^6 x^2}.
\]
This is exactly the announced result since \(\shift^{\nicefrac{3}{2}}(1 - \diln T)^3 = \shift^{3\alpha}\).
\end{proof}

\begin{corollary}
\label{cor.averaging.integrable}
If \(k \neq 0\), then for every \(\varepsilon > 0\) there is a \(C_{\varepsilon} > 0\) depending only on \(\beta\), \(k\) and \(\varepsilon\) such that
\[
\bprob[\bigg]{\sup_{t\in [0,\lasttime]} \abs[\Big]{\int_0^t e^{ki\phase(s)} \frac{\diln}{1 - \diln s} \diff{s}} > C_{\varepsilon}} < \varepsilon.
\]
\end{corollary}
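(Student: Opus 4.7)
The plan is to apply Lemma~\ref{lem.averaging} directly with the trivial choice $X_{\shift}\equiv 1$ (so $a_{\shift} = b_{\shift} = 0$, allowing $m_a = m_b = 0$), $M = 1$, and the optimal choice $T \defeq \lasttime$ that pushes the time horizon to the boundary of the interval appearing in the corollary.

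The key observation underlying this choice is the identity $1 - \diln\lasttime = \shift^{\nicefrac{-1}{2}}$, which holds uniformly in both regimes of $\beta$: when $\beta \leq 2$ we have $\diln = 1$ and $\lasttime = 1 - \shift^{\nicefrac{-1}{2}}$, while when $\beta > 2$ we have $\diln = 1 - \shift^{\nicefrac{-1}{2}}$ and $\lasttime = 1$. Consequently,
\[
\shift^{\nicefrac{3}{2}}(1 - \diln\lasttime)^3 = 1
\qquadtext{and}
\shift^3 (1 - \diln\lasttime)^6 = 1,
\]
so that both the additive correction and the Gaussian rate in the conclusion of Lemma~\ref{lem.averaging} become constants independent of \(\shift\). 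Since the event $\{\sup_{t\in[0,\lasttime]}\abs{X_{\shift}(t)} \leq 1\}$ has full probability with the trivial choice of $X_{\shift}$, Lemma~\ref{lem.averaging} yields constants $C, C' > 0$ depending only on $\beta$ and $k$ such that
\[
\bprob[\bigg]{\sup_{t\in [0,\lasttime]} \abs[\Big]{\int_0^t e^{ki\phase(s)} \frac{\diln}{1-\diln s} \diff{s}} \geq x + C} \leq 4\exp(-C' x^2)
\]
for every $x > 0$. Choosing $x = \sqrt{C'^{-1}\log(\nicefrac{4}{\varepsilon})}$ and setting $C_{\varepsilon} \defeq C + x$ gives the desired conclusion.

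There is no real obstacle in this argument; the entire content is recognizing that the endpoint $\lasttime$ has been defined precisely so that the prefactors $\shift^{\nicefrac{3}{2}}(1-\diln\lasttime)^3$ and $\shift^3(1-\diln\lasttime)^6$ appearing in Lemma~\ref{lem.averaging} are exactly one. In effect, this corollary is the $\alpha \downarrow 0$ endpoint of Corollary~\ref{cor.averaging.bounded}, with the $\shift$-independence of the bound coming for free from this matching.
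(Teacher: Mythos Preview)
Your proof is correct and follows essentially the same approach as the paper: apply Lemma~\ref{lem.averaging} with $X_{\shift}\equiv 1$, $M=1$, and $T=\lasttime$, then choose $x$ large enough to make the Gaussian tail smaller than $\varepsilon$. Your explicit verification that $1-\diln\lasttime=\shift^{\nicefrac{-1}{2}}$ in both regimes, and hence that the factors $\shift^{\nicefrac{3}{2}}(1-\diln\lasttime)^3$ and $\shift^3(1-\diln\lasttime)^6$ equal $1$, makes transparent a step the paper leaves implicit.
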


\begin{proof}
Applying the lemma with \(X_{\shift} \equiv 1\) so that \(a_{\shift} = b_{\shift} = 0\) and with \(M = 1\) and \(T = \lasttime\), we get that for some \(C, C' > 0\) depending only on \(\beta\) and \(k\), for any \(x > 0\),
\[
\bprob[\bigg]{\sup_{t\in [0,\lasttime]} \abs[\Big]{\int_0^t e^{ki\phase(s)} \frac{\diln}{1 - \diln s} \diff{s}} > x + C} \leq 4 e^{-C' x^2},
\]
and taking \(x\) large enough so that \(4e^{-C'x^2} < \varepsilon\) yields the result with \(C_{\varepsilon} \defeq x + C\).
\end{proof}

\subsection{The geometric Brownian motion}

We are now ready to tackle the convergence of the process \(e^{-2\ampN}\) to a geometric Brownian motion driven by \(\Im W\).

\begin{proposition}
\label{prop.GBM}
On the probability space from Lemma~\ref{lem.probspace}, if \(\alpha \in (0,\nicefrac{1}{2})\) and \(\delta \in (0,\alpha)\), then for any \(\shift \in \{\shift_n\}_{n\in\mathbb{N}}\) large enough, there is a \(C > 0\) depending only on \(\beta\), \(\alpha\) and \(\delta\) such that
\[
\bprob[\bigg]{\sup_{t\in\timedom} \abs[\Big]{2\ampN(t) + \log\GBM(t)} \geq \shift^{-\alpha+\delta}}
    \leq 4 \shift^{4\alpha} \log^2\shift \exp\bigl( - C\shift^{\nicefrac{2\delta}{3}} \bigr)
\]
where \(\timedom \defeq [0, (1-\shift^{\nicefrac{-1}{2}+\alpha}) / \diln]\) and \(\GBM(t) \defeq \exp\bigl( \frac{2}{\sqrt{\beta}} \Im W\circ\slogtime(t) - \frac{2}{\beta} \slogtime(t) \bigr)\).
\end{proposition}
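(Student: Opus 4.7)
Using the SDE for $\amp$ from Proposition~\ref{prop.polarcoords} with initial condition $\phaseN(0) = \nicefrac{\pi}{2}$, expand $2\ampN$ into its deterministic and oscillatory components using the identities $\cos 2\phase = \tfrac{1}{2}(e^{2i\phase} + e^{-2i\phase})$ and $\sin 2\phase = -\Im e^{-2i\phase}$:
\[
2\ampN(t) = \frac{2}{\beta}\slogtime(t) + 2\Bigl(\frac{2}{\beta} - \frac{1}{2}\Bigr) I_2(t) + \frac{2}{\beta} I_4(t) - \frac{2}{\sqrt{\beta}}\, \Im J(t),
\]
where $I_k(t) \defeq \int_0^t \cos k\phaseN(s)\, \frac{\diln}{1-\diln s}\,\diff{s}$ and $J(t) \defeq \int_0^t e^{-2i\phaseN(s)} \sqrt{\frac{2\diln}{1-\diln s}}\,\diff{\sBM}(s)$. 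The deterministic part $\frac{2}{\beta}\slogtime(t)$ matches the corresponding term in $-\log\GBM(t)$, so what remains is to show that the oscillatory integrals $I_2, I_4$ are small and that $\Im J(t)$ is close to $\Im W\circ\slogtime(t)$.

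For the oscillatory terms, apply Corollary~\ref{cor.averaging.bounded} with $k = 2$ and $k = 4$: on the event $\{\diln t \leq 1 - \shift^{-\nicefrac{1}{2}+\alpha}\}$, each $I_k$ is bounded by $x + C\shift^{-3\alpha}$ with failure probability $\leq 4 e^{-C' \shift^{6\alpha} x^2}$. Choosing $x = \shift^{-\alpha + \delta}/c$ for a suitable constant $c$, and noting that $\shift^{-3\alpha} \ll \shift^{-\alpha+\delta}$ as $\shift\to\infty$ (since $3\alpha > \alpha - \delta$), the contribution of $I_2$ and $I_4$ to $2\ampN + \log\GBM$ is bounded by a fixed fraction of $\shift^{-\alpha+\delta}$ except on an event of probability at most $8\exp(-C''\shift^{4\alpha + 2\delta})$, which is much smaller than the target bound.

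For the stochastic integral, apply Lemma~\ref{lem.probspace}: on the event $\{\diln t \leq 1 - \shift^{-\nicefrac{1}{2}+\alpha}\}$, the difference $|J(t) - W\circ\slogtime(t)|$ is at most $\shift^{-\alpha+\delta'}$ except with probability $\leq 3\shift^{4\alpha}\log^2\shift\exp(-C\shift^{\nicefrac{2\delta'}{3}})$ for any $\delta' \in (0,\alpha)$. Taking imaginary parts and multiplying by $\frac{2}{\sqrt{\beta}}$ gives the desired control with a loss of a multiplicative constant, which can be absorbed by choosing $\delta'$ slightly smaller than $\delta$ (or adjusting the constant $C$ in the exponent).

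Finally, a union bound over the three contributions (the two oscillatory terms and the stochastic one) yields
\[
\bprob[\bigg]{\sup_{\diln t \in [0, 1-\shift^{\nicefrac{-1}{2}+\alpha}]} \abs[\big]{2\ampN(t) + \log\GBM(t)} \geq \shift^{-\alpha+\delta}} \leq 4\shift^{4\alpha}\log^2\shift\exp(-C\shift^{\nicefrac{2\delta}{3}})
\]
for $\shift$ large enough, with $C > 0$ depending only on $\beta$, $\alpha$ and $\delta$. No step constitutes a genuine obstacle: the algebraic decomposition of $2\ampN$ is routine, and the two probabilistic estimates are precisely what Lemma~\ref{lem.probspace} and Corollary~\ref{cor.averaging.bounded} provide. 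The main care required is to tune $\delta'$ in Lemma~\ref{lem.probspace} and the constant $c$ in the application of the averaging corollary so that the errors from the three pieces each fit within a fraction of $\shift^{-\alpha+\delta}$ and the failure probabilities combine into the claimed form.
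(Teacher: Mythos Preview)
Your proposal is correct and follows essentially the same approach as the paper: decompose $2\ampN(t) + \log\GBM(t)$ into the two oscillatory cosine integrals (controlled via Corollary~\ref{cor.averaging.bounded}) and the difference between the stochastic integral and $\Im W\circ\slogtime$ (controlled via the imaginary part of Lemma~\ref{lem.probspace}), then combine by a union bound. The paper carries this out with the same choices of thresholds (splitting $\shift^{-\alpha+\delta}$ into quarters) and arrives at the identical dominant tail term $3\shift^{4\alpha}\log^2\shift\exp(-C\shift^{2\delta/3})$, absorbing the smaller contributions from the averaging corollary into the final constant for large $\shift$.
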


\begin{proof}
By definition of \(\ampN\),
\begin{align*}
&\sup_{t\in\timedom} \abs[\Big]{2\ampN(t) + \frac{2}{\sqrt{\beta}} \Im W\circ\slogtime(t) - \frac{2}{\beta} \slogtime(t)} \\
    &\hspace*{11mm} \leq \sup_{t\in\timedom} \abs[\bigg]{\frac{2}{\sqrt{\beta}} \int_0^t \sin 2\phaseN(s) \sqrt{\frac{2\diln}{1 - \diln s}} \diff{\sBM}(s) + \frac{2}{\sqrt{\beta}} \Im W\circ\slogtime(t)} \\
    &\hspace*{22mm} + \frac{\abs{4-\beta}}{\beta} \sup_{t\in\timedom} \abs[\bigg]{\int_0^t \cos 2\phaseN(s) \frac{\diln}{1 - \diln s} \diff{s}}
    + \frac{2}{\beta} \sup_{t\in\timedom} \abs[\bigg]{\int_0^t \cos 4\phaseN(s) \frac{\diln}{1 - \diln s} \diff{s}}.
\end{align*}
The first supremum can immediately be bounded by \(\frac{1}{3}\shift^{-\alpha+\delta}\) on a good event by Lemma~\ref{lem.probspace}, simply by taking the imaginary part in \eqref{eq.probspace}. The other two can each be bounded by \(\frac{1}{3} \shift^{-\alpha+\delta}\) on a good event by applying Corollary~\ref{cor.averaging.bounded}. The result then follows by combining the tail bounds, which are dominated by that from Lemma~\ref{lem.probspace}. 
\end{proof}

Using properties of Brownian motion, we can take the exponential and compare \(e^{-2\ampN}\) and \(e^{2\ampN}\) to the geometric Brownian motion \(\GBM\) and its reciprocal.

\begin{corollary}
\label{cor.GBM.exp}
In the setting of the proposition, for any \(\delta \in (0,\alpha)\) and any \(\shift\) large enough, 
\beq{eq.GBM.exp.+}
\bprob[\bigg]{\sup_{t\in\timedom} \abs[\Big]{e^{-2\ampN(t)} - \GBM(t)} \geq \shift^{-\alpha+\delta}} \leq \frac{2}{\log\shift},
\eeq
and if \(\alpha > \frac{1}{\beta+2}\) and \(\delta < \frac{1}{\beta}\bigl( \alpha(\beta+2) - 1 \bigr)\), then for any \(\shift\) large enough,
\beq{eq.GBM.exp.-}
\bprob[\bigg]{\sup_{t\in\timedom} \abs[\Big]{e^{2\ampN(t)} - \frac{1}{\GBM(t)}} \geq \exp\Bigl( - \frac{\alpha(\beta+2) - \beta\delta - 1}{\beta} \log\shift \Bigr)}
    \leq 2\exp\Bigl( - \frac{\beta\delta^2}{36(1-2\alpha)} \log\shift \Bigr).
\eeq
\end{corollary}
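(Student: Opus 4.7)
The plan is to derive both bounds from Proposition~\ref{prop.GBM} by linearizing the exponential difference. Writing
\[
    e^{-2\ampN(t)} - \GBM(t) = \GBM(t) \bigl( e^{-(2\ampN(t) + \log\GBM(t))} - 1 \bigr)
\]
and analogously \(e^{2\ampN(t)} - 1/\GBM(t) = (1/\GBM(t))\bigl( e^{2\ampN(t) + \log\GBM(t)} - 1 \bigr)\), and applying the elementary inequality \(|e^u - 1| \leq 2|u|\) for \(|u| \leq 1\), the task reduces to controlling the product of \(|2\ampN + \log\GBM|\)---which Proposition~\ref{prop.GBM} already handles---with either \(\sup \GBM\) or \(\sup 1/\GBM\) on the interval \(\diln t \in [0, 1 - \shift^{\nicefrac{-1}{2}+\alpha}]\).

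For \eqref{eq.GBM.exp.+}, the key observation is that \(\tau \mapsto \exp\bigl( \tfrac{2}{\sqrt{\beta}} \Im W(\tau) - \tfrac{2}{\beta} \tau \bigr)\) is the standard mean-one exponential martingale driven by \(\Im W\), so \(\GBM\) is its time change by \(\slogtime\). Doob's maximal inequality then yields \(\bprob{\sup_t \GBM(t) \geq \log\shift} \leq 1/\log\shift\). Applying Proposition~\ref{prop.GBM} with any fixed \(\delta' \in (0, \delta)\) small enough that \(\log\shift \cdot \shift^{-\alpha + \delta'} \leq \tfrac{1}{4} \shift^{-\alpha+\delta}\) for all large \(\shift\) (which holds for any such \(\delta'\)), the proposition's error bound becomes super-polynomially small in \(\shift\) and is dominated by the Doob contribution, so the total failure probability is at most \(2/\log\shift\) for \(\shift\) large enough.

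For \eqref{eq.GBM.exp.-}, the reciprocal \(1/\GBM(t) = \exp\bigl( -\tfrac{2}{\sqrt{\beta}} \Im W\circ\slogtime(t) + \tfrac{2}{\beta} \slogtime(t) \bigr)\) is no longer a martingale because of its positive drift; I would instead bound
\[
    \sup_{\diln t \in [0, 1-\shift^{\nicefrac{-1}{2}+\alpha}]} 1/\GBM(t) \leq \exp\Bigl( \tfrac{2}{\sqrt{\beta}} \sup_{\tau \in [0, T_\alpha]} \abs{\Im W(\tau)} + \tfrac{2}{\beta} T_\alpha \Bigr)
\]
with \(T_\alpha \defeq (\tfrac{1}{2}-\alpha)\log\shift\), and apply the reflection principle \(\bprob{\sup_{\tau\leq T_\alpha} |\Im W(\tau)| \geq x} \leq 2e^{-x^2/(2T_\alpha)}\). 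A short calculation using the identity \(\gamma + (1-2\alpha)/\beta - \alpha = 0\), where \(\gamma \defeq (\alpha(\beta+2)-1)/\beta\), shows that to obtain the target level \(\shift^{-\gamma+\delta}\) after invoking Proposition~\ref{prop.GBM} at parameter \(\delta'\), it suffices to take \(x = \tfrac{\sqrt{\beta}}{2}(\delta - \delta')\log\shift - O(1)\). Choosing \(\delta'\) just below \(2\delta/3\) then produces the rate \(x^2/(2T_\alpha) = \beta\delta^2 \log\shift/(36(1-2\alpha))\), matching \eqref{eq.GBM.exp.-}, while the super-polynomially small error from the proposition is absorbed into the slack.

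The main obstacle is threading the constraints in Part 2. The hypotheses \(\alpha > 1/(\beta+2)\) and \(\delta < (\alpha(\beta+2)-1)/\beta\) are exactly what ensure that \(\gamma > 0\) (so the target bound is nontrivial) and that \(\delta < \gamma < \alpha\) (so Proposition~\ref{prop.GBM} applies with \(\delta' = 2\delta/3 < \alpha\)). Once the algebraic cancellation above is verified and the sub-leading constants absorbed for \(\shift\) large, the rest is routine bookkeeping.
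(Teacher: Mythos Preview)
Your proposal is correct and follows the same overall architecture as the paper: linearize the exponential difference, control the supremum of $\GBM^{\pm 1}$ over the interval, and combine with Proposition~\ref{prop.GBM} applied at a slightly smaller exponent. The differences are in the tools. For \eqref{eq.GBM.exp.+} the paper computes the exact distribution of the running maximum of a drifted Brownian motion via Girsanov and the joint density formula, then bounds the error function; your use of Doob's $L^1$ maximal inequality on the exponential martingale $\GBM$ is cleaner and gets the same $1/\log\shift$ contribution in one line. For \eqref{eq.GBM.exp.-} the paper works directly with the drifted process $-\tfrac{2}{\sqrt\beta}\Im W(s)+\tfrac{2}{\beta}s$ and chooses the threshold $y=(\tfrac{1-2\alpha}{\beta}+\tfrac{\delta}{3})\log\shift$, combining with Proposition~\ref{prop.GBM} at parameter $\delta/3$; you instead separate the deterministic drift $\tfrac{2}{\beta}T_\alpha$ from $\sup|\Im W|$ and use the reflection principle, combining with the proposition at $\delta'\approx 2\delta/3$. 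Both choices land on the same exponent $\beta\delta^2/(36(1-2\alpha))$ after the algebraic cancellation $\alpha-\gamma=(1-2\alpha)/\beta$ that you identify. Your linearization $|e^u-1|\le 2|u|$ for $|u|\le 1$ is also a minor variant of the paper's $|e^u-1|\le |u|e^{|u|}$; either works once the proposition guarantees $|2\ampN+\log\GBM|$ is small.
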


\begin{proof}
Since
\[
\abs[\Big]{e^{\mp 2\ampN} - \GBM^{\pm 1}}
    \leq \GBM^{\pm 1} \abs[\big]{2\ampN + \log\GBM} e^{\abs{2\ampN + \log\GBM}},
\]
given the proposition, what remains to control here is only the supremum of the geometric Brownian motion \(\GBM\) and that of its reciprocal. Recall that
\[
\GBM^{\pm 1}(t)
    = \exp\Bigl( \pm \frac{2}{\sqrt{\beta}} \Im W\circ\slogtime(t) \mp \frac{2}{\beta} \slogtime(t) \Bigr).
\]
We are interested in the supremum of this process over \(\diln t \in [0, 1 - \shift^{\nicefrac{-1}{2}+\alpha}]\), which can be understood as the supremum over \(s = \slogtime(t) \in [0, (\frac{1}{2} - \alpha)\log\shift]\). Now, from the joint density of a Brownian motion and its running maximum, an application of Girsanov's theorem shows that for \(y \geq 0\) (see e.g.\@ \cite[Part~II, \S 2.1, formula~1.1.4]{borodin_handbook_2002} for the precise statement),
\beq{eq.GBM.driftedBMbound}
\begin{multlined}
\bprob[\bigg]{\sup_{s\in [0,(\frac{1}{2}-\alpha)\log\shift]} \Bigl( \pm \frac{2}{\sqrt{\beta}} \Im W(s) \mp \frac{2s}{\beta} \Bigr) \geq y}
    = \frac{1}{2} \biggl( 1 - \erf\Bigl( \frac{1}{2} \Bigl( y \sqrt{\frac{\beta}{(1-2\alpha)\log\shift}} \pm \sqrt{\frac{(1-2\alpha)\log\shift}{\beta}} \Bigr) \Bigr) \biggr) \\
    + \frac{e^{\mp y}}{2} \biggl( 1 - \erf\Bigl( \frac{1}{2} \Bigl( y \sqrt{\frac{\beta}{(1-2\alpha)\log\shift}} \mp \sqrt{\frac{(1-2\alpha)\log\shift}{\beta}} \Bigr) \Bigr) \biggr).
\end{multlined}
\eeq

We can bound this in the two cases. First, for the top sign, the argument of the first error function is nonnegative so we can use the fact that \(1 - \erf x \leq e^{-x^2}\) for \(x \geq 0\), and we can simply bound the second one with the basic property \(\frac{1}{2}(1 - \erf) \leq 1\). We thus obtain for \(y = \log\log\shift\) and \(\shift\) large enough that
\begin{multline*}
\bprob[\bigg]{\sup_{s\in [0,(\frac{1}{2}-\alpha)\log\shift]} \Bigl( \frac{2}{\sqrt{\beta}} \Im W(s) - \frac{2s}{\beta} \Bigr) \geq \log\log\shift} \\
    \leq \frac{1}{2} \exp\biggl( - \frac{1}{4} \Bigl( \log\log\shift \sqrt{\frac{\beta}{(1-2\alpha)\log\shift}} + \sqrt{\frac{(1-2\alpha)\log\shift}{\beta}} \Bigr)^2 \biggr) + \exp\Bigl( - \log\log\shift \Bigr) 
    \leq \frac{1}{2\shift^{\nicefrac{(1-2\alpha)}{4\beta}}} + \frac{1}{\log\shift}.
\end{multline*}
Exponentiating, we get that for \(\shift\) large enough,
\beq{eq.boundsupGBM}
\bprob[\bigg]{\sup_{t\in\timedom} \GBM(t) \geq \log\shift}
    \leq \frac{1}{2\shift^{\nicefrac{(1-2\alpha)}{4\beta}}} + \frac{1}{\log\shift}.
\eeq
On the intersection of the complement of this event with the complement of the event in the proposition with \(\delta\) replaced with \(\nicefrac{\delta}{2}\),
\[
\sup_{t\in\timedom} \abs[\big]{e^{-2\ampN(t)} - \GBM(t)}
    \leq \shift^{-\alpha+\nicefrac{\delta}{2}} \exp\bigl( \shift^{-\alpha+\nicefrac{\delta}{2}} \bigr) \log\shift
    \leq \shift^{-\alpha+\delta}
\]
for \(\shift\) large enough, and combining the tail bounds (which are dominated by the \(\nicefrac{1}{\log\shift}\) term) yields \eqref{eq.GBM.exp.+}.

With the bottom sign in \eqref{eq.GBM.driftedBMbound}, we use again the bound \(1 - \erf x \leq e^{-x^2}\) valid for \(x \geq 0\), but now on both error functions, so this is only valid for \(y \geq \frac{1-2\alpha}{\beta}\log\shift\). This yields
\[
\bprob[\bigg]{\sup_{s\in [0,(\frac{1}{2}-\alpha)\log\shift]} \Bigl( - \frac{2}{\sqrt{\beta}} \Im W(s) + \frac{2s}{\beta} \Bigr) \geq y}
    \leq \exp\biggl( -\frac{1}{4} \Bigl( y \sqrt{\frac{\beta}{(1-2\alpha)\log\shift}} - \sqrt{\frac{(1-2\alpha)\log\shift}{\beta}} \Bigr)^2 \biggr).
\]
Taking \(y = \bigl( \frac{1-2\alpha}{\beta} + \frac{\delta}{3} \bigr) \log\shift\), we get
\beq{eq.boundsupGBM-1}
\bprob[\bigg]{\sup_{s\in [0,(\frac{1}{2}-\alpha)\log\shift]} \Bigl( - \frac{2}{\sqrt{\beta}} \Im W(s) + \frac{2s}{\beta} \Bigr) \geq \Bigl( \frac{1 - 2\alpha}{\beta} + \frac{\delta}{3} \Bigr) \log\shift}
    \leq \exp\Bigl( - \frac{\beta\delta^2}{36(1-2\alpha)} \log\shift \Bigr).
\eeq
On the intersection of the complement of this event with the complement of the event in the proposition with \(\delta\) replaced with \(\nicefrac{\delta}{3}\),
\[
\sup_{t\in\timedom} \abs[\Big]{e^{2\ampN(t)} - \frac{1}{\GBM(t)}}
    \leq \exp\biggl( \shift^{-\alpha+\nicefrac{\delta}{3}} + \Bigl( - \alpha + \frac{\delta}{3} + \frac{1-2\alpha}{\beta} + \frac{\delta}{3} \Bigr) \log\shift \biggr)
\]
for \(\shift\) large enough. If \(\alpha > \frac{1}{\beta+2}\), then taking \(\delta < \frac{1}{\beta}\bigl( \alpha(\beta+2) - 1\bigr)\) ensures that the exponent is negative, and combining the tail bounds (which are dominated by the negative power of \(\shift\)) gives \eqref{eq.GBM.exp.-} for \(\shift\) large enough.
\end{proof}

\subsection{The real part of the hyperbolic Brownian motion}

In the last section we saw that the process \(e^{-2\ampN}\) becomes a geometric Brownian motion driven by \(\Im W\) as \(\shift\to\infty\). Now, we use this to prove the following result, hence finishing to compare \(-\exp(-\diffamps-i\diffphases)\) with a hyperbolic Brownian motion driven by \(W\).

\begin{proposition}
\label{prop.ReHBM}
On the probability space from Lemma~\ref{lem.probspace}, for any \(\alpha \in (0, \nicefrac{1}{2})\) and \(\delta \in (0, \nicefrac{\alpha}{4})\), there is a \(C > 0\) such that for any \(\shift \in \{\shift_n\}_{n\in\mathbb{N}}\) large enough, 
\[
\bprob[\bigg]{\sup_{t\in\timedom} \abs[\Big]{-e^{-\diffamps(t)}\cos\diffphases(t) - \frac{2}{\sqrt{\beta}} \int_0^t \GBM(s) \diff{(\Re W\circ\slogtime)}(s)} \geq \shift^{\nicefrac{-\alpha}{4}+\delta}} \leq \frac{C}{\log\shift}
\]
where \(\timedom \defeq [0, (1 - \shift^{\nicefrac{-1}{2}+\alpha}) / \diln]\).
\end{proposition}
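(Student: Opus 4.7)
The plan is to express $A(t) \defeq -e^{-\diffamps(t)}\cos\diffphases(t)$ as a stochastic integral using \eqref{eq.dalmostHBM} and the Wronskian identity $e^{-\diffamps}\sin\diffphases = e^{-2\ampN}$, and then compare it term by term with $B(t) \defeq \frac{2}{\sqrt{\beta}}\int_0^t \GBM(s)\diff{(\Re W\circ\slogtime)}(s)$. Taking the real part of \eqref{eq.dalmostHBM} and applying the Wronskian yields
\[
A(t) = \frac{2}{\sqrt{\beta}}\int_0^t e^{-2\ampN(s)}\cos 2\phaseN(s)\sqrt{\frac{2\diln}{1-\diln s}}\diff{\sBM}(s) - \int_0^t e^{-2\ampN(s)}\Bigl(\bigl(\tfrac{4}{\beta}-1\bigr)\sin 2\phaseN(s) + \tfrac{4}{\beta}\sin 4\phaseN(s)\Bigr)\frac{\diln}{1-\diln s}\diff{s}.
\]
Setting $M(t) \defeq \int_0^t e^{-2i\phaseN(s)}\sqrt{\frac{2\diln}{1-\diln s}}\diff{\sBM}(s)$ and $Q \defeq \Re M - \Re W\circ\slogtime$, this gives a decomposition $A - B = \mathcal{E}_1 + \mathcal{E}_2 + \mathcal{E}_3$ with $\mathcal{E}_1 \defeq \frac{2}{\sqrt{\beta}}\int_0^\cdot (e^{-2\ampN} - \GBM)\diff{(\Re M)}$, $\mathcal{E}_2 \defeq \frac{2}{\sqrt{\beta}}\int_0^\cdot \GBM\diff{Q}$, and $\mathcal{E}_3$ the oscillating drift.

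I would handle $\mathcal{E}_3$ via Lemma~\ref{lem.averaging} applied to $X_{\shift} = e^{-2\ampN}$, which by Itô's formula satisfies an SDE of the required form with coefficients bounded in $\beta$. Writing $\sin k\phaseN = \Im e^{ik\phaseN}$ for $k \in \{2,4\}$ and controlling $\sup|e^{-2\ampN}| \lesssim \log\shift$ via \eqref{eq.boundsupGBM} together with Proposition~\ref{prop.GBM} (exceptional probability $O(\nicefrac{1}{\log\shift})$), the lemma with $1-\diln T = \shift^{-\nicefrac{1}{2}+\alpha}$ gives a deterministic error of order $\log\shift/\shift^{3\alpha}$ plus a sub-Gaussian tail, so $|\mathcal{E}_3| \leq \shift^{-\nicefrac{\alpha}{4}+\delta}/3$ with overwhelming probability. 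For $\mathcal{E}_1$, on the event $\{\sup|e^{-2\ampN} - \GBM| \leq \shift^{-\alpha+\delta_1}\}$ of probability $1 - O(\nicefrac{1}{\log\shift})$ from Corollary~\ref{cor.GBM.exp}, the predictable bracket of the stopped martingale $\mathcal{E}_1$ is at most $C\shift^{-2\alpha+2\delta_1}\log\shift$, so Bernstein's inequality gives $\sup|\mathcal{E}_1| \leq \shift^{-\nicefrac{\alpha}{4}+\delta}/3$ with equally high probability provided $\delta_1 < \nicefrac{3\alpha}{4} + \delta$.

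The main obstacle is $\mathcal{E}_2$, which I would tackle via Itô integration by parts:
\[
\mathcal{E}_2(t) = \frac{2}{\sqrt{\beta}}\Bigl(\GBM(t)Q(t) - \int_0^t Q(s)\diff{\GBM}(s) - \langle\GBM, Q\rangle_t\Bigr).
\]
Here $\langle\GBM, \Re W\circ\slogtime\rangle \equiv 0$ follows from the independence of $\Re W$ and $\Im W$ in the complex Brownian motion $W$, and $\langle\GBM, \Re M\rangle \equiv 0$ thanks to the explicit construction of $W$ in the proof of Lemma~\ref{lem.probspace}: on each block $[\slogtime(t_{j-1}), \slogtime(t_j)]$, the martingale part of $W\circ\slogtime$ is driven entirely by the Brownian bridge $X_j$ (independent of $\sBM$), while $W_j$ (a random variable fixed on the block) contributes only through the absolutely continuous drift $\frac{s-\slogtime(t_{j-1})}{\sigma^2}W_j$, so that $\langle W\circ\slogtime, \sBM\rangle_t \equiv 0$ and hence $\langle\GBM, Q\rangle_t \equiv 0$. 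With $\sup|Q| \leq \shift^{-\alpha+\delta_1}$ from Lemma~\ref{lem.probspace} and $\sup|\GBM| \leq \log\shift$ from \eqref{eq.boundsupGBM}, the boundary term is of order $\log\shift \cdot \shift^{-\alpha+\delta_1}$, and the remaining martingale $\int_0^\cdot Q\diff{\GBM}$ has bracket $\leq \frac{4}{\beta}\int Q^2\GBM^2\diff{\slogtime} \lesssim \shift^{-2\alpha+2\delta_1}\log^3\shift$, which Bernstein's inequality similarly controls. Choosing $\delta_1 \in (0,\alpha)$ small compared to $\delta$ makes all three contributions at most $\shift^{-\nicefrac{\alpha}{4}+\delta}/3$ with high probability, and the union bound of exceptional events is dominated by the $O(\nicefrac{1}{\log\shift})$ coming from \eqref{eq.boundsupGBM}.
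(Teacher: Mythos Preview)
Your decomposition $A-B=\mathcal{E}_1+\mathcal{E}_2+\mathcal{E}_3$ and your treatment of the oscillating drift $\mathcal{E}_3$ match the paper's Step~2. The gap is in $\mathcal{E}_1$ and $\mathcal{E}_2$: the integrals $\int_0^\cdot \GBM\,\diff{(\Re M)}$ and $\int_0^\cdot Q\,\diff{\GBM}$ are not well-defined It\^o integrals, because on the probability space of Lemma~\ref{lem.probspace} there is no common filtration in which both $\sBM$ and $W\circ\slogtime$ are semimartingales. Recall the construction: on a block $[t_{j-1},t_j]$ one has $W\circ\slogtime(t)=\sum_{k<j}W_k+\sigma X_j\bigl(\tfrac{\slogtime(t)-\slogtime(t_{j-1})}{\sigma^2}\bigr)+\tfrac{\slogtime(t)-\slogtime(t_{j-1})}{\sigma^2}W_j$, and $W_j$ is a functional of $\sBM|_{[t_{j-1},t_j]}$. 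Hence $W\circ\slogtime(t)$ (and therefore $\GBM(t)$) is \emph{not} adapted to any filtration in which $\sBM$ remains a Brownian motion; conversely $\Re M$ and $Q$ are not adapted to the $W$-filtration. Your justification that ``$W_j$ contributes only through the absolutely continuous drift'' is a semimartingale decomposition valid only in a filtration where $W_j$ is already known at time $t_{j-1}$, and in any such enlargement $\sBM$ acquires an extra drift, so $\Re M$ is no longer a martingale and Bernstein's inequality no longer applies to your $\mathcal{E}_1$ or to $\int Q\,\diff{\GBM}$.

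This is precisely why the paper does not attempt a continuous-time integration by parts. Instead it discretizes (Step~3), exploiting that the two filtrations \emph{do} agree at the mesh points $t_j$: the paper's four-term splitting \eqref{eq.ReHBM.supsumincrements} is effectively a discrete integration by parts, freezing $e^{-2\ampN}$ and $\GBM$ at $t_{k-1}$ so that every piece is either a genuine martingale increment in one filtration or a difference controllable by Lemma~\ref{lem.probspace}. Your approach could likely be repaired by replacing $\GBM$ in $\mathcal{E}_1,\mathcal{E}_2$ with its piecewise-constant discretization on the same mesh, but carrying this through amounts to reproducing the paper's Steps~3--4.
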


\begin{proof}
We can get an SDE for \(-e^{-\diffamps}\cos\diffphases\) by taking the real part of the SDE for \(-\exp\bigl( -\diffamps - i\diffphases \bigr)\) given in \eqref{eq.dalmostHBM}. As the Wronskian identity \eqref{eq.ampsphasesWronskianidentity} implies that \(e^{-\diffamps} \sin\diffphases = e^{-2\ampN}\), this SDE can be written as
\beq{eq.dRealmostHBM}
\begin{aligned}
\diff{\bigl( -e^{-\diffamps} \cos\diffphases \bigr)}(t)
    & = \frac{2}{\sqrt{\beta}} e^{-2\ampN(t)} \cos 2\phaseN(t) \sqrt{\frac{2\diln}{1 - \diln t}} \diff{\sBM}(t) \\
    & \hspace*{22mm} - e^{-2\ampN(t)} \biggl( \Bigl( \frac{4}{\beta} - 1 \Bigr) \sin 2\phaseN(t) + \frac{4}{\beta} \sin 4\phaseN(t) \biggr) \frac{\diln}{1 - \diln t} \diff{t}.
\end{aligned}
\eeq
To prove the proposition, we proceed as follows. First, we find a good event on which the growth of \(e^{-2\ampN}\) can be controlled. From this, we can deduce that the second line in \eqref{eq.dRealmostHBM} vanishes when \(\shift\to\infty\), thus reducing the problem to comparing the first line in \eqref{eq.dRealmostHBM} to the real part of the hyperbolic Brownian motion driven by \(W\). To do this, we discretize the time interval as in the proof of Lemma~\ref{lem.probspace} in order to reduce the problem to a comparison between two discrete time martingales.

\setcounter{step}{0}
\step{Finding a good event on which to control the growth of integrands}

Let
\beq{eq.ReHBM.goodevent}
\mathscr{G} \defeq \biggl\{ \sup_{t\in\timedom} \GBM(t) \leq \log\shift \biggr\} \cap \biggl\{ \sup_{t\in\timedom} \abs[\big]{e^{-2\ampN}(t) - \GBM(t)} \leq \shift^{-\alpha+\delta} \biggr\}.
\eeq
We have seen in~\eqref{eq.boundsupGBM} in the proof of Corollary~\ref{cor.GBM.exp} and in \eqref{eq.GBM.exp.+} in the corollary itself that the complements of both of these events have probability at most \(\nicefrac{2}{\log\shift}\) for \(\shift\) large enough, so \(\pprob{\mathscr{G}^\complement} \leq \nicefrac{4}{\log\shift}\). Therefore, in order to prove the proposition, it suffices to prove that
\[
\pprob[\bigg]{\mathscr{G} \cap \biggl\{ \sup_{t\in\timedom} \abs[\Big]{-e^{-\diffamps(t)}\cos\diffphases(t) - \frac{2}{\sqrt{\beta}} \int_0^t \GBM(s) \diff{(\Re W\circ\slogtime)}(s)} \geq \shift^{\nicefrac{-\alpha}{4}+\delta} \biggr\}} \lesssim \frac{1}{\log\shift}.
\]

\step{Controlling the oscillatory terms}

We now show that the second line in \eqref{eq.dRealmostHBM} does not contribute. An application of Itô's formula (or just taking the imaginary part of~\eqref{eq.dalmostHBM} and using \(e^{-\diffamps} \sin\diffphases = e^{-2\ampN}\)) shows that
\begin{multline*}
\diff{\bigl( e^{-2\ampN} \bigr)}(t)
    = - \frac{2\sqrt{2}}{\sqrt{\beta}} e^{-2\ampN(t)} \sin 2\phaseN(t) \sqrt{\frac{\diln}{1 - \diln t}} \diff{\sBM}(t) \\
    - e^{-2\ampN(t)} \biggl( \Bigl( \frac{4}{\beta} - 1 \Bigr) \cos 2\phaseN(t) + \frac{4}{\beta} \cos 4\phaseN(t) \biggr) \frac{\diln}{1 - \diln t} \diff{t}.
\end{multline*}
This SDE has precisely the form appearing in Lemma~\ref{lem.averaging} with
\[
a_{\shift} = - \Bigl( \frac{4}{\beta} - 1 \Bigr) \cos 2\phaseN - \frac{4}{\beta} \cos 4\phaseN
\qquadtext{and}
b_{\shift} = - \frac{2\sqrt{2}}{\beta} \sin 2\phaseN,
\]
which are both bounded processes with bounds depending only on \(\beta\). Applying this lemma for a \(k \neq 0\) with \(\diln T = 1 - \shift^{\nicefrac{-1}{2}+\alpha}\) so that \(\shift^{\nicefrac{3}{2}} (1 - \diln T)^3 = \shift^{3\alpha}\), we see that there are \(C, C' > 0\) depending only on \(\beta\) and \(k\) such that for any \(x > 0\),
\[
\pprob[\bigg]{\biggl\{ \sup_{t\in\timedom} e^{-2\ampN(t)} \leq 2\log\shift \biggr\} \cap \biggl\{ \sup_{t\in\timedom} \abs[\Big]{\int_0^t e^{-2\ampN(s) + ki\phaseN(s)} \frac{\diln}{1 - \diln s} \diff{s}} \geq x + \frac{2C\log\shift}{\shift^{3\alpha}} \biggr\}}
    \leq 4 \exp\Bigl( - \frac{C' \shift^{6\alpha} x^2}{4\log^2\shift} \Bigr).
\]
By definition, \(e^{-2\ampN} \leq 2\log\shift\) on \(\mathscr{G}\), so the first of the events in the above can be replaced with \(\mathscr{G}\) without changing the tail bound. Taking \(x = \shift^{\nicefrac{-\alpha}{4}+\delta}\) then yields an exponential tail bound, which vanishes faster than \(\nicefrac{1}{\log\shift}\) as \(\shift\to\infty\). Hence, using this with \(k = -2\) and \(k = -4\) gives the control we need on both terms of the second line of \eqref{eq.dRealmostHBM}.

\step{Reducing to a discrete-time problem}

It remains to compare the first line of~\eqref{eq.dRealmostHBM} with the real part of a hyperbolic Brownian motion driven by \(W\) and run in logarithmic time. To do so, we reduce the comparison between the two stochastic integrals to a comparison between discrete-time martingales. As in the proof of Lemma~\ref{lem.probspace}, we partition the time interval with a sequence \(\{t_j\}_{j=0}^N\) defined with \(t_0 \defeq 0\), \(\diln t_N \defeq 1 - \nicefrac{1}{\sqrt{\shift}}\), and with
\[
\diln t_j \defeq 1 - \frac{1}{(1 + \shift^{-p})^j}
\quadtext{for}
0 < j < N
\]
where here we set \(p \defeq \nicefrac{\alpha}{2}\). As we have seen in the proof of Lemma~\ref{lem.probspace}, this means that \(N\) is given by~\eqref{eq.probspace.Ndefandbound}, so that in particular \(N \leq \shift^p\log\shift\) for \(\shift\) large enough. Now, with \(S_{\shift,\alpha} \defeq \{j \in \mathbb{N} : 0 < \diln t_j \leq 1 - \shift^{\nicefrac{-1}{2}+\alpha}\}\), we can write
\begin{subequations}
\label{eq.ReHBM.supwithincrements}
\begin{align}
&\sup_{t\in\timedom} 
    \abs[\bigg]{\int_0^t e^{-2\ampN(s)} \cos 2\phaseN(s) \sqrt{\frac{2\diln}{1 - \diln s}} \diff{\sBM}(s)
    - \int_0^t \GBM(s) \diff{\bigl( \Re W\circ\slogtime \bigr)}(s)} \notag \\
& \hspace*{11mm} \leq \sup_{j \in S_{\shift,\alpha}}
    \abs[\bigg]{\int_0^{t_{j-1}} e^{-2\ampN(s)} \cos 2\phaseN(s) \sqrt{\frac{2\diln}{1 - \diln s}} \diff{\sBM}(s)
    - \int_0^{t_{j-1}} \GBM(s) \diff{\bigl( \Re W\circ\slogtime \bigr)}(s)} 
    \label{eq.ReHBM.supwithincrements.discrete} \\
& \hspace*{22mm} + \sup_{j \in S_{\shift,\alpha}} \sup_{t\in [t_{j-1}, t_j]} 
    \abs[\bigg]{\int_{t_{j-1}}^t e^{-2\ampN(s)} \cos 2\phaseN(s) \sqrt{\frac{2\diln}{1 - \diln s}} \diff{\sBM}(s)} 
    \label{eq.ReHBM.supwithincrements.B} \\
& \hspace*{22mm} + \sup_{j \in S_{\shift,\alpha}} \sup_{t\in [t_{j-1}, t_j]} 
    \abs[\bigg]{\int_{t_{j-1}}^t \GBM(s) \diff{\bigl( \Re W\circ\slogtime \bigr)}(s)}.
    \label{eq.ReHBM.supwithincrements.W}
\end{align}
\end{subequations}
Here, the two last lines are easily controlled on \(\mathscr{G}\). Indeed, on this event \(e^{-2\ampN} \leq 2\log\shift\) uniformly for \(\diln t \in [0, 1-\shift^{\nicefrac{-1}{2}+\alpha}]\), so the bracket of the integral in \eqref{eq.ReHBM.supwithincrements.B} is bounded for any \(t \in [t_{j-1}, t_j]\) by
\[
4 \log^2\shift \int_{t_{j-1}}^{t_j} \frac{2\diln}{1 - \diln s} \diff{s}
    = 8 \log^2\shift \log(1 + \shift^{-p})
    \leq 8 \shift^{-p} \log^2\shift.
\]
Thus, by Bernstein's inequality, for any \(x > 0\),
\[
\pprob[\bigg]{\mathscr{G} \cap \biggl\{ \sup_{t\in [t_{j-1}, t_j]} \abs[\Big]{\int_{t_{j-1}}^t e^{-2\ampN(s)} \cos 2\phaseN(s) \sqrt{\frac{2\diln}{1 - \diln s}} \diff{\sBM}(s)} \geq x \biggr\}}
    \leq 2 \exp\Bigl( - \frac{\shift^p x^2}{16\log^2\shift} \Bigr).
\]
Taking the supremum over \(j \in S_{\shift,\alpha}\) adds a prefactor of \(N\) to the tail bound, so because \(N \leq \shift^p\log\shift = \shift^{\nicefrac{\alpha}{2}} \log\shift\), with \(x = \shift^{\nicefrac{-\alpha}{4}+\delta}\) the tail bound decreases exponentially in a power of \(\shift\) and is certainly \(O(\nicefrac{1}{\log\shift})\).

Likewise, on \(\mathscr{G}\) the geometric Brownian motion \(\GBM\) is bounded by \(\log\shift\), so the bracket of the integral in \eqref{eq.ReHBM.supwithincrements.W} is bounded by
\[
\log^2\shift \int_{t_{j-1}}^{t_j} \diff{\quadvar{\Re W\circ\slogtime}}(s)
    = \log^2\shift \int_{t_{j-1}}^{t_j} \frac{\diln}{1 - \diln s} \diff{s}
    \leq \shift^{-p} \log^2\shift.
\]
In the same way as in the last case, Bernstein's inequality then yields a tail bound for~\eqref{eq.ReHBM.supwithincrements.W} on \(\mathscr{G}\) that decreases exponentially in a power of \(\shift\). 

The problem is thus reduced to controlling~\eqref{eq.ReHBM.supwithincrements.discrete}.

\step{Controlling the difference of discrete-time martingales}

It remains to control~\eqref{eq.ReHBM.supwithincrements.discrete}, the supremum of the difference of discrete martingales. To do this, given a process \(X\), we let \(R(X)\) denote the process \(X\) modified to be reset at every time \(t_k\) of the partition, that is, \(R(X)(s) \defeq X(s) - X(t_{k-1})\) for \(s \in [t_{k-1}, t_k)\). Using this notation, we can split each difference of increments in~\eqref{eq.ReHBM.supwithincrements.discrete} in four differences, so that the full difference takes the form
\begin{subequations}
\label{eq.ReHBM.supsumincrements}
\begin{align}
&\int_0^{t_{j-1}} e^{-2\ampN(s)} \cos 2\phaseN(s) \sqrt{\frac{2\diln}{1 - \diln s}} \diff{\sBM}(s)
    - \int_0^{t_{j-1}} \GBM(s) \diff{\bigl( \Re W\circ\slogtime \bigr)}(s) \notag \\
    & \hspace*{6mm} = \int_0^{t_{j-1}} R\bigl( e^{-2\ampN} \bigr)(s) \cos 2\phaseN(s) \sqrt{\frac{2\diln}{1 - \diln s}} \diff{\sBM}(s) 
    \label{eq.ReHBM.supsumincrements.RB} \\
    & \hspace*{12mm} + \sum_{k=1}^{j-1} \Bigl( e^{-2\ampN(t_{k-1})} - \GBM(t_{k-1}) \Bigr) \int_{t_{k-1}}^{t_k} \cos 2\phaseN(s) \sqrt{\frac{2\diln}{1 - \diln s}} \diff{\sBM}(s) 
    \label{eq.ReHBM.supsumincrements.diffGBMs}
\end{align}
\begin{align}
    & \hspace*{12mm} + \sum_{k=1}^{j-1} \GBM(t_{k-1}) \biggl( \int_{t_{k-1}}^{t_k} \cos 2\phaseN(s) \sqrt{\frac{2\diln}{1 - \diln s}} \diff{\sBM}(s) - \Re W\circ\slogtime(t_k) + \Re W\circ\slogtime (t_{k-1}) \biggr) 
    \label{eq.ReHBM.supsumincrements.diffBMs} \\
    & \hspace*{12mm} - \int_0^{t_{j-1}} R(\GBM)(s) \diff{\bigl( \Re W\circ\slogtime \bigr)}(s).
    \label{eq.ReHBM.supsumincrements.RW}
\end{align}
\end{subequations}
We will control each of these four lines independently.

We start with the easiest ones. First, the integral in a single term of \eqref{eq.ReHBM.supsumincrements.diffGBMs} has bracket
\[
2 \int_{t_{k-1}}^{t_k} \cos^2 2\phaseN(s) \frac{\diln}{1 - \diln s} \diff{s}
    \leq 2\log(1 + \shift^{-p})
    \leq 2\shift^{-p},
\]
so by Bernstein's inequality, for any \(x > 0\),
\[
\bprob[\bigg]{\abs[\Big]{\int_{t_{k-1}}^{t_k} \cos 2\phaseN(s) \sqrt{\frac{2\diln}{1 - \diln s}} \diff{\sBM}(s)} \geq x}
    \leq 2 \exp\Bigl( - \frac{\shift^p x^2}{4} \Bigr).
\]
Since \(\abs{e^{-2\ampN} - \GBM} \leq \shift^{-\alpha+\delta}\) on \(\mathscr{G}\), it follows that for any \(k\) and any \(x > 0\),
\[
\pprob[\bigg]{\mathscr{G} \cap \Bigl\{ \abs[\big]{e^{-2\ampN(t_{k-1})} - \GBM(t_{k-1})} \abs[\Big]{\int_{t_{k-1}}^{t_k} \cos 2\phaseN(s) \sqrt{\frac{2\diln}{1 - \diln s}} \diff{\sBM}(s)} \geq x \Bigr\}}
    \leq 2 \exp\Bigl( - \frac{\shift^{\nicefrac{5\alpha}{2}-2\delta} x^2}{4} \Bigr).
\]
Combining the tail bounds for all of the increments and using that \(N \leq \shift^{\nicefrac{\alpha}{2}} \log\shift\) for \(\shift\) large enough, we get
\begin{multline*}
\pprob[\bigg]{\mathscr{G} \cap \biggl\{ \sup_{j\in S_{\shift,\alpha}} \sum_{k=1}^{j-1} \abs[\big]{e^{-2\ampN(t_{k-1})} - \GBM(t_{k-1})} \abs[\Big]{\int_{t_{k-1}}^{t_k} \cos 2\phaseN(s) \sqrt{\frac{2\diln}{1 - \diln s}} \diff{\sBM}(s)} \geq x \biggr\}} \\
    \leq 2\shift^{\nicefrac{\alpha}{2}} \log\shift \exp\Bigl( - \frac{\shift^{\nicefrac{3\alpha}{2}-2\delta} x^2}{4\log^2\shift} \Bigr).
\end{multline*}
With \(x = \shift^{\nicefrac{-\alpha}{4}+\delta}\), this bound is \(O(\nicefrac{1}{\log\shift})\) as needed.

Then, to control \eqref{eq.ReHBM.supsumincrements.diffBMs}, we use the tail bound \eqref{eq.probspace} we got in Lemma~\ref{lem.probspace}. Indeed, as \(\GBM \leq \log\shift\) on \(\mathscr{G}\), for any \(k\) we get that for \(\shift\) large enough,
\[
\frac{\shift^{\nicefrac{-\alpha}{4}+\delta}}{N\GBM(t_{k-1})}
    \geq \frac{\shift^{\nicefrac{-3\alpha}{4}+\delta}}{\log^2\shift}
    \geq \shift^{\nicefrac{-3\alpha}{4}},
\]
so by Lemma~\ref{lem.probspace},
\begin{multline*}
\pprob[\bigg]{\mathscr{G} \cap \Bigl\{ \GBM(t_{k-1}) \abs[\Big]{\int_{t_{k-1}}^{t_k} \cos 2\phaseN(s) \sqrt{\frac{2\diln}{1 - \diln s}} \diff{\sBM}(s) - \Re W\circ\slogtime(t_k) + \Re W\circ\slogtime(t_{k-1})} \geq \frac{\shift^{\nicefrac{-\alpha}{4}+\delta}}{N} \Bigr\}} \\
    \leq 3\shift^{4\alpha} \log^2\shift \exp\bigl( - C\shift^{\nicefrac{\alpha}{6}} \bigr).
\end{multline*}
In a similar way as in the last case, it follows that the supremum of~\eqref{eq.ReHBM.supsumincrements.diffBMs} over \(j \in S_{\shift,\alpha}\) only exceeds \(\shift^{\nicefrac{-\alpha}{4}+\delta}\) on \(\mathscr{G}\) with probability at most \(3 \shift^{\nicefrac{9\alpha}{2}} \log^3\shift \exp\bigl( - C\shift^{\nicefrac{\alpha}{6}} \bigr)\), which again vanishes faster than \(\nicefrac{1}{\log\shift}\).

Now, the stochastic integral in \eqref{eq.ReHBM.supsumincrements.RW} has quadratic variation
\begin{align}
\label{eq.ReHBM.quadvarRW}
\int_0^{t_{j-1}} R^2(\GBM)(s) \diff{\quadvar{\Re W\circ\slogtime}}(s)
    & = \sum_{k=1}^{j-1} \int_{t_{k-1}}^{t_k} \bigl( \GBM(s) - \GBM(t_{k-1}) \bigr)^2 \frac{\diln}{1 - \diln s} \diff{s} \\
    & = \frac{4}{\beta} \sum_{k=1}^{j-1} \int_{t_{k-1}}^{t_k} \biggl( \int_{t_{k-1}}^s \GBM(u) \diff{\bigl( \Im W\circ\slogtime \bigr)}(u) \biggr)^2 \frac{\diln}{1 - \diln s} \diff{s}.
    \notag
\end{align}
For any fixed \(k\), for \(s \in [t_{k-1}, t_k]\) the bracket of the remaining stochastic integral is bounded on \(\mathscr{G}\) by
\[
\log^2\shift \int_{t_{k-1}}^{t_k} \frac{\diln}{1 - \diln s} \diff{s}
    = \log^2\shift \log(1 + \shift^{-p})
    \leq \shift^{-p} \log^2\shift,
\]
so by Bernstein's inequality, for any \(y > 0\),
\[
\pprob[\bigg]{\mathscr{G} \cap \Bigl\{ \max_{1\leq k\leq j-1} \sup_{s\in [t_{k-1}, t_k]} \abs[\Big]{\int_{t_{k-1}}^s \GBM(u) \diff{\bigl( \Im W\circ\slogtime \bigr)}(u)} \geq y \Bigr\}}
    \leq 2(j-1) \exp\Bigl( - \frac{\shift^p y^2}{2\log^2\shift} \Bigr).
\]
On the complementary event (relative to \(\mathscr{G}\)), the quadratic variation of the integral in \eqref{eq.ReHBM.supsumincrements.RW} is bounded by
\[
\frac{4y^2}{\beta} \sum_{k=1}^{j-1} \int_{t_{k-1}}^{t_k} \frac{\diln}{1 - \diln s} \diff{s}
    = \frac{4y^2}{\beta} \int_0^{t_{j-1}} \frac{\diln}{1 - \diln s} \diff{s}
    = - \frac{4y^2}{\beta} \log(1 - \diln t_{j-1})
    \leq \frac{2y^2}{\beta} \log\shift.
\]
Hence, applying again Bernstein's inequality, we obtain that for any \(x > 0\),
\[
\pprob[\bigg]{\mathscr{G} \cap \Bigl\{ \abs[\Big]{\int_0^{t_{j-1}} R(\GBM)(s) \diff{\bigl( \Re W\circ\slogtime \bigr)}(s)} \geq x \Bigr\}}
    \leq 2(j-1) \exp\Bigl( - \frac{\shift^p y^2}{2\log^2\shift} \Bigr) + 2\exp\Bigl( - \frac{\beta x^2}{4y^2\log\shift} \Bigr).
\]
We can get the exponents to match by setting \(y^2 = \sqrt{\frac{\beta\log\shift}{2\shift^p}} x\), which yields
\[
\pprob[\bigg]{\mathscr{G} \cap \Bigl\{ \abs[\Big]{\int_0^{t_{j-1}} R(\GBM)(s) \diff{\bigl( \Re W\circ\slogtime \bigr)}(s)} \geq x \Bigr\}}
    \leq 2j \exp\Bigl( - \frac{\sqrt{\beta} \shift^{\nicefrac{p}{2}} x}{2\sqrt{2}\log^{\nicefrac{3}{2}}\shift} \Bigr).
\]
Taking the supremum over \(j \in S_{\shift,\alpha}\) replaces the prefactor with \(N^2 \leq \shift^\alpha \log^2\shift\), and with \(x = \shift^{\nicefrac{-\alpha}{4}+\delta}\) the power of \(\shift\) in the exponent remains positive, so the upper bound is \(O(\nicefrac{1}{\log\shift})\).

It only remains to bound \eqref{eq.ReHBM.supsumincrements.RB}, which we do in the same way as in the last case. The stochastic integral in \eqref{eq.ReHBM.supsumincrements.RB} quadratic variation
\[
2 \sum_{k=1}^{j-1} \int_{t_{k-1}}^{t_k} \Bigl( e^{-2\ampN(s)} - e^{-2\ampN(t_{k-1})} \Bigr)^2 \cos^2 2\phaseN(s) \frac{\diln}{1 - \diln s} \diff{s}.
\]
Now,
\[
\abs[\big]{e^{-2\ampN(s)} - e^{-2\ampN(t_{k-1})}}
    \leq \abs[\big]{e^{-2\ampN(s)} - \GBM(s)} + \abs[\big]{\GBM(s) - \GBM(t_{k-1})} + \abs[\big]{\GBM(t_{k-1}) - e^{-2\ampN(t_{k-1})}}.
\]
The square of this can be bounded using Young's inequality together with the fact that \(\abs{e^{-2\ampN} - \GBM} \leq \shift^{-\alpha+\delta}\) on \(\mathscr{G}\), and this shows that the above quadratic variation is bounded on \(\mathscr{G}\) by
\[
4 \sum_{k=1}^{j-1} \int_{t_{k-1}}^{t_k} \Bigl( \abs[\big]{\GBM(s) - \GBM(t_{k-1})}^2 + 4\shift^{-2\alpha+2\delta} \Bigr) \frac{\diln}{1 - \diln s} \diff{s}.
\]
Here, we can directly bound the second term by \(16\shift^{-2\alpha+2\delta} \log(1 - \diln t_{j-1}) \leq 8\shift^{-2\alpha+2\delta} \log\shift\) once we sum up the increments. Then, the first term is exactly four times the quadratic variation~\eqref{eq.ReHBM.quadvarRW} that we bounded for the case of~\eqref{eq.ReHBM.supsumincrements.RW}. Using what we found in that case, we get that for any \(y > 0\)
\begin{multline*}
\pprob[\bigg]{\mathscr{G} \cap \Bigl\{ 4 \sum_{k=1}^{j-1} \int_{t_{k-1}}^{t_k} \Bigl( \abs[\big]{\GBM(s) - \GBM(t_{k-1})}^2 + 4\shift^{-2\alpha+2\delta} \Bigr) \frac{\diln}{1 - \diln s} \diff{s} \geq 8 \Bigl( \frac{y^2}{\beta} + \shift^{-2\alpha+2\delta} \Bigr) \log\shift \Bigr\}} \\
    \leq 2(j-1) \exp\Bigl( - \frac{\shift^p y^2}{2\log^2\shift} \Bigr).
\end{multline*}
This controls the quadratic variation of the stochastic integral in \eqref{eq.ReHBM.supsumincrements.RB}, so a further application of Bernstein's inequality shows that for any \(x > 0\),
\begin{multline*}
\pprob[\bigg]{\mathscr{G} \cap \Bigl\{ \abs[\Big]{\int_0^{t_{j-1}} R(e^{-2\ampN})(s) \cos 2\phaseN(s) \sqrt{\frac{2\diln}{1 - \diln s}} \diff{\sBM}(s)} \geq x \Bigr\}} \\
    \leq 2(j-1) \exp\Bigl( - \frac{\shift^p y^2}{2\log^2\shift} \Bigr) + 2\exp\Bigl( - \frac{\beta x^2}{16(y^2 + \beta\shift^{-2\alpha+2\delta}) \log\shift} \Bigr).
\end{multline*}
Setting \(y^2 = \sqrt{\frac{\beta\log\shift}{8\shift^p}} x\) would make the exponents match if the extra term \(\beta\shift^{-2\alpha+2\delta}\) did not appear in the denominator of the second exponent. Nevertheless, if the \(y^2\) part of the quadratic variation dominates, this still gives the best order for \(y^2\), and taking this we obtain
\[
\pprob[\bigg]{\mathscr{G} \cap \Bigl\{ \abs[\Big]{\int_0^{t_{j-1}} \!\! R(e^{-2\ampN})(s) \cos 2\phaseN(s) \sqrt{\frac{2\diln}{1 - \diln s}} \diff{\sBM}(s)} \geq x \Bigr\}}
    \leq 2j \exp\biggl( - \frac{\sqrt{\beta}\shift^{\nicefrac{p}{2}} x}{4\sqrt{2}\log^{\nicefrac{3}{2}}\!\shift} \Bigl( 1 + \frac{2\sqrt{2\beta}\shift^{\nicefrac{p}{2}-2\alpha+2\delta}}{x\sqrt{\log\shift}} \Bigr)^{-1} \biggr).
\]
Taking the supremum only converts the \(2j\) factor into an \(N^2\) factor, so with \(x = \shift^{\nicefrac{-\alpha}{4}+\delta}\), the bound remains \(O(\nicefrac{1}{\log\shift})\) like in the other cases. 

This finishes to control the supremum of the difference between the two discrete-time martingales in \eqref{eq.ReHBM.supwithincrements.discrete}, and therefore concludes the proof.
\end{proof}

\section{Vague convergence of the canonical systems and convergence of solutions}
\label{sec.CSconv}

In the last section, we showed that the process \(-\exp(-\diffamps-i\diffphases)\), which appears in the first term of the Airy system's coefficient matrix~\eqref{eq.sAirymatpolar}, becomes as \(\shift\to\infty\) a hyperbolic Brownian motion driven by \(W\). Heuristically, this shows that the first term of the coefficient matrix~\eqref{eq.sAirymatpolar} essentially becomes that of the sine system in the limit.

In this section, we complete this argument to prove the vague convergence of the canonical systems. An important ingredient that we are missing, however, is a better control on the size of the entries of the Airy system's coefficient matrix. We start by filling this gap, and then we complete the proof of the vague convergence. Finally, we conclude this section with a proof of an important consequence of this: the compact convergence of the canonical systems' transfer matrices, which shows that their solutions also converge.

\subsection{Controlling the magnitude of the entries of the coefficient matrix}

In order to control the magnitude of the entries of the coefficient matrix, we will use the following property of continuous martingales.

\begin{proposition}
\label{prop.martingalebound}
Let \(M\) be a continuous martingale on an interval \([0,T)\) with \(M(0) = 0\). For any \(\varepsilon, \delta > 0\), there is a \(y > 0\) such that
\[
\bprob[\bigg]{\sup_{t \in [0,T)} \frac{\abs{M(t)}}{1 + \quadvar{M}(t)^{\nicefrac{1}{2} + \delta}} \geq y} < \varepsilon.
\]
\end{proposition}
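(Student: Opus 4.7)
The plan is to reduce the statement to a standard maximal estimate for Brownian motion via the Dambis--Dubins--Schwarz (DDS) theorem. By DDS, on a possibly enlarged probability space there exists a standard real Brownian motion $B$ (run on all of $[0,\infty)$) such that $M(t) = B(\quadvar{M}(t))$ for all $t \in [0,T)$. Since $\quadvar{M}$ is nondecreasing, this gives the pathwise bound
\[
\sup_{t\in[0,T)} \frac{\abs{M(t)}}{1 + \quadvar{M}(t)^{\nicefrac{1}{2}+\delta}}
\leq \sup_{s\geq 0} \frac{\abs{B(s)}}{1 + s^{\nicefrac{1}{2}+\delta}} \eqdef Y,
\]
so it suffices to prove that $Y$ is finite in probability, and in fact tight: $\bprob{Y \geq y} \to 0$ as $y\to\infty$.

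To estimate $\bprob{Y \geq y}$, I would split the time axis dyadically. On $[0,1]$ we have $1 + s^{\nicefrac{1}{2}+\delta} \geq 1$, so the reflection principle gives
\[
\bprob[\Big]{\sup_{s\in[0,1]} \abs{B(s)} \geq y} \leq 4\exp\bigl(-\nicefrac{y^2}{2}\bigr).
\]
On $A_n \defeq [2^n, 2^{n+1}]$ for $n\geq 0$, we have $1 + s^{\nicefrac{1}{2}+\delta} \geq 2^{n(\nicefrac{1}{2}+\delta)}$, so applying the reflection principle to $B$ on $[0,2^{n+1}]$,
\[
\bprob[\Big]{\sup_{s\in A_n} \frac{\abs{B(s)}}{1 + s^{\nicefrac{1}{2}+\delta}} \geq y}
\leq \bprob[\Big]{\sup_{s\leq 2^{n+1}} \abs{B(s)} \geq y\, 2^{n(\nicefrac{1}{2}+\delta)}}
\leq 4\exp\Bigl( - \frac{y^2\, 2^{2n\delta}}{4} \Bigr).
\]
Summing these bounds over $n \geq 0$ and adding the $[0,1]$ contribution yields
\[
\bprob{Y \geq y} \leq 4\exp\bigl(-\nicefrac{y^2}{2}\bigr) + \sum_{n\geq 0} 4\exp\Bigl( - \frac{y^2\, 2^{2n\delta}}{4} \Bigr),
\]
which converges (since $2n\delta > 0$) and tends to $0$ as $y \to \infty$. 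Choosing $y$ large enough that this bound is smaller than $\varepsilon$ gives the proposition.

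There is no real obstacle here; the only minor subtlety is that when $\quadvar{M}(T) < \infty$ with positive probability, the DDS Brownian motion is only defined up to that time, so one must enlarge the probability space to extend it to $[0,\infty)$ (this is standard, see e.g.\@ \cite[Theorem~V.1.6]{revuz_continuous_1999}). After that the argument is purely a Brownian maximal inequality computation.
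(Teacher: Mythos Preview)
Your proof is correct and follows the same opening move as the paper: both apply the Dambis--Dubins--Schwarz theorem to reduce to bounding $\sup_{s\geq 0}\abs{B(s)}/(1+s^{\nicefrac{1}{2}+\delta})$ for a standard Brownian motion $B$, and both note the need to enlarge the probability space when $\quadvar{M}(T)<\infty$.

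Where you diverge is in how you handle the Brownian supremum. The paper gives a soft argument: it invokes the law of the iterated logarithm to conclude that $\limsup_{t\to\infty}\abs{B(t)}/(1+t^{\nicefrac{1}{2}+\delta})=0$ a.s., so the supremum is a.s.\ finite by continuity, and then uses continuity of measures to find $y$ with the desired probability bound. You instead give a quantitative argument via a dyadic decomposition and the reflection principle, producing an explicit summable tail bound $\bprob{Y\geq y}\leq 4e^{-y^2/2}+\sum_{n\geq 0}4e^{-y^2 2^{2n\delta}/4}$. Your route is slightly longer but yields more: an explicit (in fact sub-Gaussian) decay rate in $y$, whereas the paper's LIL argument gives no rate. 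Both are perfectly adequate for how the proposition is used downstream.
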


\begin{proof}
Setting \(T_t \defeq \inf\{s \geq 0 : \quadvar{M}(s) > t\}\), by the Dambis--Dubins--Schwarz theorem, \(B(t) \defeq M(T_t)\) is a standard Brownian motion on \(\bigl[ 0, \quadvar{M}(T) \bigr)\) and \(M(t) = B\bigl( \quadvar{M}(t) \bigr)\) for any \(t \in [0,T)\). Therefore,
\[
\sup_{t \in [0,T)} \frac{\abs{M(t)}}{1 + \quadvar{M}(t)^{\nicefrac{1}{2} + \delta}}
    = \sup_{t \in [0,\quadvar{M}(T))} \frac{\abs{B(t)}}{1 + t^{\nicefrac{1}{2}+\delta}},
\]
and the result will follow if we can prove that for any \(\varepsilon, \delta > 0\), there is a \(y > 0\) such that
\[
\bprob[\bigg]{\sup_{t\in [0,\infty)} \frac{\abs{B(t)}}{1 + t^{\nicefrac{1}{2}+\delta}} \geq y} < \varepsilon.
\]
Now, since \((1 + t^{\nicefrac{1}{2}+\delta})^{-1} \sqrt{2t\log\log t} \to 0\) as \(t\to\infty\),
\[
\limsup_{t\to\infty} \frac{\abs{B(t)}}{1+t^{\nicefrac{1}{2}+\delta}}
    \leq \Bigl( \limsup_{t\to\infty} \frac{\abs{B(t)}}{\sqrt{2t\log\log t}} \Bigr) \Bigl( \limsup_{t\to\infty} \frac{\sqrt{2t\log\log t}}{1+t^{\nicefrac{1}{2}+\delta}} \Bigr)
    = 0
\]
a.s.\@ by the law of the iterated logarithm. Since the map \(t \mapsto \abs{B(t)} (1 + t^{\nicefrac{1}{2}+\delta})^{-1}\) is a.s.\@ continuous, it follows that the random variable \(\sup_{t \in [0,\infty)} \abs{B(t)} (1+t^{\nicefrac{1}{2}+\delta})^{-1}\) is a.s.\@ finite, and because
\[
\biggl\{ \sup_{t\in [0,\infty)} \frac{\abs{B(t)}}{1+t^{\nicefrac{1}{2}+\delta}} < \infty \biggr\} = \bigcup_{n\in\mathbb{N}} \biggl\{ \sup_{t\in [0,\infty)} \frac{\abs{B(t)}}{1+t^{\nicefrac{1}{2}+\delta}} < n \biggr\},
\]
the result follows by continuity of measures.
\end{proof}

From this property of continuous martingales, we can prove the following result, which will allow us to control the magnitude of the entries of the shifted Airy system's coefficient matrix.

\begin{proposition}
\label{prop.tracebound}
Let \(\amp\) solve the SDE from Proposition~\ref{prop.polarcoords} with \(\amp(0) = 0\). For any \(\varepsilon > 0\), there are constants \(C, C' > 0\) depending only on \(\beta\) and \(\varepsilon\) such that
\[
\bprob[\Big]{\forall t \in [0,\lasttime], \abs[\Big]{\amp(t) + \frac{1}{\beta} \log(1 - \diln t)} \leq C + C'\bigl( -\log(1 - \diln t) \bigr)^{\nicefrac{3}{4}}} > 1 - \varepsilon.
\]
\end{proposition}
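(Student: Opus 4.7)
The plan is to use the SDE for $\amp$ from Proposition~\ref{prop.polarcoords} to decompose $\amp$ into a deterministic leading part, a bounded oscillatory contribution, and a martingale of controlled quadratic variation. Integrating the SDE from $0$ to $t$ and using $\int_0^t \frac{\diln}{1-\diln s}\diff s = -\log(1-\diln t)$, we get
\[
    \amp(t) + \frac{1}{\beta}\log(1-\diln t)
    = I_2(t) + I_4(t) + M(t),
\]
where $I_k(t) \defeq c_k \int_0^t \cos k\phase(s) \frac{\diln}{1-\diln s}\diff s$ with $c_2 = \frac{2}{\beta} - \frac{1}{2}$ and $c_4 = \frac{1}{\beta}$, and
\[
    M(t) \defeq \sqrt{\tfrac{2}{\beta}} \int_0^t \sin 2\phase(s) \sqrt{\tfrac{\diln}{1-\diln s}}\,\diff \sBM(s).
\]

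The oscillatory drift terms $I_2$ and $I_4$ are each controlled uniformly over $[0,\lasttime]$ by Corollary~\ref{cor.averaging.integrable}: for any $\varepsilon > 0$, taking $k=2$ and $k=4$ in turn, there is a constant $K_\varepsilon > 0$ depending only on $\beta$ and $\varepsilon$ such that, with probability at least $1-\varepsilon/2$,
\[
    \sup_{t\in[0,\lasttime]} \bigl(|I_2(t)|+|I_4(t)|\bigr) \leq K_\varepsilon.
\]

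For the martingale part, observe that its quadratic variation satisfies the pointwise bound
\[
    \quadvar{M}(t) = \frac{2}{\beta}\int_0^t \sin^2 2\phase(s)\,\frac{\diln}{1-\diln s}\diff s \leq -\frac{2}{\beta}\log(1-\diln t).
\]
Applying Proposition~\ref{prop.martingalebound} with $\delta = \nicefrac{1}{4}$ on the interval $[0,\lasttime)$ (noting that $M$ extends continuously and is a continuous $L^2$-martingale started at $0$), there exists $y > 0$ depending only on $\beta$ and $\varepsilon$ such that, with probability at least $1-\varepsilon/2$,
\[
    \sup_{t\in[0,\lasttime]} \frac{|M(t)|}{1+\quadvar{M}(t)^{\nicefrac{3}{4}}} \leq y,
\]
which combined with the bound on $\quadvar{M}(t)$ yields $|M(t)| \leq y + y (\nicefrac{2}{\beta})^{\nicefrac{3}{4}} \bigl(-\log(1-\diln t)\bigr)^{\nicefrac{3}{4}}$ uniformly. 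Taking a union bound of the two high-probability events and setting $C \defeq K_\varepsilon + y$ and $C' \defeq y(\nicefrac{2}{\beta})^{\nicefrac{3}{4}}$ finishes the proof. No step here is a serious obstacle: Corollary~\ref{cor.averaging.integrable} and Proposition~\ref{prop.martingalebound} do all the real work, and the only mildly delicate point is matching the exponent $\nicefrac{3}{4}$ in the bound to the choice $\delta = \nicefrac{1}{4}$ in Proposition~\ref{prop.martingalebound}.
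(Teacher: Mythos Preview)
Your proposal is correct and follows essentially the same approach as the paper: decompose $\amp(t)+\tfrac{1}{\beta}\log(1-\diln t)$ into the two oscillatory drift integrals (handled by Corollary~\ref{cor.averaging.integrable}) and the martingale part (handled by Proposition~\ref{prop.martingalebound} with $\delta=\tfrac14$ after bounding the bracket by a constant times $-\log(1-\diln t)$). The only cosmetic difference is that the paper splits the failure probability as $\varepsilon/3$ over three events rather than $\varepsilon/2$ over two, and absorbs the $\sqrt{2/\beta}$ into the constants differently.
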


\begin{remark}
This gives a good control on \(e^{2\ampN}\), \(e^{\ampN+\ampD}\) and \(e^{2\ampD}\), which are the amplitudes of the entries of the shifted Airy system's coefficient matrix. Indeed, it shows that for \(k,l \in \mathbb{R}\) and \(\varepsilon > 0\), there are \(C, C' > 0\) such that \(k\ampN(t) + l\ampD(t) \leq C + C'\bigl( -\log(1-\diln t) \bigr)^{\nicefrac{3}{4}} - \frac{k+l}{\beta} \log(1 - \diln t)\) with probability at least \(1 - \varepsilon\). If \(k + l = 2\), it follows that for any \(\gamma > 0\), there is a \(C'' > 0\) depending only on \(\beta\), \(\gamma\) and \(\varepsilon\) such that for all \(t \in [0,\lasttime]\),
\beq{eq.tracebound.integrable}
e^{k\ampN(t)+l\ampD(t)}
    \leq \frac{C''e^C}{(1 - \diln t)^{\nicefrac{2}{\beta}+\gamma}}
    \leq \frac{C''e^C}{(1 - t)^{\nicefrac{2}{\beta}+\gamma}}.
\eeq
When \(\beta > 2\), one can choose \(\gamma\) so that \(\nicefrac{2}{\beta} + \gamma < 1\), and then this last upper bound is integrable on \([0,1)\) and does not depend on \(\shift\).
\end{remark}

\begin{proof}
Recall from Proposition~\ref{prop.polarcoords} that \(\amp\) satisfies
\[
\diff{\amp}(t)
    = \biggl( \frac{1}{\beta} + \Bigl( \frac{2}{\beta} - \frac{1}{2} \Bigr) \cos 2\phase(t) + \frac{1}{\beta} \cos 4\phase(t) \biggr) \frac{\diln}{1 - \diln t} \diff{t} + \sqrt{\frac{2}{\beta}} \sin 2\phase(t) \sqrt{\frac{\diln}{1 - \diln t}} \diff{\sBM}(t)
\]
with \(\amp(0) = 0\), where \(\phase\) solves the other SDE from Proposition~\ref{prop.polarcoords}. Fix \(\varepsilon > 0\). By Corollary~\ref{cor.averaging.integrable}, the cosine terms are bounded by constants with probability at least \(1 - \nicefrac{\varepsilon}{3}\), so it suffices to show that there is a \(\tilde{C} > 0\) such that
\beq{eq.tracebound.2}
\bprob[\bigg]{\sup_{t\in [0,\lasttime]} \frac{1}{1 + \bigl( -\log(1 - \diln t) \bigr)^{\nicefrac{3}{4}}} \abs[\Big]{\int_0^t \sin 2\phase(s) \sqrt{\frac{\diln}{1 - \diln s}} \diff{\sBM}(s)} > \tilde{C}} < \frac{\varepsilon}{3}.
\eeq
But
\[
M(t) \defeq \int_0^t \sin 2\phase(s) \sqrt{\frac{\diln}{1 - \diln s}} \diff{\sBM}(s)
\]
is a continuous martingale on \([0,\lasttime]\) with \(M(0) = 0\) and bracket
\[
\quadvar{M}(t)
    = \int_0^t \sin^2 2\phase(s) \frac{\diln}{1 - \diln s} \diff{s}
    \leq - \log(1 - \diln t),
\]
so \eqref{eq.tracebound.2} follows from Proposition~\ref{prop.martingalebound}.
\end{proof}

\subsection{Vague convergence of canonical systems}

We are now ready to complete the proof of Theorem~\ref{thm.CSconvinlaw}. What we prove is the following stronger result.

\begin{theorem}
\label{thm.Airytosine.vaguely}
\setshift{E_n}
Let \(\{\sBM\}_{n\in\mathbb{N}}\) and \(W\) be the Brownian motions from Lemma~\ref{lem.probspace}, and let \(\sAirymat\) and \(\sinemat\) be the coefficient matrices of the shifted Airy system and of the sine system built from these Brownian motions as described in the remark following Lemma~\ref{lem.probspace}. Let \(\mathcal{I} \defeq [0,1)\) if \(\beta \leq 2\) and \(\mathcal{I} \defeq [0,1]\) if \(\beta > 2\). Then, for any \(\testfunc \in \mathscr{C}_c(\mathcal{I}, \mathbb{C}^2)\),
\beq{eq.Airytosine.vaguely}
\int_{\mathcal{I}} \testfunc^*(t) \bigl( \timechange'(t) \sAirymat\circ\timechange(t) - \sinemat\circ\logtime(t) \bigr) \testfunc(t) \diff{t}
\probto[n\to\infty] 0.
\eeq
In particular, \(\timechange' (\sAirymat\circ\timechange) \to \sinemat\circ\logtime\) vaguely on \(\mathcal{I}\) in probability and in distribution.
\end{theorem}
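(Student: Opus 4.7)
The plan is to fix an arbitrary $\varphi \in \mathscr{C}_c(\mathcal{I},\mathbb{C}^2)$, prove the scalar convergence~\eqref{eq.Airytosine.vaguely}, and then conclude by Proposition~\ref{prop.CS.random.convergence}. Using the polar decomposition~\eqref{eq.sAirymatpolar} combined with the Wronskian identity~\eqref{eq.ampsphasesWronskianidentity}, I split
\[
    \timechange'(\sAirymat\circ\timechange) = M_1 + M_2, \quad M_1 = \frac{\diln}{2 e^{-2\ampN}}\begin{pmatrix} 1 & e^{-\diffamps}\cos\diffphases \\ e^{-\diffamps}\cos\diffphases & e^{-2\diffamps}\end{pmatrix},
\]
where $M_2$ collects the three \emph{oscillatory} entries, each of the form $e^{k\ampN+\ell\ampD}\cos(m\phaseN+p\phaseD)\cdot\diln/2$ with $k+\ell=2$ and $m+p=\pm 2$. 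I handle each part separately.

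For $M_2$, each scalar contribution tests against $\varphi$ to give an integral $\int_0^{\lasttime}\psi(t)\,e^{k\ampN+\ell\ampD}(t)\cos(m\phaseN+p\phaseD)(t)\,\diln\,\diff t$ with $\psi\in\mathscr{C}_c(\mathcal{I})$. Writing cosine as a real part of an exponential and recalling from Proposition~\ref{prop.polarcoords} that $m\phaseN+p\phaseD$ has deterministic drift of order $(m+p)\diln\shift^{3/2}(1-\diln t)^2$, Itô's formula lets me express $e^{i(m\phaseN+p\phaseD)}\diln\,\diff t$ as $\shift^{-3/2}(1-\diln t)^{-2}\diff(e^{i(m\phaseN+p\phaseD)})$ plus corrections of smaller order. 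Integration by parts, exactly as in the proof of Lemma~\ref{lem.averaging}, trades oscillation for a factor $\shift^{-3/2}$ at the cost of introducing a factor $(1-\diln t)^{-3}$ in the bulk integrals and $(1-\diln\lasttime)^{-2}$ in the boundary term. On the high-probability event of Proposition~\ref{prop.tracebound}, estimate~\eqref{eq.tracebound.integrable} dominates $e^{k\ampN+\ell\ampD}(t)$ by $C(1-\diln t)^{-2/\beta-\gamma}$ for any $\gamma>0$. The worst-case size is $O\bigl(\shift^{-3/2}(1-\diln\lasttime)^{-2-2/\beta-\gamma}\bigr)$, which when $\beta\leq 2$ and $\supp\psi\subset[0,1-\varepsilon]$ is $O(\shift^{-3/2})$, and when $\beta>2$ with $\lasttime=1$ evaluates to $O\bigl(\shift^{-1/2+1/\beta+\gamma/2}\bigr)=o(1)$ after choosing $\gamma<1-2/\beta$. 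The stochastic terms produced by Itô's formula are controlled to the same order by Bernstein's inequality.

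For $M_1$, Propositions~\ref{prop.GBM} and~\ref{prop.ReHBM} together with Corollary~\ref{cor.GBM.exp} give, on an event of probability tending to one, uniformly on $\{\diln t\leq 1-\shift^{-1/2+\alpha}\}$, the approximations $e^{\pm 2\ampN}\to \GBM^{\pm 1}$ and $-e^{-\diffamps}\cos\diffphases\to\tfrac{2}{\sqrt\beta}\int_0^{\cdot}\GBM\,\diff(\Re W\circ\slogtime)$. A time change of the Itô integral defining $\Re\HBM$ identifies the latter as $\Re\HBM\circ\slogtime$, and combining with the Wronskian identity $(e^{-\diffamps}\cos\diffphases)^2+e^{-4\ampN}=e^{-2\diffamps}$ yields $e^{-2\diffamps}\to|\HBM\circ\slogtime|^2$ and $e^{-2\diffamps}/e^{-2\ampN}\to |\HBM\circ\slogtime|^2/\Im\HBM\circ\slogtime$. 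Since $\slogtime\to\logtime$ locally uniformly on $[0,1)$ and $\HBM$ has continuous paths, each of these limits equals the corresponding $\HBM\circ\logtime$ quantity compactly on $[0,1)$ in probability, giving $M_1\to\sinemat\circ\logtime$ entrywise on compacts of $[0,1)$. For $\varphi\in\mathscr{C}_c([0,1),\mathbb{C}^2)$ this already closes the slow part, covering the $\beta\leq 2$ case entirely.

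The principal obstacle is the case $\beta>2$ with $\supp\varphi$ reaching $t=1$, where the polar-coordinate convergence only holds uniformly on $\{\diln t\leq 1-\shift^{-1/2+\alpha}\}$ and the entries of both $M_1$ and $\sinemat\circ\logtime$ blow up at $t=1$. The resolution is that the blow-ups are integrable: for the sine system, $\Im\HBM\circ\logtime(t)\sim(1-t)^{2/\beta}$ near $t=1$ gives entries bounded by $(1-t)^{-2/\beta}\in L^1$; for $M_1$, the very same domination~\eqref{eq.tracebook.integrable} with $k+\ell\in\{0,1,2\}$ and $\gamma<1-2/\beta$ produces an $\shift$-independent integrable majorant on the good event of Proposition~\ref{prop.tracebound}. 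Truncating $\varphi$ to $[0,1-\delta]$ therefore introduces an $L^1$-error tending to $0$ with $\delta$ uniformly in $\shift$, reducing the full claim to the already-established compact convergence.
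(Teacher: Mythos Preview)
Your proposal is correct and follows essentially the same approach as the paper: the same decomposition \(\timechange'(\sAirymat\circ\timechange)=M_1+M_2\), the same use of Propositions~\ref{prop.GBM}, \ref{prop.ReHBM}, \ref{prop.tracebound} and Corollary~\ref{cor.GBM.exp} for \(M_1\), the same integration-by-parts mechanism exploiting the large phase drift for \(M_2\), and the same integrable-majorant argument near \(t=1\) when \(\beta>2\).

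The execution differs in a few places. For \(M_2\) the paper first replaces the test function by a piecewise constant approximation on a specific mesh (mesh width \(\propto\shift^p(1-\diln t_{j-1})^{-2}\) with \(p\) tuned to the problem), integrates by parts on each piece, and bounds the resulting sum of boundary terms. Your single global integration by parts is cleaner, but note that it requires \(\psi\) to be a semimartingale (e.g.\ \(C^1\)); since \(\psi\in\mathscr{C}_c\) is merely continuous, you need a density step (approximate \(\psi\) in sup norm by \(C_c^1\) functions, which is harmless because the integrable majorant \eqref{eq.tracebound.integrable} controls the approximation error uniformly in \(\shift\)). This is the only point you gloss over. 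For \(M_1\) the paper writes both matrices in the factored form \(\tfrac{1}{2\det X}X^\transpose X\) and expands the difference algebraically, whereas you work entry-by-entry via the Wronskian identity; both reach the same estimates. Finally, the paper truncates to \([0,1-\shift^{-1/2+\alpha}]\) at the outset and changes \(\logtime\) to \(\slogtime\) as a separate step, while you carry the full interval and argue \(\slogtime\to\logtime\) by continuity of \(\HBM\); again, both orderings work.
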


\begin{proof}
Recall that the time-changed coefficient matrix of the shifted Airy system can be written as
\[
\timechange' (\sAirymat\circ\timechange) = \frac{\diln}{2e^{-\diffamps}\sin\diffphases} \begin{pmatrix} 1 & e^{-\diffamps}\cos\diffphases \\ e^{-\diffamps}\cos\diffphases & e^{-2\diffamps} \end{pmatrix} + \frac{\diln}{2} \begin{pmatrix} e^{2\ampN} \cos 2\phaseN & e^{\sumamps} \cos\sumphases \\ e^{\sumamps} \cos\sumphases & e^{2\ampD} \cos 2\phaseD \end{pmatrix}
\]
as it was done in \eqref{eq.sAirymatpolar}. To prove that \eqref{eq.Airytosine.vaguely} holds, we first simplify the problem in two ways: we remove a short part at the end of the time interval, and we replace the sine system's logarithmic time \(\logtime(t) = -\log(1-t)\) with \(\slogtime(t) = -\log(1 - \diln t)\). Then, the rest of the proof is split in two main parts: first we show that \eqref{eq.Airytosine.vaguely} holds when \(\timechange' (\sAirymat\circ\timechange)\) is replaced with the first term of~\eqref{eq.sAirymatpolar}, and then we show that the second term converges to zero.

Note that once \eqref{eq.Airytosine.vaguely} is proven, the fact that \(\timechange' (\sAirymat\circ\timechange) \to \sinemat\circ\logtime\) vaguely on \(\mathcal{I}\) in probability follows directly from Proposition~\ref{prop.CS.random.convergence}.

As we have done before, in order to simplify the notation throughout the proof we omit the explicit indexing of the shift sequence \(\{\shift_n\}_{n\in\mathbb{N}}\). In particular, any limit as \(\shift\to\infty\) is taken along that sequence and should really be understood as a limit as \(n\to\infty\). 

\setcounter{step}{0}
\step{Shortening the time interval}

We start by showing that instead of integrating up to time \(1\), we can equivalently integrate only up to time \(1 - \shift^{\nicefrac{-1}{2}+\alpha}\) for some fixed \(\alpha \in (\frac{1}{\beta+2}, \frac{1}{2})\), i.e., that
\beq{eq.Airytosine.vaguely.shortening}
\int_{1-\shift^{\nicefrac{-1}{2}+\alpha}}^1 \testfunc^*(t) \bigl( \timechange'(t) \sAirymat\circ\timechange(t) - \sinemat\circ\logtime(t) \bigr) \testfunc(t) \diff{t} \probto[\shift\to\infty] 0.
\eeq
Notice that if \(\beta \leq 2\), then \(\testfunc\) is compactly supported in \([0,1)\) so this statement is trivial as for any \(\shift\) large enough, the whole interval \([1 - \shift^{\nicefrac{-1}{2}+\alpha}, 1]\) has left the support of \(\testfunc\). 

To show that \eqref{eq.Airytosine.vaguely.shortening} also holds when \(\beta > 2\), recall that in that case \(\tr(\sinemat\circ\logtime)\) is a.s.\@ integrable on \([0,1)\), so \(\testfunc^* (\sinemat\circ\logtime) \testfunc\) is a.s.\@ integrable on \([0,1)\), and therefore
\[
\int_{1-\shift^{\nicefrac{-1}{2}+\alpha}}^1 \testfunc^*(t) \sinemat\circ\logtime(t) \testfunc(t) \diff{t} \to 0
\quadtext{as}
\shift\to\infty
\]
a.s.\@ and in probability. The same is true with the other coefficient matrix, as its entries are bounded by \(e^{2\ampN}\), \(e^{2\ampD}\) or \(e^{\ampN+\ampD}\). Indeed, we have seen in \eqref{eq.tracebound.integrable} in the remark following Proposition~\ref{prop.tracebound} that given \(\gamma > 0\), for any \(\varepsilon > 0\), there is a \(C_\varepsilon > 0\) depending only on \(\beta\), \(\gamma\) and \(\varepsilon\) such that if \(k+l = 2\), then
\[
\bprob[\Big]{\forall t \in [0,1], e^{k\ampN(t)+l\ampD(t)} \leq \frac{C_\varepsilon}{(1 - t)^{\nicefrac{2}{\beta}+\gamma}}} \geq 1 - \varepsilon,
\]
and with \(\beta > 2\) this upper bound can be taken to be integrable by choosing \(\gamma\) so that \(\nicefrac{2}{\beta} + \gamma < 1\). Thus, on this event,
\[
\int_{1-\shift^{\nicefrac{-1}{2}+\alpha}}^1 \testfunc^*(t) \timechange'(t) \sAirymat\circ\timechange(t) \testfunc(t) \diff{t}
    \leq 2C_\varepsilon \norm{\testfunc}_\infty^2 \int_{1-\shift^{\nicefrac{-1}{2}+\alpha}}^1 \frac{1}{(1-t)^{\nicefrac{2}{\beta}+\gamma}} \diff{t},
\]
which vanishes as \(\shift\to\infty\) by dominated convergence. It follows that the integral on the left-hand side converges to \(0\) in probability as \(\shift\to\infty\), which finishes to prove~\eqref{eq.Airytosine.vaguely.shortening}. 

So for any \(\beta > 0\), the problem is reduced to showing that
\[
\int_0^{1-\shift^{\nicefrac{-1}{2}+\alpha}} \testfunc^*(t) \bigl( \timechange'(t) \sAirymat\circ\timechange(t) - \sinemat\circ\logtime(t) \bigr) \testfunc(t) \diff{t}
\probto[\shift\to\infty] 0.
\]

\step{Changing the logarithmic time scale of the sine system}

We now change the logarithmic time scale of the sine system from \(\logtime(t) = -\log(1-t)\) to \(\slogtime(t) = -\log(1-\diln t)\). Recall that \(\diln = 1\) when \(\beta \leq 2\), so \(\slogtime = \logtime\) in that case, and again there is only something to prove when \(\beta > 2\). 

To see that the time scale can also be changed when \(\beta > 2\), note that for \(\testfunc \in \mathscr{C}_c\bigl( [0,1], \mathbb{C}^2 \bigr)\),
\begin{align}
\label{eq.Airytosine.vaguely.timescale}
&\int_0^{1-\shift^{\nicefrac{-1}{2}+\alpha}} \testfunc^*(t) \Bigl( \sinemat\circ\logtime(t) - \sinemat\circ\slogtime(t) \Bigr) \testfunc(t) \diff{t} \\
    &\quad= \int_0^{\diln(1-\shift^{\nicefrac{-1}{2}+\alpha})} \Bigl( \testfunc^*(t) \sinemat\circ\logtime(t) \testfunc(t) - \frac{1}{\diln} \testfunc^*\bigl( \nicefrac{t}{\diln} \bigr) \sinemat\circ\logtime(t) \testfunc\bigl( \nicefrac{t}{\diln} \bigr) \Bigr) \diff{t} 
    + \int_{\diln(1-\shift^{\nicefrac{-1}{2}+\alpha})}^{1-\shift^{\nicefrac{-1}{2}+\alpha}} \testfunc^*(t) \sinemat\circ\logtime(t) \testfunc(t) \diff{t},
    \notag
\end{align}
where we changed variables from \(t\) to \(\logtime^{-1}\circ\slogtime(t) = \diln t\) in the second term of the first integral. Since \(\testfunc^* (\sinemat\circ\logtime) \testfunc\) is a.s.\@ integrable, the second integral goes to zero a.s.\@ since the size of the interval of integration is \((1-\shift^{\nicefrac{-1}{2}+\alpha})/\sqrt{\shift} \to 0\). Then, since \(\sinemat\) is real and positive semi-definite, the first integral can be written as
\[
\Re \int_0^{\diln(1-\shift^{\nicefrac{-1}{2}+\alpha})} \Bigl( \testfunc^*(t) + \frac{1}{\sqrt{\diln}} \testfunc^*\bigl( \nicefrac{t}{\diln} \bigr) \Bigr) \sinemat\circ\logtime(t) \Bigl( \testfunc(t) - \frac{1}{\sqrt{\diln}} \testfunc\bigl( \nicefrac{t}{\diln} \bigr) \Bigr) \diff{t}.
\]
The integrand is dominated by \(\frac{4}{\diln} \norm{\testfunc}_\infty^2 \tr(\sinemat\circ\logtime) \leq 8\norm{\testfunc}_\infty^2 \tr(\sinemat\circ\logtime)\) for \(\shift\) large enough, and this bound is a.s.\@ integrable. Hence, by dominated convergence, the integral vanishes in the limit since \(\frac{1}{\sqrt{\diln}} \testfunc\bigl( \nicefrac{t}{\diln} \bigr) \to \testfunc(t)\) as \(\shift\to\infty\) for every \(t \in [0,1)\). Thus, as \(\shift\to\infty\), \eqref{eq.Airytosine.vaguely.timescale} converges to \(0\) a.s., so also in probability.

For any \(\beta > 0\), this reduces the problem to proving that
\[
\int_0^{1-\shift^{\nicefrac{-1}{2}+\alpha}} \testfunc^*(t) \bigl( \timechange'(t) \sAirymat\circ\timechange(t) - \sinemat\circ\slogtime(t) \bigr) \testfunc(t) \diff{t} \probto[\shift\to\infty] 0.
\]

\step{Convergence of the first term to the sine system's coefficient matrix}

We now compare the first term of the Airy system's coefficient matrix in the representation \eqref{eq.sAirymatpolar} to that of the sine system. Recall that by definition,
\[
\sinemat = \frac{1}{2\det\ABM} \ABM^\transpose \ABM
\qquadtext{with}
\ABM \defeq \begin{pmatrix} 1 & -\Re\HBM \\ 0 & \Im\HBM \end{pmatrix}
\]
where \(\HBM\) is a hyperbolic Brownian motion with variance \(\nicefrac{4}{\beta}\) started at \(i\) in the upper half-plane and driven by the complex Brownian motion \(W\). The first term of the coefficient matrix \eqref{eq.sAirymatpolar} has the same structure:
\[
\frac{\diln}{2 e^{-\diffamps} \sin\diffphases} \begin{pmatrix} 1 & e^{-\diffamps} \cos\diffphases \\ e^{-\diffamps} \cos\diffphases & e^{-2\diffamps} \end{pmatrix} = \frac{\diln}{2\det\AiryABM} \AiryABM^\transpose \AiryABM
\quadtext{with}
\AiryABM \defeq \begin{pmatrix} 1 & e^{-\diffamps} \cos\diffphases \\ 0 & e^{-\diffamps} \sin\diffphases \end{pmatrix}.
\]
These representations allow to write, with \(\ABM* \defeq \ABM\circ\slogtime\),
\begin{multline*}
\frac{\diln}{2\det\AiryABM} \AiryABM^\transpose \AiryABM - \frac{1}{2\det\ABM*} \ABM*^\transpose \ABM*
    = \Bigl( \frac{\diln}{2\det\AiryABM} - \frac{1}{2\det\ABM*} \Bigr) \ABM*^\transpose \ABM* \\
    + \frac{\diln}{2\det\AiryABM} \Bigl( (\AiryABM - \ABM*)^\transpose (\AiryABM - \ABM*) + (\AiryABM - \ABM*)^\transpose \ABM* + \ABM*^\transpose (\AiryABM - \ABM*) \Bigr).
\end{multline*}
If we sandwich this matrix between \(\testfunc^*\) and \(\testfunc\), we get
\begin{align}
&\abs[\bigg]{\testfunc^* \Bigl( \frac{\diln}{2\det\AiryABM} \AiryABM^\transpose \AiryABM - \frac{1}{2\det\ABM*} \ABM*^\transpose \ABM* \Bigr) \testfunc} \notag \\
    &\qquad \leq \abs[\Big]{\frac{\diln}{2\det\AiryABM} - \frac{1}{2\det\ABM*}} \norm[\big]{\ABM*\testfunc}_2^2
    + \frac{\diln}{2\det\AiryABM} \Bigl( \norm[\big]{(\AiryABM - \ABM*)\testfunc}_2^2 + 2\abs[\Big]{\Re\inprod[\big]{\ABM*\testfunc}{(\AiryABM-\ABM*)\testfunc}} \Bigr) \notag \\
    \begin{split}
    \label{eq.Airytosine.vaguely.firstterm}
    &\qquad \leq 2\norm{\testfunc}_2^2 \abs[\Big]{\frac{1}{\det\AiryABM} - \frac{1}{\det\ABM*}} \norm{\ABM*}_{\mathrm{max}}^2 + \frac{2\norm{\testfunc}_2^2}{\sqrt{\shift} \det\AiryABM} \norm{\ABM*}_{\mathrm{max}}^2 \\
    &\qquad\hspace*{44mm} + \frac{2\norm{\testfunc}_2^2}{\det\AiryABM} \norm{\AiryABM - \ABM*}_{\mathrm{max}}^2
    + \frac{4\norm{\testfunc}_2^2}{\det\AiryABM} \norm{\ABM*}_{\mathrm{max}} \norm{\AiryABM - \ABM*}_{\mathrm{max}}
    \end{split}
\end{align}
where \(\norm{A}_{\mathrm{max}} \defeq \max_{j,k\in\{1,2\}} \abs{A_{jk}}\) for \(A \in \mathbb{R}^{2\times 2}\), and where we used that \(\diln\) is either \(1\) or \(1 - \nicefrac{1}{\sqrt{\shift}}\) to split the the first term. From the results from Section~\ref{sec.asymptotics}, it is not hard to find a good event on which this difference is small.

Indeed, recall that solving the SDE for \(\Im\HBM\) shows that \(\Im\HBM\circ\slogtime = \GBM\), with \(\GBM\) defined as in Proposition~\ref{prop.GBM}, and then the SDE for \(\Re\HBM\) gives
\[
\Re\HBM\circ\slogtime(t) = \frac{2}{\sqrt{\beta}} \int_0^t \GBM(s) \diff{(\Re W\circ\slogtime)}(s).
\]
\begin{subequations}
\label{eq.Airytosine.vaguely.tailbounds}%
Thus, Corollary~\ref{cor.GBM.exp} shows that for \(\shift\) large enough, with \(\timedom \defeq [0, (1 - \shift^{\nicefrac{-1}{2}+\alpha}) / \diln]\) as before,
\beq{eq.Airytosine.vaguely.tailbounddiffIm}
\bprob[\bigg]{\sup_{t\in\timedom} \abs[\Big]{e^{-\diffamps(t)}\sin\diffphases(t) - \Im\HBM\circ\slogtime(t)} \geq \shift^{\nicefrac{-\alpha}{2}}}
    \leq \frac{2}{\log\shift}.
\eeq
Likewise, Proposition~\ref{prop.ReHBM} shows that for some absolute \(C > 0\) and \(\shift\) large enough,
\beq{eq.Airytosine.vaguely.tailbounddiffRe}
\bprob[\bigg]{\sup_{t\in\timedom} \abs[\Big]{e^{-\diffamps(t)}\cos\diffphases(t) + \Re\HBM\circ\slogtime(t)} \geq \shift^{\nicefrac{-\alpha}{5}}}
    \leq \frac{C}{\log\shift}.
\eeq
Together, these two statements allow to control \(\norm{\AiryABM - \ABM*}_{\mathrm{max}}\). Similarly, Corollary~\ref{cor.GBM.exp} also shows that for \(\shift\) large enough,
\beq{eq.Airytosine.vaguely.tailbounddiffdets}
\bprob[\bigg]{\sup_{t\in\timedom} \abs[\Big]{\frac{1}{\det\AiryABM(t)} - \frac{1}{\det\ABM*(t)}} \geq \shift^{-\delta}}
    \leq 2\exp\Bigl( - \frac{\beta\delta^2}{36(1-2\alpha)} \log\shift \Bigr)
\eeq
where \(\delta \defeq \frac{1}{2\beta}\bigl( \alpha(\beta+2) - 1 \bigr) > 0\). 

Then, recall that we have seen in \eqref{eq.boundsupGBM} that for \(\shift\) large enough,
\[
\bprob[\bigg]{\sup_{t\in\timedom} \Im\HBM\circ\slogtime(t) \geq \log\shift}
    \leq \frac{2}{\log\shift}.
\]
On the complementary event, for any \(t \in [0,\lasttime]\),
\[
\quadvar{\Re\HBM\circ\slogtime}(t)
    = \frac{4}{\beta} \int_0^t \abs[\big]{\Im\HBM\circ\slogtime(s)}^2 \diff{\quadvar{\Re W\circ\slogtime}}(s)
    \leq \frac{4}{\beta} \log^2\shift \int_0^t \frac{\diln}{1 - \diln s} \diff{s}
    \leq \frac{2}{\beta} \log^3\shift,
\]
so by Bernstein's inequality, for any \(x > 0\),
\[
\pprob[\bigg]{\biggl\{ \sup_{t\in\timedom} \abs[\big]{\Re\HBM\circ\slogtime(t)} \geq x \biggr\} \cap \biggl\{ \sup_{t\in\timedom} \Im\HBM\circ\slogtime(t) < \log\shift \biggr\}} \leq 2\exp\Bigl( - \frac{\beta x^2}{4\log^3\shift} \Bigr).
\]
Taking \(x = \log^2\shift\), the bound on the probability becomes a negative power of \(\shift\), and it follows that for \(\shift\) large enough,
\beq{eq.Airytosine.vaguely.tailboundX}
\bprob[\bigg]{\sup_{t\in\timedom} \norm{\ABM*(t)}_{\mathrm{max}} \geq \log^2\shift} \leq \frac{3}{\log\shift}.
\eeq

Finally, Proposition~\ref{prop.tracebound} (in particular \eqref{eq.tracebound.integrable} in the remark) shows that for any \(\varepsilon, \gamma > 0\), there is a \(C_\varepsilon > 0\) depending only on \(\beta\), \(\gamma\) and \(\varepsilon\) such that
\beq{eq.Airytosine.vaguely.tracebound}
\bprob[\Big]{\forall t \in [0,\lasttime], \frac{1}{\det\AiryABM(t)} \leq \frac{C_\varepsilon}{(1-t)^{\nicefrac{2}{\beta}+\gamma}}} \geq 1 - \frac{\varepsilon}{2}.
\eeq
\end{subequations}

So, by combining all of the tail bounds in \eqref{eq.Airytosine.vaguely.tailbounds}, given \(\varepsilon,\gamma > 0\) we can set
\begin{multline*}
\mathscr{G}_{\shift,\varepsilon,\gamma} \defeq
    \biggl\{ \sup_{t\in\timedom} \norm{\AiryABM(t) - \ABM*(t)}_{\mathrm{max}} < \shift^{\nicefrac{-\alpha}{5}} \biggr\}
    \cup \biggl\{ \sup_{t\in\timedom} \abs[\Big]{\frac{1}{\det\AiryABM(t)} - \frac{1}{\det\ABM*(t)}} < \shift^{-\delta} \biggr\} \\
    \cup \biggl\{ \sup_{t\in\timedom} \norm{\ABM*(t)}_{\mathrm{max}} < \log^2\shift \biggr\} 
    \cup \biggl\{ \forall t\in [0,\lasttime], \frac{1}{\det\AiryABM(t)} \leq \frac{C_\varepsilon}{(1 - t)^{\nicefrac{2}{\beta}+\gamma}} \biggr\}
\end{multline*}
and \(\pprob{\mathscr{G}_{\shift,\varepsilon,\gamma}} \geq 1 - \nicefrac{C}{\log\shift} - \nicefrac{\varepsilon}{2}\) for any \(\shift\) large enough, where \(C > 0\) is an absolute constant. 
To conclude, take \(\gamma > 0\) arbitrary if \(\beta \leq 2\) but such that \(\nicefrac{2}{\beta}+\gamma < 1\) if \(\beta > 2\). On \(\mathscr{G}_{\shift,\varepsilon,\gamma}\), using the representation \eqref{eq.Airytosine.vaguely.firstterm}, we can bound
\begin{align}
\label{eq.Airytosine.vaguely.firstterm2}
&\abs[\bigg]{\int_0^{1-\shift^{\nicefrac{-1}{2}+\alpha}} \testfunc^*(t) \Bigl( \frac{\diln}{2\det\AiryABM} \AiryABM^\transpose \AiryABM - \frac{1}{2\det\ABM*} \ABM*^\transpose \ABM* \Bigr)(t) \testfunc(t) \diff{t}} \\
    &\hspace*{16mm} \leq 2\shift^{-\delta} \log^4\shift \int_0^{1-\shift^{\nicefrac{-1}{2}+\alpha}} \norm{\testfunc(t)}_2^2 \diff{t}
    + 2C_\varepsilon \shift^{\nicefrac{-1}{2}} \log^4\shift \int_0^{1-\shift^{\nicefrac{-1}{2}+\alpha}} \frac{\norm{\testfunc(t)}_2^2}{(1 - t)^{\nicefrac{2}{\beta}+\gamma}} \diff{t} \notag \\
    &\hspace*{32mm} + 2C_\varepsilon \shift^{\nicefrac{-2\alpha}{5}} \int_0^{1-\shift^{\nicefrac{-1}{2}+\alpha}} \frac{\norm{\testfunc(t)}_2^2}{(1 - t)^{\nicefrac{2}{\beta}+\gamma}} \diff{t}
    + 4C_\varepsilon \shift^{\nicefrac{-\alpha}{5}} \log^2\shift \int_0^{1-\shift^{\nicefrac{-1}{2}+\alpha}} \frac{\norm{\testfunc(t)}_2^2}{(1 - t)^{\nicefrac{2}{\beta}+\gamma}} \diff{t}. \notag
\end{align}
All of these integrals are finite. Indeed, if \(\beta \leq 2\), then the integrands are all bounded because the support of \(\testfunc\) is compact in \([0,1)\), while if \(\beta > 2\), then the integrands are all dominated by \(\norm{\testfunc}_\infty^2 (1 - t)^{\nicefrac{-2}{\beta} - \gamma}\), which is integrable by our choice of \(\gamma\). Therefore, the right-hand side here vanishes as \(\shift\to\infty\). Since \(\pprob{\mathscr{G}_{\shift,\varepsilon,\gamma}} \geq 1 - \nicefrac{C}{\log\shift} - \nicefrac{\varepsilon}{2}\), it follows that~\eqref{eq.Airytosine.vaguely.firstterm2} converges to \(0\) in probability as \(\shift\to\infty\).

\step{Convergence of the second term to zero}

It remains to prove that the second term of the coefficient matrix~\eqref{eq.sAirymatpolar} vanishes in the limit. More precisely, given the simplifications made in the first steps of the proof, it remains to prove that for \(\testfunc \in \mathscr{C}_c(\mathcal{I}, \mathbb{C}^2)\),
\[
\int_0^{1-\shift^{\nicefrac{-1}{2}+\alpha}} \testfunc^*(t) \begin{pmatrix}
    e^{2\ampN(t)} \cos 2\phaseN(t) &
    e^{\ampN(t)+\ampD(t)} \cos\bigl( \phaseN(t) + \phaseD(t) \bigr) \\
    e^{\ampN(t)+\ampD(t)} \cos\bigl( \phaseN(t) + \phaseD(t) \bigr) &
    e^{2\ampD(t)} \cos 2\phaseD(t)
\end{pmatrix} \testfunc(t) \diff{t}
\probto[\shift\to\infty] 0
\]
which, by linearity, follows if for any \(\testfunc \in \mathscr{C}_c(\mathcal{I}, \mathbb{R})\),
\beq{eq.Airytosine.vaguely.vaguepart}
\int_0^{\lasttime} \testfunc(t) e^{k\ampN(t)+l\ampD(t)} \cos\bigl( k\phaseN(t) + l\phaseD(t) \bigr) \diff{t}
\probto[\shift\to\infty] 0
\eeq
when \(k,l \in \mathbb{N}_0\) satisfy \(k+l = 2\). To prove \eqref{eq.Airytosine.vaguely.vaguepart}, we will work on an event on which \(\exp(k\ampN+l\ampD)\) can be controlled. As in other cases, Proposition~\ref{prop.tracebound} guarantees that given \(\varepsilon > 0\), there is a \(C_\varepsilon > 0\) depending on \(\beta\), \(\gamma\) and \(\varepsilon\) such that the event
\beq{eq.Airytosine.vaguely.traceboundevent}
\mathscr{H}_{\shift,\varepsilon} \defeq \biggl\{ \forall t \in [0,\lasttime], e^{k\ampN(t)+l\ampD(t)} \leq \frac{C_\varepsilon}{(1- \diln t)^{\nicefrac{2}{\beta}+\gamma}} \biggr\}
\qquadtext{where}
\gamma \defeq \begin{cases}
    \nicefrac{1}{4} - \nicefrac{1}{2\beta} & \text{if } \beta > 2, \\
    \nicefrac{1}{4} & \text{if } \beta \leq 2
\end{cases}
\eeq
has \(\pprob{\mathscr{H}_{\shift,\varepsilon}} \geq 1 - \nicefrac{\varepsilon}{2}\). Note that when \(\beta > 2\), \(\gamma\) is chosen so that \(\nicefrac{2}{\beta} + \gamma = \nicefrac{1}{4} + \nicefrac{3}{2\beta} < 1\) and \(t \mapsto (1-t)^{\nicefrac{-2}{\beta}-\gamma}\) is integrable on \([0,1]\).

Now, we start by replacing the test function \(\testfunc\) by a piecewise constant approximation. To do so, we discretize the time interval with a partition \(\{t_j\}_{j=0}^N\) of \([0, \lasttime]\) defined by setting \(t_0 \defeq 0\), and then recursively
\beq{eq.Airytosine.vaguely.partition}
\diln t_j \defeq \diln t_{j-1} + \frac{\pi \shift^p}{2\diln \shift^{\nicefrac{3}{2}} (1 - \diln t_{j-1})^2}
\qquadtext{where}
p \defeq \begin{cases}
    \nicefrac{1}{4} + \nicefrac{1}{2\beta} & \text{if } \beta > 2, \\
    \nicefrac{1}{4} & \text{if } \beta \leq 2,
\end{cases}
\eeq
until this would give \(\diln t_j > 1 - \nicefrac{1}{\sqrt{\shift}}\), in which case we set \(j \eqdef N\) and \(t_N \defeq \lasttime\) (note that this does happen in a finite number of steps because what is added to \(\diln t_{j-1}\) is always at least \(\frac{\pi}{2} \shift^{\nicefrac{-3}{2}+p}\)). Then, define \(\hat{\testfunc} \colon \mathcal{I}\to\mathbb{R}\) as
\beq{eq.Airytosine.vaguely.piecewiseconstanttestfunc}
\hat{\testfunc} \equiv \hat{\testfunc}_j \defeq \frac{1}{t_j - t_{j-1}} \int_{t_{j-1}}^{t_j} \testfunc(s) \diff{s}
\qquadtext{on} [t_{j-1}, t_j)
\eeq
for each \(j \leq N\), and then \(\hat{\testfunc}(t_N) \defeq \testfunc(\lasttime)\) and \(\hat{\testfunc} \equiv 0\) on \((\lasttime, 1]\). 

Since \(\testfunc\) is continuous and compactly supported in \(\mathcal{I}\), \(\hat{\testfunc}\) is compactly supported in \(\mathcal{I}\), and it is also bounded with \(\norm{\hat{\testfunc}}_\infty \leq \norm{\testfunc}_\infty\). Moreover, since \(\hat{\testfunc}_j\) is the average of \(\testfunc\) in \([t_{j-1}, t_j)\), then for \(t \in [t_{j-1}, t_j)\), it is clear that \(\abs{\testfunc(t) - \hat{\testfunc}_j} \leq \omega_{\testfunc}\bigl( \abs{t_j - t_{j-1}} \bigr)\) where \(\omega_{\testfunc}\) denotes a modulus of continuity for \(\testfunc\). By definition of the \(t_j\)'s, this means that
\[
\abs[\big]{\testfunc(t) - \hat{\testfunc}_j}
    \leq \omega_{\testfunc}\Bigl( \frac{\pi \shift^p}{2\diln^2 \shift^{\nicefrac{3}{2}} (1 - \diln t_{j-1})^2} \Bigr)
    \leq \omega_{\testfunc}\Bigl( \frac{\pi}{2\diln^2 \shift^{\nicefrac{1}{2}-p}} \Bigr),
\]
and as this estimate does not depend on \(j\), it is in fact a bound on \(\norm{(\testfunc - \hat{\testfunc}) \charf{[0,\lasttime]}}_\infty\). It follows that, on \(\mathscr{H}_{\shift,\varepsilon}\), we can indeed replace \(\testfunc\) with \(\hat{\testfunc}\):
\[
\abs[\bigg]{\int_0^{\lasttime} e^{k\ampN(t)+l\ampD(t)} \cos\bigl( k\phaseN(t) + l\phaseD(t) \bigr) \bigl( \testfunc(t) - \hat{\testfunc}(t) \bigr) \diff{t}}
    \leq C_\varepsilon \omega_{\testfunc}\Bigl( \frac{\pi}{2\diln^2 \shift^{\nicefrac{1}{2}-p}} \Bigr) \int_0^{\lasttime} \frac{\charf{\supp\hat{\testfunc}}(t)}{(1-t)^{\nicefrac{2}{\beta}+\gamma}} \diff{t},
\]
and the remaining integral is always finite, either because \(\supp\hat{\testfunc}\) is compact (when \(\beta \leq 2\)) or because \(\nicefrac{2}{\beta} + \gamma < 1\) (when \(\beta > 2\)). As the right-hand side vanishes as \(\shift\to\infty\) and \(\pprob{\mathscr{H}_{\shift,\varepsilon}} \geq 1 - \nicefrac{\varepsilon}{2}\), this shows that the integral converges to zero in probability as \(\shift\to\infty\), and therefore it suffices to prove that
\beq{eq.Airytosine.vaguely.vaguepartdiscretized}
\int_0^{\lasttime} \hat{\testfunc}(t) e^{k\ampN(t)+l\ampD(t)} \cos\bigl( k\phaseN(t) + l\phaseD(t) \bigr) \diff{t}
\probto[\shift\to\infty] 0.
\eeq

On any interval \([t_{j-1}, t_j)\), since \(\hat{\testfunc}\) is constant, we can integrate by parts. Recall that
\begin{gather*}
\diff{\amp}(t) = \ampdrift(t) \frac{\diln}{1-\diln t} \diff{t} + \ampdiff(t) \sqrt{\frac{\diln}{1-\diln t}} \diff{\sBM}(t)
\shortintertext{and}
\diff{\phase}(t) = -2\diln \shift^{\nicefrac{3}{2}} (1-\diln t)^2 \diff{t} + \phasedrift(t) \frac{\diln}{1-\diln t} \diff{t} + \phasediff(t) \sqrt{\frac{\diln}{1-\diln t}} \diff{\sBM}(t)
\end{gather*}
where \(\ampdrift\), \(\ampdiff\), \(\phasedrift\) and \(\phasediff\) are constants plus linear combinations of trigonometric functions of multiples of \(\phase\), which are always bounded by constants. Starting from these Itô differentials, an application of Itô's formula shows that if \(k + l = 2\), then
\beq{eq.Airytosine.vaguely.IBPdiff}
\begin{multlined}
e^{-k\ampN(t)-l\ampD(t)} \diff{\Bigl( e^{k\ampN+l\ampD} \sin\bigl( k\phaseN + l\phaseD \bigr) \Bigr)}(t) \\
    = -4\diln \shift^{\nicefrac{3}{2}} (1 - \diln t)^2 \cos\bigl( k\phaseN(t) + l\phaseD(t) \bigr) \diff{t}
    + \sumdrift(t) \frac{\diln}{1 - \diln t} \diff{t}
    + \sumdiff(t) \sqrt{\frac{\diln}{1-\diln t}} \diff{\sBM}(t)
\end{multlined}
\eeq
where \(\sumdrift\) and \(\sumdiff\) are processes which are bounded by constants depending only on \(\beta\). It follows that for each \(j\),
\begin{align*}
&\int_{t_{j-1}}^{t_j} e^{k\ampN(t)+l\ampD(t)} \cos\bigl( k\phaseN(t) + l\phaseD(t) \bigr) \diff{t} \\
    &\hspace*{22mm} = - \frac{1}{4\diln\shift^{\nicefrac{3}{2}}} \int_{t_{j-1}}^{t_j} \frac{1}{(1-\diln t)^2} \biggl( \diff{\Bigl( e^{k\ampN+l\ampD} \sin\bigl( k\phaseN+l\phaseD \bigr) \Bigr)}(t) \\
    &\hspace*{44mm} - e^{k\ampN(t)+l\ampD(t)} \sumdrift(t) \frac{\diln}{1-\diln t} \diff{t} 
    - e^{k\ampN(t)+l\ampD(t)} \sumdiff(t) \sqrt{\frac{\diln}{1-\diln t}} \diff{\sBM}(t) \biggr).
\end{align*}
Integrating by parts and summing up the increments, we get
\begin{subequations}
\label{eq.Airytosine.vaguely.integraldiscretized}
\begin{align}
&\int_0^{\lasttime} \hat{\testfunc}(t) e^{k\ampN(t)+l\ampD(t)} \cos\bigl( k\phaseN(t) + l\phaseD(t) \bigr) \hat{\testfunc}(t) \diff{t} \notag \\
    &\hspace*{11mm} = - \frac{1}{4\diln\shift^{\nicefrac{3}{2}}} \sum_{j=1}^N \hat{\testfunc}_j \frac{e^{k\ampN(t)+l\ampD(t)} \sin\bigl( \phaseN(t) + l\phaseD(t) \bigr)}{(1-\diln t)^2} \biggr\rvert_{t_{j-1}}^{t_j}
    \label{eq.Airytosine.vaguely.integraldiscretized.IBP} \\
    &\hspace*{22mm} + \frac{1}{4\diln\shift^{\nicefrac{3}{2}}} \int_0^{\lasttime} \hat{\testfunc}(t) e^{k\ampN(t)+l\ampD(t)} \Bigl( 2\sin\bigl( k\phaseN(t) + l\phaseD(t) \bigr) + \sumdrift(t) \Bigr) \frac{\diln}{(1 - \diln t)^3} \diff{t}
    \label{eq.Airytosine.vaguely.integraldiscretized.drift} \\
    &\hspace*{22mm} + \frac{1}{4\diln\shift^{\nicefrac{3}{2}}} \int_0^{\lasttime} \hat{\testfunc}(t) e^{k\ampN(t)+l\ampD(t)} \sumdiff(t) \frac{\sqrt{\diln}}{(1 - \diln t)^{\nicefrac{5}{2}}} \diff{\sBM}(t).
    \label{eq.Airytosine.vaguely.integraldiscretized.diff}
\end{align}
\end{subequations}

To control \eqref{eq.Airytosine.vaguely.integraldiscretized}, recall that on \(\mathscr{H}_{\shift,\varepsilon}\), for any \(t \in [0, \lasttime]\),
\[
e^{k\ampN(t)+l\ampD(t)}
    \leq \frac{C_\varepsilon}{(1 - \diln t)^{\nicefrac{2}{\beta}+\gamma}}.
\]
By definition of \(\diln\) and \(\lasttime\), this upper bound is always bounded by \(C_\varepsilon \shift^{\nicefrac{1}{\beta}+\nicefrac{\gamma}{2}}\). When \(\beta > 2\), we defined \(\gamma = \nicefrac{1}{4} - \nicefrac{1}{2\beta}\) and \(p = \nicefrac{1}{4} + \nicefrac{1}{2\beta}\), so \(\nicefrac{1}{\beta} + \nicefrac{\gamma}{2} = p - \nicefrac{\gamma}{2}\). When \(\beta \leq 2\), we took \(p = \gamma = \nicefrac{1}{4}\) so \(p - \nicefrac{\gamma}{2} > 0\) and it is still true that \((1 - \diln t)^{\nicefrac{-2}{\beta}-\gamma} \leq \shift^{p-\nicefrac{\gamma}{2}}\) for \(\shift\) large enough and \(t \in \supp\hat{\testfunc}\), as the latter set is compact in \([0,1)\). This shows that in any case,
\beq{eq.Airytosine.vaguely.expestimate}
e^{k\ampN(t)+l\ampD(t)}
    \leq C_\varepsilon \shift^{p - \nicefrac{\gamma}{2}}
\qquadtext{where}
p - \frac{\gamma}{2} = \begin{cases}
    \nicefrac{1}{8} + \nicefrac{3}{4\beta} & \text{if } \beta > 2, \\
    \nicefrac{1}{8} & \text{if } \beta \leq 2
\end{cases}
\eeq
for \(t \in \supp\hat{\testfunc}\) and \(\shift\) large enough.

Now, on \(\mathscr{H}_{\shift,\varepsilon}\), the estimate \eqref{eq.Airytosine.vaguely.expestimate} shows that \eqref{eq.Airytosine.vaguely.integraldiscretized.IBP} is bounded for \(\shift\) large enough by
\[
\frac{C_\varepsilon \norm{\testfunc}_\infty \shift^{p - \nicefrac{\gamma}{2}}}{4\diln\shift^{\nicefrac{3}{2}}} \sum_{j=1}^N \Bigl( \frac{1}{(1 - \diln t_j)^2} + \frac{1}{(1 - \diln t_{j-1})^2} \Bigr)
    = \frac{C_\varepsilon \norm{\testfunc}_\infty \shift^{p-\nicefrac{\gamma}{2}}}{4\diln\shift^{\nicefrac{3}{2}}} \biggl( 1 + 2\sum_{j=1}^{N-1} \frac{1}{(1 - \diln t_j)^2} + \shift \biggr).
\]
We directly see that the first and last terms vanish as \(\shift\to\infty\) since they are of order \(\shift^{\nicefrac{-3}{2}+p-\nicefrac{\gamma}{2}}\) and \(\shift^{\nicefrac{-1}{2}+p-\nicefrac{\gamma}{2}}\) respectively while \(p - \nicefrac{\gamma}{2} < \nicefrac{1}{2}\) in any case. Then, using the definition \eqref{eq.Airytosine.vaguely.partition} of the times \(t_j\), we can easily see that the second term also vanishes as \(\shift\to\infty\):
\[
\frac{C_\varepsilon \norm{\testfunc}_\infty}{\pi\shift^{\nicefrac{\gamma}{2}}} \sum_{j=1}^{N-1} \frac{\pi\shift^p}{2\diln\shift^{\nicefrac{3}{2}}(1-\diln t_j)^2}
    \leq \frac{C_\varepsilon \norm{\testfunc}_\infty}{\pi\shift^{\nicefrac{\gamma}{2}}}\, \diln t_N
    \to 0.
\]
As this holds on \(\mathscr{H}_{\shift,\varepsilon}\), this shows that \eqref{eq.Airytosine.vaguely.integraldiscretized.IBP} converges to \(0\) in probability as \(\shift\to\infty\).

To control \eqref{eq.Airytosine.vaguely.integraldiscretized.drift}, note that the estimate \eqref{eq.Airytosine.vaguely.expestimate} implies that this term is bounded on \(\mathscr{H}_{\shift,\varepsilon}\) for \(\shift\) large enough by
\[
\frac{C_\varepsilon \norm{\testfunc}_\infty (2 + \norm{\sumdrift}_\infty) \shift^{p-\nicefrac{\gamma}{2}}}{4\diln\shift^{\nicefrac{3}{2}}} \int_0^{\lasttime} \frac{\diln}{(1 - \diln t)^3} \diff{t}
    \leq \frac{C_\varepsilon \norm{\testfunc}_\infty (2 + \norm{\sumdrift}_\infty)}{2\diln} \shift^{\nicefrac{-1}{2}+p-\nicefrac{\gamma}{2}}.
\]
Since \(p - \nicefrac{\gamma}{2} < \nicefrac{1}{2}\), this vanishes as \(\shift\to\infty\), and it follows that \(\eqref{eq.Airytosine.vaguely.integraldiscretized.drift}\) converges to \(0\) in probability as \(\shift\to\infty\).

Finally, using again the estimate \eqref{eq.Airytosine.vaguely.expestimate}, we see that the quadratic variation of \eqref{eq.Airytosine.vaguely.integraldiscretized.diff} is bounded on \(\mathscr{H}_{\shift,\varepsilon}\) for \(\shift\) large enough by
\[
\frac{C_\varepsilon^2 \norm{\testfunc}_\infty^2 \norm{\sumdiff}_\infty^2 \shift^{2p-\gamma}}{16\diln^2\shift^3} \int_0^{\lasttime} \frac{\diln}{(1 - \diln t)^5} \diff{t}
    \leq \frac{C_\varepsilon^2 \norm{\testfunc}_\infty^2 \norm{\sumdiff}_\infty^2}{16\diln^2} \shift^{-1+2p-\gamma}.
\]
Therefore, by Bernstein's inequality, for any \(x > 0\),
\[
\pprob[\bigg]{\mathscr{H}_{\shift,\varepsilon} \cap \biggl\{ \abs[\Big]{\frac{1}{4\diln\shift^{\nicefrac{3}{2}}} \int_0^{\lasttime} \hat{\testfunc}(t) e^{k\ampN(t)+l\ampD(t)} \sumdiff(t) \frac{\sqrt{\diln}}{(1 - \diln t)^{\nicefrac{5}{2}}} \diff{\sBM}(t)} \geq x \biggr\}} \leq 2\exp\Bigl( - \frac{\diln \shift^{1-2p+\gamma} x^2}{C_\varepsilon^2 \norm{\testfunc}_\infty^2 \norm{\sumdiff}_\infty^2} \Bigr).
\]
Again, since \(p - \nicefrac{\gamma}{2} < \nicefrac{1}{2}\), \(\shift^{1-2p+\gamma} \to \infty\) as \(\shift\to\infty\), so for any \(x > 0\) and any \(\shift\) large enough, the right-hand side is bounded by \(\nicefrac{\varepsilon}{2}\). As \(\pprob{\mathscr{H}_{\shift,\varepsilon}} \geq 1 - \nicefrac{\varepsilon}{2}\) by definition, it follows that~\eqref{eq.Airytosine.vaguely.integraldiscretized.diff} converges to \(0\) in probability as \(\shift\to\infty\). 

As all three terms of \eqref{eq.Airytosine.vaguely.integraldiscretized} converge to \(0\) in probability as \(\shift\to\infty\), the second term of the shifted Airy system's coefficient matrix does converge vaguely to \(0\) as \(\shift\to\infty\), and this concludes the proof.
\end{proof}

\subsection{Convergence of transfer matrices}

A rather direct consequence of the vague convergence of canonical systems proved in Theorem~\ref{thm.Airytosine.vaguely} is that the corresponding transfer matrices converge compactly in probability, which proves~Corollary~\ref{cor.TMconvinlaw} given in the introduction. More precisely, we can prove the compact convergence of the transfer matrices on our probability space constructed in Section~\ref{sec.coupling}.

\begin{theorem}
\label{thm.Airytosine.TMconvergence}
\setshift{E_n}
Let \(\mathcal{I} \defeq [0,1)\) if \(\beta \leq 2\) and \(\mathcal{I} \defeq [0,1]\) if \(\beta > 2\). Let \(\sAirymat* \defeq \timechange' (\sAirymat\circ\timechange)\) and \(\sinemat* \defeq \sinemat\circ\logtime\), and let \(\sAiryTM*, \sineTM* \colon \mathcal{I} \times \mathbb{C} \to \mathbb{C}^{2\times 2}\) be the transfer matrices of the associated canonical systems, both defined on the probability space from Lemma~\ref{lem.probspace}. Then \(\sAiryTM* \to \sineTM*\) compactly on \(\mathcal{I}\times\mathbb{C}\) in probability as \(n\to\infty\).
\end{theorem}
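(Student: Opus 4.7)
The plan is to apply Proposition~\ref{prop.CS.random.TMconvergence} with $H_n \defeq \timechange'(\sAirymat \circ \timechange)$ and $H \defeq \sinemat \circ \logtime$ on the probability space of Lemma~\ref{lem.probspace}. The vague convergence $H_n \to H$ in probability on $\mathcal{I}$ is exactly the content of Theorem~\ref{thm.Airytosine.vaguely}, so what remains is to supply, for each $\varepsilon > 0$, deterministic $f_\varepsilon, g_\varepsilon \in L^1\loc(\mathcal{I})$ that pointwise dominate $\tr H$ and $\tr H_n$ respectively with probability at least $1 - \varepsilon$, uniformly in $n$ large enough.

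For the Airy trace bound, the polar representation~\eqref{eq.sAirymatpolar} immediately gives $\tr H_n(t) \leq \diln\bigl(e^{2\ampN(t)} + e^{2\ampD(t)}\bigr)$, and I would invoke the pointwise estimate~\eqref{eq.tracebound.integrable} from the remark following Proposition~\ref{prop.tracebound}, applied with $k+l = 2$, to obtain a high-probability uniform-in-$n$ bound of the form $C_{\varepsilon,\gamma}(1-t)^{-2/\beta - \gamma}$ for arbitrarily small $\gamma > 0$. When $\beta \leq 2$ this is locally bounded on $\mathcal{I} = [0,1)$, and when $\beta > 2$ a choice $\gamma < 1 - 2/\beta$ makes it integrable on $\mathcal{I} = [0,1]$; either way it supplies $g_\varepsilon$.

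For the sine trace bound, I would integrate the imaginary part of~\eqref{eq.HBMSDE} explicitly, obtaining $\log \Im\HBM(s) = (2/\sqrt{\beta})\Im W(s) - (2/\beta)s$ and hence $\Im\HBM\circ\logtime(t) = (1-t)^{2/\beta} \exp\bigl((2/\sqrt{\beta})\Im W(-\log(1-t))\bigr)$. Combined with the a.s.\ continuity of $\HBM\circ\logtime$ and (in the $\beta > 2$ regime) the a.s.\ existence of a finite real limit $\HBM(\infty)$, a one-sided Gaussian-tail estimate on $\Im W$---to the effect that $\inf_{s\geq 0}(\Im W(s) + \gamma s) > -\infty$ with high probability for any $\gamma > 0$---yields a pointwise bound $\tr(\sinemat\circ\logtime)(t) \leq K_{\varepsilon,\gamma}(1-t)^{-2/\beta - \gamma}$ valid with probability at least $1 - \varepsilon$. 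This dominator lies in $L^1\loc(\mathcal{I})$ for the same reason as on the Airy side, supplying $f_\varepsilon$.

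The main subtlety is the pointwise control of $1/\Im\HBM\circ\logtime$ up to $t = 1$ in the $\beta > 2$ regime: one cannot merely invoke a.s.\ integrability of $\tr(\sinemat\circ\logtime)$, because the proposition requires a single deterministic dominating function, so the one-sided lower bound on $\Im W(s)$ must be quantitative and hold simultaneously for all $s \geq 0$. Once both trace-dominations are in place, Proposition~\ref{prop.CS.random.TMconvergence} delivers the claimed compact convergence in probability of $\sAiryTM\circ\timechange$ to $\sineTM\circ\logtime$ on $\mathcal{I}\times\mathbb{C}$.
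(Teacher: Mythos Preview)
Your overall strategy---apply Proposition~\ref{prop.CS.random.TMconvergence}, supplying trace dominators via Proposition~\ref{prop.tracebound} for the Airy side and via explicit control of $\Im\HBM$ and $\Re\HBM$ for the sine side---is exactly the paper's approach, and for $\beta>2$ your sketch is essentially complete. (The paper bounds $\Re\HBM$ by first bounding $\quadvar{\Re\HBM}(\infty)$ and invoking Bernstein, while you invoke the a.s.\ existence of $\HBM(\infty)$; both work. Note, incidentally, that $\quadvar{\Re\HBM}(\infty)<\infty$ a.s.\ for \emph{all} $\beta>0$, so $\Re\HBM$ is a.s.\ bounded even when $\beta\leq 2$; your appeal to ``a.s.\ continuity'' alone would not furnish a single deterministic dominator on $[0,1)$, but this stronger fact does.)

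There is, however, a genuine gap in the $\beta\leq 2$ case on the Airy side. The polar representation~\eqref{eq.sAirymatpolar} and hence the estimate~\eqref{eq.tracebound.integrable} are only valid for $t\in[0,\lasttime]=[0,1-E_n^{-1/2}]$; on the remaining interval $(1-E_n^{-1/2},1)$ the time change is given by the second branch of~\eqref{eq.deftimechange}, and the coefficient matrix there involves $\sAiryDirichlet,\sAiryNeumann$ evaluated on $[E_n-1,\infty)$, for which you have no control. Since Proposition~\ref{prop.CS.random.TMconvergence} requires a single $g_\varepsilon\in L^1\loc[0,1)$ dominating $\tr H_n$ on \emph{all} of $[0,1)$ for every large $n$, your $g_\varepsilon$ does not suffice. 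The paper handles this by not applying Proposition~\ref{prop.CS.random.TMconvergence} directly on $[0,1)$: instead it applies it on each compact $[0,b]\subset[0,1)$ (where eventually $b<\lasttime$, so the trace bound does hold), obtaining compact convergence on $[0,b]\times\mathbb{C}$, and then upgrades to $[0,1)\times\mathbb{C}$ by a routine metric argument using an exhaustion $\{K_n\}$ of $[0,1)\times\mathbb{C}$. You should add this step.
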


\begin{proof}
By Theorem~\ref{thm.Airytosine.vaguely} and Proposition~\ref{prop.CS.random.TMconvergence}, it suffices to find for any \(\varepsilon > 0\) functions \(f_\varepsilon, g_\varepsilon \in L^1\loc(\mathcal{I})\) such that \(\tr\sinemat\circ\logtime \leq f_\varepsilon\) and \(\tr \timechange'(\sAirymat\circ\timechange) \leq g_\varepsilon\) with probability at least \(1 - \varepsilon\) for any \(\shift\) large enough.

We start by finding a suitable bound for \(\tr\sinemat\circ\logtime\). Recall that by definition,
\[
\tr \sinemat\circ\logtime
    = \frac{1}{2\Im\HBM\circ\logtime} \Bigl( 1 + \abs{\HBM\circ\logtime}^2 \Bigr)
\]
where \(\HBM\) is a hyperbolic Brownian motion started at \(i\) with variance \(\nicefrac{4}{\beta}\) and driven by the Brownian motion \(W\) from Lemma~\ref{lem.probspace}. Now, by properties of Brownian motion (for instance, applying Proposition~\ref{prop.martingalebound}), for any \(\varepsilon > 0\) there is a \(\tilde{C}_\varepsilon > 0\) and an event with probability at least \(1 - \nicefrac{\varepsilon}{2}\) on which
\(
\abs{\Im W\circ\logtime(t)}
    \leq \tilde{C}_\varepsilon \bigl( 1 + \logtime(t)^{\nicefrac{3}{4}} \bigr)
\)
for any \(t \in [0,1)\), and therefore for any \(\delta > 0\) there is a \(C_\varepsilon > 0\) such that on that event, for any \(t \in [0, 1)\),
\[
\frac{1}{C_\varepsilon} (1 - t)^{\nicefrac{2}{\beta}+\delta} \leq \Im\HBM\circ\logtime(t) \leq C_\varepsilon (1 - t)^{\nicefrac{2}{\beta}-\delta}.
\]
If \(\Im\HBM\circ\logtime\) is bounded in that way, then as long as \(\delta < \nicefrac{2}{\beta}\),
\[
\quadvar{\Re\HBM\circ\logtime}(t)
    \leq \frac{4C_\varepsilon^2}{\beta} \int_0^t (1 - s)^{\nicefrac{2}{\beta}-\delta} \diff{\quadvar{\Re W\circ\logtime}}(s)
    = \frac{4C_\varepsilon^2}{\beta} \int_0^t (1 - s)^{\nicefrac{2}{\beta}-\delta - 1} \diff{s}
    \leq \frac{4C_\varepsilon^2}{2 - \beta\delta}. 
\]
Hence, Bernstein's inequality shows that for any \(x > 0\),
\[
\pprob[\bigg]{\Bigl\{ \abs[\big]{\Im W\circ\logtime(t)}
    \leq \tilde{C}_\varepsilon \bigl( 1 + \logtime(t)^{\nicefrac{3}{4}} \bigr) \Bigr\} \cap \Bigl\{ \sup_{t\in [0,1)} \abs{\Re\HBM\circ\logtime(t)} \geq x\Bigr\}} \leq 2 \exp\Bigl( - \frac{(2 - \beta\delta) x^2}{8C_\varepsilon^2} \Bigr),
\]
and this bound can be made smaller than \(\nicefrac{\varepsilon}{2}\) by taking \(x\) large enough. Combining the above, we get for such an \(x\) that
\[
\bprob[\bigg]{\forall t\in [0,1), \tr\sinemat\circ\logtime(t) \leq \frac{C_\varepsilon ( 1 + C_\varepsilon^2 + x^2)}{2(1 - t)^{\nicefrac{2}{\beta}+\delta}}} \geq 1 - \varepsilon.
\]
This dominating function is always \(L^1\loc[0,1)\), and it can be taken to be integrable on \([0,1]\) when \(\beta > 2\) by choosing \(\delta\) small enough so that \(\nicefrac{2}{\beta} + \delta < 1\). 

When \(\beta > 2\), we have also already seen it in \eqref{eq.tracebound.integrable} that Proposition~\ref{prop.tracebound} gives, for any \(\varepsilon > 0\) and \(\delta > 0\) small enough so that \(\nicefrac{2}{\beta} + \delta < 1\), an event of probability at least \(1 - \varepsilon\) on which for any \(t \in [0,1)\),
\beq{eq.Airytosine.TMconvergence.bound}
\tr\bigl( \timechange'(t) \sAirymat\circ\timechange(t) \bigr)
    \leq e^{2\ampN(t)} + e^{2\ampD(t)}
    \leq \frac{2C_\varepsilon}{(1-t)^{\nicefrac{2}{\beta}+\delta}}
\eeq
for a suitable \(C_\varepsilon > 0\). As this upper bound is integrable on \([0,1)\), the proof is complete for \(\beta > 2\).

When \(\beta \leq 2\), we slightly modify the argument. Proposition~\ref{prop.tracebound} shows in the same way that for any \(\varepsilon, \delta > 0\), there is an event of probability at least \(1 - \varepsilon\) on which the bound~\eqref{eq.Airytosine.TMconvergence.bound} holds for any \(t \in [0,\lasttime) = [0, 1-\nicefrac{1}{\sqrt{\shift}})\). This means that for any compact \([0,b] \subset [0,1)\), the bound~\eqref{eq.Airytosine.TMconvergence.bound} holds uniformly on \([0,b]\) for all \(\shift\) large enough so that \(b < 1 - \nicefrac{1}{\sqrt{\shift}}\), and therefore Theorem~\ref{thm.Airytosine.vaguely} and Proposition~\ref{prop.CS.random.TMconvergence} show that the transfer matrices converge compactly in probability on \([0,b] \times \mathbb{C}\). Since \(b\) is arbitrary in \([0,1)\), we can deduce from this the full compact convergence in probability on \([0,1) \times \mathbb{C}\). Indeed, taking a sequence of compact sets \(K_n \subset [0,1) \times \mathbb{C}\) such that \(\bigcup_{n\in\mathbb{N}} K_n = [0,1) \times \mathbb{C}\), a metric on \(\TMLP[0,1)\) is
\[
d(T, T') = \sum_{n=1}^\infty \frac{1}{2^n} \frac{d_n(T, T')}{1+d_n(T, T')}
\qquadtext{where}
d_n(T, T') \defeq \sup_{(t,z)\in K_n} \abs[\big]{T(t,z) - T'(t,z)}.
\]
Now, \(\sAiryTM* \to \sineTM*\) uniformly on \(K_n\) for any \(n\) by the previous argument, so given \(\varepsilon, \zeta > 0\), we can first take \(N\) large enough so that \(\sum_{n=N+1}^\infty 2^{-n} < \nicefrac{\zeta}{2}\), and then take \(\shift_0\) large enough so that for each \(n \leq N\) and each \(\shift \geq \shift_0\), \(d_n\bigl( \sAiryTM*, \sineTM* \bigr) \leq \nicefrac{\zeta}{2N}\) with probability at least \(1 - \nicefrac{\varepsilon}{N}\). Combining, \(d\bigl( \sAiryTM*, \sineTM* \bigr) \leq \zeta\) with probability at least \(1-\varepsilon\) for all \(\shift\) large enough, that is, \(\sAiryTM* \to \sineTM*\) compactly on \(\mathcal{I}\times\mathbb{C}\) in probability.
\end{proof}

\section{Spectral convergence of the canonical systems}
\label{sec.WTconv}

The purpose of this section is to prove the convergence of the canonical systems' Weyl--Titchmarsh functions stated in Theorem~\ref{thm.WTconvinlaw}. To do this, we prove that the convergence of the Weyl--Titchmarsh functions holds in probability on the probability space from Lemma~\ref{lem.probspace}. By the theory of canonical systems, there is really only something to prove when \(\beta > 2\). Indeed, when \(\beta \leq 2\), Theorem~\ref{thm.Airytosine.TMconvergence} directly yields the following, which proves Theorem~\ref{thm.WTconvinlaw} for \(\beta \leq 2\).

\begin{corollary}
\setshift{E_n}
Let \(\sAiryWT\) and \(\sineWT\) be the Weyl--Titchmarsh functions of the shifted Airy system and of the sine system respectively, with both systems defined on the probability space from Lemma~\ref{lem.probspace}. When \(\beta \leq 2\), \(\sAiryWT \to \sineWT\) compactly on \(\UHP\) in probability as \(n\to\infty\), and in particular the corresponding spectral measures also converge in probability.
\end{corollary}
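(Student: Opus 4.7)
The plan is to deduce the corollary from Theorem~\ref{thm.Airytosine.TMconvergence} by feeding the convergence of transfer matrices through the continuity statements of Section~\ref{sec.CS.topologies}. The first step I would take is to check the hypotheses of Theorem~\ref{thm.TMtoWT.LP}, namely that both families of canonical systems are limit point at the right endpoint of $\mathcal{I} = [0,1)$. For the stochastic sine system this holds by definition when $\beta \leq 2$. For the time-changed Airy system the trace of the original coefficient matrix is non-integrable near $\infty$, and since $\timechange \colon [0,1) \to [0,\infty)$ is a $\mathscr{C}^1$ bijection, the change of variables
\[
\int_0^1 \tr\bigl(\timechange'(t)\sAirymat\circ\timechange(t)\bigr) \diff{t}
= \int_0^\infty \tr \sAirymat(s) \diff{s} = \infty
\]
shows that the time-changed coefficient matrix is also limit point at $1$. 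Hence both the prelimit transfer matrices and the limit $\sineTM\circ\logtime$ lie almost surely in $\TMLP[0,1)$.

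Given this, I would combine Theorem~\ref{thm.Airytosine.TMconvergence}, which provides compact convergence $\sAiryTM\circ\timechange \to \sineTM\circ\logtime$ on $[0,1) \times \mathbb{C}$ in probability on the coupled space from Lemma~\ref{lem.probspace}, with Theorem~\ref{thm.TMtoWT.LP}, which asserts that the map from transfer matrix to Weyl--Titchmarsh function is continuous on $\TM[0,1)$ at every point of $\TMLP[0,1)$. Since the continuous image of a sequence converging in probability in a separable metric space again converges in probability, this would yield $m_{\sAirymat*} \to \sineWT$ compactly on $\UHP$ in probability. To finish the first half of the corollary, I would invoke the invariance of the Weyl--Titchmarsh function under absolutely continuous time changes, which gives $m_{\sAirymat*} = \sAiryWT$ (this is the invariance asserted within Theorem~\ref{thm.WTconvinlaw}).

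For the convergence of spectral measures, I would invoke the continuity of the Herglotz representation recalled just after Theorem~\ref{thm.TMtoWT.LP} (e.g.\@ \cite[Theorem~7.3(a)]{remling_spectral_2018}), which states that a compactly convergent sequence in $\Hol(\UHP, \clUHP)$ has spectral measures that converge vaguely. Composing this continuity with the convergence of Weyl--Titchmarsh functions and again using the continuous mapping principle for convergence in probability between separable metric spaces produces the vague convergence in probability of the spectral measures. In this $\beta \leq 2$ regime there is no real obstacle; the whole argument is an application of continuity theorems of Section~\ref{sec.CS.topologies} together with Theorem~\ref{thm.Airytosine.TMconvergence}. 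The genuine difficulty of Theorem~\ref{thm.WTconvinlaw} lies in the $\beta > 2$ case, where the limit sine system becomes limit circle at $1$ while the prelimit Airy systems remain limit point there, so Theorem~\ref{thm.TMtoWT.LP} no longer applies and one must work instead through Theorem~\ref{thm.TMtoWT.LC} after upgrading the prelimit integrability condition into a genuine boundary condition in the limit.
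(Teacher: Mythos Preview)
Your proposal is correct and follows essentially the same route as the paper: invoke Theorem~\ref{thm.Airytosine.TMconvergence} for convergence of transfer matrices, apply Theorem~\ref{thm.TMtoWT.LP} for continuity into Weyl--Titchmarsh functions, and then the continuity of the Herglotz representation for spectral measures. Your version is in fact more careful than the paper's terse proof, since you explicitly verify the limit-point condition for the time-changed Airy system and note the time-change invariance $m_{\sAirymat*} = \sAiryWT$, both of which the paper leaves implicit.
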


\begin{proof}
By Theorem~\ref{thm.Airytosine.TMconvergence}, the transfer matrices of these systems converge in probability as \(\shift\to\infty\). Hence, by continuity of the map \(\TMLP[0,1) \to \Hol(\UHP,\clUHP)\) which sends a transfer matrix to the corresponding Weyl--Titchmarsh function (see Theorem~\ref{thm.TMtoWT.LP}), \(\sAiryWT \to \sineWT\) compactly in probability. In the same way, the convergence of the spectral measures follows from the continuity of the Herglotz representation map.
\end{proof}

Given this result, for the remainder of the section, we suppose that \(\beta > 2\). In that case, the convergence of the Weyl--Titchmarsh functions will be a consequence of the continuity of the map \(\TMLC[0,1] \times \mathscr{C}(\UHP,\mathbb{C}^2) \to \RiemannSphere^{\UHP}\) sending a transfer matrix and a right boundary condition to a Weyl--Titchmarsh function (i.e., Theorem~\ref{thm.TMtoWT.LC}), so what remains to be proven, essentially, is that the right boundary conditions converge. We first present the heuristic idea behind the convergence of the boundary conditions.

The right boundary condition of the sine system is a vector parallel to \((q,1)\) where \(q \defeq \lim_{t\to\infty} \Re\HBM(t)\). The time-changed shifted Airy system is limit point at \(1 + \nicefrac{1}{\sqrt{\shift}}\), so the correct \enquote{right boundary condition} to attach to its restriction to \((0,1)\) in order to reproduce the same Weyl--Titchmarsh function is simply the value at \(1\) of an integrable solution \(u_z\circ\timechange\) to \(J(u_z\circ\timechange)' = -z \timechange' (\sAirymat\circ\timechange) (u_z\circ\timechange)\) on \((0, 1+\nicefrac{1}{\sqrt{\shift}})\), which therefore varies with \(z \in \UHP\). By construction of the canonical system, such a solution \(u_z\) always has the form
\[
u_z = \sAirySLtoCS^{-1} \begin{pmatrix} \quasi{h_z} \\ h_z \end{pmatrix}
    = \begin{pmatrix} \shift^{\nicefrac{-1}{4}} \sAiryDirichlet & - \shift^{\nicefrac{-1}{4}} \sAiryDirichlet' \\ -\shift^{\nicefrac{1}{4}} \sAiryNeumann & \shift^{\nicefrac{1}{4}} \sAiryNeumann' \end{pmatrix} \begin{pmatrix} h_z' \\ h_z \end{pmatrix}
\]
where \(h_z\) is an integrable solution to \(\sAiryop h_z = zh_z\) and where \(\sAirySLtoCS\) is the matrix \eqref{eq.sAirySLtoCS}. Now, notice that using the representations~\eqref{eq.polarcoordsDN} of \(\sAiryDirichlet\) and \(\sAiryNeumann\) in the polar coordinates from Proposition~\ref{prop.polarcoords},
\begin{align*}
\shift^{\nicefrac{-1}{4}} \sAiryDirichlet\circ\timechange(1)
    & = e^{\ampD(1)} \cos\phaseD(1) \\
    & = e^{-\diffamps(1)} \cos\diffphases(1) e^{\ampN(1)} \cos\phaseN(1) + e^{-\diffamps(1)} \sin\diffphases(1) e^{\ampN(1)} \sin\phaseN(1),
\end{align*}
and we retrieve the expression in polar coordinates of \(\shift^{\nicefrac{1}{4}} \sAiryNeumann\circ\timechange(1)\) in the first term. The Wronskian identity \eqref{eq.ampsphasesWronskianidentity}, which gives \(e^{-\diffamps}\sin\diffphases \equiv e^{-2\ampN}\), also simplifies the prefactor of the second term, and this yields
\[
\shift^{\nicefrac{-1}{4}} \sAiryDirichlet\circ\timechange(1)
    = e^{-\diffamps(1)} \cos\diffphases(1) \shift^{\nicefrac{1}{4}} \sAiryNeumann\circ\timechange(1) + e^{-\ampN(1)} \sin\phaseN(1).
\]
The same manipulations can be applied to \(\shift^{\nicefrac{-1}{4}} \sAiryDirichlet'\circ\timechange(1)\), and yield
\[
\shift^{\nicefrac{-1}{4}} \sAiryDirichlet'\circ\timechange(1)
    = e^{-\diffamps(1)} \cos\diffphases(1) \shift^{\nicefrac{1}{4}} \sAiryNeumann'\circ\timechange(1) - e^{-\ampN(1)} \cos\phaseN(1).
\]
It follows that
\begin{multline*}
\begin{pmatrix}
    \shift^{\nicefrac{-1}{4}} \sAiryDirichlet & -\shift^{\nicefrac{-1}{4}} \sAiryDirichlet' \\ -\shift^{\nicefrac{1}{4}} \sAiryNeumann & \shift^{\nicefrac{1}{4}} \sAiryNeumann'
\end{pmatrix} \bigl( \timechange(1) \bigr) \\
    = \shift^{\nicefrac{1}{4}} \begin{pmatrix} -e^{-\diffamps(1)} \cos\diffphases(1) \\ 1 \end{pmatrix} \begin{pmatrix} -\sAiryNeumann\circ\timechange(1) & \sAiryNeumann'\circ\timechange(1) \end{pmatrix} + e^{-\ampN(1)} \begin{pmatrix} \sin\phaseN(1) & \cos\phaseN(1) \\ 0 & 0 \end{pmatrix},
\end{multline*}
and therefore that
\beq{eq.integrablesolutiondecomposition}
u_z\circ\timechange(1)
    = \shift^{\nicefrac{1}{4}} \wronskian{h_z}{\sAiryNeumann}(\shift-1) \begin{pmatrix} -e^{-\diffamps(1)} \cos\diffphases(1) \\ 1 \end{pmatrix} + e^{-\ampN(1)} \begin{pmatrix} \sin\phaseN(1) & \cos\phaseN(1) \\ 0 & 0 \end{pmatrix} \begin{pmatrix} h_z'(\shift-1) \\ h_z(\shift-1) \end{pmatrix}
\eeq
where \(\wronskian{f}{g}\) denotes the Wronskian of \(f\) and \(g\), and where we used that \(\timechange(1) = \shift-1\). Note that the vector remaining in the first term should converge to the right boundary condition of the sine system, since \(-\exp(-\diffamps-i\diffphases)\) converges to the hyperbolic Brownian motion by the results of Section~\ref{sec.asymptotics}.

Now, notice that
\begin{multline*}
\frac{1}{2\sqrt{\shift}} \sAiryop f(\shift + t)
    = - \Bigl( f' - \frac{2}{\sqrt{\beta}}\bigl( \sBM - \sBM(\shift) \bigr) f \Bigr)'(\shift + t) - \frac{2}{\sqrt{\beta}} \bigl( \sBM(\shift + t) - \sBM(\shift) \bigr) \Bigl( f' - \frac{2}{\sqrt{\beta}} \bigl( \sBM - \sBM(\shift) \bigr) f \Bigr)(\shift + t) \\
    + \Bigl( t - \frac{4}{\beta} \bigl( \sBM(\shift + t) - \sBM(\shift) \bigr)^2 \Bigr) f(\shift + t),
\end{multline*}
as the extra \(\sBM(\shift)\) terms cancel out. This shows that if \(f\) solves \(\sAiryop f = zf\), then \(\tilde{f}(t) \defeq f(\shift + t)\) solves \(\Airyop \tilde{f} = \frac{z}{2\sqrt{\shift}} \tilde{f}\) if this \(\Airyop\) is defined from the Brownian motion \(t \mapsto \sBM(\shift+t) - \sBM(\shift)\). Therefore, for any \(t \geq -\shift\),
\beq{eq.solutionseqlaw}
\bigl( h_z(\shift + t), \sAiryNeumann(\shift + t) \bigr) \eqlaw \bigl( a\intsol{\nicefrac{z}{2\sqrt{\shift}}}(t), \unsAiryNeumann(t) \bigr)
\eeq
where \(\unsAiryNeumann\) solves \(\Airyop\unsAiryNeumann = 0\) with \(\unsAiryNeumann(-\shift) = 0\) and \(\unsAiryNeumann'(-\shift) = 1\), \(\intsol{\lambda}\) is an integrable solution to \(\Airyop\intsol{\lambda} = \lambda\intsol{\lambda}\), and \(a \in \mathbb{C}\setminus\{0\}\) is a possibly random scaling factor. The integrable solution \(\intsol{\lambda}\) is determined only up to a multiplicative constant (which even depends on \(\lambda\) in general) so the scaling factor \(a\) is somewhat redundant, but it will be useful in the sequel to set it in terms of a fixed \(\intsol{\lambda}\). By basic properties of generalized Sturm--Liouville operators (see e.g.~\cite[Theorem~2.7]{eckhardt_weyl-titchmarsh_2013}), we may (and do) choose \(\intsol{\lambda}\) to be analytic in \(\lambda\). In particular, \(\intsol{\lambda}\) could be chosen to be the stochastic Airy function \(\SAi_{-\lambda}\) from~\cite{lambert_strong_2021}. 

Combining the equivalence in law \eqref{eq.solutionseqlaw} of solutions with the expression \eqref{eq.integrablesolutiondecomposition} provides a plan to see how the convergence of the right boundary conditions happens. If \(\shift^{\nicefrac{1}{4}} \wronskian{\intsol{\nicefrac{z}{2\sqrt{\shift}}}}{\unsAiryNeumann}(-1)\) stays bounded away from zero, we can use the inverse of this value to choose the constant \(a\) which normalizes the integrable solution, thus removing the prefactor of the first term in \eqref{eq.integrablesolutiondecomposition}. With this setup, the right boundary condition converges provided the second term of~\eqref{eq.integrablesolutiondecomposition} vanishes in the limit. Notice that the Wronskian satisfies
\[
\wronskian{\intsol{\nicefrac{z}{2\sqrt{\shift}}}}{\unsAiryNeumann}(-1)
    = \wronskian{\intsol{0}}{\unsAiryNeumann}(-1) + \bigl( \intsol{\nicefrac{z}{2\sqrt{\shift}}} - \intsol{0} \bigr)(-1) \unsAiryNeumann'(-1) - \bigl( \intsol{\nicefrac{z}{2\sqrt{\shift}}}' - \intsol{0}' \bigr)(-1) \unsAiryNeumann(-1)
\]
where all primes denote time derivatives. The last two terms vanish by analyticity of \(\lambda \mapsto \intsol{\lambda}\), and the first one is equal to \(\intsol{0}(-\shift)\). Therefore, the solution to the problem essentially lies in understanding how \(\intsol{0}\) behaves towards \(-\infty\). 

The rest of this section is organized as follows. We first derive polar coordinates for \(\intsol{0}\) on the negative real line, in the same way as we did earlier in Section~\ref{sec.polarcoords}. Then, we derive asymptotics for the radial coordinate and we show that the phase becomes uniformly distributed over the unit circle, which proves Theorem~\ref{thm.asymptoticsSAi} stated in the introduction. Finally, we put all of the pieces together and prove the convergence of the Weyl--Titchmarsh functions in Theorem~\ref{thm.Airytosine.WTconvergence}, a stronger version of Theorem~\ref{thm.WTconvinlaw}.

\subsection{Polar coordinates and their asymptotic behavior}

We want to find polar coordinates for a solution \(f\) to \(\Airyop f = 0\) on the negative real line. To do so, we must have \(\Airyop\) defined on the whole of \(\mathbb{R}\), and therefore we now understand \(\Airyop\) as being defined from a \emph{two-sided} Brownian motion. We now prove the following result, which reproduces the idea of Proposition~\ref{prop.polarcoords}, but going backwards in time.

\begin{proposition}
\label{prop.polarcoordsSAi}
Let \(\Airyop\) be defined from a two-sided standard Brownian motion \(B\), and let \(\rBM(t) \defeq B(-t)\). If \(f\) solves \(\Airyop f = 0\) and \(f(-1)\) and \(f'(-1)\) are independent of the restriction of \(\rBM\) to \((1,\infty)\), then for \(t \geq 1\),
\[
f(-t) = t^{\nicefrac{-1}{4}} e^{\ramp(t)} \cos\rphase(t)
\qquadtext{and}
f'(-t) = -t^{\nicefrac{1}{4}} e^{\ramp(t)} \sin\rphase(t)
\]
where \(\ramp\) and \(\rphase\) solve the stochastic differential equations
\begin{align*}
\diff{\ramp}(t)
    & = \frac{1}{t} \biggl( \frac{1}{2\beta} + \Bigl( \frac{1}{4} + \frac{1}{\beta} \Bigr) \cos 2\rphase(t) + \frac{1}{2\beta} \cos 4\rphase(t) \biggr) \diff{t} - \frac{1}{\sqrt{\beta t}} \sin 2\rphase(t) \diff{\rBM}(t), \\
\diff{\rphase}(t)
    & = - \sqrt{t} \diff{t} - \frac{1}{t} \biggl( \Bigl( \frac{1}{4} + \frac{1}{\beta} \Bigr) \sin 2\rphase(t) + \frac{1}{2\beta} \sin 4\rphase(t) \biggr) \diff{t} - \frac{2}{\sqrt{\beta t}} \cos^2 \rphase(t) \diff{\rBM}(t)
\end{align*}
with \(\ramp(1) = \frac{1}{2} \log\bigl( f^2(-1) + {f'}^2(-1) \bigr)\) and \(\rphase(1) = \arctan\bigl( \nicefrac{f'(-1)}{f(-1)} \bigr)\).
\end{proposition}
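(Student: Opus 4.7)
The plan is to mirror the proof of Proposition~\ref{prop.polarcoords}, but in reverse time. Set $y(t) \defeq f(-t)$ for $t \geq 1$, so $y \in \AC\loc[1,\infty)$ with a.e.\ derivative $\dot y(t) = -f'(-t)$. Conditionally on $(f(-1), f'(-1))$, the process $(y, \dot y)$ is a diffusion on $[1,\infty)$ driven by $\rBM$, satisfying
\[
\diff{y}(t) = \dot y(t) \diff{t}, \qquad \diff{\dot y}(t) = -t\, y(t) \diff{t} - \frac{2}{\sqrt{\beta}}\, y(t) \diff{\rBM}(t).
\]
To derive this, rewrite the integral form of~\eqref{eq.SAE} (at $z = 0$) on $[-t-h, -t]$ using the substitution $s = -u$: the drift integral becomes $-\int_t^{t+h} u\, y(u) \diff{u}$, and the stochastic integral becomes $-\frac{2}{\sqrt{\beta}}\int_t^{t+h} y(u)\diff{\rBM}(u)$. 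Here one uses that $\rBM(t+h) - \rBM(t) = -(B(-t) - B(-t-h))$, together with the fact that $y$ is of finite variation, so no Itô correction arises from re-indexing the Riemann sums. Adaptedness is automatic: conditional on $(f(-1), f'(-1))$, the pair $(y, \dot y)$ on $[1, \infty)$ is a functional of $\rBM|_{[1, \infty)}$.

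Once this SDE is in hand, introduce polar coordinates exactly as in Proposition~\ref{prop.polarcoords}: pick a deterministic $\mathscr{C}^1$ function $S\colon [1,\infty) \to (0,\infty)$ to be specified and set $e^{r + i\xi} \defeq Sy + i\dot y/S$. Applying Itô's formula to $\log(Sy + i\dot y/S)$, and substituting $y = e^r\cos\xi/S$ and $\dot y = Se^r\sin\xi$, gives
\[
\diff{(r + i\xi)} = e^{-i\xi}\bigl[\tfrac{S'}{S}\cos\xi + S^2\sin\xi - i\bigl(\tfrac{S'}{S}\sin\xi + \tfrac{t}{S^2}\cos\xi\bigr)\bigr] \diff{t} + \tfrac{2\cos^2\xi\, e^{-2i\xi}}{\beta S^4} \diff{t} - \tfrac{2i\cos\xi\, e^{-i\xi}}{S^2\sqrt{\beta}}\, \diff{\rBM}.
\]

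Now make the scalar choice $S(t) = t^{1/4}$, which is the analogue of the normalization used in Proposition~\ref{prop.polarcoords} and is forced by the relation $S^2 = \sqrt{t} = t/S^2$: this collapses the $S^2\sin\xi - i(t/S^2)\cos\xi$ piece of the drift into the single purely imaginary term $-i\sqrt{t}\, \diff{t}$, yielding the announced advection $-\sqrt{t}\, \diff{t}$ in $\diff{\xi}$. Since $S'/S = 1/(4t)$ and $1/S^4 = 1/t$, all remaining drift contributions acquire the $1/t$ prefactor appearing in the statement. Expanding $-2i\cos\xi\, e^{-i\xi} = -\sin 2\xi - i(1+\cos 2\xi)$ and $2\cos^2\xi\, e^{-2i\xi} = \tfrac12 + \cos 2\xi + \tfrac12\cos 4\xi - i(\sin 2\xi + \tfrac12\sin 4\xi)$, and separating real and imaginary parts, produces exactly the stated SDEs with $\ramp = r$ and $\rphase = \xi$. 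The initial conditions at $t = 1$ follow by reading off the modulus and argument of $e^{r(1) + i\xi(1)} = f(-1) + i\dot y(1)$.

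The main obstacle is the reverse-time derivation of the SDE for $\dot y$, which requires careful bookkeeping of signs under time reversal and the observation that $y$ has no Brownian component, so that the Itô integral against $\rBM$ is insensitive to the direction of evaluation. Once this point is settled, the remainder is a mechanical application of Itô's formula, in fact simpler than in Proposition~\ref{prop.polarcoords} because of the scalar choice $S(t) = t^{1/4}$ in place of the $\timechange$-dependent normalization used there.
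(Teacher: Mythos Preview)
Your proposal is correct and follows essentially the same route as the paper: derive the reversed-time SDE for $(y,\dot y)$, introduce polar coordinates $e^{r+i\xi}=Sy+i\dot y/S$, apply It\^o's formula, and fix $S(t)=t^{1/4}$. The one methodological difference is that the paper obtains the reversed SDE by first time-reversing the pathwise generalized Sturm--Liouville equation for $\Airyop f=0$ (so that only Riemann integrals and $\rBM$ appear) and then extracting the It\^o form as in Section~\ref{sec.AirysineCS.defAiry}, whereas you time-reverse the It\^o integral in~\eqref{eq.SAE} directly, relying on the finite-variation property of $y$; both are legitimate and lead to the same computation thereafter.
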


\begin{proof}
Set \(y(t) \defeq f(-t)\). Reversing time in the equation \(\Airyop f = 0\) shows that
\[
0 = - \Bigl( y' + \frac{2}{\sqrt{\beta}} \rBM y \Bigr)'(t) + \frac{2}{\sqrt{\beta}} \rBM(t) \Bigl( y' + \frac{2}{\sqrt{\beta}} \rBM y \Bigr)(t) - \Bigl( t + \frac{4}{\beta} \rBM^2(t) \Bigr) y(t),
\]
and then applying Itô's formula in the same way as we did in Section~\ref{sec.AirysineCS.defAiry} (which is justified by the independence of \(y(1)\) and \(y'(1)\) on the restriction of \(\rBM\) to \((1, \infty)\)) shows that \(y'\) satisfies
\[
\diff{y'}(t) = - t y(t) \diff{t} - \frac{2}{\sqrt{\beta}} y(t) \diff{\rBM}(t).
\]
Now, define the real-valued processes \(\ramp\) and \(\rphase\) so that \(e^{\ramp(t) + i\rphase(t)} = t^{\nicefrac{1}{4}} y(t) + it^{\nicefrac{-1}{4}} y'(t)\). By Itô's formula,
\begin{multline*}
\diff{\ramp}(t) + i\diff{\rphase}(t)
    = \frac{1}{t^{\nicefrac{1}{4}}y(t) + it^{\nicefrac{-1}{4}}y'(t)} \bigg( \frac{1}{4} t^{\nicefrac{-3}{4}} y(t) \diff{t} - \frac{i}{4} t^{\nicefrac{-5}{4}} y'(t) \diff{t} + t^{\nicefrac{1}{4}} y'(t) \diff{t} + it^{\nicefrac{-1}{4}} \diff{y'(t)} \biggr) \\
    + \frac{1}{2} \frac{1}{\bigl( t^{\nicefrac{1}{4}}y(t) + it^{\nicefrac{-1}{4}}y'(t) \bigr)^2} t^{\nicefrac{-1}{2}} \diff{\quadvar{y'}}(t)
\end{multline*}
so
\begin{align*}
\SwapAboveDisplaySkip
\diff{\ramp}(t) + i\diff{\rphase}(t)
    & = e^{-2\ramp(t)} \biggl( \Bigl( \frac{1}{4} t^{\nicefrac{-1}{2}} y^2(t) - \frac{1}{4} t^{\nicefrac{-3}{2}} {y'}^2(t) \Bigr) \diff{t} - \frac{2}{\sqrt{\beta t}} y(t) y'(t) \diff{\rBM}(t) \\
    &\hspace*{22mm} - i\Bigl( \frac{1}{2t} y(t) y'(t) + {y'}^2(t) + ty^2(t) \Bigr) \diff{t} - \frac{2i}{\sqrt{\beta}} y^2(t) \diff{\rBM}(t) \biggr) \\
    &\hspace*{22mm} + \frac{2}{\beta} e^{-4\ramp(t)} t^{\nicefrac{-1}{2}} y^2(t) \Bigl( t^{\nicefrac{1}{2}} y^2(t) - t^{\nicefrac{-1}{2}} {y'}^2(t) - 2iy(t)y'(t) \Bigr) \diff{t}.
\intertext{As \(e^{\ramp(t)}\cos\rphase(t) = t^{\nicefrac{1}{4}} y(t)\) and \(e^{\ramp(t)} \sin\rphase(t) = t^{\nicefrac{-1}{4}} y'(t)\), this simplifies to}
\diff{\ramp}(t) + i\diff{\rphase}(t)
    & = \frac{1}{4t} \bigl( \cos^2\rphase(t) - \sin^2\rphase(t) \bigr) \diff{t} - \frac{2}{\sqrt{\beta t}} \cos\rphase(t) \sin\rphase(t) \diff{\rBM}(t) \\
    &\hspace*{11mm} - i\Bigl( \frac{1}{2t} \cos\rphase(t) \sin\rphase(t) + \sqrt{t} \Bigr) \diff{t} - \frac{2i}{\sqrt{\beta t}} \cos^2\rphase(t) \diff{\rBM}(t) \\
    &\hspace*{11mm} + \frac{2}{\beta} \Bigl( \frac{1}{t} \cos^4\rphase(t) - \frac{1}{t} \cos^2\rphase(t)\sin^2\rphase(t) - \frac{2i}{t} \cos^3\rphase(t)\sin\rphase(t) \Bigr) \diff{t}.
\end{align*}
The announced stochastic differential equations then follow from taking the real and imaginary parts in the above and simplifying with trigonometric identities. 
\end{proof}

These polar coordinates have essentially the same properties as the polar coordinates \(\amp\) and \(\phase\) that we used earlier to describe solutions to \(\sAiryop f = 0\). In particular, the growth of \(\rphase\) leads to the following result, analogous to Corollary~\ref{cor.averaging.integrable}.

\begin{lemma}
\label{lem.averagingSAi}
Let \(\rphase\) solve the SDE from Proposition~\ref{prop.polarcoordsSAi}. For any \(k \neq 0\) and \(\varepsilon > 0\), there is a \(C_\varepsilon > 0\) such that
\[
\bprob[\bigg]{\sup_{t\geq 1} \abs[\Big]{\int_1^t \frac{e^{ki\rphase(s)}}{s} \diff{s}} > C_\varepsilon} < \varepsilon.
\]
\end{lemma}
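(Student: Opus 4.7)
The plan is to mimic the integration-by-parts trick used in Lemma~\ref{lem.averaging} and Corollary~\ref{cor.averaging.integrable}, exploiting the fact that the dominant term in the SDE for \(\rphase\) is the deterministic drift \(-\sqrt{t}\,\diff{t}\), which makes \(e^{ki\rphase}\) oscillate on a timescale of order \(t^{-1/2}\). First, I would apply It\^o's formula using Proposition~\ref{prop.polarcoordsSAi} to write
\[
\diff{\bigl( e^{ki\rphase} \bigr)}(t) = -ki\sqrt{t}\, e^{ki\rphase(t)} \diff{t} + \frac{e^{ki\rphase(t)}}{t} \mathcal{D}(t) \diff{t} + \frac{e^{ki\rphase(t)}}{\sqrt{t}} \mathcal{M}(t) \diff{\rBM}(t),
\]
where \(\mathcal{D}\) and \(\mathcal{M}\) are bounded processes (by constants depending only on \(\beta\) and \(k\)) since they are polynomials in trigonometric functions of multiples of \(\rphase\). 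Solving this identity for \(e^{ki\rphase(t)}\diff{t}\) and dividing by \(t\) yields
\[
\frac{e^{ki\rphase(t)}}{t}\,\diff{t} = - \frac{1}{ki\, t^{3/2}} \diff{\bigl( e^{ki\rphase} \bigr)}(t) + \frac{e^{ki\rphase(t)}}{ki\, t^{5/2}} \widetilde{\mathcal{D}}(t) \diff{t} - \frac{e^{ki\rphase(t)}\mathcal{M}(t)}{t^2} \diff{\rBM}(t),
\]
with \(\widetilde{\mathcal{D}}\) bounded as well.

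Next, integrating from \(1\) to \(t\) and performing integration by parts on the first term on the right (which is valid since \(s \mapsto s^{-3/2}\) is smooth on \([1,\infty)\) and \(e^{ki\rphase}\) is a complex semimartingale), I obtain
\[
\int_1^t \frac{e^{ki\rphase(s)}}{s} \diff{s} = - \frac{1}{ki}\Bigl( \frac{e^{ki\rphase(t)}}{t^{3/2}} - e^{ki\rphase(1)} \Bigr) + \int_1^t \frac{e^{ki\rphase(s)}}{s^{5/2}} \mathcal{E}(s) \diff{s} - \int_1^t \frac{e^{ki\rphase(s)} \mathcal{M}(s)}{s^2} \diff{\rBM}(s)
\]
with \(\mathcal{E}\) bounded (it absorbs the \(3/2\) from integration by parts and the contribution of \(\widetilde{\mathcal{D}}\)).

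The three terms on the right are easy to control uniformly in \(t \geq 1\). The boundary terms are bounded by \(2/\abs{k}\) in modulus. The Lebesgue integral is dominated by \(\|\mathcal{E}\|_\infty \int_1^\infty s^{-5/2} \diff{s}\), which is a deterministic finite constant depending only on \(\beta\) and \(k\). The stochastic integral is a continuous martingale whose bracket is bounded by \(\|\mathcal{M}\|_\infty^2 \int_1^\infty s^{-4} \diff{s} < \infty\), so by the Dambis--Dubins--Schwarz theorem it is a time-change of a Brownian motion run up to a uniformly bounded time, and hence its supremum over \(t \geq 1\) is almost surely finite.

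Finally, since the supremum is almost surely finite, continuity of measures (or directly Bernstein's inequality applied to the real and imaginary parts of the stochastic integral) provides for every \(\varepsilon > 0\) a constant \(C_\varepsilon > 0\) with the desired property. The argument is entirely routine; no single step stands out as an obstacle once the right integration-by-parts identity is written down, and the key insight is simply that the \(-\sqrt{t}\) drift allows trading one factor of \(s^{-1}\) in the integrand for a factor of \(s^{-3/2}\), which is integrable at infinity.
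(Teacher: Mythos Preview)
Your proposal is correct and follows essentially the same approach as the paper: apply It\^o's formula to \(e^{ki\rphase}\), isolate the dominant \(-ki\sqrt{t}\,e^{ki\rphase}\diff{t}\) term to rewrite \(\frac{e^{ki\rphase}}{s}\diff{s}\), integrate by parts against \(s^{-3/2}\), and then bound the boundary terms, the deterministic integral via integrability of \(s^{-5/2}\), and the stochastic integral via its bounded bracket (the paper uses Bernstein's inequality, which you also mention). The only cosmetic difference is that the paper computes the bounded processes \(R_k(\rphase)\) and \(\cos^2\rphase\) explicitly rather than naming them abstractly as \(\mathcal{D}\), \(\mathcal{M}\), \(\mathcal{E}\).
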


\begin{proof}
By Itô's formula,
\beq{eq.averagingSAi.dekirphase}
\diff{\bigl( e^{ki\rphase} \bigr)}(s)
    = e^{ki\rphase(s)} \Bigl( -ki\sqrt{s} \diff{s} - \frac{3k^2}{4\beta s} \diff{s} - \frac{1}{s} R_k\bigl( \rphase(s) \bigr) \diff{s} - \frac{2ki}{\sqrt{\beta s}} \cos^2\rphase(s) \diff{\rBM(s)} \Bigr)
\eeq
where
\beq{eq.averagingSAi.defRk}
R_k(\xi) \defeq ki\Bigl( \frac{1}{4} + \frac{1}{\beta} \Bigr) \sin 2\xi + \frac{ki}{2\beta} \sin 4\xi + \frac{k^2}{\beta} \cos 2\xi + \frac{k^2}{4\beta} \cos 4\xi.
\eeq
Therefore,
\[
e^{ki\rphase(s)} \diff{s}
    = \frac{i}{k\sqrt{s}} \diff{\bigl( e^{ki\rphase} \bigr)}(s) + \frac{i}{k\sqrt{s}} e^{ki\rphase(s)} \Bigl( \frac{3k^2}{4\beta s} \diff{s} + \frac{1}{s} R_k\bigl( \rphase(s) \bigr) \diff{s} + \frac{2ki}{\sqrt{\beta s}} \cos^2\rphase(s) \diff{\rBM}(s) \Bigr),
\]
and
\beq{eq.averagingSAi.integral}
\begin{multlined}
\int_1^t \frac{e^{ki\rphase(s)}}{s} \diff{s}
    = \frac{i}{k} \int_1^t \frac{1}{s^{\nicefrac{3}{2}}} \diff{\bigl( e^{ki\rphase} \bigr)}(s) + \int_1^t \Bigl( \frac{3ki}{4\beta} + \frac{i}{k} R_k\bigl( \rphase(s) \bigr) \Bigr) \frac{e^{ki\rphase(s)}}{s^{\nicefrac{5}{2}}} \diff{s} \\
    - \frac{2}{\sqrt{\beta}} \int_1^t \frac{e^{ki\rphase(s)}}{s^2} \cos^2\rphase(s) \diff{\rBM}(s).
\end{multlined}
\eeq
Integrating the first term by parts gives
\[
\frac{i}{k} \int_1^t \frac{1}{s^{\nicefrac{3}{2}}} \diff{\bigl( e^{ki\rphase} \bigr)}(s)
= \frac{i}{k} \frac{e^{ki\rphase(s)}}{s^{\nicefrac{3}{2}}} \biggr\rvert_1^t + \frac{3i}{2k} \int_1^t \frac{e^{ki\rphase(s)}}{s^{\nicefrac{5}{2}}} \diff{s}.
\]
The first term is immediately bounded by \(\nicefrac{2}{k}\), and since \(s \mapsto s^{\nicefrac{-5}{2}}\) is integrable on \([1,\infty)\), the remaining integral is also bounded by a constant depending only on \(k\). In the same way, by definition of \(R_k\), the second integral of \eqref{eq.averagingSAi.integral} is bounded by a constant depending only on \(k\) and \(\beta\). As to the remaining stochastic integral in \eqref{eq.averagingSAi.integral}, its bracket is bounded by
\[
\frac{4}{\beta} \int_1^t \frac{1}{s^4} \cos^4\rphase(s) \diff{s}
    \leq \frac{4}{3\beta},
\]
\newpage\noindent
and thus Bernstein's inequality shows that
\[
\bprob[\bigg]{\sup_{t\geq 1} \frac{2}{\beta} \abs[\Big]{\int_1^t \frac{e^{ki\rphase(s)}}{s^2} \cos^2\rphase(s) \diff{\rBM}(s)} > x} \leq 4\exp\Bigl( - \frac{3\beta x^2}{8} \Bigr).
\]
Since the first line of~\eqref{eq.averagingSAi.integral} is bounded by a constant \(C > 0\), taking \(x\) large enough so that \(4\exp\bigl( - \frac{3\beta x^2}{8} \bigr) < \varepsilon\) yields the result with \(C_\varepsilon \defeq C + x\).
\end{proof}

From the above averaging result, it is not hard to see how the radial coordinate \(\ramp\) behaves towards \(-\infty\), which gives a similar description to what we found about the radial coordinates \(\amp\) of solutions to \(\sAiryop f = 0\) in Proposition~\ref{prop.tracebound}.

\begin{proposition}
\label{prop.ampboundSAi}
Let \(\ramp\) solve the SDE from Proposition~\ref{prop.polarcoordsSAi}. Then for any \(\varepsilon, \delta > 0\), there are \(C, C' > 0\) such that
\[
\bprob[\bigg]{\forall t \geq 1, \abs[\Big]{\ramp(t) - \ramp(1) - \frac{1}{2\beta} \log t} \leq C + C'(\log t)^{\nicefrac{1}{2}+\delta}} \geq 1 - \varepsilon.
\]
\end{proposition}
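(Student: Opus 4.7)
The plan is to integrate the SDE for \(\ramp\) and then control each of the resulting terms separately on a single high-probability event. Writing out the integrated form gives
\[
\ramp(t) - \ramp(1) - \frac{1}{2\beta}\log t
    = \Bigl( \frac{1}{4} + \frac{1}{\beta} \Bigr) \int_1^t \frac{\cos 2\rphase(s)}{s} \diff{s}
    + \frac{1}{2\beta} \int_1^t \frac{\cos 4\rphase(s)}{s} \diff{s}
    + M(t),
\]
where
\[
M(t) \defeq - \frac{1}{\sqrt{\beta}} \int_1^t \frac{\sin 2\rphase(s)}{\sqrt{s}} \diff{\rBM}(s).
\]
The deterministic \(\tfrac{1}{2\beta}\log t\) contribution has been extracted explicitly, so it remains to show that the three remaining terms are uniformly bounded in \(t\geq 1\) by \(C + C'(\log t)^{1/2+\delta}\) on an event of probability at least \(1-\varepsilon\).

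For the two oscillatory integrals, I would apply Lemma~\ref{lem.averagingSAi} with \(k = \pm 2\) and \(k = \pm 4\) (or directly with \(k\in\{2,4\}\) after replacing cosines by half-sums of complex exponentials). This produces a deterministic constant \(C_1 = C_1(\beta,\varepsilon)\) such that both suprema are bounded by \(C_1\) on an event \(\mathscr{A}\) with \(\pprob{\mathscr{A}} \geq 1 - \varepsilon/2\). This part is immediate and uses no new ideas.

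The martingale part is handled by Proposition~\ref{prop.martingalebound}. The bracket satisfies
\[
\quadvar{M}(t)
    = \frac{1}{\beta} \int_1^t \frac{\sin^2 2\rphase(s)}{s} \diff{s}
    \leq \frac{\log t}{\beta},
\]
so \(1 + \quadvar{M}(t)^{1/2+\delta} \leq 1 + \beta^{-(1/2+\delta)} (\log t)^{1/2+\delta}\). By Proposition~\ref{prop.martingalebound} applied to \(M\) on \([1,\infty)\) (trivially extending \(M\) from \(M(1)=0\)), there is \(y = y(\beta,\varepsilon,\delta) > 0\) such that on an event \(\mathscr{B}\) with \(\pprob{\mathscr{B}} \geq 1 - \varepsilon/2\),
\[
\sup_{t\geq 1} \abs{M(t)}
    \leq y \bigl( 1 + \beta^{-(1/2+\delta)}(\log t)^{1/2+\delta} \bigr).
\]
Intersecting \(\mathscr{A}\) with \(\mathscr{B}\) gives an event of probability at least \(1-\varepsilon\) on which the desired estimate holds, with \(C \defeq C_1 + y\) and \(C' \defeq y\beta^{-(1/2+\delta)}\).

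I do not expect any serious obstacle here: the structure of the SDE is exactly parallel to that of \(\amp\) in Proposition~\ref{prop.tracebound}, and the two technical inputs (averaging of oscillatory integrands and the Dambis--Dubins--Schwarz based martingale tail bound) have already been set up. The only small point to be careful about is the extension of Lemma~\ref{lem.averagingSAi} to cosines of \(4\rphase\), but this is covered directly by taking \(k = 4\) in that lemma's statement; no modification is needed.
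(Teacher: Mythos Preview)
Your proposal is correct and follows essentially the same approach as the paper: integrate the SDE, bound the two oscillatory integrals via Lemma~\ref{lem.averagingSAi}, and control the martingale term via Proposition~\ref{prop.martingalebound} after bounding its bracket by \(\tfrac{1}{\beta}\log t\). One cosmetic point: your displayed inequality \(\sup_{t\geq 1}\abs{M(t)} \leq y(1+\beta^{-(1/2+\delta)}(\log t)^{1/2+\delta})\) has a \(t\)-dependent right-hand side under a supremum over \(t\); write it as a pointwise bound \(\abs{M(t)}\leq\cdots\) for all \(t\geq 1\) instead.
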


\begin{proof}
Recall from Proposition~\ref{prop.polarcoordsSAi} that \(\ramp\) solves
\[
\diff{\ramp}(t)
    = \frac{1}{t} \biggl( \frac{1}{2\beta} + \Bigl( \frac{1}{4} + \frac{1}{\beta} \Bigr) \cos 2\rphase(t) + \frac{1}{2\beta} \cos 4\rphase(t) \biggr) \diff{t} - \frac{1}{\sqrt{\beta t}} \sin 2\rphase(t) \diff{\rBM}(t)
\]
where \(\rphase\) solves the other SDE in Proposition~\ref{prop.polarcoordsSAi}. By Lemma~\ref{lem.averagingSAi}, given \(\varepsilon > 0\), there are \(C_2, C_4 > 0\) such that for \(k=2\) and \(k=4\),
\beq{eq.ampboundSAi.deterministicints}
\bprob[\bigg]{\sup_{t\geq 1} \abs[\Big]{\int_1^t \frac{\cos k\rphase(s)}{s}} > C_k} < \frac{\varepsilon}{3}.
\eeq
Then, the martingale
\[
M(t) \defeq \frac{1}{\sqrt{\beta}} \int_1^{1+t} \frac{\sin 2\rphase(s)}{\sqrt{s}} \diff{\rBM}(s)
\]
has quadratic variation
\[
\quadvar{M}(t)
    = \frac{1}{\beta} \int_1^{1+t} \frac{\sin^2 2\rphase(s)}{s} \diff{s}
    \leq \frac{1}{\beta} \log(1 + t).
\]
By Proposition~\ref{prop.martingalebound}, it follows that for any \(\delta > 0\) there is a \(y > 0\) such that
\[
\bprob[\bigg]{\sup_{t\geq 1} \frac{\abs{M(t-1)}}{1 + (\frac{1}{\beta} \log t)^{\nicefrac{1}{2}+\delta}} \geq y} < \frac{\varepsilon}{3},
\]
and the result follows by combining this event with those of~\eqref{eq.ampboundSAi.deterministicints}.
\end{proof}

We now show that the noise drives \(e^{i\rphase}\) to become uniform on the unit circle in the limit. The following result, together with Propositions~\ref{prop.polarcoordsSAi} and~\ref{prop.ampboundSAi}, proves Theorem~\ref{thm.asymptoticsSAi} stated in the introduction.

\begin{proposition}
\label{prop.uniformphase}
Let \(\rphase\) solve the SDE from Proposition~\ref{prop.polarcoordsSAi}. If \(U \sim \uniform\mathbb{S}^1\), then \(e^{i\rphase(t)} \to U\) in law as \(t\to\infty\).
\end{proposition}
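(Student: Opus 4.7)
By Lévy's continuity theorem applied on the circle, it suffices to show that for every fixed $k \in \mathbb{Z} \setminus \{0\}$,
\[
f_k(t) := \bexpect{e^{ik\rphase(t)}} \xrightarrow[t\to\infty]{} 0.
\]
Starting from~\eqref{eq.averagingSAi.dekirphase}---whose stochastic integral is a genuine martingale since its integrand is uniformly bounded---and writing $e^{ik\xi} R_k(\xi) = \sum_{l\in\{\pm 2,\pm 4\}} a_{k,l}\, e^{i(k+l)\xi}$ as a finite Fourier expansion read off from~\eqref{eq.averagingSAi.defRk}, I take expectations to obtain the coupled linear ODE
\[
f_k'(t) = -\Bigl( ik\sqrt{t} + \frac{3k^2}{4\beta t} \Bigr) f_k(t) - \frac{1}{t} \sum_{l\in\{\pm 2,\pm 4\}} a_{k,l}\, f_{k+l}(t).
\]
Variation of parameters with the integrating factor $\phi_k(t) := t^{3k^2/(4\beta)} \exp\bigl( \tfrac{2ki}{3}(t^{3/2}-1) \bigr)$ yields
\[
\phi_k(t) f_k(t) = f_k(1) - \sum_l a_{k,l} \int_1^t s^{3k^2/(4\beta)-1} e^{\frac{2ki}{3}(s^{3/2}-1)} f_{k+l}(s)\,ds,
\]
and the $f_k(1)$ term contributes $O(t^{-3k^2/(4\beta)})$ after dividing by $\phi_k(t)$. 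The task reduces to showing that each oscillatory integral $I_{k,l}(t) := \int_1^t s^{3k^2/(4\beta)-1} e^{\frac{2ki}{3}s^{3/2}} f_{k+l}(s)\,ds$ is of order $o(t^{3k^2/(4\beta)})$.

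The key ingredient is a non-stationary-phase estimate, obtained by integrating by parts against the fast phase via $e^{\frac{2ki}{3}s^{3/2}} = (ki\sqrt{s})^{-1} \tfrac{d}{ds} e^{\frac{2ki}{3}s^{3/2}}$. Doing so produces a boundary contribution of magnitude $|k|^{-1} t^{3k^2/(4\beta) - 3/2}$ at $s = t$ together with a remainder integral in which the derivative $f_{k+l}'(s)$, expanded via its own ODE, contributes the dominant term $-i(k+l)\sqrt{s}\, f_{k+l}(s)$; multiplying back by $s^{3k^2/(4\beta)-3/2}$, this regenerates $I_{k,l}(t)$ itself with coefficient $(k+l)/k$. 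The resulting algebraic identity
\[
-\frac{l}{k}\, I_{k,l}(t) = \frac{1}{ki} \Bigl[ s^{3k^2/(4\beta)-3/2} f_{k+l}(s) e^{\frac{2ki}{3}s^{3/2}} \Bigr]_1^t + (\text{lower-order integrals})
\]
is solvable because $l \in \{\pm 2, \pm 4\}$ never vanishes, yielding $I_{k,l}(t) = O(t^{3k^2/(4\beta) - 3/2})$ up to a remainder whose integrand is $\lesssim s^{3k^2/(4\beta) - 5/2}$.

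The main obstacle lies in the lower-order remainders when $k^2 \geq 2\beta$, since $s^{3k^2/(4\beta) - 5/2}$ is then not absolutely integrable at infinity. I would handle this by iterating the same integration by parts finitely many times (each iteration lowering the integrand exponent by a further $3/2$), which closes an enlarged but still finite algebraic system; since the coefficients $-k/l$ and the Fourier amplitudes $a_{k,l}$ remain bounded through the iteration, no divergent combinatorics arise, and the final bound is $I_{k,l}(t) = O(t^{3k^2/(4\beta) - 3/2})$. Dividing by $|\phi_k(t)| = t^{3k^2/(4\beta)}$ gives $|f_k(t)| = O(t^{-3/2}) \to 0$ for every fixed $k \neq 0$, and Lévy's continuity theorem then yields $e^{i\rphase(t)} \to U$ in law with $U \sim \uniform\mathbb{S}^1$.
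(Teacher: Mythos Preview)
Your approach is correct and follows the same method-of-moments strategy as the paper: reduce to showing $\bexpect{e^{ik\rphase(t)}}\to 0$ for each nonzero $k$, then exploit the fast deterministic phase $\tfrac{2k}{3}t^{3/2}$ via one integration by parts. Two remarks are worth making.

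First, your worry about iteration is unnecessary. After one integration by parts your remainder integrals have integrand bounded by a constant times $s^{p-5/2}$ with $p=3k^2/(4\beta)$; since you only need $I_{k,l}(t)=o(t^{p})$, the crude bound $\int_1^t s^{p-5/2}\diff{s}=O(t^{p-3/2})$ (or $O(\log t)$ when $p=3/2$, or $O(1)$ when $p<3/2$) already suffices. No second integration by parts and no enlarged algebraic system are needed.

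Second, the paper avoids your regeneration step altogether by working with the shifted process $X_m(t)=\exp\bigl(mi(\rphase(t)+\tfrac{2}{3}t^{3/2})\bigr)$, whose It\^o differential has no leading $-mi\sqrt t$ drift. Writing $\psi_{m,k}(t)=t^{-p}\expect\int_1^t s^{p-1}X_m(s)e^{ki\rphase(s)}\diff{s}$ and substituting for $e^{ki\rphase(s)}\diff{s}$ via its SDE, the resulting It\^o integration by parts produces only integrands that are bounded times $s^{p-5/2}$, with no term proportional to $\psi_{m,k}$ reappearing. This is a slightly cleaner packaging of exactly the same mechanism you use; your algebraic identity $-\tfrac{l}{k}I_{k,l}=\cdots$ is the price for keeping the fast phase explicit in the integrating factor rather than in the process.
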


\begin{proof}
Recall that if \(U \sim \uniform\mathbb{S}^1\), then \(\expect U^m = 0\) for any integer \(m \neq 0\). Hence, by density of trigonometric polynomials in continuous functions on the unit circle, it suffices to prove that \(\expect e^{mi\rphase(t)} \to 0\) as \(t \to \infty\) for all \(m \neq 0\). This means that if \(\varphi_m \defeq \expect X_m\) with \(X_m(t) \defeq \exp\bigl( mi(\rphase(t) + \frac{2}{3} t^{\nicefrac{3}{2}}) \bigr)\), it suffices to prove that \(\varphi_m(t) \to 0\) as \(t\to\infty\) for all \(m \neq 0\). 

Now, from the expression \eqref{eq.averagingSAi.dekirphase} of the Itô differential of \(e^{mi\rphase}\),
\[
\diff{X_m}(s) = - X_m(s) \Bigl( \frac{3m^2}{4\beta s} + \frac{1}{s} R_m\bigl( \rphase(s) \bigr) \Bigr) \diff{s} - \frac{2mi}{\sqrt{\beta s}} X_m(s) \cos^2\rphase(s) \diff{\rBM}(s),
\]
so
\[
\varphi_m(t) = \expect X_m(1) - \expect \int_1^t X_m(s) \Bigl( \frac{3m^2}{4\beta s} + \frac{1}{s} R_m\bigl( \rphase(s) \bigr) \Bigr) \diff{s},
\]
and it follows that
\[
\varphi_m'(s) = - \frac{3m^2}{4\beta s} \varphi_m(s) - \frac{1}{s} \expect X_m(s) R_m\bigl( \rphase(s) \bigr).
\]
To simplify notation, set \(p \defeq \frac{3m^2}{4\beta}\). Multiplying by \(s^p\) and integrating, we get another expression for \(\varphi_m\):
\beq{eq.uniformphase.varphim}
\varphi_m(t) = t^{-p} \varphi_m(1) - \expect t^{-p} \int_1^t s^{p-1} X_m(s) R_m\bigl( \rphase(s) \bigr) \diff{s}.
\eeq
As \(p > 0\), it is clear that the first term vanishes as \(t\to\infty\). Then, recall that \(R_m(\xi)\) was defined in \eqref{eq.averagingSAi.defRk} as a linear combination of \(e^{\pm 2i\xi}\) and \(e^{\pm 4i\xi}\). Therefore, in order to prove that \(\varphi_m(t) \to 0\), it suffices to prove that
\[
\psi_{m,k}(t) \defeq \frac{1}{t^p} \expect \int_1^t s^{p-1} X_m(s) e^{ki\rphase(s)} \diff{s} \to 0
\quadtext{as}
t\to\infty
\]
for \(k = \pm 2\) and \(k = \pm 4\). 

Using again the expression \eqref{eq.averagingSAi.dekirphase} of the Itô differential of \(e^{ki\rphase}\), we can write
\beq{eq.uniformphase.psimk}
\psi_{m,k}(t)
    = \frac{i}{kt^p} \biggl( \expect \int_1^t s^{p-\nicefrac{3}{2}} X_m(s) \diff{\bigl( e^{ki\rphase} \bigr)}(s) + \expect \int_1^t s^{p-\nicefrac{5}{2}} X_m(s) e^{ki\rphase(s)} \Bigl( \frac{3k^2}{4\beta} + R_k\bigl( \rphase(s) \bigr) \Bigr) \diff{s} \biggr).
\eeq
Now, if \(Y_m(s) \defeq s^{p-\nicefrac{3}{2}} X_m(s)\), then
\[
\diff{Y_m}(s)
    = - \frac{3}{2} s^{p-\nicefrac{5}{2}} X_m(s) \diff{s} - s^{p-\nicefrac{5}{2}} X_m(s) R_m\bigl( \rphase(s) \bigr) \diff{s} - \frac{2mi}{\sqrt{\beta}} s^{p-2} X_m(s) \cos^2\rphase(s) \diff{\rBM}(s),
\]
which also shows that
\[
\diff{\crossvar[\big]{Y_m}{e^{ki\rphase}}}(s)
    = - \frac{4km}{\beta} s^{p-\nicefrac{5}{2}} X_m(s) e^{ki\rphase(s)} \cos^4\rphase(s) \diff{s}.
\]
From this, we can integrate the first term of \(\psi_{m,k}\) by parts:
\begin{multline*}
\expect \int_1^t s^{p-\nicefrac{3}{2}} X_m(s) \diff{\bigl( e^{ki\rphase} \bigr)}(s)
    = \expect s^{p-\nicefrac{3}{2}} X_m(s) e^{ki\rphase(s)} \biggr\rvert_1^t + \frac{3}{2} \expect \int_1^t s^{p-\nicefrac{5}{2}} X_m(s) e^{ki\rphase(s)} \diff{s} \\
    + \expect \int_1^t s^{p-\nicefrac{5}{2}} X_m(s) R_m\bigl( \rphase(s) \bigr) e^{ki\rphase(s)} \diff{s} + \frac{4km}{\beta} \expect \int_1^t s^{p-\nicefrac{5}{2}} X_m(s) e^{ki\rphase(s)} \cos^4\rphase(s) \diff{s}.
\end{multline*}
Finally, because \(X_m\), \(R_k(\rphase)\), \(R_m(\rphase)\) and \(e^{ki\rphase}\) are all bounded processes, substituting the result of this integration by parts in the expression~\eqref{eq.uniformphase.psimk} of \(\psi_{m,k}(t)\) shows that \(\abs{\psi_{m,k}(t)} \lesssim t^{\nicefrac{-3}{2}}\), and thus that \(\psi_{m,k}(t) \to 0\) as \(t\to\infty\), which completes the proof.
\end{proof}

\subsection{Convergence of the Weyl--Titchmarsh functions}

We now have all of the pieces that we need to complete the plan laid out at the beginning of the section to prove the convergence of the Weyl--Titchmarsh functions when \(\beta > 2\). The following result completes the proof of Theorem~\ref{thm.WTconvinlaw}.

\begin{theorem}
\label{thm.Airytosine.WTconvergence}
\setshift{E_n}
Let \(\sAiryWT\) and \(\sineWT\) be the Weyl--Titchmarsh functions of the shifted Airy system and of the sine system respectively, both being defined on the probability space from Lemma~\ref{lem.probspace}. When \(\beta > 2\), \(\sAiryWT \to \sineWT\) compactly on \(\UHP\) in probability as \(n\to\infty\), and in particular the corresponding spectral measures also converge in probability.
\end{theorem}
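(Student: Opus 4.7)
The plan is to apply Theorem~\ref{thm.TMtoWT.LC} to promote the transfer-matrix convergence of Theorem~\ref{thm.Airytosine.TMconvergence} to convergence of Weyl--Titchmarsh functions. When \(\beta > 2\), Proposition~\ref{prop.tracebound} together with \eqref{eq.tracebound.integrable} yields \(\tr\bigl(\timechange'(\sAirymat\circ\timechange)\bigr) \in L^1[0,1]\) with probability tending to \(1\), so the shifted Airy system restricted to \([0,1]\) is limit circle at \(1\). Consequently, \(\sAiryWT = M\bigl(T_{\sAirymat*}, w_{\Airy,\shift}\bigr)\) where \(w_{\Airy,\shift}(z) \defeq u_z\circ\timechange(1)\) is the value at \(1\) of a nontrivial integrable solution of the full shifted Airy canonical system, and \(\sineWT = M\bigl(T_{\sinemat*}, w_{\sine}\bigr)\) with \(w_{\sine} \defeq (\Re\HBM(\infty), 1)^\transpose\). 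Since \(M\) is continuous on \(M^{-1}\bigl(\Hol(\UHP,\clUHP)\bigr)\), it suffices to show that, after a nonvanishing \(z\)-holomorphic scalar normalization, \(w_{\Airy,\shift} \to w_{\sine}\) compactly on \(\UHP\) in probability.

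Extending the probability space of Lemma~\ref{lem.probspace} if necessary so that each \(\sBM\) is two-sided, we can arrange the identification \eqref{eq.solutionseqlaw} to hold pathwise by setting \(h_z(\shift+\cdot) = \SAi_{-z/(2\sqrt{\shift})}\), where \(\SAi_\lambda\) is built from the shifted two-sided Brownian motion. Then \eqref{eq.integrablesolutiondecomposition} becomes
\[
w_{\Airy,\shift}(z) = A_{\shift}(z)\begin{pmatrix} -e^{-\diffamps(1)}\cos\diffphases(1) \\ 1 \end{pmatrix} + B_{\shift}(z),
\]
where \(A_{\shift}(z) \defeq \shift^{\nicefrac{1}{4}}\wronskian{\SAi_{-z/(2\sqrt{\shift})}}{\unsAiryNeumann}(-1)\) and \(B_{\shift}(z) \defeq e^{-\ampN(1)}\bigl(\sin\phaseN(1)\,\SAi'_{-z/(2\sqrt{\shift})}(-1) + \cos\phaseN(1)\,\SAi_{-z/(2\sqrt{\shift})}(-1)\bigr)(1,0)^\transpose\), both holomorphic in \(z\). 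Dividing by \(A_{\shift}(z)\), the convergence of boundary conditions reduces to \emph{(i)} \(-e^{-\diffamps(1)}\cos\diffphases(1) \to \Re\HBM(\infty)\) in probability, and \emph{(ii)} \(B_{\shift}/A_{\shift} \to 0\) compactly on \(\UHP\) in probability.

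Statement (i) is obtained by extending Proposition~\ref{prop.ReHBM} from \(\diln t \in [0, 1 - \shift^{\nicefrac{-1}{2}+\alpha}]\) up to \(t = 1\) via Bernstein-type bounds on the martingale increments of both \(-e^{-\diffamps}\cos\diffphases\) and \(\Re\HBM\circ\slogtime\) over the remaining short interval, together with the a.s.\ convergence \(\Re\HBM(s) \to \Re\HBM(\infty)\) as \(s = \slogtime(1) = \tfrac{1}{2}\log\shift \to \infty\). For (ii), since \(\SAi_0\) and \(\unsAiryNeumann\) both solve \(\Airyop f = 0\), their Wronskian is constant and equals \(\SAi_0(-\shift)\); by analyticity of \(\lambda\mapsto\SAi_\lambda\) we obtain \(\wronskian{\SAi_{-z/(2\sqrt{\shift})}}{\unsAiryNeumann}(-1) = \SAi_0(-\shift) + O\bigl(|z|\shift^{\nicefrac{-1}{2}}\bigr)\) uniformly on compacts of \(\UHP\). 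Theorem~\ref{thm.asymptoticsSAi} then gives \(\shift^{\nicefrac{1}{4}}\SAi_0(-\shift) = C\,\shift^{\nicefrac{1}{2\beta}}e^{X(\shift)}\cos\rphase(\shift)\) with \(|X(\shift)|\) bounded by a polylogarithmic power of \(\log\shift\) on a high-probability event, while Proposition~\ref{prop.GBM} yields \(e^{-\ampN(1)} = \shift^{\nicefrac{-1}{2\beta}+o(1)}\) in probability. On the event \(\{|\cos\rphase(\shift)| \geq \eta\}\) one then obtains \(|A_{\shift}(z)| \gtrsim \eta\,\shift^{\nicefrac{1}{2\beta}+o(1)}\) and \(|B_{\shift}(z)| = O\bigl(\shift^{\nicefrac{-1}{2\beta}+o(1)}\bigr)\) uniformly on compacts of \(\UHP\), giving \(B_{\shift}/A_{\shift} = O\bigl(\eta^{-1}\shift^{-\nicefrac{1}{\beta}+o(1)}\bigr) \to 0\).

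The main obstacle is that by Proposition~\ref{prop.uniformphase}, \(\rphase(\shift) \bmod 2\pi\) converges in law to \(\uniform[0,2\pi)\), so \(|\cos\rphase(\shift)|\) is not bounded away from zero in probability, and \(A_{\shift}\) can be arbitrarily small. The remedy uses that we need only convergence in probability: for every \(\varepsilon > 0\) we choose \(\eta > 0\) so that \(\bprob{|\cos\rphase(\shift)| < \eta} < \varepsilon\) for all \(\shift\) large, and work on the complementary event, on which (i) and (ii) both hold. Assembling (i), (ii), and the joint convergence \(T_{\sAirymat*} \to T_{\sinemat*}\) from Theorem~\ref{thm.Airytosine.TMconvergence} via the continuity of \(M\) in Theorem~\ref{thm.TMtoWT.LC} yields \(\sAiryWT \to \sineWT\) compactly on \(\UHP\) in probability; convergence of the spectral measures then follows from the continuity of the Herglotz representation.
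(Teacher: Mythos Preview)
Your approach matches the paper's: reduce via Theorem~\ref{thm.TMtoWT.LC} to convergence of the right boundary data, use the decomposition \eqref{eq.integrablesolutiondecomposition}, and show separately that the vector $(-e^{-\diffamps(1)}\cos\diffphases(1),1)$ converges to $(\Re\HBM(\infty),1)$ (your (i), the paper's terms \eqref{eq.Airytosine.WTconvergence.firstentry.RealmostHBM}--\eqref{eq.Airytosine.WTconvergence.firstentry.ReHBM}) and that the remainder is negligible relative to the Wronskian prefactor (your (ii), the paper's \eqref{eq.Airytosine.WTconvergence.firstentry.error}). The paper normalizes $h_z$ so that the scaled Wronskian equals $1$ on the good event $\{ \shift^{1/4}|\wronskian{\SAi_{-z/(2\sqrt{\shift})}}{\unsAiryNeumann}(-1)|\geq 1\}$, which is equivalent to your division by $A_{\shift}$; and its handling of $\cos\rphase(\shift)$ via Skorokhod representation is equivalent to your Portmanteau-type choice of $\eta$.

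Two quantitative slips are worth fixing. First, Proposition~\ref{prop.GBM} only applies on $\diln t\leq 1-\shift^{-1/2+\alpha}$ with $\alpha>0$ and does not reach $t=1$; the bound $e^{-\ampN(1)}=\shift^{-1/(2\beta)+o(1)}$ comes instead from Proposition~\ref{prop.tracebound}, which is valid on all of $[0,\lasttime]$ (this is also what supplies the control of $e^{-2\ampN}$ on the residual interval $[\secondtolasttime,1]$ in your extension of (i)). Second, your Wronskian error $O(|z|\shift^{-1/2})$ implicitly treats $\unsAiryNeumann(-1),\unsAiryNeumann'(-1)$ as $O_p(1)$, whereas by \eqref{eq.polarcoordsDN} and Proposition~\ref{prop.tracebound} they are of order $\shift^{-1/4+1/(2\beta)+o(1)}$; the correct error in $A_{\shift}(z)$ is therefore $O_p(|z|\shift^{-1/2+1/(2\beta)})$. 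This is still dominated by the main term $\shift^{1/4}|\SAi_0(-\shift)|\asymp\shift^{1/(2\beta)+o(1)}$ on the good event (indeed for all $\beta>1$), so your conclusion that $|A_{\shift}|\to\infty$ and $B_{\shift}/A_{\shift}\to 0$ is unaffected.
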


\begin{proof}
Since the transfer matrices of these systems converge compactly in probability by Theorem~\ref{thm.Airytosine.TMconvergence}, by continuity of the map \(\TMLC[0,1] \times \mathscr{C}(\UHP, \mathbb{C}^2) \to \Hol(\UHP, \clUHP)\) that sends a transfer matrix and a right boundary condition to the corresponding Weyl--Titchmarsh function (i.e., Theorem~\ref{thm.TMtoWT.LC}), it suffices to prove that the right boundary conditions of the systems converge compactly in probability. To simplify notation in what follows, we drop the \(n\) subscript from the notation as we did in previous proofs, and we understand a limit as \(\shift\to\infty\) as being taken along the sequence \(\{\shift_n\}_{n\in\mathbb{N}}\).

Fix a compact \(K \subset \UHP\) and let \(z \in K\). As the time-changed shifted Airy system is limit point at \(1 + \nicefrac{1}{\sqrt{\shift}}\), its restriction to \([0,1]\) keeps the same Weyl--Titchmarsh function if it is given the \(z\)-dependent \enquote{boundary condition} \(u_z\circ\timechange(1)\) at \(1\), where \(u_z\circ\timechange\) is an integrable solution on \((0, 1+\nicefrac{1}{\sqrt{\shift}})\). By construction of the canonical system, such a solution has the form \(u_z = \sAirySLtoCS^{-1} \begin{smallpmatrix} \quasi{h_z} \\ h_z \end{smallpmatrix}\) where \(h_z\) is an integrable solution to \(\sAiryop h_z = zh_z\), and therefore as seen in \eqref{eq.integrablesolutiondecomposition},
\[
u_z\circ\timechange(1)
    = \shift^{\nicefrac{1}{4}} \wronskian{h_z}{\sAiryNeumann}(\shift-1) \begin{pmatrix} -e^{-\diffamps(1)} \cos\diffphases(1) \\ 1 \end{pmatrix} + e^{-\ampN(1)} \begin{pmatrix} \sin\phaseN(1) & \cos\phaseN(1) \\ 0 & 0 \end{pmatrix} \begin{pmatrix} h_z'(\shift-1) \\ h_z(\shift-1) \end{pmatrix}.
\]
Recall also that the right boundary condition of the sine system is \((\Re\HBM(\infty), 1)\), where \(\HBM\) is a hyperbolic Brownian motion with variance \(\nicefrac{4}{\beta}\) started at \(i\) in the upper half-plane, and driven by the complex Brownian motion \(W\) from Lemma~\ref{lem.probspace}.

Now, the integrable solution \(h_z\) is defined only up to a constant factor. To fix this ambiguity, on the event
\beq{eq.Airytosine.WTconvergence.defG}
\mathscr{G}_{\shift} \defeq \Bigl\{ \shift^{\nicefrac{1}{4}} \abs[\big]{\wronskian{\intsol{\nicefrac{z}{2\sqrt{\shift}}}}{\unsAiryNeumann}(-1)} \geq 1 \Bigr\},
\eeq
following \eqref{eq.solutionseqlaw}, set \(h_z\) so that for any \(t \geq -\shift\),
\[
\bigl( h_z(\shift+t), \sAiryNeumann(\shift+t) \bigr)
    \eqlaw \Bigl( \frac{1}{\shift^{\nicefrac{1}{4}} \wronskian{\intsol{\nicefrac{z}{2\sqrt{\shift}}}}{\unsAiryNeumann}(-1)} \intsol{\nicefrac{z}{2\sqrt{\shift}}}(t), \unsAiryNeumann(t) \Bigr)
\]
where \(\unsAiryNeumann\) solves \(\Airyop\unsAiryNeumann = 0\) with \(\unsAiryNeumann(-\shift) = 0\) and \(\unsAiryNeumann'(-\shift) = 1\). With this normalization for \(h_z\), the Wronskian \(\shift^{\nicefrac{1}{4}} \wronskian{h_z}{\sAiryNeumann}(-1) = 1\). On \(\mathscr{G}_{\shift}^\complement\), we cannot always choose \(h_z\) in this way since the prefactor of \(\intsol{\nicefrac{z}{2\sqrt{\shift}}}\) might blow up, but this will not make a difference since we will show that \(\pprob{\mathscr{G}_{\shift}} \to 1\) as \(\shift\to\infty\). To do so, we must control the Wronskian.

Because \(\wronskian{\intsol{0}}{\unsAiryNeumann} \equiv \intsol{0}(-\shift)\), then
\[
\wronskian{\intsol{\nicefrac{z}{2\sqrt{\shift}}}}{\unsAiryNeumann}(-1)
    = \intsol{0}(-\shift) + \bigl( \intsol{\nicefrac{z}{2\sqrt{\shift}}} - \intsol{0} \bigr)(-1) \unsAiryNeumann'(-1) - \bigl( \intsol{\nicefrac{z}{2\sqrt{\shift}}}' - \intsol{0}' \bigr)(-1) \unsAiryNeumann(-1).
\]
To control this, we can write \(\intsol{0}(-\shift) = \shift^{\nicefrac{-1}{4}} e^{\ramp(\shift)} \cos\rphase(\shift)\) where \(\ramp\) and \(\rphase\) are the polar coordinates for \(\intsol{0}\) one gets from Proposition~\ref{prop.polarcoordsSAi}. Then by Proposition~\ref{prop.ampboundSAi}, we know that for any \(\varepsilon, \delta > 0\), with probability at least \(1 - \nicefrac{\varepsilon}{6}\),
\[
\ramp(\shift)
    \geq \ramp(1) + \Bigl( \frac{1}{2\beta} - \delta \Bigr) \log\shift
\]
for \(\shift\) large enough. Since \(\ramp(1)\) is a well-defined random variable, it is possible to find \(\zeta > 0\) small enough so that \(e^{\ramp(1)} \geq \zeta\) with probability at least \(1 - \nicefrac{\varepsilon}{6}\), and therefore \(e^{\ramp(\shift)} \geq \zeta \shift^{\nicefrac{1}{2\beta}-\delta}\) with probability at least \(1 - \nicefrac{\varepsilon}{3}\). Then, by Proposition~\ref{prop.uniformphase} and the Skorokhod representation theorem, we know that there is a \(U \sim \uniform[0,2\pi)\) such that \(\rphase(\shift) - 2\pi \floor[\big]{\frac{\rphase(\shift)}{2\pi}} \to U\) in probability as \(\shift\to\infty\). Thus, \(\abs[\big]{\rphase(\shift) - 2\pi \floor[\big]{\frac{\rphase(\shift)}{2\pi}} - U} \leq \zeta\) with probability at least \(1 - \nicefrac{\varepsilon}{6}\) for any \(\shift\) large enough. Taking \(\zeta\) smaller if necessary, we can assume that \(\abs{\cos x} \geq 1 - \nicefrac{x^2}{2}\) for \(-\zeta \leq x \leq \zeta\), and as \(U\) is uniform, we may also assume that \(\abs{\cos U} \geq 2\zeta (1 - \nicefrac{\zeta^2}{2})^{-1}\) with probability at least \(1 - \nicefrac{\varepsilon}{6}\). This shows that
\[
\abs[\big]{\cos\rphase(\shift)}
    \geq \abs[\big]{\cos U} \Bigl( 1 - \frac{\zeta^2}{2} \Bigr) - \zeta
    \geq \zeta
\]
with probability at least \(1 - \nicefrac{\varepsilon}{3}\) for any \(\shift\) large enough. It follows that for any \(\shift\) large enough,
\[
\shift^{\nicefrac{1}{4}} \abs{\intsol{0}(-\shift)}
    = e^{\ramp(\shift)} \abs[\big]{\cos\rphase(\shift)}
    \geq \zeta^2 \shift^{\nicefrac{1}{2\beta}-\delta}
\]
with probability at least \(1 - \nicefrac{2\varepsilon}{3}\). Now, by analyticity of \(\lambda \mapsto \intsol{\lambda}(-1)\),
\beq{eq.Airytosine.WTconvergence.analyticity}
\max\Bigl\{ \abs[\big]{\intsol{\nicefrac{z}{2\sqrt{\shift}}}(-1) - \intsol{0}(-1)}, \abs[\big]{\intsol{\nicefrac{z}{2\sqrt{\shift}}}'(-1) - \intsol{0}'(-1)} \Bigr\} \leq \frac{X \diam K}{2\sqrt{\shift}}
\eeq
for some positive random variable \(X\). Hence, there is a \(C > 0\) such that this maximum is bounded by \(\nicefrac{C}{\sqrt{\shift}}\) with probability at least \(1 - \nicefrac{\varepsilon}{6}\), and this is uniform in \(z \in K\). Finally, since
\[
\abs{\unsAiryNeumann(-1)} \eqlaw \abs{\sAiryNeumann\circ\timechange(1)} \leq \shift^{\nicefrac{-1}{4}} e^{\ampN(1)}
\qquadtext{and}
\abs{\unsAiryNeumann'(-1)} \eqlaw \abs{\sAiryNeumann'\circ\timechange(1)} \leq \shift^{\nicefrac{-1}{4}} e^{\ampN(1)},
\]
it follows from Proposition~\ref{prop.tracebound} that if \(\delta' > 0\), there is a \(C' > 0\) such that both \(\abs{\unsAiryNeumann(-1)} \leq C' \shift^{\nicefrac{-1}{4}+\nicefrac{1}{2\beta}+\delta'}\) and \(\abs{\unsAiryNeumann'(-1)} \leq C' \shift^{\nicefrac{-1}{4}+\nicefrac{1}{2\beta}+\delta'}\) with probability at least \(1 - \nicefrac{\varepsilon}{6}\). Combining everything, we finally get that for any \(\shift\) large enough,
\beq{eq.Airytosine.WTconvergence.containsG}
\shift^{\nicefrac{1}{4}} \abs[\big]{\wronskian{\intsol{\nicefrac{z}{2\sqrt{\shift}}}}{\unsAiryNeumann}(-1)}
    \geq \zeta^2 \shift^{\nicefrac{1}{2\beta}-\delta} - 2CC'\shift^{\nicefrac{-1}{2}+\nicefrac{1}{2\beta}+\delta'}
\eeq
with probability at least \(1 - \varepsilon\). If we choose \(\delta < \nicefrac{1}{2\beta}\) and \(\delta' < \nicefrac{1}{2} - \nicefrac{1}{2\beta}\), then the right-hand side here diverges, so it is certainly at least \(1\) for \(\shift\) large enough. In that case, the event \eqref{eq.Airytosine.WTconvergence.containsG} contains \(\mathscr{G}_{\shift}\), so this shows that \(\pprob{\mathscr{G}_{\shift}} \to 1\) as \(\shift\to\infty\).

We now return to the comparison of the right boundary conditions. On \(\mathscr{G}_{\shift}\), the second entries of the right boundary conditions are equal (to \(1\)), and since \(\pprob{\mathscr{G}_{\shift}} \to 1\) it already follows that \(u_{z,2}\circ\timechange(1) \to 1\) in probability. To control the first entry, set \(\secondtolasttime \defeq (1 - \shift^{\nicefrac{-1}{2}+\alpha}) / \diln\) for \(\alpha \in (0, \nicefrac{1}{2})\). From the representation \eqref{eq.integrablesolutiondecomposition}, we see that on \(\mathscr{G}_{\shift}\),
\begin{subequations}
\label{eq.Airytosine.WTconvergence.firstentry}
\begin{align}
\abs[\big]{u_{z,1}\circ\timechange(1) - \Re\HBM(\infty)}
    & = \abs[\big]{-e^{-\diffamps(1)}\cos\diffphases(1) + e^{-\diffamps(\secondtolasttime)}\cos\diffphases(\secondtolasttime)} 
    \label{eq.Airytosine.WTconvergence.firstentry.RealmostHBM} \\
    &\hspace*{22mm} + \abs[\big]{-e^{-\diffamps(\secondtolasttime)}\cos\diffphases(\secondtolasttime) - \Re\HBM\circ\slogtime(\secondtolasttime)} 
    \label{eq.Airytosine.WTconvergence.firstentry.ReHBMconvergence} \\
    &\hspace*{22mm} + \abs[\big]{\Re\HBM\circ\slogtime(\secondtolasttime) - \Re\HBM(\infty)} \label{eq.Airytosine.WTconvergence.firstentry.ReHBM} \\
    &\hspace*{22mm} + e^{-\ampN(1)} \abs[\big]{\sin\phaseN(1)h_z'(\shift-1) + \cos\phaseN(1)h_z(\shift-1)}.
    \label{eq.Airytosine.WTconvergence.firstentry.error}
\end{align}
\end{subequations}
To conclude, we verify that if \(Y_{\shift}\) is any of these four terms, then for any \(\varepsilon, \zeta > 0\), \(\pprob[\big]{\mathscr{G}_{\shift} \cap \{\abs{Y_{\shift}} > \zeta\}} < \varepsilon\) for any \(\shift\) large enough. 

The second and third lines are obvious to control given what we already know. First, recall that by definition, \(\slogtime(\secondtolasttime) = \bigl( \frac{1}{2} - \alpha \bigr) \log\shift\), so of course \(\Re\HBM\circ\slogtime(\secondtolasttime) \to \Re\HBM(\infty)\) a.s., which is enough to control \eqref{eq.Airytosine.WTconvergence.firstentry.ReHBM}. Then, we know from Proposition~\ref{prop.ReHBM} that there is a \(C > 0\) such that for any \(\shift\) large enough,
\[
\bprob[\bigg]{\abs[\Big]{-e^{-\diffamps(\secondtolasttime)} \cos\diffphases(\secondtolasttime) - \Re\HBM\circ\slogtime(\secondtolasttime)} \geq \shift^{\nicefrac{-\alpha}{5}}} \leq \frac{C}{\log\shift},
\]
which is enough to control \eqref{eq.Airytosine.WTconvergence.firstentry.ReHBMconvergence}. 

Now, to control \eqref{eq.Airytosine.WTconvergence.firstentry.RealmostHBM}, recall from \eqref{eq.dRealmostHBM} that
\beq{eq.Airytosine.WTconvergence.dRealmostHBM}
\begin{aligned}
\diff{\bigl( -e^{-\diffamps} \cos\diffphases \bigr)}(t)
    & = \frac{2}{\sqrt{\beta}} e^{-2\ampN(t)} \cos 2\phaseN(t) \sqrt{\frac{2\diln}{1 - \diln t}} \diff{\sBM}(t) \\
    &\hspace*{22mm} - e^{-2\ampN(t)} \biggl( \Bigl( \frac{4}{\beta} - 1 \Bigr) \sin 2\phaseN(t) + \frac{4}{\beta} \sin 4\phaseN(t) \biggr) \frac{\diln}{1 - \diln t} \diff{t}.
\end{aligned}
\eeq
Given \(\varepsilon > 0\), we know from Proposition~\ref{prop.tracebound} that for some \(C, C' > 0\),
\[
\bprob[\bigg]{\forall t \in [0,1], \abs[\Big]{\ampN(t) + \frac{1}{\beta}\log(1-\diln t)} \leq C + C'\bigl( -\log(1-\diln t) \bigr)^{\nicefrac{3}{4}}} > 1 - \frac{\varepsilon}{2}.
\]
On that event, if \(\delta > 0\) then there is a (different) \(C > 0\) such that for any \(t \in [\secondtolasttime, 1]\),
\[
e^{-\ampN(t)}
    \leq C(1 - \diln t)^{\nicefrac{1}{\beta} - \delta}
    \leq C \shift^{-(\nicefrac{1}{2}-\alpha)(\nicefrac{1}{\beta}-\delta)},
\]
and thus for both \(k=2\) and \(k=4\),
\[
\abs[\Big]{\int_{\secondtolasttime}^1 e^{-2\ampN(t)} \sin k\phaseN(t) \frac{\diln}{1 - \diln t} \diff{t}}
    \leq C^2 \shift^{-2(\nicefrac{1}{2}-\alpha)(\nicefrac{1}{\beta}-\delta)} \log\frac{1-\diln\secondtolasttime}{1-\diln}
    = \alpha C^2 \shift^{-2(\nicefrac{1}{2}-\alpha)(\nicefrac{1}{\beta}-\delta)} \log\shift.
\]
Taking \(\delta < \nicefrac{1}{\beta}\), this shows that for any \(\zeta > 0\), the second line in \eqref{eq.Airytosine.WTconvergence.dRealmostHBM} is bounded by \(\nicefrac{\zeta}{2}\) with probability at least \(1 - \nicefrac{\varepsilon}{2}\) for any \(\shift\) large enough. In the same way, the bracket of the stochastic integral in \eqref{eq.Airytosine.WTconvergence.dRealmostHBM} is bounded by
\[
\frac{8\alpha C^4}{\beta} \shift^{-4(\nicefrac{1}{2}-\alpha)(\nicefrac{1}{\beta}-\delta)} \log\shift
\]
with probability at least \(1 - \nicefrac{\varepsilon}{4}\), so by Bernstein's inequality, for any \(\zeta > 0\),
\[
\bprob[\bigg]{\sup_{t\in [\secondtolasttime, 1]} \abs[\Big]{\frac{2}{\sqrt{\beta}} \int_{\secondtolasttime}^t e^{-2\ampN(s)} \cos 2\phaseN(s) \sqrt{\frac{2\diln}{1-\diln s}} \diff{\sBM}(s)} \geq \frac{\zeta}{2}}
    \leq \frac{\varepsilon}{4} + 2\exp\Bigl( - \frac{\beta \shift^{4(\nicefrac{1}{2}-\alpha)(\nicefrac{1}{\beta}-\delta)} \zeta^2}{64\alpha C^4\log\shift} \Bigr).
\]
Therefore, if \(\shift\) is also large enough so that the right-hand side here is bounded by \(\nicefrac{\varepsilon}{2}\), we get that
\[
\bprob[\bigg]{\abs[\Big]{-e^{-\diffamps(\secondtolasttime)}\cos\diffphases(\secondtolasttime) + e^{-\diffamps(1)}\cos\diffphases(1)} \geq \zeta} < \varepsilon,
\]
which is good enough control on \eqref{eq.Airytosine.WTconvergence.firstentry.RealmostHBM}. 

\newpage
Finally, in order to control \eqref{eq.Airytosine.WTconvergence.firstentry.error}, we use again Proposition~\ref{prop.tracebound}, which shows that for any \(\varepsilon, \delta > 0\) there is a \(C > 0\) such that \(e^{-\ampN(1)} < C\shift^{\nicefrac{-1}{2\beta}+\delta}\) with probability at least \(1 - \nicefrac{\varepsilon}{3}\), and this upper bound vanishes as \(\shift\to\infty\) with \(\delta < \nicefrac{1}{2\beta}\). Then, by definition of \(h_z\) on \(\mathscr{G}_{\shift}\),
\[
\abs{h_z(\shift-1)}
    \eqlaw \abs[\Big]{\frac{1}{\shift^{\nicefrac{1}{4}} \wronskian{\intsol{\nicefrac{z}{2\sqrt{\shift}}}}{\unsAiryNeumann}(-1)} \intsol{\nicefrac{z}{2\sqrt{\shift}}}(-1)}
    \leq \abs[\big]{\intsol{\nicefrac{z}{2\sqrt{\shift}}}(-1) - \intsol{0}(-1)} + \abs[\big]{\intsol{0}(-1)},
\]
and a similar bound in terms of \(\intsol{0}'\) and \(\intsol{\nicefrac{z}{2\sqrt{\shift}}}'\) holds for \(h_z'(\shift-1)\). By analyticity of \(\lambda \mapsto \intsol{\lambda}(-1)\), the first term is of order \(\nicefrac{1}{\sqrt{\shift}}\) as seen in \eqref{eq.Airytosine.WTconvergence.analyticity}, so since \(\intsol{0}(-1)\) is a well-defined random variable, it follows that there is an \(M > 0\) large enough so that
\[
\pprob[\big]{\mathscr{G}_{\shift} \cap \bigl\{ \abs{h_z(\shift-1)} > M \bigr\}} < \frac{\varepsilon}{3},
\]
and again the same works with \(h_z'(\shift-1)\). Therefore, given \(\zeta > 0\), for any \(\shift\) large enough so that \(2CM\shift^{\nicefrac{-1}{2\beta}+\delta} < \zeta\), we find that
\[
\pprob[\Big]{\mathscr{G}_{\shift} \cap \Bigl\{ e^{-\ampN(1)} \abs[\big]{\sin\phaseN(1)h_z'(\shift-1) + \cos\phaseN(1)h_z(\shift-1)} > \zeta \Bigr\}} < \varepsilon.
\]
This finishes to prove the convergence of the first entry of the boundary conditions, and thus concludes the proof.
\end{proof}

\appendix
\section{Concentration inequalities}
\label{apx.concentration}

The purpose of this appendix is to recall and prove some concentration inequalities for martingales that are used in the main text. The results we need are simple extensions of classical concentration inequalities (Freedman's and Azuma's) to martingales with subgaussian increments. Throughout, given a discrete-time martingale \(M = \{M_n\}_{n\in\mathbb{N}_0}\) with respect to a filtration \(\{\mathscr{F}_n\}_{n\in\mathbb{N}_0}\), we denote by \(\Delta M_n \defeq M_n - M_{n-1}\) the increments of \(M\). We define the \emph{bracket} or \emph{quadratic variation} process of \(M\) as
\[
\quadvar{M}_n \defeq \sum_{j=1}^n \bexpect[\big]{(\Delta M_j)^2 \given \mathscr{F}_{j-1}}.
\]
Moreover, given an adapted process \(V = \{V_n\}_{n\in\mathbb{N}}\), we say that \(M\) is \emph{\(V\)-conditionally subgaussian} if each increment \(\Delta M_n\) is \(V_n\)-subgaussian given \(\mathscr{F}_{n-1}\), that is, if for all \(n\in\mathbb{N}\) and all \(\lambda \in \mathbb{R}\),
\[
\bexpect[\big]{e^{\lambda \Delta M_n} \given \mathscr{F}_{n-1}} \leq e^{\nicefrac{\lambda^2 V_n}{2}} \quad\text{a.s.}
\]
We then define the \emph{subgaussian bracket} \(\subGbracket{M}\) of \(M\) as the smallest nonnegative, nondecreasing predictable process so that \(M\) is \(V\)-conditionally subgaussian for \(V_n \defeq \subGbracket{M}_n - \subGbracket{M}_{n-1}\), and we say that \(M\) is \emph{subgaussian} if \(\subGbracket{M} < \infty\).

The first concentration inequality that we need is an extension of Freedman's inequality \cite{freedman_tail_1975}. Our starting point is the following formulation of Freedman's inequality.

\begin{theorem}[Freedman's inequality]
\label{thm.Freedman}
Let \(M = \{M_n\}_{n\in\mathbb{N}_0}\) be a discrete-time martingale with respect to a filtration \(\{\mathscr{F}_n\}_{n\in\mathbb{N}_0}\) and with \(M_0 = 0\). Suppose that for some \(a > 0\), the increments of \(M\) satisfy \(\abs{\Delta M_n} \leq a\) for all \(n \in \mathbb{N}\). Then for any \(N \in \mathbb{N}\) and any \(x, y > 0\),
\[
\bprob[\bigg]{\sup_{1\leq n\leq N} \abs{M_n} > x, \quadvar{M}_N \leq y} \leq 2 \exp\biggl( - \frac{\log 2}{2} \Bigl( \frac{x}{a} \wedge \frac{x^2}{y} \Bigr) \biggr).
\]
\end{theorem}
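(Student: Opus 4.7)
The plan is to use the standard exponential supermartingale technique behind Bennett--Bernstein inequalities, and then to massage the resulting Bennett-type bound into the stated form via a short case analysis.

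First I would verify that for each fixed $\lambda > 0$, the process $Z_n(\lambda) \defeq \exp\bigl( \lambda M_n - \phi(\lambda) \quadvar{M}_n \bigr)$ is a nonnegative supermartingale starting from $1$, where $\phi(\lambda) \defeq (e^{\lambda a} - 1 - \lambda a)/a^2$. The key pointwise inequality $e^{\lambda x} \leq 1 + \lambda x + \phi(\lambda) x^2$ for $x \leq a$ follows from the monotonicity of $u \mapsto (e^u - 1 - u)/u^2$; combining it with $\abs{\Delta M_n} \leq a$, the martingale property, and the conditional bound $1 + t \leq e^t$ yields the supermartingale property for $Z_n(\lambda)$.

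Next, for fixed $\lambda > 0$ and $x, y > 0$, I would introduce the stopping time $\tau \defeq \inf\{n \geq 1 : M_n > x\} \wedge N$. On the event $E \defeq \{\sup_{n \leq N} M_n > x, \quadvar{M}_N \leq y\}$ one has $M_\tau > x$ and, because $\quadvar{M}$ is predictable and nondecreasing, $\quadvar{M}_\tau \leq \quadvar{M}_N \leq y$. Hence $Z_\tau(\lambda) \geq e^{\lambda x - \phi(\lambda) y}$ on $E$, and combining this with $\bexpect{Z_\tau(\lambda)} \leq 1$ from optional stopping and Markov's inequality gives the one-sided tail bound $\bprob{E} \leq \exp(-\lambda x + \phi(\lambda) y)$. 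Applying the same argument to $-M$ and taking a union bound handles the absolute value and is the source of the factor of $2$.

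Finally, optimizing over $\lambda$ at $\lambda^* \defeq \frac{1}{a} \log(1 + ax/y)$ reduces the task to the classical Bennett inequality $\bprob{E} \leq \exp\bigl( -\frac{y}{a^2} h(ax/y) \bigr)$, with $h(u) \defeq (1+u)\log(1+u) - u$. To obtain the stated form it suffices to check that $h(u) \geq \frac{\log 2}{2} \min(u, u^2)$ for every $u \geq 0$, which I would do by noting that $u \mapsto h(u)/u^2$ is decreasing on $[0,\infty)$ and $u \mapsto h(u)/u$ is increasing, with common value $h(1) = 2\log 2 - 1 > \frac{\log 2}{2}$ at $u = 1$; the two regimes $ax \leq y$ and $ax > y$ then produce the $x^2/y$ and $x/a$ parts of the minimum respectively. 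The only mildly technical point in the whole argument is this case analysis leading to the specific constant $\frac{\log 2}{2}$; the exponential supermartingale and optional stopping steps are entirely routine.
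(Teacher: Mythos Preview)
Your proposal is correct and follows essentially the same route as the paper: reduce to the Bennett-type bound $\exp\bigl(-\tfrac{y}{a^2}h(ax/y)\bigr)$ with $h(u)=(1+u)\log(1+u)-u$, then show $h(u)\geq \tfrac{\log 2}{2}\min(u,u^2)$ by a case split at $u=1$, and finish with a union bound over $\pm M$. The only cosmetic differences are that the paper cites Freedman's original 1975 statement for the Bennett bound rather than rederiving it via the exponential supermartingale, and establishes the inequality $h(u)\geq \tfrac{\log 2}{2}\min(u,u^2)$ by directly analyzing the sign of $-C\xi^p + h(\xi)$ for $p=1,2$ instead of invoking the monotonicity of $h(u)/u$ and $h(u)/u^2$ as you do.
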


\begin{proof}
Since \(\quadvar{M}\) is a non-decreasing process, the event that \(\sup_{1\leq n\leq N} M_n > x\) and \(\quadvar{M}_N \leq y\) is contained in the event that \(M_n > x\) and \(\quadvar{M}_n \leq y\) for some \(n\). Hence, it follows from the original statement of Freedman's inequality~\cite[Theorem~1.6]{freedman_tail_1975} that
\beq{eq.Freedman.nonopt}
\bprob[\bigg]{\sup_{1\leq n\leq N} M_n > x, \quadvar{M}_N \leq y}
    \leq e^{\nicefrac{x}{a}} \biggl( \frac{\nicefrac{y}{a^2}}{\nicefrac{x}{a} + \nicefrac{y}{a^2}} \biggr)^{\nicefrac{x}{a}+\nicefrac{y}{a^2}}
    = \exp\biggl( \frac{y}{a^2} \Bigl( - \Bigl( 1 + \frac{ax}{y} \Bigr) \log\Bigl( 1 + \frac{ax}{y} \Bigr) + \frac{ax}{y} \Bigr) \biggr).
\eeq
To get a simpler expression for the exponent, let
\[
b_p(\xi) \defeq - C \xi^p + (1 + \xi)\log(1 + \xi) - \xi
\]
for \(p = 1\) or \(p = 2\), with \(C > 0\) to be determined. Note that \(b_p'(\xi) = -Cp\xi^{p-1} + \log(1 + \xi)\), so in particular for \(\xi \geq 1\), \(b_1'(\xi) = - C + \log(1 + \xi) \geq - C + \log 2\), and \(b_1\) is increasing on \([1,\infty)\) if \(C < \log 2\). Because \(b_1(1) = - C + 2\log 2 - 1\), this guarantees that \(b_1 \geq 0\) on \([1,\infty)\) if \(C < 2 \log 2 - 1\). Then, using the concavity of \(\log(1 + \cdot)\), we see that if \(\xi \in [0,1]\), then \(b_2'(\xi) \geq -2C\xi + \xi\log 2 \geq 0\) if \(C \leq \frac{1}{2}\log 2\), so \(b_2 \geq 0\) on \([0,1]\) under the same condition, since \(b_2(0) = 0\). 

Combining the condition for \(b_2\) on \([0,1]\) with that for \(b_1\) on \([1,\infty)\), we get that if \(C \leq \frac{1}{2}\log 2 < 2\log 2 - 1\), then \(- (1 + \xi)\log(1 + \xi) + \xi \leq - C(\xi \wedge \xi^2)\) for any \(\xi \geq 0\). Taking the best constant \(C = \frac{1}{2}\log 2\), this shows that the exponent of the tail bound~\eqref{eq.Freedman.nonopt} is bounded by
\[
- \frac{\log 2}{2} \frac{y}{a^2} \Bigl( \frac{ax}{y} \wedge \frac{a^2x^2}{y^2} \Bigr)
    = - \frac{\log 2}{2} \Bigl( \frac{x}{a} \wedge \frac{x^2}{y} \Bigr). 
\]
The theorem now follows by applying the above to both \(M\) and \(-M\), and combining the results with a union bound.
\end{proof}

We now prove the following extension of Freedman's inequality, which allows to weaken the condition that increments are bounded at the price of increasing the tail bound.

\begin{corollary}
\label{cor.FreedmanSubG}
Let \(M = \{M_n\}_{n\in\mathbb{N}_0}\) be a discrete-time martingale with respect to a filtration \(\{\mathscr{F}_n\}_{n\in\mathbb{N}_0}\) and with \(M_0 = 0\). Suppose there are \(K, V > 0\) such that for each \(n\), the increment \(\Delta M_n\) satisfies \(\bexpect[\big]{(\Delta M_n)^2 \given \mathscr{F}_{n-1}} \leq V\) and is \(K\)-subgaussian both conditionally on \(\mathscr{F}_{n-1}\) and unconditionally. Then for any \(N \in \mathbb{N}\) and any \(x > 0\),
\[
\bprob[\bigg]{\sup_{1\leq n\leq N} \abs{M_n} > x}
    \leq \bigl( 2 + N(N+1) \bigr) \exp\biggl( - \frac{1}{2} \min\Bigl\{ \Bigl( \frac{x\log 2}{6\sqrt{K}} \Bigr)^{\nicefrac{2}{3}}, \frac{x^2\log 2}{9NV} \Bigr\} \biggr) + 2\exp\biggl( - \frac{x\log 2}{3N\sqrt{2V}} \exp\biggl( \frac{1}{4} \Bigl( \frac{x\log 2}{6\sqrt{K}} \Bigr)^{\nicefrac{2}{3}} \biggr) \biggr).
\]
\end{corollary}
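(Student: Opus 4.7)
The plan is to reduce to Theorem~\ref{thm.Freedman} by a truncation and recentering argument. Fix a level \(a > 0\) to be chosen later, split each increment into the part bounded by \(a\) in absolute value and the tail, and recenter conditionally by setting
\[
\Delta \tilde{M}_n \defeq \Delta M_n \charf{\abs{\Delta M_n} \leq a} - \bexpect[\big]{\Delta M_n \charf{\abs{\Delta M_n} \leq a} \given \mathscr{F}_{n-1}}
\]
and \(\tilde{M}_n \defeq \sum_{j=1}^n \Delta \tilde{M}_j\). Then \(\tilde{M}\) is a martingale with \(\abs{\Delta \tilde{M}_n} \leq 2a\), and its bracket satisfies \(\quadvar{\tilde{M}}_N \leq NV\) because the conditional second moment of each truncated increment is controlled by \(V\). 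The remainder \(R \defeq M - \tilde{M}\) is also a martingale, and using \(\bexpect{\Delta M_n \given \mathscr{F}_{n-1}} = 0\) to rewrite the recentering as \(-\bexpect{\Delta M_n \charf{\abs{\Delta M_n} > a} \given \mathscr{F}_{n-1}}\), one sees that the stochastic part of the increments of \(R\) vanishes except when the raw increment exceeds \(a\), leaving only a predictable compensator.

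Next, I would split \(\{\sup_{n\leq N}\abs{M_n} > x\} \subseteq \{\sup\abs{\tilde{M}_n} > x/3\} \cup \{\sup\abs{R_n} > 2x/3\}\). For the first event, Theorem~\ref{thm.Freedman} applied to \(\tilde{M}\) with maximum increment \(2a\) and \(\quadvar{\tilde{M}}_N \leq NV\) produces a bound of \(2\exp\bigl(-\frac{\log 2}{2}\min(x/(6a), x^2/(9NV))\bigr)\). The choice \(a \defeq (xK\log 2 / 6)^{\nicefrac{1}{3}}\) makes \(\frac{\log 2}{2} \cdot \frac{x}{6a} = \frac{1}{2}(x\log 2/(6\sqrt{K}))^{\nicefrac{2}{3}}\) and simultaneously \(\frac{a^2}{2K} = \frac{1}{2}(x\log 2/(6\sqrt{K}))^{\nicefrac{2}{3}}\), which is exactly what is needed to match the exponent in the corollary's first summand. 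For \(R\), I would split further into \(\{\exists n \leq N : \abs{\Delta M_n} > a\}\) and its intersection with the complement: by unconditional subgaussianity and a union bound over \(n\), the first has probability at most \(2Ne^{-a^2/(2K)} = 2N \exp\bigl(-\frac{1}{2}(x\log 2/(6\sqrt{K}))^{\nicefrac{2}{3}}\bigr)\).

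On the remaining event, no raw increment exceeds \(a\), so only the compensator \(-\sum_{j\leq n}\bexpect{\Delta M_j \charf{\abs{\Delta M_j} > a} \given \mathscr{F}_{j-1}}\) contributes to \(R_n\). By conditional Cauchy--Schwarz together with conditional subgaussianity, each term is bounded deterministically by \(\sqrt{V}\sqrt{2e^{-a^2/(2K)}} = \sqrt{2V}\,e^{-a^2/(4K)}\), so \(\abs{R_n} \leq N\sqrt{2V}\,e^{-a^2/(4K)}\). This event is therefore empty whenever \(N\sqrt{2V}\,e^{-a^2/(4K)} \leq 2x/3\) and otherwise has probability at most~\(1\); in either case the resulting indicator is dominated by \(2\exp\bigl(-\frac{x\log 2}{3N\sqrt{2V}}\exp(\frac{1}{4}(x\log 2/(6\sqrt{K}))^{\nicefrac{2}{3}})\bigr)\), which is \(\geq 1\) precisely when the recentering bound fails. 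Combining the three contributions yields the stated inequality. The main obstacle is not conceptual but one of book-keeping: tracking the constants through the \(x/3\) splittings and the various union bounds to recover the precise coefficient \(2 + N(N+1)\), which is somewhat looser than the \(2 + 2N\) that the minimal analysis above produces and reflects additional slack absorbed to write the bound uniformly in the relevant parameter ranges.
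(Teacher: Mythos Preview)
Your proposal is correct and follows essentially the same truncation-and-recentering strategy as the paper: apply Freedman's inequality to the bounded, recentered part, control the compensator via conditional Cauchy--Schwarz and the conditional subgaussian tail, and choose \(a = (xK\log 2/6)^{1/3}\) to balance exponents. The only substantive difference is that you bound the stochastic tail contribution via the single event \(\{\exists j\leq N:\abs{\Delta M_j}>a\}\) (yielding \(2N\)), whereas the paper uses a looser double-sum union bound \(\sum_{n=1}^N\sum_{j=1}^n \bprob{\abs{\Delta M_j}>a}\) (yielding \(N(N+1)\)); your observation that this produces \(2+2N\) rather than the stated \(2+N(N+1)\) is exactly right, and your bound is simply sharper.
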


\begin{proof}
Take \(a > 0\) to be determined later. Notice that \(M\) can be decomposed as
\[
M_n = \sum_{j=1}^n \Bigl( \Delta M_j - \bexpect{\Delta M_j \given \mathscr{F}_{j-1}} \Bigr)
    = M_n^\leq + M_n^>
\]
where \(M_n^\leq \defeq \sum_{j=1}^n \Delta M_j^\leq\) and \(M_n^> \defeq \sum_{j=1}^n \Delta M_j^>\) are martingales with increments
\[
\Delta M_j^\leq \defeq \Delta M_j \charf{\{\abs{\Delta M_j} \leq a\}} - \bexpect[\big]{\Delta M_j \charf{\{\abs{\Delta M_j} \leq a\}} \given \mathscr{F}_{j-1}}
\quadtext{and}
\Delta M_j^> \defeq \Delta M_j \charf{\{\abs{\Delta M_j} > a\}} - \bexpect[\big]{\Delta M_j \charf{\{\abs{\Delta M_j} > a\}} \given \mathscr{F}_{j-1}}.
\]

Now, since \(M^\leq\) has bounded increments and has \(\quadvar{M}_N \leq NV\), Freedman's inequality (Theorem~\ref{thm.Freedman}) shows that
\beq{eq.FreedmanSubG.Freedman}
\bprob[\bigg]{\sup_{1\leq n\leq N} \abs{M^\leq_n} > x}
    \leq 2 \exp\biggl( - \frac{\log 2}{2} \Bigl( \frac{x}{2a} \wedge \frac{x^2}{NV} \Bigr) \biggr).
\eeq
Thus, to control the supremum of \(M\), we only need to to control the supremum of \(M^>\). Note that if \(0 < x \leq a\), then
\[
\bprob[\big]{\abs{\Delta M_j} \charf{\{\abs{\Delta M_j}>a\}} > x}
    = \bprob[\big]{\abs{\Delta M_j} > a},
\]
because for \(\abs{\Delta M_j} \charf{\{\abs{\Delta M_j}>a\}} > x\), the indicator must not vanish, so it must be that \(\abs{\Delta M_j} > a\), in which case it is also true that \(\abs{\Delta M_j} > x\). If \(x > a\), then when \(\abs{\Delta M_j} \charf{\{\abs{\Delta M_j}>a\}} > x\) it must also be true that \(\abs{\Delta M_j} > a\), and we conclude that for any \(x > 0\),
\[
\bprob[\big]{\abs{\Delta M_j} \charf{\{\abs{\Delta M_j}>a\}} > x}
    \leq \bprob[\big]{\abs{\Delta M_j} > a}.
\]
Hence, for any \(x > 0\),
\[
\bprob[\bigg]{\sup_{1\leq n\leq N} \abs[\Big]{\sum_{j=1}^n \Delta M_j \charf{\{\abs{\Delta M_j}>a\}}} > x}
    \leq \sum_{n=1}^N \sum_{j=1}^n \bprob[\bigg]{\abs[\big]{\Delta M_j \charf{\{\abs{\Delta M_j}>a\}}} > \frac{x}{n}}
    \leq \sum_{n=1}^N \sum_{j=1}^n \bprob[\big]{\abs{\Delta M_j} > a},
\]
and as increments are \(K\)-subgaussian,
\beq{eq.FreedmanSubG.tails}
\bprob[\bigg]{\sup_{1\leq n\leq N} \abs[\Big]{\sum_{j=1}^n \Delta M_j \charf{\{\abs{\Delta M_j}>a\}}} > x}
    \leq N(N+1) \exp\Bigl( - \frac{a^2}{2K} \Bigr).
\eeq
It remains to control the mean terms of the increments of \(M^>\). By Cauchy--Schwarz, for each \(j\),
\[
\abs[\big]{\bexpect[\big]{\Delta M_j \charf{\{\abs{\Delta M_j}>a\}} \given \mathscr{F}_{j-1}}}
    \leq \sqrt{\bexpect[\big]{(\Delta M_j)^2 \given \mathscr{F}_{j-1}} \bprob[\big]{\abs{\Delta M_j} > a \given \mathscr{F}_{j-1}}},
\]
and as each increment \(\Delta M_j\) is also \(K\)-subgaussian conditionally on \(\mathscr{F}_{j-1}\), it follows that
\[
\abs[\bigg]{\sum_{j=1}^n \bexpect[\big]{\Delta M_j \charf{\{\abs{\Delta M_j}>a\}} \given \mathscr{F}_{j-1}}}
    \leq n\sqrt{2V} \exp\Bigl( - \frac{a^2}{4K} \Bigr).
\]
With the subgaussian tail bound of bounded random variables, this also gives for any \(x > 0\)
\beq{eq.FreedmanSubG.means}
\bprob[\bigg]{\sup_{1\leq n\leq N} \abs[\Big]{\sum_{j=1}^n \bexpect[\big]{\Delta M_j \charf{\{\abs{\Delta M_j}>a\}} \given \mathscr{F}_{j-1}}} > x}
    \leq 2\exp\biggl( - \frac{x\log 2}{N\sqrt{2V}} \exp\Bigl( \frac{a^2}{4K} \Bigr) \biggr).
\eeq

To conclude, we write
\[
\sup_{1\leq n\leq N} \abs{M_n}
    \leq \sup_{1\leq n\leq N} \abs{M^\leq_n} + \sup_{1\leq n\leq N} \abs[\Big]{\sum_{j=1}^n \Delta M_j \charf{\{\abs{\Delta M_j}>a\}}} + \sup_{1\leq n\leq N} \abs[\Big]{\sum_{j=1}^n \bexpect[\big]{\Delta M_j \charf{\{\abs{\Delta M_j}>a\}} \given \mathscr{F}_{j-1}}}.
\]
Then the tail bounds we got in~\eqref{eq.FreedmanSubG.Freedman}, \eqref{eq.FreedmanSubG.tails} and \eqref{eq.FreedmanSubG.means} give
\[
\bprob[\bigg]{\sup_{1\leq n\leq N} \abs{M_n} > x}
    \leq 2\exp\biggl( - \frac{\log 2}{2} \Bigl( \frac{x}{6a} \wedge \frac{x^2}{9NV} \Bigr) \biggr)
    + N(N+1) \exp\Bigl( - \frac{a^2}{2K} \Bigr)
    + 2\exp\biggl( - \frac{x\log 2}{3N\sqrt{2V}} \exp\Bigl( \frac{a^2}{4K} \Bigr) \biggr).
\]
We now choose \(a\) in such a way that the exponents in the first two tail bounds match, that is, so that
\[
\frac{x \log 2}{12a} = \frac{a^2}{2K}
\iff
a^3 = \frac{Kx \log 2}{6},
\qquadtext{which gives}
\frac{x\log 2}{12a} = \frac{a^2}{2K} = \frac{1}{2} \Bigl( \frac{x\log 2}{6\sqrt{K}} \Bigr)^{\nicefrac{2}{3}}.
\]
Using this value for \(a\) in the tail bounds yields the desired inequality.
\end{proof}

Finally, we prove a version of Azuma's inequality for martingales with subgaussian increments.

\begin{theorem}[Azuma's inequality]
\label{thm.Azuma}
Let \(M = \{M_n\}_{n\in\mathbb{N_0}}\) be a martingale with respect to a filtration \(\{\mathscr{F}_n\}_{n\in\mathbb{N}_0}\) and with \(M_0 = 0\). If \(M\) is subgaussian, then for any \(N\in\mathbb{N}\) and any \(x,y > 0\),
\[
\bprob[\bigg]{\sup_{1\leq n\leq N} \abs{M_n} > x, \subGbracket{M}_N \leq y} \leq 2\exp\Bigl( - \frac{x^2}{2y} \Bigr).
\]
\end{theorem}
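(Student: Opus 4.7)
The plan is to run the standard exponential-martingale proof of Azuma's inequality, with the only twist being that the deterministic bounds on increments are replaced by the subgaussian brackets from the hypothesis.

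First I would fix \(\lambda > 0\) and consider the process \(L_n \defeq \exp\bigl( \lambda M_n - \tfrac{\lambda^2}{2} \subGbracket{M}_n \bigr)\). Since \(\subGbracket{M}\) is predictable and nondecreasing by definition, and since \(\subGbracket{M}_n - \subGbracket{M}_{n-1}\) is exactly the subgaussian constant controlling \(\Delta M_n\) conditionally on \(\mathscr{F}_{n-1}\), the conditional expectation computation
\[
\bexpect[\big]{L_n \given \mathscr{F}_{n-1}}
    = L_{n-1} \exp\Bigl( - \tfrac{\lambda^2}{2} (\subGbracket{M}_n - \subGbracket{M}_{n-1}) \Bigr) \bexpect[\big]{e^{\lambda \Delta M_n} \given \mathscr{F}_{n-1}}
    \leq L_{n-1}
\]
shows that \(L\) is a nonnegative supermartingale with \(L_0 = 1\).

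Next I would introduce the stopping time \(\tau \defeq \inf\{n \geq 1 : M_n > x\} \wedge N\), which is bounded. Because \(\subGbracket{M}\) is nondecreasing, the event
\[
A \defeq \Bigl\{ \sup_{1\leq n\leq N} M_n > x,\ \subGbracket{M}_N \leq y \Bigr\}
\]
is contained in \(\bigl\{ M_\tau > x,\ \subGbracket{M}_\tau \leq y \bigr\}\), on which \(L_\tau \geq \exp\bigl( \lambda x - \tfrac{\lambda^2}{2} y \bigr)\). Applying the optional stopping theorem to the bounded stopping time \(\tau\) yields \(\bexpect L_\tau \leq 1\), and Markov's inequality then gives
\[
\bprob{A} \leq \bprob[\big]{L_\tau \geq e^{\lambda x - \lambda^2 y/2}} \leq \exp\Bigl( -\lambda x + \tfrac{\lambda^2}{2} y \Bigr).
\]
Optimizing over \(\lambda > 0\) at \(\lambda = x/y\) produces the bound \(\exp\bigl( - \tfrac{x^2}{2y} \bigr)\) for the one-sided supremum of \(M\).

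Finally, to convert the one-sided bound into the two-sided bound in the statement, I would apply the same argument to the martingale \(-M\), which has the same subgaussian bracket as \(M\) (by symmetry of the subgaussian condition in \(\lambda\)), and combine with a union bound to pick up the factor of \(2\). No step here is a genuine obstacle; the only thing to be a little careful about is making sure the predictability and monotonicity of \(\subGbracket{M}\) are used correctly so that the supermartingale property and the containment \(A \subseteq \{\subGbracket{M}_\tau \leq y\}\) both hold.
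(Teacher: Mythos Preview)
Your proposal is correct and follows essentially the same approach as the paper: both build the exponential supermartingale \(\exp\bigl(\lambda M_n - \tfrac{\lambda^2}{2}\subGbracket{M}_n\bigr)\), apply optional stopping, optimize at \(\lambda = x/y\), and finish with a union bound after applying the argument to \(-M\). The only cosmetic difference is the choice of stopping time (you take \(\tau = \inf\{n : M_n > x\} \wedge N\) whereas the paper takes \(T = \inf\{n : M_n > x \text{ or } \subGbracket{M}_n > y\}\)), but both versions work equally well.
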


\begin{proof}
For \(\lambda > 0\), let
\[
\mathcal{E}_n \defeq \exp\Bigl( \lambda M_n - \frac{\lambda^2}{2} \subGbracket{M}_n \Bigr).
\]
Then \(\{\mathcal{E}_n\}_{n\in\mathbb{N}_0}\) is a supermartingale by definition of the subgaussian bracket of \(M\), and it satisfies \(\mathcal{E}_0 = 1\). Hence, if \(T \defeq \inf\{n\in\mathbb{N} : M_n > x \text{ or } \subGbracket{M}_n > y\}\), the optional stopping theorem shows that \(\expect\mathcal{E}_{T\wedge N} \leq 1\) for any \(N \in \mathbb{N}\).

Now, let \(A_N \defeq \{T\leq N\} \cap \{\subGbracket{M}_N \leq y\}\). Note that for \(A_N\) to occur, it must be that \(M_T > x\), because this is the only way that \(T \leq N\) can occur at the same time as \(\subGbracket{M}_N \leq y\). It follows that
\[
1 \geq \expect\mathcal{E}_{T\wedge N}
    \geq \bexpect{\mathcal{E}_{T\wedge N} \given A_N} \pprob{A_N}
    \geq \exp\Bigl( \lambda x - \frac{\lambda^2}{2} y \Bigr) \pprob{A_N},
\]
which gives an upper bound on \(\pprob{A_N}\). Optimizing over \(\lambda\), we see that the best upper bound is achieved with \(\lambda = \nicefrac{x}{y}\). Unraveling the definition of \(A_N\), we get that
\[
\bprob[\bigg]{\sup_{1\leq n\leq N} M_n > x, \subGbracket{M}_N \leq y}
    \leq \exp\Bigl( - \frac{x^2}{2y} \Bigr),
\]
which is half of the desired bound. The full result follows from applying the above to \(-M\) and combining the results for \(M\) and \(-M\) with a union bound.
\end{proof}

\section{Resolvents of canonical systems}
\label{apx.resolvents}

In this section, we give a few results about resolvents of canonical systems. In the case where a canonical system is derived from a Sturm--Liouville operator, we also describe the relationship between the two associated resolvents.

\subsection{Integral representation of a canonical system's resolvent}

Let \(H\) be the coefficient matrix of a canonical system on an interval \((a,b)\). Consider a self-adjoint realization \(\mathcal{S}_H\) of the system's maximal relation \(\mathcal{T}_H\), for example one of the relations described in Theorem~\ref{thm.CS.selfadjointrealizations}. The \emph{resolvent} of \(\mathcal{S}_H\) at a \(z \in \mathbb{C}\) outside of its spectrum is the relation \((\mathcal{S}_H - z)^{-1}\). It can be shown that this relation is in fact always a bounded normal operator on \(L_H^2(a,b)\), although it can have a kernel (see \cite[Theorem~3.1]{remling_spectral_2018}). This resolvent can be described as an integral operator.

\begin{theorem}[Theorem~3.3 of \cite{remling_spectral_2018}]
\label{thm.CS.resolvent}
Let \(\mathcal{S}_H\) be as above and let \(z \in \mathbb{C} \setminus \mathbb{R}\). For \(t \in \{a, b\}\), let \(u_t\) be a non-trivial solution of \(Ju' = -zHu\) that satisfies the boundary or integrability condition near \(t\), and normalize these solutions so that \(u_a^\transpose J u_b \equiv -1\). Then for \(v \in L_H^2(a,b)\),
\[
\bigl( (\mathcal{S}_H - z)^{-1} v \bigr)(t)
    = \int_a^b G(t,s,z) H(s) v(s) \diff{s}
\]
where
\[
G(t,s,z) \defeq u_b(t) u_a^\transpose(s) \charf{(a,t)}(s) + u_a(t) u_b^\transpose(s) \charf{[t,b)}(s).
\]
\end{theorem}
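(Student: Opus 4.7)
The plan is to verify directly that the right-hand side defines an inverse to $\mathcal{S}_H - z$. Let $w(t) \defeq u_a(t) \int_t^b u_b^\transpose(s) H(s) v(s) \diff{s} + u_b(t) \int_a^t u_a^\transpose(s) H(s) v(s) \diff{s}$, so that $w(t) = \int_a^b G(t,s,z) H(s) v(s) \diff{s}$. The first step is to show that $w$ has a locally absolutely continuous representative (by the assumption that $v \in L_H^2(a,b)$ and since $u_a, u_b$ are continuous, the integrals are $\AC\loc$) and compute $Jw'$. Differentiating and using $Ju_a' = -zHu_a$ and $Ju_b' = -zHu_b$,
\[
Jw'(t) = -zH(t) w(t) + J\bigl(u_b(t) u_a^\transpose(t) - u_a(t) u_b^\transpose(t)\bigr) H(t) v(t),
\]
since the boundary terms coming from differentiating the integrals combine exactly into $J(u_b u_a^\transpose - u_a u_b^\transpose) Hv$. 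A short computation using the normalization $u_a^\transpose J u_b \equiv -1$ shows that $u_b u_a^\transpose - u_a u_b^\transpose = J$ as a constant matrix, so $J(u_b u_a^\transpose - u_a u_b^\transpose) = J^2 = -I$. This yields $Jw' = -H(zw + v)$, which is exactly the statement that $(w, zw+v) \in \mathcal{T}_H$, so formally $(\mathcal{S}_H - z) w = v$ provided $w$ lies in the domain of the chosen self-adjoint realization.

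Next I would verify that $w$ satisfies the boundary conditions defining $\mathcal{S}_H$. Near $a$, the second integral vanishes and $w(t) \to u_a(a^+) \cdot \int_a^b u_b^\transpose H v$ (interpreted appropriately when $a$ is limit point, by integrability of $Hv$ against $u_b$ obtained from Cauchy--Schwarz and $u_b \in L_H^2$ near $a$). Therefore $w$ inherits the boundary or integrability condition at $a$ satisfied by $u_a$; symmetrically at $b$. This shows $(w, zw+v) \in \mathcal{S}_H$.

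The main obstacle is then showing $w \in L_H^2(a,b)$. The plan is to split $w$ according to the two summands and estimate each using $u_a, u_b \in L_H^2$ near their respective endpoints together with Cauchy--Schwarz against $v \in L_H^2$; in the limit-point case this requires that the relevant solution decays sufficiently, which is exactly the content of the endpoint condition imposed on $u_a$ or $u_b$. Once this is in hand, since $z \in \mathbb{C} \setminus \mathbb{R} \subset \rho(\mathcal{S}_H)$ by self-adjointness, the equation $(\mathcal{S}_H - z) w = v$ has the unique solution $w = (\mathcal{S}_H - z)^{-1} v$, proving the integral representation.
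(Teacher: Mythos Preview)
The paper does not actually prove this statement; it is quoted verbatim as Theorem~3.3 of Remling's book and used as a black box in Appendix~\ref{apx.resolvents}. So there is no proof in the paper to compare against.

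Your direct-verification approach is the standard one and the core computation is correct: the identity $u_b u_a^\transpose - u_a u_b^\transpose = J$ does follow from the normalization $u_a^\transpose J u_b = -1$ (both sides are skew-symmetric $2\times 2$ matrices, hence multiples of $J$, and the off-diagonal entry matches), so $Jw' = -H(zw+v)$ holds a.e.\ as you claim. The one place where your plan is thin is the $L_H^2$ membership of $w$. Your Cauchy--Schwarz idea runs into the issue that near $b$ you are multiplying $u_a(t)$, which need \emph{not} lie in $L_H^2$ near $b$ in the limit-point case, by an integral that merely tends to zero; controlling that product uniformly is exactly where the work lies. A cleaner route is to first take $v$ with compact support in $(a,b)$: then near $a$ the function $w$ is a constant multiple of $u_a$ and near $b$ a constant multiple of $u_b$, so $w \in L_H^2$ and the boundary/integrability conditions are immediate. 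This already identifies the integral operator with $(\mathcal{S}_H - z)^{-1}$ on a dense subspace, and you extend to all of $L_H^2$ by the a priori boundedness of the resolvent (self-adjointness, $z \notin \mathbb{R}$).
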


\begin{remark}
It is easy to see that such solutions \(u_a\) and \(u_b\) always exist, and are unique up to a multiplicative constant (see \cite[Lemma~3.2]{remling_spectral_2018}). Moreover, while the normalization condition \(u_a^\transpose J u_b \equiv -1\) might look very strong at first glance, it only concerns the ambiguity in these multiplicative constants. Indeed, the quantity \(u_a^\transpose J u_b\), which can be seen as the Wronskian of \(u_a\) and \(u_b\), is already constant:
\[
(u_a^\transpose J u_b)'
    = - (Ju_a')^\transpose u_b + u_a^\transpose J u_b'
    = z (H u_a)^\transpose u_b - z u_a^\transpose H u_b
    = 0
\]
since \(H\) is symmetric.
\end{remark}

When the system satisfies Hypothesis~\ref{hyp.LCata} and \(z \in \UHP\), the integral kernel \(G\) of the resolvent can be expressed in terms of the system's transfer matrix \(T_H\) and of its Weyl--Titchmarsh function \(m\). Indeed, \(u_a(t) \defeq T_H(t,z) e_0\) and \(u_b(t) \defeq T_H(t,z) \begin{smallpmatrix} m(z) \\ 1 \end{smallpmatrix}\) are solutions of the system by definition of the transfer matrix. If \(b\) is limit circle, then \(m(z) = \projection T_H(b,z)^{-1} u_b(b)\) by definition so \(u_b\) must satisfy the boundary condition at \(b\), and if \(b\) is limit point, then it follows from Weyl theory that \(u_b \in L_H^2(a,b)\)---see \cite[Theorem~3.14(a)]{remling_spectral_2018}. As \(u_a(a) = e_0\) and \(u_a^\transpose J u_b \equiv (u_a^\transpose J u_b)(a) = e_0^\transpose J \begin{smallpmatrix} m(z) \\ 1 \end{smallpmatrix} = -1\), these two solutions can be used to define the kernel \(G\). A direct computation then shows that
\beq{eq.CS.resolventkernel}
G(t,s,z) = T_H(t,z) \begin{pmatrix} m(z) & \charf{[t,b)}(s) \\ \charf{(a,t)}(s) & 0 \end{pmatrix} T_H^\transpose(s,z).
\eeq

\subsection{Remarks on the convergence of canonical systems' resolvents}

Given the integral representation of the resolvent from Theorem~\ref{thm.CS.resolvent} with the kernel \(G\) as written in~\eqref{eq.CS.resolventkernel}, we immediately get the following.

\begin{proposition}
\label{prop.CS.resolventconvergence}
Let \(\mathcal{I} = [a,b)\) or \(\mathcal{I} = [a,b]\). For any \(\testfunc \in \mathscr{C}_c(\mathcal{I}, \mathbb{C}^2)\) and any \(z \in \mathbb{C} \setminus \mathbb{R}\), the mapping \(S_{z, \testfunc}\colon \CS\mathcal{I} \times \TM\mathcal{I} \times \mathbb{C} \to \mathscr{C}(\mathcal{I}, \mathbb{C}^2)\) given by
\[
S_{z, \testfunc}(H, T, m)(t) = T(t,z) \int_a^b \begin{pmatrix} m & \charf{[t,b)}(s) \\ \charf{(a,t)}(s) & 0 \end{pmatrix} T^\transpose(s,z) H(s) \testfunc(s) \diff{s}
\]
is continuous with respect to the topology of compact convergence. In particular, given self-adjoint realizations \(\mathcal{S}_{H_n}\) and \(\mathcal{S}_H\) of canonical systems maximal relations, \((\mathcal{S}_{H_n} - z)^{-1}\testfunc \to (\mathcal{S}_H - z)^{-1}\testfunc\) compactly whenever the associated coefficient matrices, transfer matrices and Weyl--Titchmarsh functions all converge.
\end{proposition}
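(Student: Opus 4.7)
The plan is to factor the continuity of $S_{z,\varphi}$ through two auxiliary $\mathscr{C}(\mathcal{I},\mathbb{C}^2)$-valued maps
\[
A_{H,T}(t) \defeq \int_a^t T^\transpose(s,z) H(s) \varphi(s) \diff{s},
\qquad
B_{H,T}(t) \defeq \int_t^b T^\transpose(s,z) H(s) \varphi(s) \diff{s}.
\]
Expanding the matrix kernel in the definition of $S_{z,\varphi}$ gives the identity
\[
S_{z,\varphi}(H,T,m)(t) = T(t,z) \Bigl( \begin{pmatrix} m & 0 \\ 1 & 0 \end{pmatrix} A_{H,T}(t) + \begin{pmatrix} m & 1 \\ 0 & 0 \end{pmatrix} B_{H,T}(t) \Bigr),
\]
so that once $A_{H,T}$ and $B_{H,T}$ are shown to depend compactly continuously on $(H,T)$ (with $H$ in the vague topology and $T$ in compact convergence), the continuity of $S_{z,\varphi}$ follows at once from composition, using compact convergence of $T(\cdot, z)$ and scalar continuity in $m$. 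I focus on $A_{H,T}$; the argument for $B_{H,T}$ is symmetric, and the limit-circle case at $b$ raises no extra issue since $H$ is then integrable near $b$.

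For fixed convergent sequences $H_n \to H$ vaguely and $T_n \to T$ compactly, I split
\[
A_{H_n,T_n}(t) - A_{H,T}(t) = \int_a^t (T_n - T)^\transpose H_n \varphi \diff{s} + \int_a^t T^\transpose (H_n - H) \varphi \diff{s}.
\]
For the first piece, setting $K_\varphi \defeq \supp\varphi$, compact convergence forces $(T_n - T)^\transpose \varphi \to 0$ uniformly on $K_\varphi$, while vague convergence applied to a continuous cutoff dominating $\charf{K_\varphi}$, together with the positive semi-definiteness bound $\abs{(H_n)_{ij}} \leq \tr H_n$, gives the uniform bound $\sup_n \int_{K_\varphi} \tr H_n \diff{s} < \infty$; hence this piece vanishes uniformly in $t$. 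For the second piece, at fixed $t$ I approximate $\charf{(a,t)}$ by a continuous cutoff $\chi_\varepsilon \in \mathscr{C}_c(\mathcal{I})$ equal to $1$ on $(a, t-\varepsilon]$ and to $0$ on $[t,b)$; the main smoothed term $\int_\mathcal{I} \chi_\varepsilon T^\transpose (H_n - H) \varphi \diff{s}$ tends to $0$ by vague convergence against $\chi_\varepsilon T^\transpose \varphi \in \mathscr{C}_c(\mathcal{I}, \mathbb{C}^2)$, while the smoothing errors are bounded by $\norm{T^\transpose \varphi}_\infty \bigl( \int_{[t-\varepsilon, t]} \tr H_n + \int_{[t-\varepsilon, t]} \tr H \bigr)$, which is uniformly small in $n$ by absolute continuity of $\tr H$ together with vague convergence against a continuous cutoff slightly larger than $\charf{[t-\varepsilon, t]}$. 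This yields pointwise convergence $A_{H_n,T_n}(t) \to A_{H,T}(t)$.

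To upgrade to uniform convergence on a compact $K \subset \mathcal{I}$, I establish equicontinuity of $\{A_{H_n,T_n}\}_n$: for $s, t \in K$,
\[
\abs[\big]{A_{H_n,T_n}(t) - A_{H_n,T_n}(s)} \leq \norm{T_n^\transpose \varphi}_{\infty, K_\varphi} \int_s^t \tr H_n \diff{u},
\]
and finitely covering $K \cap K_\varphi$ by intervals on which $\int \tr H$ is small (by absolute continuity of the limit) and transferring to $H_n$ via continuous cutoffs slightly larger than each interval produces a modulus of continuity uniform in $n$ for all $n$ sufficiently large; the finitely many remaining small-$n$ terms are absolutely continuous individually. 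Pointwise convergence, equicontinuity on compacts, and continuity of the limit then give uniform convergence on $K$ through a standard Arzelà--Ascoli argument. The main technical obstacle throughout is controlling the discontinuous indicator against only vague convergence of $H_n$; what makes the cutoff approximations succeed is absolute continuity of the coefficient matrices, which rules out atoms and lets the limit's fine structure be transferred to the sequence. The \emph{in particular} clause is then immediate: Theorem~\ref{thm.CS.resolvent} together with \eqref{eq.CS.resolventkernel} identifies $(\mathcal{S}_{H_n} - z)^{-1}\varphi$ with $S_{z,\varphi}(H_n, T_{H_n}, m_n)$, and the assumed convergences of $H_n$, $T_{H_n}$ and $m_n$ feed through continuity of $S_{z,\varphi}$ to give compact convergence of the resolvents applied to $\varphi$.
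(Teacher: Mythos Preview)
Your proof is correct. The paper does not actually give a proof of this proposition: it simply states that the result follows ``immediately'' from the integral representation of the resolvent in Theorem~\ref{thm.CS.resolvent} and the formula~\eqref{eq.CS.resolventkernel} for the kernel in terms of the transfer matrix and Weyl--Titchmarsh function. You have supplied the routine verification that the paper omits, and your decomposition via the auxiliary functions $A_{H,T}$ and $B_{H,T}$, the cutoff argument to handle the discontinuous indicator against vague convergence, and the equicontinuity upgrade via absolute continuity of the limiting coefficient matrix are all sound.

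One minor remark: the equicontinuity step can be streamlined by observing that the positive measures $\tr H_n \diff{s}$ converge vaguely to the atomless measure $\tr H \diff{s}$, whence the associated distribution functions $t \mapsto \int_a^t \tr H_n$ converge locally uniformly to the continuous limit $t \mapsto \int_a^t \tr H$; this gives the uniform modulus of continuity without the explicit finite cover. But your argument as written is already adequate.
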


It is natural to ask whether the above resolvent convergence result can be strengthened. After all, the resolvent of a canonical system is a bounded operator on a Hilbert space, and therefore it would be natural to hope for the resolvents of a convergent sequence of systems to converge in norm or at least in the strong operator topology, for example. However, it is not obvious to extend this result in general, because the Hilbert space \(L_H^2(a,b)\) on which a canonical system's resolvent acts depends on its coefficient matrix \(H\), so it changes along a sequence of systems. To extend the convergence of resolvents to one of operators on a Hilbert space, one would need to isometrically map all of the \(L_H^2(a,b)\) spaces into a given one, and see how the resolvents behave in that space, which is not obvious to do in general. Nevertheless, all \(\mathscr{C}_c(\mathcal{I}, \mathbb{C}^2)\) functions certainly lie in any \(L_H^2(a,b)\) space for \(H \in \CS\mathcal{I}\), hence the above result.

\subsection{Resolvents of Sturm--Liouville operators}

Unsurprisingly, there is a relationship between the resolvent of a generalized Sturm--Liouville operator and that of its associated canonical system. Following \cite[Theorem~7.3]{eckhardt_weyl-titchmarsh_2013}, the resolvent of a generalized Sturm--Liouville operator \(L\) at \(z \in \mathbb{C} \setminus \mathbb{R}\) with appropriate boundary conditions at limit circle endpoints acts on \(g \in L^2\bigl( (a,b), w(t)\diff{t} \bigr)\) as
\beq{eq.SLresolvent}
\bigl( (L - z)^{-1}g \bigr)(t)
    = \int_a^b G_{\mathrm{SL}}(t,s,z) g(s) w(s) \diff{s}
\eeq 
where \(f_a\) and \(f_b\) are solutions to \(Lf = zf\) that satisfy the boundary or integrability conditions at \(a\) and \(b\) respectively, and where the integral kernel is
\[
G_{\mathrm{SL}}(t,s,z) = \frac{1}{\wronskian{f_b}{f_a}} \bigl( f_b(t) f_a(s) \charf{(a,t)}(s) + f_a(t) f_b(s) \charf{[t,b)}(s) \bigr)
\]
with \(\wronskian{f_b}{f_a} \defeq f_b\quasi{f_a} - f_a\quasi{f_b}\) being the (constant) Wronskian of \(f_b\) and \(f_a\). 

Let \(A\colon (a,b) \to \mathbb{R}^{2\times 2}\) denote the matrix which maps solutions of the eigenvalue equation \(Lf = zf\) to solutions of the associated canonical system, as described in Section~\ref{sec.CS.SL}. By construction, the functions \(u_a\) and \(u_b\) defined so that \(Au_a = (\quasi{f_a}, f_a)\) and \(Au_b = (\quasi{f_b}, f_b)\) are solutions of the canonical system that satisfy the boundary or integrability conditions at \(a\) and \(b\) respectively. Using the identity \(M^\transpose J M = J \det M\), which holds for any invertible \(M\), and the fact that \(\det A \equiv \det A_0 = 1\), we see that
\[
u_a^\transpose J u_b
    = u_a^\transpose A^\transpose A^{-\transpose} J A^{-1} A u_b
    = u_a^\transpose A^\transpose J A u_b
    = \begin{pmatrix} \quasi{f_a} & f_a \end{pmatrix} \begin{pmatrix} -f_b \\ \quasi{f_b} \end{pmatrix}
    = \wronskian{f_a}{f_b},
\]
so if we choose \(f_a\) and \(f_b\) to be normalized so that \(\wronskian{f_a}{f_b} \equiv -1\), then \(u_a\) and \(u_b\) can be used to characterize the canonical system's resolvent as in Theorem~\ref{thm.CS.resolvent}. Then, if \(x\) denotes either \(a\) or \(b\), by expanding the definition \(H = w J A^{-1} \begin{smallpmatrix} 0 & 1 \\ 0 & 0 \end{smallpmatrix} A\) of the coefficient matrix, we see that for any \(v \in L_H^2(a,b)\),
\begin{multline*}
u_x^\transpose(s) H(s) v(s)
    = w(s) u_x^\transpose(s) A^\transpose(s) A^{-\transpose}(s) J A^{-1}(s) \begin{pmatrix} 0 & 1 \\ 0 & 0 \end{pmatrix} A(s) v(s) \\
    = w(s) (Au_x)^\transpose(s) J \begin{pmatrix} 0 & 1 \\ 0 & 0 \end{pmatrix} (Av)(s) 
    = w(s) (Au_x)^\transpose(s) \begin{pmatrix} 0 \\ (Av)_2(s) \end{pmatrix}
    = f_x(s) (Av)_2(s) w(s).
\end{multline*}
It follows that if \(\mathcal{S}\) denotes the self-adjoint realization of the canonical system's maximal relation corresponding to the boundary conditions taken here, then
the integral representation of its resolvent at \(z \in \mathbb{C} \setminus \mathbb{R}\) from Theorem~\ref{thm.CS.resolvent} satisfies
\beq{eq.resolventconjugation}
A(t) \bigl( (\mathcal{S} - z)^{-1} v \bigr)(t)
    = \int_a^b \biggl( \begin{pmatrix} \quasi{f_b}(t) \\ f_b(t) \end{pmatrix} f_a(s) \charf{(a,t)}(s) + \begin{pmatrix} \quasi{f_a}(t) \\ f_a(t) \end{pmatrix} f_b(s) \charf{[t,b)}(s) \biggr) (Av)_2(s) w(s) \diff{s}.
\eeq
In particular, the generalized Sturm--Liouville operator's resolvent evaluated at \((Av)_2\) can be extracted as the second entry of this vector.

\section{Proof of Corollary \ref{cor.SAizeta}}
\label{apx.proofSAizeta}

In this section, we detail the proof of Corollary~\ref{cor.SAizeta}. The proof simply amounts to understanding how to express the Weyl--Titchmarsh functions of the Airy and sine systems in terms of the stochastic Airy and zeta functions. 

The Weyl--Titchmarsh function of the shifted and scaled Airy system is \(\sAiryWT(z) = \projection u(0,z)\) where \(u\colon [0,\infty) \times \mathbb{C} \to \mathbb{C}^2\) is defined so that for each \(z \in \mathbb{C}\), \(u(\cdot,z)\) is an integrable solution to \(J\partial_1 u(\cdot,z) = -z \sAirymat u(\cdot,z)\). By construction of the canonical system, \(u(0,z) = \sAirySLtoCS^{-1}(0)\bigl( \partial_1 h(0,z), h(0,z) \bigr)\) where \(h(\cdot,z)\) is an integrable solution to \(\sAiryop h(\cdot,z) = z h(\cdot,z)\) for each \(z \in \mathbb{C}\). This means that \(h(\cdot,z)\) also solves \(\Airyop h(\cdot,z) = (\shift + \frac{z}{2\sqrt{\shift}}) h(\cdot,z)\), and therefore by definition of the stochastic Airy function, \(h(0,z)\) has the same law as \(a\SAi_{-\shift - \nicefrac{z}{2\sqrt{\shift}}}(0)\) for some (possibly random and \(z\)-dependent) \(a \in \mathbb{C} \setminus \{0\}\). Hence,
\beq{eq.WTSAi.apx}
\sAiryWT(z) \eqlaw \frac{\SAi'_{\lambda_{\shift}(z)}(0)}{\sqrt{\shift} \SAi_{\lambda_{\shift}(z)}(0)}
\qquadtext{where}
\lambda_{\shift}(z) = - \shift - \frac{z}{2\sqrt{\shift}},
\eeq
and where the prime denotes a time derivative.

\newpage
We now recall the definition of the stochastic zeta function \(\zeta_\beta\) introduced by Valkó and Virág in \cite{valko_many_2022}. Let \(b_1\) and \(b_2\) be two independent two-sided standard Brownian motions on \(\mathbb{R}\), and define the processes
\[
y(t) \defeq \exp\Bigl( b_2(t) - \frac{t}{2} \Bigr)
\qquadtext{and}
x(t) \defeq \begin{cases}
    - \int_t^0 y(s) \diff{b_1}(s) & \text{if } s \leq 0, \\
    \int_0^t y(s) \diff{b_1}(s) & \text{if } s \geq 0.
\end{cases}
\]
Then, set
\[
\taumat(t) \defeq \frac{1}{2y(\frac{4}{\beta}\log t)} X_{\tau_\beta}^*\bigl( \tfrac{4}{\beta}\log t \bigr) X_{\tau_\beta}\bigl( \tfrac{4}{\beta}\log t \bigr)
\qquadtext{where}
X_{\tau_\beta} \defeq \begin{pmatrix} 1 & -x \\ 0 & y \end{pmatrix}.
\]
Following \cite[Definition~43]{valko_many_2022}, we now define \(\tau_\beta\) as the random Dirac operator mapping \(u \colon (0,1) \to \mathbb{C}\) to \(\taumat^{-1} J u'\) with boundary conditions \(\mathfrak{u}_0^*J u(0) = \mathfrak{u}_1^*J u(1) = 0\) where again \(\mathfrak{u}_0 \defeq (1,0)\), but now \(\mathfrak{u}_1 \defeq (-q,-1)\) where \(q\) is a standard Cauchy random variable independent of \(b_1\) and \(b_2\).

The stochastic zeta function \(\zeta_\beta\) is the \emph{secular function} of \(\tau_\beta\) as defined in \cite[Definitions~13 and~43]{valko_many_2022}. For our purposes, it suffices to understand \(\zeta_\beta\) through its characterization as the solution of a differential equation: if \(v\colon (0,1] \times \mathbb{C} \to \mathbb{C}^2\) is defined so that for each \(z \in \mathbb{C}\), \(v(\cdot,z)\) is the unique solution to
\[
\taumat^{-1}J \partial_1 v(\cdot,z) = z v(\cdot,z)
\qquadtext{with}
\lim_{t\to 0} v(t,z) = \begin{pmatrix} 1 \\ 0 \end{pmatrix},
\]
then \(\zeta_\beta(z) = (1, -q) v(1,z)\). This can be taken as a definition of \(\zeta_\beta\), since by \cite[Proposition~18]{valko_many_2022}, this differential equation always has a unique solution (see also \cite[Section 5.1]{valko_many_2022}). 

As explained in \cite[Section~4.2]{valko_many_2022}, the operator \(\tau_\beta\) is orthogonally equivalent to the stochastic sine operator. Indeed, if
\[
S \defeq \begin{pmatrix} 1 & 0 \\ 0 & -1 \end{pmatrix}
\qquadtext{and}
Q \defeq \frac{1}{\sqrt{q^2 + 1}} \begin{pmatrix} q & 1 \\ -1 & q \end{pmatrix}
\]
and if \(\rho\) denotes a transformation acting on a function \(f\) on \((0,1]\) by sending it to \(\rho f(t) = f(1-t)\), then the operator \(\rho^{-1} SQ \tau_\beta (SQ)^{-1} \rho\) has the same distribution as the stochastic sine operator by \cite[Proposition~46]{valko_many_2022}. Writing \(\tau_\beta = \taumat^{-1} J \partial\) with \(\partial\) being a derivative operator, this means that the stochastic sine operator \((\sinemat^{-1}\circ\logtime) J \partial\) has the same distribution as \(\rho^{-1} SQ \taumat^{-1} J \partial (SQ)^{-1} \rho\). Now, \(\partial\rho = - \rho\partial\), so this equivalence yields the equivalence of coefficient matrices
\beq{eq.SAizeta.eqlawmats}
\sinemat^{-1}\circ\logtime
    \eqlaw \rho^{-1} SQ \taumat^{-1} J (SQ)^{-1} J \rho.
\eeq 

With \(v\) as above, define \(u \colon [0,1) \times \mathbb{C} \to \mathbb{C}^2\) by setting \(u(t,z) \defeq \rho SQv(t,-z)\). Then by definition of \(v\), for \(t \in [0,1)\) and \(z \in \mathbb{C}\), \(u\) solves
\[
J \partial_1 u(t,z)
    = - J SQ \partial_1 v(1-t, -z)
    = -z J SQ J \taumat(1-t) v(1-t, -z)
    = -z J SQ J \taumat(1-t) (SQ)^{-1} u(t,z),
\]
which shows that \(u(\cdot,z)\) solves the sine canonical system with spectral parameter \(z\). Moreover, by definition of \(v\), as \(t \to 1\),
\[
u(t,z)
    \to SQ \begin{pmatrix} 1 \\ 0 \end{pmatrix}
    = \frac{1}{\sqrt{q^2 + 1}} \begin{pmatrix} q \\ 1 \end{pmatrix}.
\]
By \cite[Lemma~45]{valko_many_2022}, \(q\) is the point to which converges the hyperbolic Brownian motion that drives the sine system in this setup (i.e., with coefficient matrix \(\rho^{-1} J SQ J \taumat (SQ)^{-1} \rho\)). This shows that \(u(\cdot,z)\) satisfies the sine system's boundary condition at \(1\) when \(\beta > 2\). When \(\beta \leq 2\), there is no boundary condition at \(1\), but it is easy to see from the definition of the sine system's norm that an integrable solution must become parallel to \((q, 1)\) as time goes to \(1\). Indeed,
\[
\norm{f}_{\sinemat\circ\logtime}^2
    = \frac{1}{2} \int_0^1 \frac{\norm{\ABM\circ\logtime(t)f(t)}^2}{\Im\HBM\circ\logtime(t)} \diff{t}
\]
and
\[
\ABM\circ\logtime(t) f(t)
    = \begin{pmatrix} f_1(t) - f_2(t) \Re\HBM\circ\logtime(t) \\ f_2(t) \Im\HBM\circ\logtime(t) \end{pmatrix}.
\]
Because \(\Re\HBM(t)\) a.s.\@ does not vanish as \(t\to\infty\) but \(\Im\HBM(t)\) does, \(f(t)\) must become parallel to \((\Re\HBM(\infty), 1)\) as \(t\to 1\) for \(\norm{f}_{\sinemat\circ\logtime}\) to be finite. As an integrable solution exists, the uniqueness statement of \cite[Proposition~18]{valko_many_2022} therefore implies that \(u(\cdot,z)\) is (a multiple of) that integrable solution. Hence, for any \(\beta > 0\), \(u\) can be taken as the solution that defines the system's Weyl--Titchmarsh function, that is, \(\sineWT(z) \eqlaw \projection u(0, z)\). 

Now, \(v(1,z) = (SQ)^{-1} u(0, -z)\), and as \(S^{-1} = S\) and \(Q^{-1} = Q^\transpose\), a direct computation shows that
\[
\zeta_\beta(z)
    = \begin{pmatrix} 1 & -q \end{pmatrix} v(1, z)
    = \sqrt{q^2 + 1}\, u_2(0, -z).
\]
Therefore,
\beq{eq.WTzeta}
\sineWT(z)
    \eqlaw \frac{u_1(0, z)}{u_2(0, z)}
    = \sqrt{q^2 + 1}\, \frac{u_1(0, z)}{\zeta_\beta(-z)}.
\eeq
By general canonical systems theory, the function \(\xi(z) \defeq u_1(0, z)\) is entire. Moreover, since \(q\) is independent of the Brownian motions from which \(\tau_\beta\) is defined, it is independent of the solution \(u\), and therefore of \(\xi\) and \(\zeta_\beta\).

Thus, Corollary~\ref{cor.SAizeta} simply follows from Theorem~\ref{thm.WTconvinlaw} because of the representations \eqref{eq.WTSAi.apx} and \eqref{eq.WTzeta} of the two Weyl--Titchmarsh functions.

\section{The spectral weights of the Airy canonical system}
\label{apx.spectralweights}

In this section, we show by comparison with finite-dimensional systems that the spectral weights of the Airy canonical system are independent of the eigenvalues and iid.  We do this by proving an intermediate result characterizing a certain random meromorphic function. We denote by \(\Gamma(\alpha, \lambda)\) a Gamma distribution with shape and rate parameters \(\alpha > 0\) and \(\lambda > 0\).

\begin{proposition}
\label{prop.spectralweights}
Let $\{\lambda_j\}_{j \geq 1}$ be the negatives of the eigenvalues of the stochastic Airy operator, i.e., the zeros of $\SAi_{\lambda}(0)$ in $\lambda$.  Let $\{\Gamma_j\}_{j \geq 1}$ be an iid family of $\Gamma(\tfrac{\beta}{2}, \tfrac{4}{\beta})$ (mean-$2$) random variables.  Then
\[
    \sum_{j=1}^\infty \frac{\Gamma_j}{2(\lambda - \lambda_j)^2}
    \eqlaw
    - \partial_\lambda \frac{\partial_t \SAi_\lambda(0)}{\SAi_\lambda(0)},
\]
with the equality in distribution as random analytic functions on the upper half-plane $\mathbb{H}$.
\end{proposition}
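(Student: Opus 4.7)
The plan has two main parts. First, I would expand the right-hand side as a sum of double-pole terms to identify the classical spectral weights. Second, I would match the joint distribution of these weights and the eigenvalues against the claimed distribution by comparison with a finite-dimensional tridiagonal model.

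For the first part, set $M(\lambda) \defeq \partial_t \SAi_\lambda(0)/\SAi_\lambda(0)$; this is the unshifted-Airy analogue of the Weyl--Titchmarsh function in \eqref{eq.WTSAi}. Differentiating $\Airyop \SAi_\lambda = -\lambda \SAi_\lambda$ in $\lambda$ and performing a Wronskian computation in the generalized Sturm--Liouville sense (using the quasi-derivative $\quasi{f}$ to accommodate the distributional potential) yields
\[
-\partial_\lambda M(\lambda) = \frac{\int_0^\infty \SAi_\lambda(t)^2\,dt}{\SAi_\lambda(0)^2}.
\]
Expanding $\SAi_\lambda(0) \approx (\lambda - \lambda_j)\,\partial_\lambda \SAi_{\lambda_j}(0)$ near each zero $\lambda_j$ and evaluating the same Wronskian identity at $\lambda=\lambda_j$ to get $\partial_\lambda \SAi_{\lambda_j}(0) = \int_0^\infty \SAi_{\lambda_j}^2/\SAi_{\lambda_j}'(0)$, one identifies the double-pole coefficient of $-\partial_\lambda M$ at $\lambda_j$ as $w_j \defeq |\psi_j'(0)|^2$, where $\psi_j \defeq \SAi_{\lambda_j}/\|\SAi_{\lambda_j}\|_{L^2(0,\infty)}$ is the normalized eigenfunction. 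The Herglotz representation of $M$ then gives
\[
-\partial_\lambda M(\lambda) = \sum_j \frac{w_j}{(\lambda-\lambda_j)^2},
\]
with the possible linear and constant terms absent by the known $|\lambda|\to\infty$ asymptotics of $\SAi_\lambda(0)$ from \cite{lambert_strong_2021}. The proposition thus reduces to showing that $(w_j)_{j\geq 1}$, conditionally on $\{\lambda_j\}$, is a sequence of iid $\tfrac{1}{2}\Gamma(\tfrac{\beta}{2},\tfrac{4}{\beta})$ random variables.

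For the second part, I would proceed by approximation with the Dumitriu--Edelman tridiagonal model $T_N$ for the Gaussian $\beta$-ensemble. Since $e_1$ is cyclic for $T_N$ (all subdiagonal entries are a.s.\ nonzero), the map $T_N \mapsto (\mu_j^{(N)}, q_j^{(N)})_{j=1}^N$ with $q_j^{(N)} \defeq |\langle e_1, v_j^{(N)}\rangle|^2$ is a measurable bijection, and a classical Jacobian computation shows that $(q_1^{(N)},\ldots,q_N^{(N)})$ is $\mathrm{Dirichlet}(\tfrac{\beta}{2},\ldots,\tfrac{\beta}{2})$, independent of the eigenvalues. Writing $q_j^{(N)} = Y_j^{(N)}/\sum_k Y_k^{(N)}$ with $Y_k^{(N)}$ iid $\Gamma(\tfrac{\beta}{2},1)$, the denominator concentrates on $N\beta/2$ by the LLN, so the top finitely many $Y_j^{(N)}$ are asymptotically iid $\Gamma(\tfrac{\beta}{2},1)$ and independent of the top eigenvalues. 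Under the edge scaling of \cite{ramirez_beta_2011}, $T_N$ converges to $\Airyop$ and the first entry of the $j$th top eigenvector scales as $N^{-1/2}\psi_j'(0)$ (the $N^{-1/2}$ being the combined effect of the grid-step factor $N^{-1/6}$ and the discrete derivative at the Dirichlet-type boundary). Hence $q_j^{(N)} \sim w_j/N$ and the limit distributional identity $w_j \eqlaw (2/\beta)\,\Gamma(\tfrac{\beta}{2},1) \eqlaw \tfrac{1}{2}\Gamma(\tfrac{\beta}{2},\tfrac{4}{\beta})$ follows, jointly iid across $j$ and independent of $\{\lambda_j\}$.

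The principal obstacle is making this last step rigorous at the level of spectral weights rather than eigenvalues alone. The convergence of \cite{ramirez_beta_2011} controls eigenvalues and eigenfunctions in a norm adapted to the bulk of the eigenvectors, but the weights $w_j = |\psi_j'(0)|^2$ probe the boundary and require matching the discrete derivative $N^{1/2}(v_j^{(N)})_1$ to the continuous $\psi_j'(0)$. I would address this either by a direct asymptotic analysis of the three-term recurrence near the boundary index, or more structurally by embedding $T_N$ as a canonical system (as in Section~\ref{sec.CS.SL} applied to the finite-difference operator), invoking an analogue of Theorem~\ref{thm.WTconvinlaw} for joint convergence of the Weyl--Titchmarsh functions on $\UHP$, and extracting the convergence of weights from the Herglotz representation by a residue-continuity argument, which needs only a quantitative lower bound on the top eigenvalue spacings (available from the simplicity and tail estimates for the $\Airy$ point process).
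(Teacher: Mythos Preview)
Your proposal and the paper share the same starting point---the Dumitriu--Edelman tridiagonal model and the Dirichlet distribution of its spectral weights, which is independent of the eigenvalues---but diverge in how the $N\to\infty$ limit is taken. You first expand $-\partial_\lambda M(\lambda)$ as $\sum_j w_j/(\lambda-\lambda_j)^2$ with $w_j = |\psi_j'(0)|^2$, and then aim to show $Nq_j^{(N)} \to w_j$ via convergence of eigenvector boundary values, correctly flagging this as the delicate step since \cite{ramirez_beta_2011} does not directly control the discrete derivative at the boundary. The paper sidesteps this entirely: it writes the finite-$N$ resolvent entry as a ratio of (rescaled) characteristic polynomials $\Psi_{N-1}/\Psi_N$, differentiates in $\lambda$, and then invokes the \emph{analytic-function} convergence $\Psi_N \to \SAi_\lambda(0)$ and $N^{1/3}(\Psi_N - \Psi_{N-1}) \to \partial_t\SAi_\lambda(0)$ from \cite[Theorem~1.1]{lambert_strong_2021}, rather than eigenvector convergence. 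This yields $-N^{1/3}\partial_\lambda(\Psi_{N-1}/\Psi_N) \to -\partial_\lambda\bigl(\partial_t\SAi_\lambda(0)/\SAi_\lambda(0)\bigr)$ locally uniformly on $\UHP$ with no need to identify or match individual weights; the left-hand side is handled by the Dirichlet-to-Gamma scaling exactly as you describe. Your residue expansion of the right-hand side is correct and is essentially what the paper carries out in the proof of the subsequent Corollary to read off the spectral measure, but for the Proposition itself it is a detour: the function-level argument via \cite{lambert_strong_2021} completely avoids the boundary-regularity obstacle you identified, which is the main advantage of the paper's route over yours.
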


From here, we derive that the spectral weights of the Airy operator have the law of these $\Gamma_j$ random variables. 
\begin{corollary}
\label{cor.spectralweights}
The spectral measure of the shifted and scaled Airy canonical system has the distribution 
\[
\sum_{j=1}^\infty {\Gamma_j}\delta_{\Lambda_j}(\lambda),
\]
where $\{\Lambda_j\}_{j\geq 1}$ are the eigenvalues of the canonical system, and $\{\Gamma_j\}_{j\geq 1}$ are iid $\Gamma(\tfrac{\beta}{2}, \tfrac{4}{\beta})$ (mean-$2$) random variables.
\end{corollary}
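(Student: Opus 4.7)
The plan is to derive the corollary directly from Proposition~\ref{prop.spectralweights} together with the explicit representation~\eqref{eq.WTSAi} of the Weyl--Titchmarsh function, by differentiating both sides once in $z$ and comparing the Laurent expansions at each eigenvalue.

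First, I would recall that the (shifted and scaled) stochastic Airy operator has compact resolvent (its eigenvalues form the discrete $\Airy$ point process, shifted and scaled), so its spectrum is pure point and simple, with eigenvalues $\{\Lambda_j\}_{j\geq 1}$ in bijection with the zeros $\{\lambda_j\}$ of $\SAi_\lambda(0)$ via the affine map $\lambda_{\shift}(\Lambda_j) = \lambda_j$. In particular the associated spectral measure is purely atomic, $\mu = \sum_j w_j \delta_{\Lambda_j}$, so Theorem~\ref{thm.Herglotzrepresentation} gives
\[
\sAiryWT(z) = a + bz + \sum_{j\geq 1} w_j \left( \frac{1}{\Lambda_j - z} - \frac{\Lambda_j}{1+\Lambda_j^2} \right),
\qquad
\partial_z \sAiryWT(z) = b + \sum_{j\geq 1} \frac{w_j}{(\Lambda_j - z)^2}.
\]

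Next, I would differentiate the identity~\eqref{eq.WTSAi}. Since $\partial_z \lambda_{\shift}(z) = -1/(2\sqrt{\shift})$, the chain rule yields
\[
\partial_z \sAiryWT(z) = -\frac{1}{2\shift}\, \partial_\lambda \frac{\partial_t \SAi_\lambda(0)}{\SAi_\lambda(0)} \bigg|_{\lambda=\lambda_{\shift}(z)}.
\]
Using the relation $\lambda_{\shift}(z) - \lambda_j = -(z - \Lambda_j)/(2\sqrt{\shift})$, so that $(\lambda_{\shift}(z) - \lambda_j)^2 = (\Lambda_j - z)^2/(4\shift)$, Proposition~\ref{prop.spectralweights} transforms into
\[
\partial_z \sAiryWT(z) \eqlaw \sum_{j\geq 1} \frac{\Gamma_j}{(\Lambda_j - z)^2}
\]
as random meromorphic functions on $\UHP$, where $\{\Gamma_j\}$ are iid $\Gamma(\nicefrac{\beta}{2}, \nicefrac{4}{\beta})$ and independent of $\{\lambda_j\}$, hence of $\{\Lambda_j\}$.

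To conclude, I would compare the two expressions for $\partial_z \sAiryWT$: both are meromorphic with double poles precisely at the $\Lambda_j$, and the coefficient of $(\Lambda_j - z)^{-2}$ is $w_j$ in the first and $\Gamma_j$ in the second. Because the Laurent coefficients at prescribed poles are measurable functionals of the meromorphic function itself, the equality in distribution passes to the joint law of the sequences $(\Lambda_j, w_j)_{j\geq 1}$ and $(\Lambda_j, \Gamma_j)_{j\geq 1}$, giving the stated distributional identity. The main delicate point is the passage from an equality in distribution of analytic functions to the joint distribution of the pole data; this is clean here because the poles are simple points of a discrete set and their locations coincide on both sides, so only the double-pole residues need to be matched. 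The linear term $b$ and the constant $a$ in the Herglotz representation play no role in this comparison, as they do not contribute to the singular parts at the $\Lambda_j$.
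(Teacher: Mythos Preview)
Your proposal is correct and follows essentially the same approach as the paper. Both arguments extract the spectral weights from the local pole data of the Weyl--Titchmarsh function expressed via~\eqref{eq.WTSAi} and identify them with the $\Gamma_j$'s through Proposition~\ref{prop.spectralweights}; you differentiate first and read off the double-pole coefficients of $\partial_z \sAiryWT$, while the paper computes the residues of $\sAiryWT$ at its simple poles and then algebraically rewrites these residues as limits of $(\lambda-\lambda_j)^2\,\partial_\lambda\bigl(\partial_t\SAi_\lambda(0)/\SAi_\lambda(0)\bigr)$, which is the same quantity.
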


\begin{proof}
    We can recover the spectral measure $\mu_{\sAirymat}(\diff{z})$ of $\sAiryWT$ by Stieltjes inversion, i.e. 
    \[
    \mu_{\sAirymat}(\diff{z}) = \frac{1}{\pi} \lim_{\varepsilon \to 0^+} \Im\bigl( \sAiryWT(z+i\varepsilon)\bigr) \diff{z},
    \] 
    as a weak limit of measures.

    From \eqref{eq.WTSAi}, the Weyl--Titchmarsh function of the Airy canonical system is 
    \[
    \sAiryWT(z) = \frac{\partial_t \SAi_{\lambda_{\shift}(z)}(0)}{\sqrt{\shift} \SAi_{\lambda_{\shift}(z)}(0)}
    \]
    where $\lambda_{\shift}(z) = - \shift - \frac{z}{2\sqrt{\shift}}$. The function $\frac{\partial_t \SAi_{\lambda_{\shift}(z)}(0)}{\sqrt{\shift} \SAi_{\lambda_{\shift}(z)}(0)}$ has simple poles at $z = -2\sqrt{\shift}( \lambda_j + \shift)$,  where \(\{\lambda_j\}_{j\geq 1}\) are the zeros of \(\lambda \mapsto \SAi_\lambda(0)\). Moreover, by taking limits, we have that the residue at these poles is given by 
    \[
        \lim_{z \to -2\sqrt{\shift}( \lambda_j + \shift)} 
    \bigl(z + 2\sqrt{\shift}( \lambda_j + \shift) \bigr) \frac{\partial_t \SAi_{\lambda_{\shift}(z)}(0)}{ \sqrt{\shift}\SAi_{\lambda_{\shift}(z)}(0)} = - \frac{2 \partial_t\SAi_{\lambda_j}(0)}{ \partial_\lambda(\SAi_{\lambda}(0))\rvert_{\lambda = \lambda_j}}.
    \]
    Now, because \(\lambda \mapsto \SAi_\lambda(0)\) has a simple zero at \(\lambda_j\),
    \begin{multline*}
    \frac{\partial_t \SAi_{\lambda_j}(0)}
    {\partial_\lambda(\SAi_{\lambda}(0))\rvert_{\lambda = \lambda_j}} 
    =
    \lim_{\lambda\to\lambda_j} (\lambda - \lambda_j)^2 \frac{\partial_t \SAi_\lambda(0) \partial_\lambda \SAi_\lambda(0)}{\SAi_\lambda(0)^2} \\
    = \lim_{\lambda\to\lambda_j} (\lambda - \lambda_j)^2 \frac{\partial_t \SAi_\lambda(0) \partial_\lambda \SAi_\lambda(0) - \SAi_\lambda(0) \partial_\lambda\partial_t \SAi_\lambda(0)}{\SAi_\lambda(0)^2}
    = - \lim_{\lambda\to\lambda_j} (\lambda - \lambda_j)^2 \partial_\lambda \frac{\partial_t \SAi_\lambda(0)}{\SAi_\lambda(0)}.
    \end{multline*}
    Hence, if we have realized the $\Gamma_j$'s on the same probability space as the Airy canonical system so that the relation in Proposition \ref{prop.spectralweights} holds almost surely, by Proposition \ref{prop.spectralweights} we have that
    \[
    \frac{\partial_t \SAi_{\lambda_j}(0)}{\partial_\lambda(\SAi_{\lambda}(0))\rvert_{\lambda = \lambda_j}} 
        = \frac{\Gamma_j}{2}.
    \]
    Therefore, as the masses at the poles of $\sAiryWT$ are the negatives of the residues, we find that 
    \[
    \mu_{\sAirymat}(\diff{z}) = \sum_{j=1}^\infty {\Gamma_j}\delta_{-2\sqrt{\shift}( \lambda_j + \shift)}(\diff{z}).
    \qedhere
    \]
\end{proof}

We now turn to the proof of Proposition \ref{prop.spectralweights}. 

\begin{proof}
    We will show the identity holds by using the coupling results of \cite{lambert_strong_2021}, which couple the tridiagonal Dumitriu--Edelman matrix \cite{dumitriu_matrix_2002} to the Airy operator.
    
    Recall that for any $\alpha>0$, a $\chi_\alpha$ random variable has density proportional to $x^{\alpha-1} e^{-x^2 / 2} \charf{x>0}$ and $\chi_\alpha^2 \sim$ $\Gamma(\frac{\alpha}{2}, 2)$. In terms of these variables, we define the semi-infinite tridiagonal matrix
    \[
    \renewcommand{\arraystretch}{0.5}
    \setlength{\arraycolsep}{4pt}
        \mathbf{A} = \begin{pmatrix}
        b_1 & a_1 & & \\
        a_1 & b_2 & a_2 & \phantom{\ddots} \\
        & a_2 & b_3 & \ddots \\
        & & \ddots & \ddots
        \end{pmatrix}
    \]
    where \(b_j \sim \normal{0}{2}\) and \(a_j \sim \chi_{\beta j}\). We let $\Phi_N(z) \defeq \det\bigl( [z-(4 N \beta)^{\nicefrac{-1}{2}} \mathbf{A} ]_{N,N} \bigr)$ where $[\mathbf{A}]_{N,N}$ is the top left $N\times N$ submatrix of $\mathbf{A}$.  Then by the Cramer's formula,
    \[
    \frac{\Phi_{N-1}(z)}{\Phi_N(z)} = 
    \Bigl( \bigl[z-(4 N \beta)^{\nicefrac{-1}{2}} \mathbf{A} \bigr]_{N,N}\Bigr)^{-1}_{N,N} = 
    \sum_{j=1}^N \frac{q_j^2}{z - \widetilde{\lambda}^{(N)}_j}
    \]
    where $\{q_j^2\}_{j=1}^N$ are the \emph{spectral weights} and where $\{\widetilde{\lambda}^{(N)}_j\}_{j=1}^N$ are the {eigenvalues} of $\bigl[(4 N \beta)^{\nicefrac{-1}{2}} \mathbf{A}\bigr]_{N,N}$.  By the Dumitriu--Edelman theorem, weights and eigenvalues are independent, and the weights have a $\operatorname{Dirichlet}(\tfrac{\beta}{2}, \tfrac{\beta}{2},\dots, \tfrac{\beta}{2})$ distribution. 

    Following \cite{lambert_strong_2021}, we let for $n \leq N$,
    \[
        w_n(z)\coloneqq \biggl((2 \pi)^{\nicefrac{1}{4}} e^{N z^2} 2^{-n}\bigl(N z^2\bigr)^{\nicefrac{-1}{12}} \sqrt{\frac{n!}{N^n}}\biggr)^{-1},
    \] 
    and we define $\Psi_n(\lambda) = (w_n \Phi_n)(1+\tfrac{\lambda}{2 N^{2/3}})$.  We also let $\lambda_j^{(N)} = 2N^{\nicefrac{2}{3}}(\widetilde{\lambda}^{(N)}_j - 1)$.  Then we have the representation that for $\lambda \in \mathbb{H}$,
    \[
    N \sum_{j=1}^N \frac{q_j^2}{(\lambda - \lambda_j^{(N)})^2} = -N^{\nicefrac{1}{3}}\partial_\lambda \frac{\Psi_{N-1}(\lambda)}{\Psi_N(\lambda)}.
    \]
    On taking $N\to \infty$, the random function on the left-hand side converges weakly and locally uniformly in $\mathbb{H}$ to the random function 
    \[      
    \sum_{j=1}^\infty \frac{\Gamma_j}{2(\lambda - \lambda_j)^2}.
    \]
    By \cite[Theorem~1.1]{lambert_strong_2021}, as \(N\to\infty\) we have the convergence on compacts of 
    \[
        -N^{\nicefrac{1}{3}}\partial_\lambda \frac{\Psi_{N-1}(\lambda)}{\Psi_N(\lambda)}
        = - N^{\nicefrac{1}{3}} \partial_\lambda \frac{\Psi_{N-1}(\lambda) - \Psi_N(\lambda)}{\Psi_N(\lambda)}
        \lawto - \partial_\lambda \frac{\partial_t \SAi_\lambda(0)}{\SAi_\lambda(0)}.
    \qedhere
    \]
\end{proof}

\printbibliography

\end{document}